\documentclass[12pt, a4paper,leqno]{amsart}
\usepackage{amsfonts}
\usepackage{amsmath}
\usepackage{amssymb}
\usepackage{mathrsfs}
\usepackage{graphicx}
\usepackage{amsthm}
\usepackage[unicode=true]{hyperref}
\usepackage{booktabs}
\usepackage{array}
\usepackage{esint}

\providecommand{\U}[1]{\protect\rule{.1in}{.1in}}
\calclayout
\allowdisplaybreaks
\newtheorem{theorem}{Theorem}[section]
\theoremstyle{plain}

\newtheorem{corollary}[theorem]{Corollary}

\newtheorem{problem}{Problem}
\newtheorem{proposition}[theorem]{Proposition}

\numberwithin{equation}{section}
\theoremstyle{definition}
\newtheorem{definition}[theorem]{Definition}

\theoremstyle{remark}
\newtheorem{remark}{Remark}
\def\R{\mathbb{R}}
\def\T{\mathbb{T}}
\def\N{\mathbb{N}}
\DeclareMathOperator*{\argmin}{arg\,min}
\begin{document}
\title[Quantitative $\Gamma$-Convergence]{Quantitative $\Gamma$-Convergence:
A Variational Extrapolation Framework}
\author{Oscar Dominguez}
\address{Oscar Dominguez\\
Departamento de Matem\'aticas \\
CUNEF Universidad \\
Madrid, Spain.}
\email{oscar.dominguez@cunef.edu}
\author{Mario Milman}
\address{Mario Milman \\
Instituto Argentino de Matem\'atica\\
Buenos Aires, Argentina.}
\email{mario.milman@icloud.com}
\urladdr{https://sites.google.com/site/mariomilman}
\thanks{O.D. is supported in part by the grants RYC2022-037402-I, PID2023-152878NB-I00 and CNS2025-166944 funded by the Spanish Ministry of Science and
Innovation.}

\begin{abstract}
This paper introduces a new extrapolation framework within the Calculus of
Variations. We develop a  robust quantitative theory of the   $\Gamma$-convergence approach of De Giorgi by combining it  with  Gagliardo's  work on relative completions in  interpolation theory. A central new tool
is the extrapolation of  \emph{variational $K$-functionals}, which associates to an arbitrary family   of
functionals a dynamic penalty method that simultaneously characterizes their $\Gamma$%
-limits and yields explicit rates of convergence for minimizers. As a
technical foundation we prove compactness extrapolation theorems also
involving Gagliardo completions. To illustrate the mechanism,  we apply the technique  to variational models arising 
in image processing, notably the Rudin--Osher--Fatemi (ROF) and
Chan--Esedoglu models. Our main result in this setting is a contribution to
the \emph{inverse regularity problem} associated to discrete approximations of the ROF model: we show that the required regularity of the observed image  that guarantees a prescribed optimal rate of convergence is measured in terms of the  decay of  
variational $K$-functionals. Surprisingly,  our characterization is
explicitly captured by using the recently introduced spaces  of  Brezis, Van
Schaftingen, and Yung. Moreover, our methods unify, and considerably extend, classical
pointwise convergence and $\Gamma$-versions of the
Bourgain--Brezis--Mironescu and Maz'ya--Shaposhnikova formulae, as well as related 
results in a variety of settings that involve BMO-type functionals, heat kernels, Sobolev spaces on metric measure 
spaces, ...  By bridging De Giorgi's variational
convergence with Gagliardo's completion through variational $K$-functionals
and extrapolation of compactness, this work establishes a quantitative
approach to the Calculus of Variations.
\end{abstract}

\maketitle
\tableofcontents

\section{Introduction}

\subsection{Overview}

The derivation of quantitative rates of convergence for minimizers of energy
functionals remains a limitation of De Giorgi's theory of $\Gamma $%
-convergence. While $\Gamma$-convergence guarantees stability of minimizers
and convergence of energies, it does not intrinsically supply rates of
approximation. Standard tools from interpolation theory, built around the
classical $K$-functional between two fixed normed spaces, are not directly suited
to sequences of functionals in which the penalization itself evolves with
the approximation parameter.

In this paper, we address this limitation by developing a quantitative theory
that merges De Giorgi's variational framework with Gagliardo's theory of
relative completions. The central device is the \textit{variational }$K$-%
\textit{functional,} which replaces the two fixed norms of the classical
functional by a sequence of functionals (not necessarily norms). This construction yields a novel  penalty
characterization of $\Gamma$-limits that is both dynamic and quantitative.

The development of these tools allows us to reformulate $\Gamma$-convergence
within the framework of Gagliardo relative completion from interpolation
theory. In our setting, the two theories merge naturally, allowing the
description of the $\Gamma$-convergence of sequences of functionals in terms
of their \textit{generalized relative Gagliardo completions}. Armed with this
equivalence \ we are able to develop new results on extrapolation of
compactness, analyze rates of convergence of variational limits and bridge
the gap between qualitative stability and quantitative approximation in
variational analysis.

Next, we provide an atlas of the contents of the paper. Our presentation here
deviates from the order of the paper: We start by  introducing the applications to
image processing and then outline the contents of the preceding\footnote{%
Here we follow Gabriel Garc\'ia M\'{a}rquez's novella \textit{Chronicle of a
Death Foretold, }(G. Rabassa, Trans.), 2003 Vintage Books, that starts with
a murder and the plot is its reconstruction.} sections 
 with  the supporting
foundational theoretical material, as well as many applications that
illustrate the scope of our theory.

\subsection{Image processing: the inverse regularity problem}\label{Section1.2}

As a preview, and somewhat informally (we refer to Section \ref{Section9.2}
for formal definitions and notation), we present an application of our
theory to the inverse regularity problem for the celebrated Rudin--Osher--Fatemi (ROF) model \cite{ROF}  in image denoising. Recall that, for a noisy image $f \in L^2(\T^2)$, the ROF model relies on the functional  
\begin{equation}\label{ROFIntro}
E(u):=\frac{1}{2}\Vert f-u\Vert_{L^{2}}^{2}+\text{TV}(u),
\end{equation}
where $\text{TV}$ refers to the (anisotropic) total variation (cf. \eqref{PF18.2}). For each $n \in \N$, let $E_n$ be the approximation of  $E$ associated to a suitable discretization of $\text{TV}$  defined on the space of piecewise constant functions on an $n \times n$ grid  (cf.  (\ref{PF18.6})). 
Let $\bar{u}$ and $\bar{u}_n$ denote the unique minimizers of $E$ and $E_n$ with corresponding values 
\begin{equation*}
 \bar{E}:= E(\bar{u}), \qquad \bar{E}_n := E_n(\bar{u}_n).
 \end{equation*}
Then   we show that  optimal rates of convergence of both $\bar{E}_n \to \bar{E}$ and $\bar{u}
_{n}\rightarrow\bar{u}$  are controlled by a variational $K$-functional of the
observed data $f$, which in turn can be estimated  by a classical $K$-functional (cf. \eqref{eq:classical_K_intro} below). To be more precise, we establish the following result. 

\begin{theorem}\label{Thm1Intro}
	Assume that\footnote{%
The assumption $p\leq2$ is quite natural. Otherwise, in view of the
classical embedding $\dot{W}^{1,p}\subset C$ if $p>2$, we would impose continuity on the observed image, which is not very interesting.} $p\in(1,2]$ and $\gamma = 2-\frac{2}{p}$. Then there exists a constant $C > 0$, which is independent of $f$, such that 
\begin{equation}
|\bar{E}-\bar{E}_n|^{\frac{1}{2}} + \Vert\bar{u}-\bar{u}_{n}\Vert_{L^{2}}\leq C K(n^{-\gamma},f;L^{2},\dot {W}%
^{1,p})  \label{enlaprimera}
\end{equation}
for every $n \in \mathbb{N}$. In addition, if\footnote{In light of \cite[Proposition 1 and Remark 3]{BIM},  the assumption $\bar{u} \in L^\infty$ is automatically satisfied for a general datum $f \in L^2$. However, it is not possible to provide an estimate of type $\|\bar{u}\|_{L^\infty} \leq c \|f \|_{L^2}$.} $\bar{u} \in \dot{\text{BV}} \cap L^\infty$ then
\begin{equation}\label{29.9.1}
	\Vert\bar{u}-\bar{u}_{n}\Vert_{L^{2}}^2 \leq  C n^{-1/2} K(n^{-\gamma},f;L^{2},\dot {W}%
^{1,p}) (\|\bar{u}\|_{L^\infty} \emph{TV}(\bar{u}))^{1/2}.  
\end{equation}
\end{theorem}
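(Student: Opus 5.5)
The plan is to pass through the variational $K$-functional associated with the pair $(\frac12\Vert\cdot\Vert_{L^2}^2,\text{TV})$ versus $(\frac12\Vert\cdot\Vert_{L^2}^2,\text{TV}_n)$. By definition, $\bar E = \inf_u\{\frac12\Vert f-u\Vert_{L^2}^2+\text{TV}(u)\}$ and $\bar E_n$ is the analogous infimum over grid functions with $\text{TV}_n$. We compare both with the same energy evaluated at a smooth competitor $g\in \dot W^{1,p}$. The key discretization estimate to establish first is a consistency bound of the form
\[
|\text{TV}(g)-\text{TV}_n(P_n g)| \lesssim n^{-\gamma}\Vert g\Vert_{\dot W^{1,p}},\qquad \Vert g-P_ng\Vert_{L^2}\lesssim n^{-\gamma}\Vert g\Vert_{\dot W^{1,p}}.
\]
Here $P_n$ is the averaging projection onto the $n\times n$ grid. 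The exponent $\gamma=2-\frac2p$ comes from Sobolev/Poincar\'e on cells of side $n^{-1}$ in dimension $2$: the $L^2$ oscillation on a cell scales like $n^{-1+2/p-1}\cdot$, and after summing cells one gets $n^{-(1-2/p+1)}$. One also uses that $\text{TV}_n(P_n g)\le \text{TV}(g)$ (averaging does not increase anisotropic variation) together with a reverse inequality for the piecewise-constant extension.

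Second, I use the strong convexity of both energies, that is, the $L^2$ term with modulus $\frac12$. This gives $\frac12\Vert u-\bar u\Vert^2\le E(u)-\bar E$, and similarly for $E_n$. Therefore it suffices to bound $|\bar E-\bar E_n|$ by $C\,K(n^{-\gamma},f)^2$ and then to control the cross terms. Decompose $f=(f-g)+g$ with $g$ nearly optimal for $K(t,f;L^2,\dot W^{1,p})$, $t=n^{-\gamma}$. The minimizers are $1$-Lipschitz in $f$ in $L^2$ (nonexpansiveness of the proximal map of a convex functional), both for $E$ and for $E_n$ composed with $P_n$. Hence
\[
\Vert\bar u(f)-\bar u_n(f)\Vert\le 2\Vert f-g\Vert+\Vert \bar u(g)-\bar u_n(g)\Vert .
\]
For the smooth datum $g$, the minimizer $\bar u(g)$ inherits $\dot W^{1,p}$ regularity; for the anisotropic ROF on the torus, $\Vert\nabla\bar u(g)\Vert_p\le\Vert\nabla g\Vert_p$ by translation-invariance and convexity. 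Testing $E_n$ with $P_n\bar u(g)$ and $E$ with the extension of $\bar u_n(g)$, and using the consistency bounds, gives $|\bar E(g)-\bar E_n(g)|\lesssim t\Vert g\Vert_{\dot W^{1,p}}$. Strong convexity then yields $\Vert \bar u(g)-\bar u_n(g)\Vert^2\lesssim t\Vert g\Vert_{\dot W^{1,p}}$. This is only a square-root rate, so I would instead redo the comparison at the level of the variational $K$-functional, $\inf_g\{\Vert f-g\Vert+t\Vert g\Vert\}$, where the energy difference is linear in $t\Vert g\Vert$ and the $L^2$ error enters squared. Homogeneity (replacing $\text{TV}$ by $\lambda\text{TV}$ and rescaling) upgrades the mixed bound to $(\Vert f-g\Vert+t\Vert g\Vert)^2$. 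This gives \eqref{enlaprimera}, and getting the squared/homogeneous form is the main obstacle I expect. The energy part uses $|\bar E(f)-\bar E(g)|\lesssim \Vert f-g\Vert(\Vert f\Vert+\Vert g\Vert)$, and this needs the same homogeneity trick.

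For \eqref{29.9.1}, I would use the Euler--Lagrange/dual characterization instead: $f-\bar u\in\partial\text{TV}(\bar u)$ and $f-\bar u_n\in\partial\text{TV}_n(\bar u_n)$. Subtracting and pairing with $\bar u_n-P_n\bar u$ gives
\[
\Vert \bar u-\bar u_n\Vert^2\lesssim \langle f-\bar u_n,\ \bar u-P_n\bar u\rangle+(\text{consistency defect}).
\]
Here $\Vert\bar u-P_n\bar u\Vert_{L^2}^2\le\Vert\bar u-P_n\bar u\Vert_{L^1}\,2\Vert\bar u\Vert_\infty\lesssim n^{-1}\text{TV}(\bar u)\Vert\bar u\Vert_\infty$. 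This produces the factor $n^{-1/2}(\Vert\bar u\Vert_\infty\text{TV}(\bar u))^{1/2}$, and it remains to bound $\Vert f-\bar u_n\Vert$, more precisely its relevant part, by $CK(n^{-\gamma},f)$ via the first part.
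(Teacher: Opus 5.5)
Your argument for \eqref{enlaprimera} has a genuine gap. The consistency bound $|\text{TV}(g)-\text{TV}_n(P_ng)|\lesssim n^{-\gamma}\Vert g\Vert_{\dot W^{1,p}}$ is false. Take $g(x)=(2\pi n)^{-1}\sin(2\pi n x_1)$: every cell average vanishes, so $\text{TV}_n(P_ng)=0$, while $\text{TV}(g)=2/\pi$ and $\Vert\nabla g\Vert_{L^p}$ is bounded independently of $n$. No rate for a first-order quantity can come from first-order control of $g$. Even granting an energy error linear in $t\Vert g\Vert_{\dot W^{1,p}}$, strong convexity only gives the square-root rate you noticed. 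The ``homogeneity trick'' meant to square it is not available, because $E_{cf}(cu)=c^{2}\big(\tfrac12\Vert f-u\Vert_{L^2}^2+c^{-1}\text{TV}(u)\big)$. Rescaling changes the regularization weight, so no bound of the form $(\Vert f-g\Vert_{L^2}+t\Vert g\Vert_{\dot W^{1,p}})^2$ follows. For the same reason, your splitting $|\bar E(f)-\bar E(g)|\lesssim\Vert f-g\Vert(\Vert f\Vert+\Vert g\Vert)$ is only linear in $\Vert f-g\Vert$ and cannot give $|\bar E-\bar E_n|^{1/2}\lesssim K$.

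The missing idea is that $\text{TV}$ and $\text{TV}_n$ never need to be compared on smooth functions: the whole error is the data-projection error $\Vert f-P_nf\Vert_{L^2}$. Since $\text{TV}(P_nu)\le\text{TV}(u)$ and $P_n$ is an orthogonal projection, $E_{P_nf}(P_nu)\le E_{P_nf}(u)$ for all $u$. By uniqueness the ROF minimizer $T(P_nf)$ lies in $X_n$, and by orthogonality $\bar u_n=T(P_nf)$. Nonexpansiveness of $T$ then gives $\Vert\bar u-\bar u_n\Vert_{L^2}\le\Vert f-P_nf\Vert_{L^2}$. This also closes your splitting, since $\Vert\bar u(g)-\bar u_n(g)\Vert\le\Vert g-P_ng\Vert$, and no $\dot W^{1,p}$ regularity of $\bar u(g)$ is needed. (The paper reaches the same bound via strong convexity and $\Vert\bar u-P_n\bar u\Vert_{L^2}\le\Vert f-P_nf\Vert_{L^2}$.) For the energy, $\bar E\le\bar E_n$ is trivial. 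Testing with $P_n\bar u\in X_n$ and using $\Vert f-P_n\bar u\Vert^2=\Vert f-P_nf\Vert^2+\Vert P_n(f-\bar u)\Vert^2$ gives $\bar E_n-\bar E\le\frac12\Vert f-P_nf\Vert_{L^2}^2$, which is already quadratic. Your correct Poincar\'e bound $\Vert g-P_ng\Vert_{L^2}\lesssim n^{-\gamma}\Vert\nabla g\Vert_{L^p}$, together with $\Vert f-P_nf\Vert\le2\Vert f-g\Vert+\Vert g-P_ng\Vert$, yields $\Vert f-P_nf\Vert_{L^2}\lesssim K(n^{-\gamma},f;L^2,\dot W^{1,p})$ and hence \eqref{enlaprimera}.

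The same fact repairs your sketch of \eqref{29.9.1}. As written, $f-\bar u_n\in\partial\text{TV}_n(\bar u_n)$ is incorrect: subgradients of $\text{TV}_n=\text{TV}\circ P_n$ lie in $X_n$. Since $\bar u_n$ minimizes $\frac12\Vert P_nf-v\Vert^2+\text{TV}_n(v)$ over all of $L^2$, the correct condition is $P_nf-\bar u_n\in\partial\text{TV}_n(\bar u_n)$. Because $\text{TV}_n\le\text{TV}$ with equality on $X_n$, one has $\partial\text{TV}_n(\bar u_n)\subseteq\partial\text{TV}(\bar u_n)$, so your ``consistency defect'' vanishes. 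Monotonicity plus orthogonality then give exactly $\Vert\bar u-\bar u_n\Vert_{L^2}^2\le\langle f-P_nf,\bar u-P_n\bar u\rangle$. The factor to control is therefore $\Vert f-P_nf\Vert_{L^2}$, not $\Vert f-\bar u_n\Vert_{L^2}$, which is not small. Your estimate $\Vert\bar u-P_n\bar u\Vert_{L^2}^2\lesssim n^{-1}\Vert\bar u\Vert_{L^\infty}\text{TV}(\bar u)$ is correct and finishes the argument.
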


  In particular, Theorem \ref{Thm1Intro} allows us to solve the following basic question: Given any  $\alpha\in(0,1),$
what is the required regularity of the observed image $f$ that guarantees the prescribed rate of convergence
\begin{equation}\label{31.8.1}
\Vert\bar{u}-\bar{u}_{n}\Vert_{L^{2}} = \mathcal{O} (n^{-\alpha})? 
\end{equation}
In view of (\ref{enlaprimera}) and the definition of the real interpolation
spaces for $\theta\in(0,1)$ (cf. \eqref{Deflim} below) 
\begin{equation*}
f\in(L^{2},\dot{W}^{1,p})_{\theta,\infty}\iff  K(t,f;L^{2},%
\dot{W}^{1,p}) = \mathcal{O} ( t^{\theta}), \qquad \forall t > 0, 
\end{equation*}
we see that the desired rate of approximation of the minimizers $%
\mathcal{O}(n^{-\alpha })$ with $\alpha\in(0,\gamma)$ is achieved provided that %
\begin{equation}
f\in(L^{2},\dot{W}^{1,p})_{\frac{\alpha}{\gamma},\infty}.   \label{lasegunda}
\end{equation}
Hence the above question can be naturally reduced to computing the interpolation spaces in \eqref{lasegunda}. In the Hilbert setting $p=2$ (and so $\gamma=1$), this can be done in a simple way: Using the well-known fact that 
\begin{equation}\label{2.9.2}
	K(t, f; L^2, \dot{W}^{1, 2}) \approx \omega(f, t)_2,  \qquad \forall t > 0, 
\end{equation}
where\footnote{Given two nonnegative quantities $A$ and $B$, the standard notation $A \lesssim B$ means that there exists a constant $c$, independent of all essential parameters, such that $A \leq c B$. We write $A \approx B$ if $A \lesssim B \lesssim A$.}   $\omega(f, t)_2$ denotes  the classical modulus of smoothness of $f \in L^2$ (cf. \eqref{15.9.1}), then 
\begin{equation}\label{2.9.4}
(L^{2},\dot {W}%
^{1,2})_{\alpha,\infty}=\dot{B}_{2,\infty}^{\alpha} \qquad \text{(Besov spaces)}.
\end{equation}
As a combination of \eqref{enlaprimera} and \eqref{2.9.2}, we obtain 
\begin{equation*}
	\Vert\bar{u}-\bar{u}_{n}\Vert_{L^{2}}  \lesssim \omega(f, n^{-1})_2
\end{equation*}
for every $n \in \mathbb{N}$ and $f \in L^2$. In particular, if $f \in \dot{B}^\alpha_{2, \infty}$ then  (cf. Theorem \ref{Coro8.32})
\begin{equation}\label{27.9.4}
	\Vert\bar{u}-\bar{u}_{n}\Vert_{L^{2}}  = \begin{cases}
	
	 \mathcal{O} (n^{-\alpha})& \text{if} \qquad \alpha \in [\frac{1}{2}, 1), \\
	 \mathcal{O}(n^{-\frac{2 \alpha +1}{4}}) & \text{if} \qquad \alpha \in (0, \frac{1}{2}), 
	 \end{cases}
\end{equation}
 and if $f \in L^\infty \cap \dot{BV}$ then 
\begin{equation}\label{7.9.2}
	\Vert\bar{u}-\bar{u}_{n}\Vert_{L^{2}}  \lesssim n^{-1/2} \|f\|_{L^\infty}^{1/2} \text{TV}(f)^{1/2};
\end{equation}
 cf. Corollary \ref{Thm191.} for a sharper version in terms of fractional Sobolev spaces. Furthermore, the decay expressed in \eqref{27.9.4} is optimal for every $\alpha \in (0, 1)$.  We mention that the optimal decay $\mathcal{O}(n^{-1/2})$ for $f \in L^\infty$ given by \eqref{7.9.2}  has been recently achieved by Chambolle--Pock \cite{CP} and Caillaud--Chambolle  \cite{CC} in the more delicate isotropic setting and under an additional technical assumption on existence of Lipschitz dual fields associated to \eqref{ROFIntro}; cf. also \cite{LLW, WL} for first contributions along this line of research. Concerning the anisotropic setting  based on approximation by continuous piecewise bilinear functions,  the estimate \eqref{7.9.2} was obtained in \cite{BNS}. Working with the anisotropic functionals $E_n$ in Theorem \ref{Thm1Intro}, a very recent contribution by Kirisits and Setterqvist \cite{KSe} gives the rate of convergence $\|\bar{u}-\bar{u}_n\|_{L^2} = \mathcal{O}(n^{- \frac{q-2}{2(q-1)} })$ under  $f \in L^q, q \geq 2$. 

However, the above approach does not work anymore outside the Hilbert setting (i.e., working with $(L^2, \dot{W}^{1, p})$ with $p \neq 2$) due to the lack of a suitable  analog of \eqref{2.9.2}. Despite this obstruction, as already pointed out,    the decay of $K(t, f; L^2, \dot{W}^{1, p})$ as $t \to 0$ may be captured by the interpolation norm associated to  \eqref{lasegunda}. Then the key ingredient is a recent observation obtained in  \cite{DSSVSY}, where the authors found a characterization of the desired interpolation spaces in terms of  the new 
scale of function spaces $\dot{B}_{q}^{\beta}(\gamma,\infty)$ introduced  by Brezis, Van Schaftingen, and Yung \cite{BVY, BVY2}  via  weak-type variants of classical Gagliardo (semi)norms; cf. \eqref{BVSY} below for the precise definition. Armed now with such a tool, we are able to give an  answer to the inverse regularity problem in image processing and show that  Brezis--Van Schaftingen--Yung spaces provide a natural framework to obtain error bounds for discretization of the total variation in the ROF denoising model. 

\begin{theorem}\label{Thm2Intro} Assume that $p \in(1, 2)$ and $\alpha\in\big(0, 2-\frac{2}{p%
} \big)$. Let 
\begin{equation*}
\beta= \frac{\alpha}{2-\frac{2}{p}}, \qquad\frac{1}{q} = \frac{1-\beta}{2} + 
\frac{\beta}{p}, \qquad\frac{1}{\gamma} = \frac{1}{2}-\frac{1}{p}. 
\end{equation*}
If $f \in\dot{B}^{\beta}_{q}(\gamma, \infty)$ then 
\begin{equation}  \label{25.3Nov}
\|\bar{u}-\bar{u}_{n}\|_{L^{2}} \lesssim \begin{cases} n^{-\alpha} \|f\|_{\dot{B}^{\beta}_{q}(\gamma, \infty)} & \qquad \text{if}  \qquad \alpha \geq \frac{1}{2}, \\
n^{-\frac{2 \alpha +1}{4}}   \|f\|_{\dot{B}^{\beta}_{q}(\gamma, \infty)}^{1/2} (\|\bar{u}\|_{L^\infty} \emph{TV}(\bar{u}))^{1/4} & \qquad \text{if} \qquad \alpha < \frac{1}{2}, 
\end{cases}
\end{equation}
for all $n \in \mathbb{N}.$ In particular, if $f \in\dot{W}^{\beta, q}$ then \eqref{25.3Nov} holds.
\end{theorem}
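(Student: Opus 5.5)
We will derive Theorem \ref{Thm2Intro} from Theorem \ref{Thm1Intro}, combined with the characterization of real interpolation spaces from \cite{DSSVSY}. Set $\gamma_0 := 2-\frac{2}{p}$; in the statement the letter $\gamma$ is reused for the Lorentz-type index, so we avoid it here. Then $\beta=\alpha/\gamma_0\in(0,1)$.

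\textbf{Step 1 (the interpolation embedding).} The key input is the result of \cite{DSSVSY}. It identifies, or at least embeds, the Brezis--Van Schaftingen--Yung space into the real interpolation space:
\begin{equation*}
\dot{B}^{\beta}_{q}(\gamma,\infty)\hookrightarrow (L^{2},\dot{W}^{1,p})_{\beta,\infty},\qquad \frac{1}{q}=\frac{1-\beta}{2}+\frac{\beta}{p},\quad \frac{1}{\gamma}=\frac{1}{2}-\frac{1}{p}.
\end{equation*}
Note that $\frac{1}{\gamma}$ is negative here, consistent with the conventions of \cite{BVY}. With the norm of the right-hand space given by \eqref{Deflim}, this yields
\begin{equation*}
K(t,f;L^{2},\dot{W}^{1,p})\lesssim t^{\beta}\,\|f\|_{\dot{B}^{\beta}_{q}(\gamma,\infty)},\qquad t>0.
\end{equation*}
We would check that the exponents match the form used in \cite{DSSVSY}, namely the parametrization $(L^{p_0},\dot W^{1,p_1})_{\theta,\infty}$ with $\frac1q=\frac{1-\theta}{p_0}+\frac{\theta}{p_1}$. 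This matching is the main obstacle: if only the ``$\supset$'' inclusion is available in the needed generality, that is precisely the direction we require.

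\textbf{Step 2 (case $\alpha\ge \frac12$).} Put $t=n^{-\gamma_0}$ in \eqref{enlaprimera}. Since $t^\beta=n^{-\gamma_0\beta}=n^{-\alpha}$, we obtain
\begin{equation*}
\|\bar u-\bar u_n\|_{L^2}\lesssim n^{-\alpha}\|f\|_{\dot{B}^{\beta}_{q}(\gamma,\infty)}.
\end{equation*}
This bound in fact holds for every $\alpha$, but it is the stated one only for $\alpha\ge\frac12$.

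\textbf{Step 3 (case $\alpha<\frac12$).} Use \eqref{29.9.1} instead. If $\bar u\notin \dot{\text{BV}}\cap L^\infty$, the right-hand side of \eqref{25.3Nov} is infinite and there is nothing to prove. Otherwise, inserting Step 1 gives
\begin{equation*}
\|\bar u-\bar u_n\|_{L^2}^2\lesssim n^{-1/2}\,n^{-\alpha}\,\|f\|_{\dot{B}^{\beta}_{q}(\gamma,\infty)}\,\bigl(\|\bar u\|_{L^\infty}\text{TV}(\bar u)\bigr)^{1/2}.
\end{equation*}
Taking square roots produces the rate $n^{-\frac{2\alpha+1}{4}}$ with the stated powers $\frac12$ and $\frac14$.

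\textbf{Step 4 (the Sobolev case).} For the final claim we use the embedding $\dot{W}^{\beta,q}=\dot B^{\beta}_{q,q}\hookrightarrow \dot{B}^{\beta}_{q}(\gamma,\infty)$. This holds because, for $\gamma<0$ in the BVY scale, the weak-type Gagliardo quantity is dominated by the strong one; this is an easy Chebyshev/Marcinkiewicz-type estimate from the definition \eqref{BVSY}. Alternatively, one can use the classical embedding $\dot W^{\beta,q}\hookrightarrow (L^2,\dot W^{1,p})_{\beta,\infty}$ directly, via reiteration from Sobolev/Besov interpolation.
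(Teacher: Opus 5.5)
Your proposal is correct and follows the paper's argument essentially step by step. The interpolation identity $(L^{2},\dot W^{1,p})_{\beta,\infty}=\dot B^{\beta}_{q}(\gamma,\infty)$ from \cite[Corollary 1.15]{DSSVSY} (of which, as you note, only the embedding into the interpolation space is needed) gives $K(n^{-2+\frac{2}{p}},f;L^2,\dot W^{1,p})\lesssim n^{-\alpha}\|f\|_{\dot B^{\beta}_{q}(\gamma,\infty)}$; this is inserted into \eqref{enlaprimera} for $\alpha\geq\frac12$ and into \eqref{29.9.1} for $\alpha<\frac12$, and the Sobolev case follows from the Chebyshev embedding $L^{q}(\nu_\gamma)\hookrightarrow L^{q,\infty}(\nu_\gamma)$. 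The one point you leave implicit, which the paper states, is that $\dot B^{\beta}_{q}(\gamma,\infty)\subset L^{2}+\dot W^{1,p}\subset L^{2}$ by Sobolev embedding on $\mathbb{T}^2$; this guarantees $f\in L^2$, so that $\bar u$, $\bar u_n$ and Theorem \ref{Thm1Intro} make sense.
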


\begin{remark}
Since the appearance of the works \cite{BVY, BVY2, BSVSY}, the spaces $\dot{B}^\beta_q(\gamma, \infty)$ have attracted  a great deal of attention due to several important reasons; cf. the survey paper \cite{BSVSY2}.  
We should also mention   the earlier work of Bourgain and Nguyen  \cite{BoN, N2}, where a  distinguished  case of parameters was already considered. In particular,  it was shown as a combination of  \cite{BoN, N2, BVY, GY, BSVSY, BSVSY2} that these spaces provide     derivative-free characterizations of standard Sobolev (semi)norms, more precisely, for $\gamma \neq 0$, 
\begin{equation}\label{2.9.3}
\dot{B}^{\beta}_{q}(\gamma,\infty) (\R^N)
=
\begin{cases}
\dot{W}^{1,q}(\R^N) & \qquad  \text{if} \qquad  \beta=1, \qquad q \in (1, \infty), \\
\dot{BV}(\R^N) & \qquad \text{if} \qquad \beta = q = 1, \qquad \gamma \in \R \backslash [-1, 0], \\
L^q(\R^N) &\qquad  \text{if} \qquad  \beta=0, \qquad q \in [1, \infty). 
\end{cases}
\end{equation}
Note that, for the purposes of this work (cf. Theorem \ref{Thm2Intro}), the most interesting case concerns the fractional regime $\beta \in (0, 1)$. In sharp contrast to the  case $\beta = 0, 1$ exhibited in \eqref{2.9.3}, the spaces $\dot{B}^{\beta}_{q}(\gamma,\infty), \beta \in (0, 1),$ are not part of  classical scales of function spaces  such as fractional Sobolev spaces, Besov spaces or Triebel--Lizorkin spaces, but they constitute an independent scale; cf. \cite{DSSVSY}. 
\end{remark}

\begin{remark}
 The borderline case $p=2$ in Theorem \ref{Thm2Intro}  refers (at least,
formally) to  the Besov setting $f \in \dot{B}^\alpha_{2, \infty}$, cf. \eqref{2.9.4}.  
\end{remark}

Next we explain the different components of our method and indicate where to
find the results in the paper.

\subsection{Gagliardo completions and extrapolation}

The notion of Gagliardo relative completions was introduced by Gagliardo 
\cite{GG} in the context of his foundational work on interpolation theory.
While the original definition involves the convergence of sequences with
specific properties, over time an equivalent definition, using $K$%
-functionals, became more popular and is now prevalent in the literature.
Curiously, it is precisely the interplay between these two approaches that
was decisive in our research.

Given a compatible\footnote{Loosely speaking, compatible means that $A_0 + A_1$ makes sense.} pair of  normed\footnote{We shall also meet cases where $A_1$ is only equipped with a semi-norm.}  spaces $(A_{0},A_{1}),$ the \emph{$K$-functional}
is defined by a penalty method that plays one norm against the other. For $%
f\in A_{0}+A_{1}$ and $t>0,$ we let%
\begin{equation}
K(t,f;A_{0},A_{1}):=\inf\{\left\Vert f_{0}\right\Vert _{A_{0}}+t\left\Vert
f_{1}\right\Vert _{A_{1}}:f=f_{0}+f_{1},f_{i}\in A_{i},i=0,1\}. 
\label{eq:classical_K_intro}
\end{equation}
The \emph{Gagliardo (relative) completions} are defined by the conditions (cf.  
\cite[p. 295]{BS} and \cite[p. 296]{BK}): For $f \in A_0 + A_1$ and $i = 0, 1$, 
\begin{equation}
\left\Vert f\right\Vert _{G_{i}(A_{0},A_{1})}:=\sup_{t>0}\frac{%
K(t,f;A_{0},A_{1})}{t^{i}}.   \label{sec:ante}
\end{equation}
Note that, by monotonicity properties of $K$-functionals,  
\begin{equation}\label{3.9.1}
	\|f\|_{G_0(A_0, A_1)} = \lim_{t \to \infty} K(t, f; A_0, A_1) \qquad \text{and} \qquad \|f\|_{G_1(A_0, A_1)} = \lim_{t \to 0} \frac{K(t, f; A_0, A_1)}{t}. 
\end{equation}
Let us denote by  $\|
\cdot \|_{(A_{0},A_{1})_{\theta,p}^{\blacktriangleleft}}$  the (normalized) interpolation norms with $\theta \in (0, 1)$ and $p \in [1, \infty]$; cf. \eqref{2.9.5}.  

The connection of Gagliardo's relative completions to extrapolation theory
can be seen from the  pointwise limits obtained in \cite{M-Lim, KMX}   (more precisely, their slight  refinement given in Proposition \ref{prop:lim}). Namely, if $f \in A_{i^\ast}$ with $i^\ast = 1-i, i=0, 1$, then 
\begin{equation}
\lim_{\theta\rightarrow i}\Vert
f\Vert_{(A_{0},A_{1})_{\theta,p}^{\blacktriangleleft}}=\Vert
f\Vert_{G_{i}(A_{0},A_{1})}. 
\label{gextra}
\end{equation}
In particular, applying \eqref{gextra} to the specific pair $(L^{p}(\R^N),\dot{W}^{1, p}(\R^N)),$ $p\in (1, \infty)$, leads to new approaches to asymptotic versions of 
the celebrated  ``BBM-formula" by Bourgain--Brezis--Mironescu \cite{BBM2} and ``MS-formula" by  Maz'ya--Shaposhnikova \cite{MS}
\begin{equation}\label{14.9.4}
\lim_{s\rightarrow1^-}(1-s)^{1/p}\left\Vert f\right\Vert _{\dot{W}^{s,p}(\mathbb{R}%
^{N})}=C_{N,p}\left\Vert \nabla f\right\Vert _{L^{p}(\mathbb{R}^{N})},
\end{equation}%
\begin{equation*}
\lim_{s\rightarrow0^+}s^{1/p}\left\Vert f\right\Vert _{\dot{W}^{s,p}(\mathbb{R}%
^{N})}=c_{N,p}\left\Vert f\right\Vert _{L^{p}(\mathbb{R}^{N})}, 
\end{equation*}
respectively. The limit case $p=1$ also holds in the above formulae provided that $\|\nabla f\|_{L^p(\R^N)}$ in \eqref{14.9.4} is replaced by $\|f\|_{\dot{BV}(\R^N)}$, cf. \cite{Dav}. 

The $\Gamma$-convergence version of the BBM  formula was obtained by Ponce \cite{P} but, prior to our work, $\Gamma$-convergence has not
been studied using extrapolation methods.

We were led to the notion of variational $K$-functional when we discovered a
special case of the connection between De Giorgi's theory and Gagliardo's
relative completions.

\subsection{A new key idea: the connection De Giorgi--Gagliardo}

De Giorgi's $\Gamma$-convergence \cite{DG} deals with limits of functionals
defined on a normed\footnote{$\Gamma$-convergence can be formulated in the more general setting of metric spaces. However, for the purposes of this paper, it is enough to focus on normed spaces.} space $X$. Given a sequence  $%
\{F_{n}\}_{n\in \mathbb{N}}$ of functionals  $F_{n}:X\longrightarrow [0, +\infty],$
its  lower and upper $\Gamma$-limits are defined  
for each $f\in X$ by 
\begin{equation}
\Gamma_X-\liminf_{n\rightarrow\infty}F_{n}(f):=\inf\Big \{\liminf_{n\rightarrow
\infty}F_{n}(f_{n}):\{f_{n}\}_{n\in\mathbb{N}}\subset X\text{ with }%
\lim_{n\rightarrow\infty}\left\Vert f-f_n\right\Vert _{X}=0\Big\}, 
\label{intro1}
\end{equation}%
\begin{equation}
\Gamma_X-\limsup_{n\rightarrow\infty}F_{n}(f):=\inf\Big\{\limsup_{n\rightarrow
\infty}F_{n}(f_{n}):\{f_{n}\}_{n\in\mathbb{N}}\subset X\text{ with }%
\lim_{n\rightarrow\infty}\left\Vert f-f_n\right\Vert _{X}=0\Big\}. 
\label{intro2}
\end{equation}
Then $\{F_{n}\}_{n\in \mathbb{N}}$ is $\Gamma$-convergent if these two
limits are equal for all $f\in X,$ in which case we let $F(f)$ be the
functional that computes their common value at $f$ and write $\Gamma_X-\lim_{n \to \infty} F_{n} = F.$ For a detailed study of $\Gamma$-convergence, we refer to the monographs \cite{Bra, DALM}. 

On the other hand, let us recall Gagliardo's original definition of relative completions,
framed in terms of convergence of suitable sequences (cf. \cite[Lemma 1]{C})
\begin{equation}\label{2.9.7}
\left\Vert f\right\Vert _{G_{i}(A_{0},A_{1})}=\inf\Big\{\sup_{n%
\in \mathbb{N}}\left\Vert f_{n}\right\Vert _{A_{i}}:\{f_{n}\}_{n \in \mathbb{N}}\subset
A_{i}\text{ with } \lim_{n \to \infty} \left\Vert f-f_n\right\Vert _{A_{i^\ast}} = 0%
\Big\} 
\end{equation}
for $f \in A_0 + A_1$ and $i=0, 1$. Now, it is an easy exercise to check that
\begin{align}
\left\Vert f\right\Vert _{G_{i}(A_{0},A_{1})} & =\inf\Big\{\limsup_{n%
\to \infty}\left\Vert f_{n}\right\Vert _{A_{i}}:\{f_{n}\}_{n \in \mathbb{N}}\subset
A_{i}\text{ with } \lim_{n \to \infty} \left\Vert f-f_n\right\Vert _{A_{i^\ast}} = 0%
\Big\} \nonumber \\
& =  \inf\Big\{\liminf_{n%
\to \infty}\left\Vert f_{n}\right\Vert _{A_{i}}:\{f_{n}\}_{n \in \mathbb{N}}\subset
A_{i}\text{ with } \lim_{n \to \infty} \left\Vert f-f_n\right\Vert _{A_{i^\ast}} = 0%
\Big\}.    \label{intro3}
\end{align}
Although innocent-looking, the reformulation of the original definition  \eqref{2.9.7} given by \eqref{intro3} enables a direct comparison with \eqref{intro2}. However, there is still a technical point to be clarified, namely, there are three different norms (associated to $A_0, A_1$ and $A_0 + A_1$) playing simultaneously  in \eqref{intro3}, unlike in \eqref{intro1} where only one norm (associated to $X$) is involved.  This issue can be circumvented as follows: Let $\left\{
F_{n}^{i}\right\} _{n\in \mathbb{N}}$ be the (constant) sequence of 
functionals defined on $A_{i^\ast}$ by 
\begin{equation*}
F_{n}^{i}(f)=%
\begin{cases}
\left\Vert f\right\Vert _{A_{i}}, & f\in A_{i} \cap A_{i^\ast}, \\ 
+\infty, & f \in A_{i^\ast} \backslash A_i,%
\end{cases}   
\end{equation*}
for every $n \in \mathbb{N}$.    It can be easily seen  from (\ref{intro1}), \eqref{intro2} and (\ref{intro3}) that%
\begin{equation}
\Gamma_{A_{i^\ast}}-\lim_{n\rightarrow\infty}F_{n}^{i}(f)=\left\Vert f\right\Vert
_{G_{i}(A_{0},A_{1})},\qquad f\in A_{i^\ast}.   \label{aextender}
\end{equation}
In particular, this result  tells us that  the
topology of Gagliardo completions fits into the variational setting of $\Gamma$-convergence. Moreover, this  suggests that there could be a deeper structural link between
Gagliardo's extrapolation and variational limits.

\begin{problem}
\label{core_problem} Is it possible in general to characterize De Giorgi $%
\Gamma$-convergence through a parameterized penalty method? Specifically,
does there exist a dynamic variant of the $K$-functional capable of
recovering $\Gamma$-limits while simultaneously providing a mechanism to
extract explicit rates of convergence for minimizers?
\end{problem}

The classical $K$-functional is too rigid to resolve  Problem \ref%
{core_problem}; our solution instead relies on letting the penalty evolve
dynamically.

\subsection{$\Gamma$-convergence as Gagliardo extrapolation  of variational $K$-functionals}

Concerning the derivation of (\ref{aextender}) note that the sequence of
functionals, whose $\Gamma$-limit we seek, is part of the $K$-functional
that we control. This informs our constructions. The primary tool introduced
in this paper is a  family of $K$-functionals constructed by
replacing one of the two endpoint spaces of the classical definition (cf. \eqref{eq:classical_K_intro}) by a
sequence of  functionals. Given a sequence  $%
\{F_{n}\}_{n\in \mathbb{N}}$ of functionals  $F_n : X \longrightarrow [0, + \infty],$ for each  $t > 0$ and $f\in X,$ we let 
\begin{equation}
K(t,f;X,F_{n}):=\inf_{g\in X}\{\left\Vert f-g\right\Vert _{X}+tF_{n}(g)\}. 
\label{varK_intro}
\end{equation}
This functional acts as a dynamic extension of the classical $K$-functional,
replacing fixed target norms with an evolving sequence of functional
penalties. Although the definition of variational $K$-functional given in \eqref{varK_intro} mimics in an obvious way the classical construction of $K$-functional, we stress that many fundamental properties of the latter strongly rely on norm properties and then the absence of norms makes the analysis of variational $K$-functionals more delicate.   The basic properties of variational $K$-functionals and their connection with $\Gamma$-convergence are
developed in Section \ref{section:GagliardoGamma}. In particular, we show that $\Gamma$-convergence can be understood as the Gagliardo completion of variational $K$-functionals, providing a satisfactory answer to   Problem \ref{core_problem}.

\begin{theorem}
\label{teoarriba}
Let $\{F_{n}\}_{n\in\mathbb{N}}$ be a sequence of
functionals on $X$. Then  for all $f\in X$,
\begin{align*}
\sup_{t>0}\liminf_{n\rightarrow\infty}\frac{K(t,f;X,F_{n})}{t} & \leq
\Gamma_X-\liminf_{n\rightarrow\infty}F_{n}(f) \\
& \hspace{-3cm}\leq\Gamma_X-\limsup_{n\rightarrow\infty}F_{n}(f)\leq
\inf_{\{t_n\}_{n \in \mathbb{N}}}\limsup_{n\rightarrow\infty} \frac{K(t_n,f;X,F_{n})}{t_n},
\end{align*}
where the infimum runs over all infinitesimals $\{t_n\}_{n \in \mathbb{N}} \subset (0, \infty)$.  
\end{theorem}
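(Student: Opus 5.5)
The middle inequality $\Gamma_X-\liminf \leq \Gamma_X-\limsup$ is immediate from the definitions \eqref{intro1}--\eqref{intro2}, since $\liminf \le \limsup$ for every admissible sequence. So the work is in the two outer inequalities, and I would prove each directly from the definition \eqref{varK_intro} of the variational $K$-functional. Both arguments are elementary.

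\emph{Lower bound.} Fix $t>0$ and a sequence $f_n \to f$ in $X$. We may assume $\liminf_n F_n(f_n)<\infty$, since otherwise there is nothing to prove. Testing \eqref{varK_intro} with $g=f_n$ gives
\begin{equation*}
K(t,f;X,F_n)\le \Vert f-f_n\Vert_X + tF_n(f_n).
\end{equation*}
Dividing by $t$ and taking $\liminf_{n\to\infty}$, the first term vanishes because $\Vert f-f_n\Vert_X\to 0$ with $t$ fixed. Hence
\begin{equation*}
\liminf_n \frac{K(t,f;X,F_n)}{t}\le \liminf_n F_n(f_n).
\end{equation*}
Taking the infimum over all sequences $f_n\to f$ yields $\Gamma_X-\liminf_n F_n(f)$ on the right. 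Then taking the supremum over $t>0$ on the left gives the first inequality.

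\emph{Upper bound.} Let $\{t_n\}$ be an infinitesimal with
\begin{equation*}
L:=\limsup_n \frac{K(t_n,f;X,F_n)}{t_n}<\infty;
\end{equation*}
otherwise there is nothing to prove. By definition of the infimum in \eqref{varK_intro}, choose near-minimizers $g_n\in X$ with
\begin{equation*}
\Vert f-g_n\Vert_X + t_nF_n(g_n)\le K(t_n,f;X,F_n)+t_n^2 .
\end{equation*}
Dividing by $t_n$ gives
\begin{equation*}
F_n(g_n)\le \frac{K(t_n,f;X,F_n)}{t_n}+t_n ,
\end{equation*}
so $\limsup_n F_n(g_n)\le L$. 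Moreover, since $F_n\ge 0$,
\begin{equation*}
\Vert f-g_n\Vert_X\le K(t_n,f;X,F_n)+t_n^2 = t_n\Big(\frac{K(t_n,f;X,F_n)}{t_n}+t_n\Big).
\end{equation*}
The bracket is eventually bounded because $L<\infty$, so the right-hand side tends to $0$ as $t_n\to0$. Thus $g_n\to f$ in $X$, and $\{g_n\}$ is an admissible recovery sequence in \eqref{intro2}. This gives $\Gamma_X-\limsup_n F_n(f)\le L$, and taking the infimum over all infinitesimals $\{t_n\}$ finishes the proof.

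\emph{Main obstacle.} The only delicate point is in the upper bound: showing that the near-minimizers actually converge to $f$, and choosing the error term $t_n^2$ so that it is negligible after division by $t_n$. Convergence requires exactly the finiteness of $L$ together with $t_n\to0$. A secondary technicality is that $K(t_n,f;X,F_n)$ might be $+\infty$ for finitely many $n$; this does not affect the $\limsup$, and for those indices $g_n$ can be chosen arbitrarily.
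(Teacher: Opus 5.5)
Your proof is correct and follows the paper's argument. The paper proves the more general Theorem~\ref{Thm3} for pairs $(F_n,G_n)$, using the same $t_n^2$-near-minimizing decomposition for the upper bound and the same test decomposition $f=(f-f_n)+f_n$ for the lower bound (with an $\varepsilon$-optimal sequence where you take the infimum over all admissible sequences directly), and then specializes to $G_n=\Vert\cdot\Vert_X$.
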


As a consequence, $\Gamma$-convergence corresponds to the extrapolation characterization  \eqref{3.9.1} of the Gagliardo completion $G_1$ when the role of classical $K$-functionals is replaced by variational $K$-functionals. Hence, when combined with \eqref{aextender}, Theorem \ref{teoarriba} provides a new convergence approach that encompasses both De Giorgi and Gagliardo theories.  Remarkably,  one of the main advantages of this point of view is that  $\Gamma$-convergence can now be put into the powerful  quantitative setting provided by $K$-functionals, cf. Section \ref{Section1.2}.

\subsection{Variational $K$-functionals and extrapolation of $\Gamma$-convergence}

We now illustrate how the variational $K$-functional can be applied to obtain $%
\Gamma$-convergence results. Here we shall consider in detail the
case when $\theta\rightarrow1^{-},$ the case $\theta\rightarrow0^{+}$ will
then follow by symmetry. Next we show the $\Gamma$-version of (\ref{gextra}) as a simple application of Theorem \ref{teoarriba}. 

\begin{theorem}\label{ThmExtrapolIntro}
Let $p \in [1, \infty]$. If $f \in A_0$ then\footnote{As usual, we adopt the convention that the formula \eqref{9.9.4} for $f \in A_0 \backslash G_1(A_0, A_1)$ reads as $\Gamma_{A_0}-\lim_{\theta\to 1^-} \left\Vert f\right\Vert _{(A_{0},A_{1})_{\theta
,p}^{\blacktriangleleft}} = + \infty$.} 
\begin{equation}\label{9.9.4}
\Gamma_{A_0}-\lim_{\theta\rightarrow1^{-}}\Vert
f\Vert_{(A_{0},A_{1})_{\theta,p}^{\blacktriangleleft}}=\left\Vert
f\right\Vert _{G_{1}(A_{0},A_{1})}. 
\end{equation}
\end{theorem}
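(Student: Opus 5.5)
We argue with Theorem \ref{teoarriba}, taking $X=A_0$ and, for a sequence $\theta_n\to1^-$, the functionals $F_n(g)=\Vert g\Vert_{(A_0,A_1)^{\blacktriangleleft}_{\theta_n,p}}$ (set to $+\infty$ if $g\notin(A_0,A_1)_{\theta_n,p}$). It suffices to prove that for every $f\in A_0$
\begin{equation*}
\inf_{\{t_n\}}\limsup_{n\to\infty}\frac{K(t_n,f;A_0,F_n)}{t_n}\le \Vert f\Vert_{G_1(A_0,A_1)}\le \sup_{t>0}\liminf_{n\to\infty}\frac{K(t,f;A_0,F_n)}{t},
\end{equation*}
since the three $\Gamma$-quantities of Theorem \ref{teoarriba} are then squeezed. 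The sequence $\theta_n\to1^-$ is arbitrary, so this gives the $\Gamma$-limit as $\theta\to1^-$.

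\textbf{Lower bound.} The first ingredient is an elementary comparison between the normalized interpolation norm and the $K$-functional. With the normalization in \eqref{2.9.5}, which I expect to make $\Vert h\Vert^{\blacktriangleleft}_{\theta,p}\ge K(s,h;A_0,A_1)s^{-\theta}$ for every $s>0$, one has (possibly up to a factor tending to $1$ as $\theta\to1$)
\begin{equation*}
\Vert g\Vert^{\blacktriangleleft}_{\theta,p}\ \ge\ \sup_{s>0}s^{-\theta}K(s,g;A_0,A_1).
\end{equation*}
Fix $s=t$. For any $g$, subadditivity of $K$ together with $K(t,f-g)\le\Vert f-g\Vert_{A_0}$ gives
\begin{equation*}
K(t,f)\le \Vert f-g\Vert_{A_0}+K(t,g)\le \Vert f-g\Vert_{A_0}+t^{\theta_n}F_n(g).
\end{equation*}
Hence $K(t^{\theta_n},f;A_0,F_n)\ge K(t,f;A_0,A_1)$. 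To obtain the variational $K$-functional at the parameter $t$ itself, write $K(t,f;A_0,F_n)\ge \min(1,t^{1-\theta_n})K(t^{\theta_n},f;A_0,F_n)$. As $n\to\infty$ this yields $\liminf_n K(t,f;A_0,F_n)/t\ge K(t,f;A_0,A_1)/t$. Taking the supremum over $t$ and using \eqref{sec:ante} gives the lower bound.

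\textbf{Upper bound.} Suppose $M:=\Vert f\Vert_{G_1}<\infty$. By \eqref{2.9.7} there exist $g_k\in A_1$ with $\Vert f-g_k\Vert_{A_0}\to0$ and $\Vert g_k\Vert_{A_1}\le M+\varepsilon$. The key estimate I need is the reverse inequality for elements of $A_0\cap A_1$, which I believe is part of Proposition \ref{prop:lim}:
\begin{equation*}
\limsup_{\theta\to1}\Vert g\Vert^{\blacktriangleleft}_{\theta,p}\le \Vert g\Vert_{G_1}\le\Vert g\Vert_{A_1}.
\end{equation*}
This is the pointwise limit \eqref{gextra} applied to $g\in A_0$. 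A diagonal argument then finishes the proof: choose $k(n)\to\infty$ slowly enough that $F_n(g_{k(n)})\le M+2\varepsilon$ for large $n$, and take $t_n\to0$ with $\Vert f-g_{k(n)}\Vert_{A_0}/t_n\to0$. This gives
\begin{equation*}
\limsup_n \frac{K(t_n,f;A_0,F_n)}{t_n}\le M+2\varepsilon .
\end{equation*}
If instead $M=\infty$, the lower bound already shows the $\Gamma$-$\liminf$ is $+\infty$.

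\textbf{Main obstacle.} The delicate step is the normalization of $\Vert\cdot\Vert^{\blacktriangleleft}$. The comparison $\Vert g\Vert^{\blacktriangleleft}_{\theta,p}\gtrsim s^{-\theta}K(s,g)$ must hold with constant exactly $1+o(1)$ as $\theta\to1$, and the pointwise limit must hold for $g\in A_1\cap A_0$. Both should follow from the monotonicity of $K(s,g)$ and of $K(s,g)/s$, as in the proof of Proposition \ref{prop:lim}. I would verify these constants first.
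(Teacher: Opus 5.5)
Your proposal is correct. The lower bound is the paper's inequality \eqref{elmal}. You only handle the change of parameter differently: you use $K(t,f;A_0,F_n)\ge\min(1,t^{1-\theta_n})K(t^{\theta_n},f;A_0,F_n)$, whereas the paper rewrites $\Vert f\Vert_{G_1(A_0,A_1)}$ as $\sup_{t}\liminf_{\theta\to1^-}K(t^{1/\theta},f;A_0,A_1)/t^{1/\theta}$.

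The normalization issue you flag is not an obstacle. \eqref{sec2:1} holds with constant exactly $1$ for every $\theta$ and $p$: split the integral at $s$ and use the monotonicity of $K(t)$ and $K(t)/t$. That is all either argument needs.

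The genuine difference is in the upper bound. The paper uses the trivial decomposition $g=f$, i.e. $nK(\frac{1}{n},f;A_0,F_{\theta_n})\le F_{\theta_n}(f)$. It then concludes by Proposition \ref{prop:lim} applied to $f$ itself. That proposition covers every $f\in A_0$, not just $A_0\cap A_1$, because its proof passes to the Gagliardo pair. So the constant sequence is already a recovery sequence, and your approximation by $g_k$, the diagonal choice of $k(n)$ and the tuning of $t_n$ are unnecessary.

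Conversely, your route can be made independent of Proposition \ref{prop:lim}. Since $C:=\sup_k\Vert g_k\Vert_{A_0}<\infty$, \eqref{sec2:2} gives $\sup_k F_n(g_k)\le C^{1-\theta_n}(M+\varepsilon)^{\theta_n}\to M+\varepsilon$. Hence any $k(n)\to\infty$ works, with no diagonal selection. The upper bound then rests only on \eqref{sec2:2} and \eqref{2.9.7}. The latter follows directly from near-optimal decompositions for $K(s,f;A_0,A_1)$ as $s\to0$.

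The paper's version is shorter, and it shows that the pointwise and $\Gamma$-limits are attained along the same constant sequence. Yours, so modified, is self-contained and does not use the endpoint limit formula for interpolation norms.
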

\begin{proof}
Consider the 
family of functionals $F_\theta, \theta \in (0, 1)$, defined on $A_0$ given by 
\begin{equation}\label{9.9.1}
F_{\theta}(f):= \begin{cases}
 \|f\|_{(A_0, A_1)_{\theta, p}^\blacktriangleleft} & \text{if} \qquad  f\in A_0 \cap (A_0, A_1)_{\theta, p}, \\ 
+\infty & \text{if} \qquad  f \in A_0 \backslash (A_0, A_1)_{\theta, p}. %
\end{cases} 
\end{equation}
  To implement the variational $K$-functional method we
need two elementary inequalities (see the proof of Theorem \ref{Thm1}  
below for full details): If $f \in A_0$ then 
\begin{equation}
K(t^{1/\theta},f;A_{0},A_{1})\leq K(t,f;A_{0},F_{\theta}), \qquad \forall t > 0, 
\label{elmal}
\end{equation}
and%
\begin{equation}
\limsup_{n\rightarrow\infty}nK\Big(\frac{1}{n},f; A_{0},F_{\theta_{n}} \Big) \leq\left%
\Vert f\right\Vert _{G_{1}(A_{0},A_{1})},   \label{elbien}
\end{equation}
where $\theta_n \to 1^{-}$. 
It readily follows that 
\begin{align*}
	\|f\|_{G_1(A_0, A_1)} &= \sup_{t \in (0, \infty)} \frac{K(t, f; A_0, A_1)}{t} \\
	& = \sup_{t \in (0, \infty)} \liminf_{\theta \to 1^-}  \frac{K(t^{\frac{1}{\theta}}, f; A_0, A_1)}{t^{\frac{1}{\theta}}} \\
	& \leq \sup_{t \in (0, \infty)} \bigg( \liminf_{\theta \to 1^-} \frac{K(t, f; A_0, F_\theta)}{t} \bigg) \bigg(\liminf_{\theta \to 1^-} t^{1-\frac{1}{\theta}} \bigg) \qquad \text{(by \eqref{elmal})} \\
	& \leq \Gamma_{A_0}-\liminf_{\theta \to 1^-} F_\theta(f) \qquad \text{(by Theorem \ref{teoarriba})} \\
	& \leq \Gamma_{A_0}-\limsup_{\theta \to 1^-} F_\theta(f) \\
	& \leq \limsup_{n\rightarrow\infty}nK\Big(\frac{1}{n}; f,A_{0},F_{\theta_{n}} \Big) \qquad \text{\Big(by Theorem \ref{teoarriba} with $t_n = \frac{1}{n}$ \Big)}   \\
	& \leq \|f\|_{G_1(A_0, A_1)} \qquad \text{(by \eqref{elbien})}. 
\end{align*}
\end{proof}

We refer to Sections \ref{section:GagliardoGamma} and \ref%
{SectionGammaExtrapol} for full development.

\subsection{Some applications of extrapolation of $\Gamma$-convergence: a unified perspective to BBM and MS formulae}

We illustrate the scope of our theory by considering in detail examples in
different contexts for which our methodology nevertheless produces a unified
perspective. In
Section \ref{sec:app&exples} we apply Theorem \ref{ThmExtrapolIntro} to
many familiar pairs of function spaces. In particular, we derive  
$\Gamma$-convergence results for:
\begin{itemize}
	\item BBM formulae for general Sobolev spaces built on rearrangement invariant spaces\footnote{Loosely speaking, Banach spaces of functions whose  norms depend only on the size of functions. This class contains many customary spaces such as Lebesgue spaces, Lorentz spaces, Orlicz spaces, Lorentz--Zygmund spaces, ...  We refer the reader to \cite{BS} for a detailed account.} on $\R^N$, thus considerably extending Ponce's approach  \cite{P};
	\item BBM formulae for Sobolev spaces on metric measure spaces. In particular, we show that the recent  approach to Sobolev spaces on Cheeger spaces proposed in  \cite{ARB}  follows from our abstract theory; 
	\item BBM formulae associated to non-local operators such as the fractional Laplacian via Butzer seminorms;  
	\item convergence of BMO-type functionals related to the total variation, cf. \cite{ARBDN}; 
	\item extension of the MS formulae obtained in  \cite{DLNP} (restricted to characteristic functions) and \cite{CDLKNP} (associated to $L^2$) to  any rearrangement invariant norm on   $\R^N$;
	\item  heat-content type energies related to the total variation and Sobolev seminorms, as recently investigated in \cite{GS}.  
\end{itemize}

The implementation of our method follows a unified perspective: while the
technicalities in each case can be very different, the starting procedure
and the tasks to perform to arrive at the results always call for a careful
choice of a pair of spaces and   control of the corresponding $K$-functionals.

\subsection{Computing exact limit formulae}

A glaring drawback of the extrapolation methodology is that  $K$-functionals can rarely be computed exactly and for most pairs of spaces\ we only
have equivalent expressions that are up to multiplicative constants. This is
reflected in the corresponding asymptotic formulae, which we have formulated in
terms of the $K$-functionals and therefore the results obtained are not exact formulae, but only up
to equivalence constants. However,  we show that in most of the settings there
is a conceptually simple fix. Informally, when we
arrive at expressions that are within constants of the $K$-functional we
seek, we can work directly with those expressions and prove the
corresponding limit theorems for them and normalize them to achieve exact
limits. This is developed in Section \ref{SectionExact} where we
explain the method in detail and illustrate it with  examples. 

\subsection{Equi-coercivity via extrapolation of compactness}

It is well known that equi-coercivity is a key property paired with $\Gamma$-convergence to ensure stability of minimizers. In Section \ref{SectionCompactness}, we show that equi-coercivity is also part of the general extrapolation theory. In particular, we obtain the following extrapolation of compactness.

%
%
%
%
%

\begin{theorem}\label{ThmCwiGen}
Suppose that $(A_0, A_1)$ is a pair of Banach spaces such that\footnote{The symbol $\hookrightarrow$ stands for the continuous embedding.}  
\begin{equation*}
	A_1 \hookrightarrow A_0 \qquad \text{is compact}.
\end{equation*}
Let $p\in\lbrack1,\infty
],\{\theta_{n}\}_{n \in \mathbb{N}}\subset(0,1)$ with $\lim_{n\rightarrow\infty}\theta_{n}=1$,
and let $\{u_{n}\}_{n \in \mathbb{N}}$ be a sequence such that%
\begin{equation*}
\liminf_{n\to \infty}\left\Vert u_{n}\right\Vert
_{(A_{0},A_{1})_{\theta_{n},p}^{\blacktriangleleft}}<\infty. 
\end{equation*}
Then there exists $u\in A_{0}$ such that $%
u_{n}\rightarrow u$ in $A_{0}$ as $n\rightarrow\infty$ (up to a subsequence).
\end{theorem}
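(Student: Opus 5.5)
Pass to a subsequence (not relabelled) with $\sup_n \|u_n\|_{(A_0,A_1)^{\blacktriangleleft}_{\theta_n,p}} \le M<\infty$. We will show that $\{u_n\}$ is totally bounded in $A_0$; since $A_0$ is Banach, a further subsequence then converges to some $u\in A_0$. Two ingredients are needed. The first is a uniform bound on the $K$-functional. The normalization in \eqref{2.9.5} is, presumably, designed so that the normalized norm dominates $\sup_{t>0} t^{-\theta}K(t,f;A_0,A_1)$ with a constant independent of $\theta$ (this is the standard estimate $\|f\|_{\theta,\infty}\lesssim \|f\|_{\theta,p}$, made uniform by the normalizing factor $(\theta(1-\theta)p)^{1/p}$). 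This gives
\begin{equation*}
K(t,u_n;A_0,A_1)\le C M\, t^{\theta_n}, \qquad t>0,\ n\in\mathbb{N}.
\end{equation*}
Moreover, since $A_1\hookrightarrow A_0$, we have $\|u_n\|_{A_0}\approx K(1,u_n;A_0,A_1)\le CM$, so $u_n\in A_0$ is bounded there.

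The second ingredient is an approximation step. Fix $\varepsilon\in(0,1)$ and choose $t=\varepsilon$ in the bound above. Then there exist decompositions $u_n=a_n+b_n$ with
\begin{equation*}
\|a_n\|_{A_0}+\varepsilon\|b_n\|_{A_1}\le 2CM\varepsilon^{\theta_n}.
\end{equation*}
Since $\theta_n\to1$, for all $n$ large we have $\varepsilon^{\theta_n}\le \varepsilon^{1/2}$ (indeed $\theta_n\ge 1/2$). Hence $\|a_n\|_{A_0}\le 2CM\varepsilon^{1/2}$ and $\|b_n\|_{A_1}\le 2CM\varepsilon^{\theta_n-1}\le 2CM\varepsilon^{-1/2}$. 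The set $\{b_n\}$ is therefore bounded in $A_1$, so by compactness of $A_1\hookrightarrow A_0$ it is relatively compact in $A_0$. Consequently $\{u_n\}_{n\ge n_0(\varepsilon)}$ lies within $2CM\varepsilon^{1/2}$ of a relatively compact subset of $A_0$. Adding the finitely many $u_n$ with $n<n_0(\varepsilon)$ shows that $\{u_n\}$ can be covered by finitely many $A_0$-balls of radius $\lesssim\varepsilon^{1/2}$. Since $\varepsilon$ is arbitrary, $\{u_n\}$ is totally bounded in $A_0$.

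To conclude, a diagonal argument over $\varepsilon=1/k$ (equivalently, the fact that totally bounded sets in a complete space are relatively compact) produces an $A_0$-Cauchy subsequence, whose limit $u\in A_0$ is the desired point.

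The main obstacle is the first ingredient: one must verify that the normalized norms $\|\cdot\|^{\blacktriangleleft}_{\theta,p}$ dominate $\sup_t t^{-\theta}K(t,\cdot)$ uniformly as $\theta\to1$, including the case $p<\infty$. I would check this by using monotonicity of $K$ and of $K(t)/t$. For $s\in[t,\infty)$ one has $K(s)\ge K(t)$, so
\begin{equation*}
\int_t^\infty (s^{-\theta}K(s))^p\,\frac{ds}{s}\ge K(t)^p\,\frac{t^{-\theta p}}{\theta p}.
\end{equation*}
Alternatively, integrating over $(0,t)$ and using $K(s)\ge (s/t)K(t)$ gives a factor $\frac{1}{(1-\theta)p}$. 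The precise constant in \eqref{2.9.5} determines which of these bounds to use. If \eqref{2.9.5} normalizes only by $((1-\theta)p)^{1/p}$, the second bound is the right one, and it yields exactly $K(t)\le M t^{\theta}$ with no loss.
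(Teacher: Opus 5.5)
Your argument is correct, but it is not the route the paper takes.

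\textbf{The paper's route.} It obtains Theorem~\ref{ThmCwiGen} as the case $T=\mathrm{id}$, $(B_0,B_1)=(A_0,A_0)$ of Theorem~\ref{Thm5 copy(1)}. There it fixes $\eta<\inf_n\theta_n$ and uses the extrapolation estimate \eqref{PF3.1} to bound $\{u_n\}$ uniformly in $A_1+(A_0,A_1)^{\blacktriangleleft}_{\eta,p}$. It then invokes Cwikel's one-sided compactness theorem (Theorem~\ref{Cwikelsthm}) to see that this space maps compactly into $A_0$.

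\textbf{Your route.} You work directly with the $K$-functional.
\begin{itemize}
\item The uniform bound $K(t,u_n;A_0,A_1)\le M t^{\theta_n}$ is exactly \eqref{sec2:1}, with constant $1$. With the paper's normalization $c_{\theta,p}=(\theta(1-\theta)p)^{1/p}$ you simply add your two monotonicity bounds, since $\frac{1}{\theta p}+\frac{1}{(1-\theta)p}=\frac{1}{\theta(1-\theta)p}$.
\item The $\varepsilon$-splitting $u_n=a_n+b_n$, with $a_n$ small in $A_0$ and $b_n$ bounded in $A_1$, then gives total boundedness in $A_0$.
\end{itemize}

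\textbf{Comparison.} Your argument is considerably more elementary. It uses neither Cwikel's theorem nor \eqref{PF3.1}, only the definition of a compact embedding and the completeness of $A_0$. It is close in spirit to the paper's own proof of Theorem~\ref{Thm5**}. In fact, taking $t_n=e^{-1/(1-\theta_n)}\to 0$ gives $K(t_n,u_n)/t_n\le M t_n^{\theta_n-1}=eM$, so Theorem~\ref{Thm5**} with $T=\mathrm{id}$ and $B_0=B_1=A_0$ already yields the statement.

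What the heavier route buys is the operator version, Theorem~\ref{Thm5 copy(1)}. There $Tu_n$ converges in the finer space $B_1+(B_0,B_1)^{\blacktriangleleft}_{\eta,p}$ for every $\eta\in(0,1)$. Your splitting argument, run for a general $T$, only gives convergence in $B_0+B_1$. When $B_0=B_1=A_0$, as here, the two conclusions coincide.
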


The proof of Theorem \ref{ThmCwiGen}  combines Cwikel's one sided
compactness theorem\ \cite{C2} with special
reiteration formulae in extrapolation theory (cf. \cite{KM}). Then we show that many  coercivity results in different settings that have been treated independently in the  recent literature (to mention some of them, cf. \cite{BBM2, P, AABPT, ARBDN, GS, ARB}) follow immediately from Theorem \ref{ThmCwiGen} when the pair $(A_0, A_1)$ is specified, thereby providing  a unified perspective. See Section \ref{Section7.1} for further details.

\subsection{Variational extrapolation} 

In Section \ref{Section8}, we show stability of minimizers in variational analysis via extrapolation and Gagliardo completions. To be more precise, we establish the following continuity result for interpolated functionals defined on the rather general setting of Banach function spaces (cf. \cite[Chapter 1]{BS}).

\begin{theorem}\label{ThmVarIntro}
Let $B$ and $A_0$ be Banach function spaces over a common finite measure space such that $B \hookrightarrow A_0$ and let $A_1$ be a Banach space satisfying  
\begin{equation}\label{23.9.1}
	A_0 \cap A_1 \hookrightarrow A_0 \qquad \text{is compact}.
\end{equation}
In addition, we assume that $B$ is uniformly convex. For each $f \in B, q, \lambda > 0,  p \in [1, \infty), \theta \in (0, 1)$, consider the interpolated functional given by
$$
	E_\theta (u):= \|f-u\|^q_B + \lambda \|u\|_{(A_0, A_1)_{\theta, p}^{\blacktriangleleft}}.  
$$
	If $\bar{u}_\theta$ are near-minimizers of $E_\theta$ then $\bar{u}_\theta \to \bar{u}$ in $B$ as $\theta \to 1^{-}$ (up to a subfamily), where $\bar{u}$ is a minimizer  of the Gagliardo functional
	\begin{equation}\label{23.9.4}
			E_1 (u):= \|f-u\|^q_B + \lambda \|u\|_{G_1(A_0, A_1)}.  
	\end{equation}
	Moreover
	\begin{equation}\label{20.9.9}
		E_1(\bar{u}) = \lim_{\theta \to 1^{-}} E_\theta(\bar{u}_\theta). 
	\end{equation}
\end{theorem}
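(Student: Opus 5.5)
The plan is to run the standard ``$\Gamma$-convergence plus equi-coercivity implies convergence of minimizers'' argument, with Theorem \ref{ThmExtrapolIntro} supplying the $\Gamma$-limit and Theorem \ref{ThmCwiGen} supplying compactness. The uniform convexity of $B$ will then upgrade weak convergence of the near-minimizers to strong convergence in $B$.

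\textbf{Step 1 (uniform bounds).} Fix $\theta_n\to1^-$ and near-minimizers $\bar u_n:=\bar u_{\theta_n}$. I take this to mean $E_{\theta_n}(\bar u_n)\le\inf E_{\theta_n}+\varepsilon_n$ with $\varepsilon_n\to0$. Testing with $u=0$ gives $E_{\theta_n}(\bar u_n)\le\|f\|_B^q+\varepsilon_n$. Hence $\|\bar u_n\|_B$ is bounded and $\|\bar u_n\|_{(A_0,A_1)^{\blacktriangleleft}_{\theta_n,p}}\le C/\lambda$.

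\textbf{Step 2 (compactness).} Theorem \ref{ThmCwiGen} as stated assumes $A_1\hookrightarrow A_0$ compactly. Here we only have \eqref{23.9.1}, so I would apply it to the pair $(A_0,A_0\cap A_1)$. Using $K(t,f;A_0,A_0\cap A_1)\approx K(\min(1,t),f;A_0,A_1)$ for $f \in A_0$, and the fact that only small $t$ matters as $\theta\to1$ after normalization, the bound from Step 1 transfers up to constants. This transfer is the delicate point and I would check it against the proof of Theorem \ref{ThmCwiGen}. We then get a subsequence with $\bar u_n\to\bar u$ in $A_0$. Boundedness in $B$ together with reflexivity of $B$ (Milman--Pettis) gives a further subsequence with $\bar u_n\rightharpoonup w$ weakly in $B$. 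Since $B\hookrightarrow A_0$ and both are Banach function spaces over a finite measure space, the limits must agree, so $w=\bar u$. One way to see this is to pass to a.e.\ convergent subsequences and test against bounded functions in the associate space.

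\textbf{Step 3 ($\Gamma$-inequalities).} For the liminf side, weak lower semicontinuity of $\|\cdot\|_B$ gives $\|f-\bar u\|_B^q\le\liminf\|f-\bar u_n\|_B^q$. The $\Gamma_{A_0}$-liminf of Theorem \ref{ThmExtrapolIntro} gives $\|\bar u\|_{G_1}\le\liminf\|\bar u_n\|_{\theta_n,p}$. For the limsup side, take any $v\in B$ with $\|v\|_{G_1}<\infty$. I do not use a recovery sequence in $A_0$, because that would not control the $B$ term. Instead I use the constant sequence $v$, via the pointwise limit \eqref{gextra}: $\|v\|_{\theta,p}^{\blacktriangleleft}\to\|v\|_{G_1}$, valid since $v\in A_0$. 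Then $\limsup E_{\theta_n}(v)=E_1(v)$. Combining the two sides,
\[
E_1(\bar u)\le\liminf E_{\theta_n}(\bar u_n)\le\limsup E_{\theta_n}(\bar u_n)\le\limsup\big(E_{\theta_n}(v)+\varepsilon_n\big)=E_1(v).
\]
So $\bar u$ minimizes $E_1$. Taking $v=\bar u$ yields \eqref{20.9.9}.

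\textbf{Step 4 (strong convergence in $B$).} Equality of the limits, combined with the two separate liminf inequalities, forces $\|f-\bar u_n\|_B\to\|f-\bar u\|_B$. Suppose instead the $B$ term had a strictly larger limsup along some subsequence. Then the total limit would exceed $E_1(\bar u)$, since the other term is bounded below by its $\Gamma$-liminf, which is a contradiction. Weak convergence plus convergence of norms in the uniformly convex space $B$ (Radon--Riesz) gives $\bar u_n\to\bar u$ in $B$.

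I expect the main obstacle to be Step 2: adapting Theorem \ref{ThmCwiGen} to the hypothesis \eqref{23.9.1}, and identifying the $A_0$-limit with the weak $B$-limit.
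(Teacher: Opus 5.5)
Your proposal is correct and is essentially the paper's proof. It uses constant recovery sequences with the pointwise limit of Proposition~\ref{prop:lim}, the $\Gamma_{A_0}$-liminf from Theorem~\ref{Thm1}, compactness from Theorem~\ref{ThmCwiGen} applied to $(A_0,A_0\cap A_1)$, and Milman--Pettis plus Radon--Riesz for strong convergence in $B$. The only differences are that you use weak lower semicontinuity of $\|\cdot\|_B$ where the paper uses the Fatou property, and you inline the fundamental theorem of $\Gamma$-convergence rather than first proving $\Gamma_{A_0}-\lim_{\theta\to1^-}E_\theta=E_1$ and citing it.

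One correction in Step 2: the equivalence $K(t,f;A_0,A_0\cap A_1)\approx K(\min\{1,t\},f;A_0,A_1)$ is false in general. For instance, for constants with $A_1=\dot W^{1,1}$ the right-hand side vanishes while the left-hand side does not. The correct statement is $K(t,f;A_0,A_0\cap A_1)\approx t\|f\|_{A_0}+K(t,f;A_0,A_1)$ for $t\in(0,1)$. Its extra term contributes only $\lesssim\|f\|_{A_0}\lesssim\|f\|_B$ to the normalized norm. So the transfer still works, but precisely because of your $B$-bound from Step 1 and the embedding $B\hookrightarrow A_0$.
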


\begin{remark}
	\begin{enumerate}
		\item[(i)] In view of applications to image processing (cf. Section \ref{Subsection1.11}  below), the assumption $B \hookrightarrow A_0$ seems to be quite natural.  Note that $A_1$ is  not necessarily a Banach function space in the sense of \cite{BS}. In particular, Sobolev spaces or more generally Besov--Triebel--Lizorkin spaces are admissible. 
		\item[(ii)] The uniform convexity of $B$  may be weakened to include non-uniformly convex spaces such as $L^1$. On the other hand, the fact that $B$ is uniformly
convex seems to be general enough to cover many classical spaces such as the 
$L^{p}$ spaces with $1<p<\infty$, Hilbert spaces, Lorentz spaces $ 
\Lambda^{p}(w)$ with $1<p<\infty$ and a regular weight $w$ (cf. \cite{H} for
further details), suitable Orlicz--Lorentz spaces, ....
	\end{enumerate}
\end{remark}

Basic ingredients in the proof of Theorem \ref{ThmVarIntro} are extrapolation of $\Gamma$-convergence (cf. Theorem \ref{ThmExtrapolIntro}) and extrapolation of compactness (cf. Theorem \ref{ThmCwiGen}).

\subsection{Convergence of filters in image processing: from non-local to local models}\label{Subsection1.11} 
Starting from the influential  works of Buades--Coll--Morel \cite{BCM} and Gilboa--Osher \cite{GO, GO2},   there has been an increasing interest in denoising models arising in image processing based on filters that approximate the local total variation in \eqref{ROFIntro} by certain non-local versions.  Then Aubert--Kornprobst \cite{AK} and Brezis--Nguyen \cite{BN} established the convergence of non-local to local filters.

As explained in Section  \ref{sec:asymp}, Theorem \ref{ThmVarIntro}    is partly motivated  by  convergence results of non-local  to local filters in image processing.   In our setting, the role of non-local filters is played by  the (normalized) interpolation functionals $\|\cdot\|_{(A_0, A_1)_{\theta, p}}^{\blacktriangleleft}$ and the local filter corresponds to the Gagliardo completion $G_1(A_0, A_1)$. To be more precise, an application of Theorem \ref{ThmVarIntro} related to  the special choice 
$$B = L^2, \qquad A_0 = L^1, \qquad A_1 = \dot{W}^{1, 1} \qquad \text{(on a fixed cube in $\R^N$)}$$
together with the important fact that 
$$
	G_1(L^1, \dot{W}^{1, 1}) = \dot{BV}
$$
give  $L^2$-stability of minimizers from the functionals (cf. \cite{AK, NO})
\begin{equation*}
	E_\theta(u) = \|f-u\|_{L^2}^2 + (1-\theta) \|u\|_{\dot{W}^{\theta, 1}}
\end{equation*}
to the classical ROF model  (cf. \eqref{ROFIntro})  
as $\theta \to 1^-$. On the other hand, the counterpart associated to  the choice 
$$B = L^1, \qquad A_0 = L^1, \qquad A_1 = \dot{W}^{1, 1},$$
in Theorem \ref{ThmVarIntro} gives the corresponding $L^1$-stability of minimizers of (cf. \cite{Be})
\begin{equation*}
	E_\theta(u) = \|f-u\|_{L^1} + (1-\theta) \|u\|_{\dot{W}^{\theta, 1}}
\end{equation*}
 to the Chan--Esedoglu denoising model \cite{CE} as $\theta \to 1^{-}$. See Corollary \ref{CoroIP1} for the precise statement.  As already mentioned,  these convergence results were  obtained in \cite{AK} by using completely different tools; cf. also \cite{BN} dealing with a more  challenging setting of both non-local and non-convex approximations (Yaroslavsky filters) of the total variation.  Accordingly,  Theorem \ref{ThmVarIntro} shows that  convergence results for filters in image processing are  part of a general phenomenon based on  extrapolation and Gagliardo completions.

\subsection{Brief interlude: historical context and references}

The celebrated limit formulae of Bourgain--Brezis--Mironescu \cite{BBM2} and
Maz'ya--Shaposhnikova \cite{MS} have had a major impact in research on Sobolev spaces 
and related topics. One can find further references in the books \cite{BM}
and \cite{Pbook}. The pointwise limits  of real interpolation norms as the
parameters approach the endpoints (cf. \eqref{gextra}) and their relevance to
deriving the BBM and MS formulae were apparently first
systematically studied in \cite{M-Lim}, and further developed in \cite{KMX} and 
\cite{DM}. Building on the foundational extrapolation theory of
Jawerth--Milman \cite{JM}, these works established a key bridge between
classical interpolation theory and endpoint behavior, opening the way for
further developments. Here we mention only a representative sample of
contributions in different directions: \cite{DLTYY}, \cite{PS}, 
\cite{Hurri}, \cite{DR}, \cite{Maa}, \cite{GAT} and the references therein. We hope that the new connections and tools developed in this paper will have
further impact in both the Calculus of Variations and Extrapolation theory.

\section{Interpolation of normed pairs and Gagliardo completions}

\label{section:prelim}

In this preliminary  section, we review some basic elements of  interpolation theory and the
properties of Gagliardo relative completions as an endpoint extrapolation
methodology.

Let $(A_{0},A_{1})$ be a compatible pair of normed spaces. One of the most popular approaches to construct intermediate spaces relative to $(A_0, A_1)$ relies on $K$-functionals (cf. \eqref{eq:classical_K_intro}). 
For $\theta\in(0,1)$ and $p\in\lbrack1,\infty)$, the \emph{(real) interpolation
space} $(A_{0},A_{1})_{\theta,p}$ consists of all elements $f\in A_{0}+A_{1}$
such that 
\begin{equation}\label{5.10.1} 
\Vert f\Vert_{(A_{0},A_{1})_{\theta,p}}:=\bigg(\int_{0}^{\infty
}[t^{-\theta}K(t,f;A_{0},A_{1})]^{p}\,\frac{dt}{t}\bigg)^{1/p}<\infty.
\end{equation}
When $p=\infty$, the space $(A_{0},A_{1})_{\theta,\infty}$ is equipped with
the norm 
\begin{equation}  \label{Deflim}
\Vert f\Vert_{(A_{0},A_{1})_{\theta,\infty}}:=\sup
_{t>0}\{t^{-\theta}K(t,f;A_{0},A_{1})\}.
\end{equation}

In extrapolation theory, it is standard to normalize these norms (cf. \cite%
{JM}). Let 
\begin{equation*}
c_{\theta,p}:=\left\{ 
\begin{array}{cc}
(\theta(1-\theta)p)^{\frac{1}{p}} &\qquad  \text{if} \qquad p<\infty, \\ 
1 &\qquad  \text{if} \qquad p=\infty,%
\end{array}
\right.   
\end{equation*}
and define the normalized norm by 
\begin{equation}\label{2.9.5}
\Vert f\Vert_{(A_{0},A_{1})_{\theta,p}^{\blacktriangleleft}}:=c_{\theta
,p}\Vert f\Vert_{(A_{0},A_{1})_{\theta,p}}. 
\end{equation}
By $(A_0, A_1)_{\theta, p}^{\blacktriangleleft}$ we mean  $(A_0, A_1)_{\theta, p}$ equipped with the norm \eqref{2.9.5}. Under this normalization, the following standard estimates hold (cf. \cite{JM, M}): 
\begin{equation}
\Vert f\Vert_{(A_{0},A_{1})_{\theta,\infty}}\leq\Vert
f\Vert_{(A_{0},A_{1})_{\theta,p}^{\blacktriangleleft}}\qquad\text{for} \qquad %
f\in(A_{0},A_{1})_{\theta,p},   \label{sec2:1}
\end{equation}%
\begin{equation}
\Vert f\Vert_{(A_{0},A_{1})_{\theta,p}^{\blacktriangleleft}}\leq\Vert
f\Vert_{A_{0}}^{1-\theta}\Vert f\Vert_{A_{1}}^{\theta}\qquad\text{for} \qquad f\in
A_{0}\cap A_{1}.   \label{sec2:2}
\end{equation}
Furthermore, the basic identity 
\begin{equation}\label{7.9.4}
K(t,f;A_{0},A_{0})=\min\{1,t\}\Vert f\Vert_{A_{0}}
\end{equation}
yields 
\begin{equation}
\Vert f\Vert_{(A_{0},A_{0})_{\theta,p}^{\blacktriangleleft}}=\Vert
f\Vert_{A_{0}}.   \label{sec2:3}
\end{equation}

The spaces corresponding to the limiting parameter values $\theta\in\{0,1\}$
are the Gagliardo relative completions $G_\theta(A_0, A_1)$, cf. \eqref{sec:ante}. 
It is easy to see that, for $i=0, 1$, 
\begin{equation}
A_{i}\hookrightarrow  G_{i}(A_{0},A_{1}) \qquad \text{with} \qquad \|f\|_{G_{i}(A_{0},A_{1})} \leq \|f\|_{A_i}. \label{sec:sera}
\end{equation}
Moreover, the Gagliardo pair preserves the underlying $K$-functional structure, namely, 
\begin{equation}
K(t,f;G_{0}(A_{0},A_{1}),G_{1}(A_{0},A_{1}))=K(t,f;A_{0},A_{1}), 
\label{sec2:5}
\end{equation}
and consequently, 
\begin{equation}
\|f\|_{\left( G_{0}(A_{0},A_{1}),G_{1}(A_{0},A_{1})\right)
_{\theta,p}^{\blacktriangleleft}}=\|f\|_{(A_{0},A_{1})_{\theta,p}^{%
\blacktriangleleft}}.   \label{sec2:6}
\end{equation}

The asymptotic role of Gagliardo relative completions within real
interpolation scales is characterized by the following limiting behavior:

\begin{proposition}
\label{prop:lim}Let $p \in [1, \infty]$ and $i\in\{0,1\}$. Setting $i^{\ast
}=1-i$, we have
\begin{equation}
\lim_{\theta\rightarrow i}\left\Vert f\right\Vert _{(A_{0},A_{1})_{\theta
,p}^{\blacktriangleleft}}=\left\Vert f\right\Vert _{G_{i}(A_{0},A_{1})}\qquad%
\text{for} \qquad f\in A_{i^{\ast}}.   \label{sec2:7}
\end{equation}
\end{proposition}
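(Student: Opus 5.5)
By the symmetry $K(t,f;A_0,A_1)=tK(t^{-1},f;A_1,A_0)$, which swaps $\theta\leftrightarrow 1-\theta$ and $G_0\leftrightarrow G_1$ and leaves $c_{\theta,p}$ unchanged, it suffices to treat $i=1$, i.e. $f\in A_0$ and $\theta\to1^-$. Write $K(t)=K(t,f;A_0,A_1)$ and $L:=\|f\|_{G_1(A_0,A_1)}=\sup_{t>0}K(t)/t=\lim_{t\to0}K(t)/t$, using (\ref{3.9.1}). Recall that $K(t)$ is increasing and $K(t)/t$ is decreasing. Since $f\in A_0$, we also have $K(t)\le \|f\|_{A_0}$ for all $t$. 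I prove a lower bound $\liminf\ge L$ and an upper bound $\limsup\le L$.

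\emph{Upper bound (assume $L<\infty$).} Use $K(t)\le \min\{Lt,\|f\|_{A_0}\}$. For $p<\infty$ this gives
\begin{equation*}
\|f\|^p_{(A_0,A_1)^{\blacktriangleleft}_{\theta,p}}\le \theta(1-\theta)p\Big(L^p\int_0^{T}t^{(1-\theta)p}\frac{dt}{t}+\|f\|_{A_0}^p\int_T^\infty t^{-\theta p}\frac{dt}{t}\Big)
\end{equation*}
for any $T>0$. The right-hand side equals $\theta L^pT^{(1-\theta)p}+(1-\theta)\|f\|_{A_0}^pT^{-\theta p}$. Letting $\theta\to1$ with $T$ fixed gives $\limsup\le L^p$. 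For $p=\infty$ the bound is immediate: $\sup_t t^{-\theta}\min\{Lt,\|f\|_{A_0}\}=L^\theta\|f\|_{A_0}^{1-\theta}\to L$. If $L=0$ the same argument gives the limit $0$, and then the lower bound is trivial.

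\emph{Lower bound.} By (\ref{sec2:1}) it suffices to treat $p=\infty$. Fix $s>0$. Then
\begin{equation*}
\|f\|_{(A_0,A_1)^{\blacktriangleleft}_{\theta,p}}\ge \sup_t t^{-\theta}K(t)\ge s^{1-\theta}\,\frac{K(s)}{s}.
\end{equation*}
Hence $\liminf_{\theta\to1}\ge K(s)/s$, and taking the supremum over $s$ gives $\ge L$. This argument also covers $L=\infty$, so in that case the limit is $+\infty$, consistent with the convention.

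Combining the two bounds yields (\ref{sec2:7}). The only delicate point is the upper bound for $p<\infty$. There one must see that the normalization $c_{\theta,p}$ exactly compensates the divergence of $\int_0^T t^{(1-\theta)p}\,dt/t$. The large-$t$ tail, controlled by $\|f\|_{A_0}$ because $f\in A_0$, is killed by the factor $1-\theta$. This tail estimate is where the hypothesis $f\in A_{i^*}$ enters.
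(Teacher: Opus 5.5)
Your proof is correct, and it takes a different, more direct route than the paper's. The paper does not estimate the integral itself. Its steps are:
\begin{enumerate}
\item For $f\in A_0\cap A_1$ it cites known limit results from the literature.
\item It transfers the limit to $f\in G_0(A_0,A_1)\cap G_1(A_0,A_1)$ by applying those results to the Gagliardo pair and invoking \eqref{sec2:6}.
\item It passes to $f\in A_0\cap G_1(A_0,A_1)$ via \eqref{sec:sera}.
\item Only for $f\in A_0\setminus G_1(A_0,A_1)$ does it use the lower bound through \eqref{sec2:1}, which you use for every $f$.
\end{enumerate}

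You instead observe that the only information needed is $K(t)\le\min\{Lt,\|f\|_{A_0}\}$. Your explicit expressions then show exactly how $c_{\theta,p}$ offsets the divergence at small $t$ while the tail at large $t$ dies. These are $\theta L^pT^{(1-\theta)p}+(1-\theta)\|f\|_{A_0}^pT^{-\theta p}$ for $p<\infty$, and $L^\theta\|f\|_{A_0}^{1-\theta}$ for $p=\infty$.

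Your route is self-contained and needs no external limit theorem. It handles finite and infinite $L$ in one stroke, and it makes the Gagliardo-pair reduction unnecessary. Since you only use boundedness of $K$, the computation works verbatim with $\|f\|_{A_0}$ replaced by $\sup_{t>0}K(t,f;A_0,A_1)=\|f\|_{G_0(A_0,A_1)}$. So it proves the paper's intermediate statement \eqref{sec2:8} directly. The paper's route gains brevity by leaning on the literature, and it shows how the invariance \eqref{sec2:6} under Gagliardo completion extends results from $A_0\cap A_1$ to larger classes.
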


\begin{proof}
Consider the case $i=1$. For $f\in A_{0}\cap A_{1}$ the formula  %
\eqref{sec2:7} was already established in \cite[Theorem 1]{M-Lim} (if $p < \infty$) and \cite[Lemma 1]{KMX} (if $p \leq \infty$), see also \cite{DM}. Applying this to the Gagliardo  pair $%
\left( G_{0}(A_{0},A_{1}),G_{1}(A_{0},A_{1})\right) $ and invoking %
\eqref{sec2:6}, we obtain 
\begin{equation}
\lim_{\theta\rightarrow1^{-}}\left\Vert f\right\Vert
_{(A_{0},A_{1})_{\theta,p}^{\blacktriangleleft}}=\left\Vert f\right\Vert
_{G_{1}(A_{0},A_{1})}, \qquad f\in G_{0}(A_{0},A_{1})\cap
G_{1}(A_{0},A_{1}).   \label{sec2:8}
\end{equation}
By the inclusion properties (\ref{sec:sera}), \eqref{sec2:8} also holds for $f\in
A_{0}\cap G_{1}(A_{0},A_{1})$. It remains to check the validity of \eqref{sec2:8} for  $f\in A_{0}\setminus G_{1}(A_{0},A_{1})
$. For this purpose, we note that \eqref{sec2:1} yields 
\begin{equation*}
\left\Vert f\right\Vert
_{(A_{0},A_{1})_{\theta,p}^{\blacktriangleleft}}\geq\left\Vert f\right\Vert
_{(A_{0},A_{1})_{\theta,\infty}} \geq t^{-\theta} K(t, f; A_0, A_1) 
\end{equation*}
for every $t > 0$ and taking limits as $\theta\rightarrow1^{-}$:  
\begin{equation*}
\liminf_{\theta\rightarrow1^{-}}\left\Vert f\right\Vert
_{(A_{0},A_{1})_{\theta,p}^{\blacktriangleleft}}\geq\frac{K(t,f;A_{0},A_{1})%
}{t},\qquad\forall t>0. 
\end{equation*}
Taking now the supremum over $t>0$, we find that  
\begin{equation*}
\liminf_{\theta\rightarrow1^{-}}\left\Vert f\right\Vert
_{(A_{0},A_{1})_{\theta,p}^{\blacktriangleleft}}\geq\left\Vert f\right\Vert
_{G_{1}(A_{0},A_{1})} = + \infty,
\end{equation*}
which gives \eqref{sec2:7} for $i=1$ and $f  \not \in G_1(A_0, A_1)$.  

The case $i=0$ in  \eqref{sec2:7}  follows from the case $i=1$ when applied to the reversed pair $(A_{1},A_{0})$.
Indeed, since $K(t,f;A_{1},A_{0})=tK(\frac{1}{t},f;A_{0},A_{1})$, we have $\|f\|_{
G_{0}(A_{1},A_{0})}=\|f\|_{G_{1}(A_{0},A_{1})}$ and $\|f\|_{(A_{1},A_{0})_{\theta,p}^{%
\blacktriangleleft}}=\|f\|_{(A_{0},A_{1})_{1-\theta,p}^{\blacktriangleleft}}$. 
\end{proof}

\section{Variational $K$-functionals and  $\Gamma $%
-convergence}

\label{section:GagliardoGamma}

We now introduce an extension of $\Gamma$-convergence that tracks a sequence
of functionals establishing a direct bridge between De Giorgi and Gagliardo
methodologies.

\begin{definition}
\label{GdG} Let $\{F_{n}\}_{n\in\mathbb{N}}$ and $\{G_{n}\}_{n\in\mathbb{N}}$
be sequences of functionals mapping a normed  space $X$ to $[0, + \infty]$. For each $f\in X$, the upper and lower generalized $%
\Gamma $-limits are defined by 
\begin{equation*}
\Gamma-\limsup_{n\rightarrow\infty}(F_{n},G_{n})(f):=\inf\Big\{\limsup
_{n\rightarrow\infty}F_{n}(f_{n}):\{f_{n}\}_{n\in\mathbb{N}}\subset X \text{
with } \lim_{n\rightarrow\infty}G_{n}(f-f_n)=0\Big\}, 
\end{equation*}
\begin{equation*}
\Gamma-\liminf_{n\rightarrow\infty}(F_{n},G_{n})(f):=\inf\Big\{\liminf
_{n\rightarrow\infty}F_{n}(f_{n}):\{f_{n}\}_{n\in\mathbb{N}}\subset X \text{
with } \lim_{n\rightarrow\infty}G_{n}(f-f_n)=0\Big\}. 
\end{equation*}
The pair $(F_{n},G_{n})$ is said to $\Gamma$-converge if these limits
coincide for all $f\in X$, denoted by the common functional $%
\Gamma-\lim_{n\rightarrow \infty}(F_{n},G_{n})$. 
\end{definition}
\begin{remark}
	This definition extends
analogously to families of functionals $\{F_{\varepsilon}\}_{\varepsilon>0}$ and $%
\{G_{\varepsilon }\}_{\varepsilon>0}$.
\end{remark}

\begin{remark}
\label{Remark3.2} If $G_{n}(f)=\Vert f\Vert_{X}$ for every $n\in\mathbb{N}$
and  $f\in X$, Definition \ref{GdG} reduces to classical $\Gamma $%
-convergence  (cf. \eqref{intro1}-\eqref{intro2}) on $X$: 
\begin{equation*}
\Gamma-\lim_{n\rightarrow\infty}(F_{n},G_{n})=\Gamma_{X}-\lim_{n\rightarrow
\infty}F_{n}. 
\end{equation*}
\end{remark}

The structural analogue to Definition \ref{GdG} in the setting of
interpolation theory is the following notion:

\begin{definition}
\label{DefGKFunct} Given two functionals $F,G:X\rightarrow [0, + \infty]$, the variational  $K$-functional associated with the
pair $(G,F)$ is defined for each $f\in X$ and $t>0$ by 
\begin{equation*}
K(t,f;G,F):=\inf_{f=g+h}\{G(g)+tF(h)\}. 
\end{equation*}
\end{definition}

\begin{remark}
Let $\text{\textrm{dom}}\, F = \{f\in X : F(f)<+\infty\}$. If $F$ and $G$
are norms, Definition \ref{DefGKFunct} coincides with the classical
$K$-functional (cf. \eqref{eq:classical_K_intro}), meaning $K(t,f;G,F) = K(t,f;\text{%
\textrm{dom}}\,G, \text{\textrm{dom}}\,F)$ for $f \in\text{\textrm{dom}}\,G
+ \text{\textrm{dom}}\,F$. Then Definition \ref{DefGKFunct} yields the
natural non-linear (in the sense that norms are replaced by general functionals) extension of $K$-functionals.
\end{remark}

The following theorem provides the core equivalence, showing that
generalized $\Gamma$-limits are comparable to their Gagliardo completions.

\begin{theorem}
\label{Thm3} Let $\{F_{n}\}_{n\in\mathbb{N}}$ and $\{G_{n}\}_{n\in\mathbb{N}}
$ be sequences of functionals on $X$. Then, for any $f\in X$, 
\begin{align*}
\sup_{t>0}\liminf_{n\rightarrow\infty}\frac{K(t,f;G_{n},F_{n})}{t} &
\leq\Gamma-\liminf_{n\rightarrow\infty}(F_{n},G_{n})(f) \\
& \leq\Gamma-\limsup_{n\rightarrow\infty}(F_{n},G_{n})(f)\leq \inf_{\{t_n\}_{n \in \mathbb{N}}}\limsup
_{n\rightarrow\infty}\frac{K(t_n,f;G_{n},F_{n})}{t_n},
\end{align*}
where the infimum runs over all sequences $\{t_n\}_{n \in \mathbb{N}} \subset (0, \infty)$ such that $t_n \to 0$ as $n \to \infty$.  
\end{theorem}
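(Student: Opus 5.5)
The middle inequality is immediate: for every admissible sequence, $\liminf \le \limsup$, and taking infima preserves this. So two inequalities need proof. Throughout I will use the decomposition $f = (f-f_n) + f_n$ to pass between the $K$-functional and sequences $f_n$ with $G_n(f-f_n)\to 0$.

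\emph{Lower bound.} Fix $t>0$ and a sequence $\{f_n\}\subset X$ with $G_n(f-f_n)\to0$. If no such sequence exists, the $\Gamma$-liminf is $+\infty$ and there is nothing to prove. Taking $g=f-f_n$ and $h=f_n$ in Definition \ref{DefGKFunct} gives
$$
\frac{K(t,f;G_n,F_n)}{t}\le \frac{G_n(f-f_n)}{t}+F_n(f_n).
$$
Taking $\liminf_{n\to\infty}$, the first term tends to $0$ because $t$ is fixed. Hence
$$
\liminf_n \frac{K(t,f;G_n,F_n)}{t}\le \liminf_n F_n(f_n).
$$
Now take the infimum over admissible sequences and then the supremum over $t>0$. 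This gives the first inequality.

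\emph{Upper bound.} Fix an infinitesimal $\{t_n\}$ and set
$$
L:=\limsup_n \frac{K(t_n,f;G_n,F_n)}{t_n}.
$$
We may assume $L<\infty$. For each $n$ I choose a near-optimal decomposition $f=g_n+h_n$ with
$$
G_n(g_n)+t_nF_n(h_n)\le K(t_n,f;G_n,F_n)+t_n^2,
$$
and put $f_n:=h_n$. The $t_n^2$ slack handles the possibility that the infimum is not attained, and the case where $K$ is finite for large $n$ follows from $L<\infty$. Dividing by $t_n$ and dropping the nonnegative terms gives
$$
F_n(f_n)\le \frac{K(t_n,f;G_n,F_n)}{t_n}+t_n \quad\Longrightarrow\quad \limsup_n F_n(f_n)\le L.
$$
Dropping the other nonnegative term instead gives
$$
G_n(f-f_n)=G_n(g_n)\le K(t_n,f;G_n,F_n)+t_n^2 = t_n\Big(\frac{K(t_n,f;G_n,F_n)}{t_n}+t_n\Big).
$$
The bracket is bounded for large $n$ since $L<\infty$, and $t_n\to0$, so $G_n(f-f_n)\to0$. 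Thus $\{f_n\}$ is admissible, so the $\Gamma$-limsup is at most $L$. Taking the infimum over $\{t_n\}$ finishes the proof.

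The only delicate point is the upper bound. Its admissibility check needs $K(t_n,\cdot)/t_n$ to stay bounded so that multiplying by $t_n\to0$ forces $G_n(f-f_n)\to0$, and the near-minimizer slack must be $o(t_n)$; choosing $t_n^2$ achieves both.
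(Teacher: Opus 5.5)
Your proof is correct and follows the paper's argument essentially line by line. The upper bound uses the same near-optimal decompositions with slack $t_n^2$ and the same two estimates for $G_n(f-h_n)$ and $F_n(h_n)$, and the lower bound uses the same splitting $f=(f-f_n)+f_n$; the only cosmetic difference is that you take an arbitrary admissible sequence and pass to the infimum at the end, whereas the paper fixes an $\varepsilon$-optimal sequence and lets $\varepsilon\to0^+$, which is equivalent.
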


\begin{proof}
We first prove the upper bound, more precisely, given any sequence $\{t_n\}_{n \in \mathbb{N}}$ formed by positive numbers with  $t_n \to 0$ as $n \to \infty$ then 
\begin{equation}
\Gamma-\limsup_{n\rightarrow\infty}(F_{n},G_{n})(f)\leq  \limsup
_{n\rightarrow\infty}\frac{K(t_n,f;G_{n},F_{n})}{t_n}.   \label{P3.122}
\end{equation}
Suppose that $L:=\limsup_{n\rightarrow\infty}K\big(t_n,f;G_{n},F_{n}%
\big)/t_n<  \infty$, otherwise \eqref{P3.122} holds trivially. In particular,  $K(t_n, f; G_n, F_n) < +\infty$ for all sufficiently large $n \geq n_0$. Given  $n\geq n_0$, choose a decomposition $f=g_{n}+h_{n}$ satisfying 
\begin{equation}
G_{n}(g_{n})+t_n F_{n}(h_{n})\leq K(t_n,f;G_{n},F_{n})%
+t_n^2.   \label{P2.1_thm}
\end{equation}
This implies that 
\begin{equation*}
G_{n}(f-h_{n})=G_{n}(g_{n})\leq K(t_n,f;G_{n},F_{n})+t^2_n. 
\end{equation*}
Taking limits yields 
\begin{equation*}
\limsup_{n\rightarrow\infty}G_{n}(f-h_{n})\leq\limsup_{n\rightarrow\infty }
 \frac{K (t_n,f;G_{n},F_{n})}{t_n} t_n =L 
\lim_{n\rightarrow\infty} t_n =0, 
\end{equation*}
which confirms that $G_{n}(f-h_{n})\rightarrow0$ as $n\rightarrow\infty$.
Moreover, from \eqref{P2.1_thm} we have 
\begin{equation*}
F_{n}(h_{n})\leq \frac{K(t_n,f;G_{n},F_{n})}{t_n}+t_n. 
\end{equation*}
Taking now limits on both sides of the previous estimate  we obtain $\limsup_{n\rightarrow%
\infty}F_{n}(h_{n})\leq L$. By Definition \ref{GdG}, this implies $\Gamma
-\limsup_{n\rightarrow\infty}(F_{n},G_{n})(f)\leq L$, and \eqref{P3.122}
follows.

Next we establish the lower bound: 
\begin{equation}
\sup_{t>0}\liminf_{n\rightarrow\infty}\frac{K(t,f;G_{n},F_{n})}{t}\leq
\Gamma-\liminf_{n\rightarrow\infty}(F_{n},G_{n})(f).   \label{P3.322}
\end{equation}
Let $M:=\Gamma-\liminf_{n\rightarrow\infty}(F_{n},G_{n})(f)$. We can assume $%
M<\infty$. Given $\varepsilon>0$, choose a sequence $\{f_{n}\}_{n\in%
\mathbb{N}} \subset X$ such that $G_{n}(f-f_n)\rightarrow0$ as $n\rightarrow \infty$
and 
\begin{equation}
\liminf_{n\rightarrow\infty}F_{n}(f_{n})\leq M+\varepsilon. 
\label{P2.2_thm}
\end{equation}
Fix $t>0$. There exists $m_0 = m_{0}(t,\varepsilon)\in\mathbb{N}$ such
that 
\begin{equation}
G_{n}(f-f_{n})\leq\varepsilon t\qquad\text{for all} \qquad n\geq
m_{0}.   \label{intermedia}
\end{equation}
Let $n\geq m_{0}$. To compute $K(t,f;G_{n},F_{n})$ consider
the decomposition $f=(f-f_{n})+f_{n}$, and use (\ref{intermedia}) to obtain 
\begin{equation}
\frac{K(t,f;G_{n},F_{n})}{t}\leq\frac{G_{n}(f-f_{n})}{t}+F_{n}(f_{n})\leq%
\varepsilon+F_{n}(f_{n}).   \label{P2.3_thm}
\end{equation}
Taking limits as $n\rightarrow\infty$ on both sides of %
\eqref{P2.3_thm} and applying \eqref{P2.2_thm}, 
\begin{equation*}
\liminf_{n\rightarrow\infty}\frac{K(t,f;G_{n},F_{n})}{t}\leq\varepsilon
+\liminf_{n\rightarrow\infty}F_{n}(f_{n})\leq2\varepsilon+M. 
\end{equation*}
Since this holds for any arbitrarily chosen $t>0$, we take the supremum over 
$t>0$ to arrive at: 
\begin{equation*}
\sup_{t>0}\liminf_{n\rightarrow\infty}\frac{K(t,f;G_{n},F_{n})}{t}%
\leq2\varepsilon+M. 
\end{equation*}
Since $\varepsilon>0$ was arbitrary, passing to the limit as $\varepsilon
\rightarrow0^{+}$ establishes the desired estimate \eqref{P3.322}.
\end{proof}

In the context of this section, to simplify the exposition,  we adopt the following 
conventional shorthand notation for the variational $K$-functional evaluated
against the ambient normed space: 
\begin{equation*}
K(t,f;X,F_{n}):=K(t,f;\Vert\cdot\Vert_{X},F_{n}).   
\end{equation*}
Specializing Theorem \ref{Thm3} to the framework detailed in Remark \ref%
{Remark3.2} yields Theorem \ref{teoarriba}. 


\section{{$\Gamma$-extrapolation\label{SectionGammaExtrapol}}}

In this section we show the $\Gamma$-convergence of the family of
interpolation norms $\Vert\cdot\Vert_{(A_{0},A_{1})_{\theta,p}^{%
\blacktriangleleft}}$ to the Gagliardo completions of the original pair $%
(A_{0},A_{1})$ as the interpolation parameter $\theta\in(0,1)$ approaches
the endpoints $0$ or $1$.

\begin{theorem}
\label{Thm1} Let $p\in\lbrack1,\infty], i\in\{0,1\}$ and set $i^{\ast}=1-i$. 
Then for $f\in A_{i}$ we have 
\begin{equation}\label{9.9.2}
\Gamma_{A_{i}}-\lim_{\theta\rightarrow i^{\ast}}\Vert
f\Vert_{(A_{0},A_{1})_{\theta,p}^{\blacktriangleleft}}=\Vert
f\Vert_{G_{i^{\ast}}(A_{0},A_{1})}. 
\end{equation}
\end{theorem}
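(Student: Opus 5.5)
We treat the case $i=0$ (so $\theta\to1^-$), following the scheme of the proof of Theorem \ref{ThmExtrapolIntro}. The case $i=1$ then follows by applying it to the reversed pair $(A_1,A_0)$. This uses the identities $\|f\|_{(A_1,A_0)^{\blacktriangleleft}_{\theta,p}}=\|f\|_{(A_0,A_1)^{\blacktriangleleft}_{1-\theta,p}}$ and $\|f\|_{G_0(A_1,A_0)}=\|f\|_{G_1(A_0,A_1)}$ recorded at the end of the proof of Proposition \ref{prop:lim}. Define $F_\theta$ on $A_0$ as in \eqref{9.9.1}. By Theorem \ref{Thm3} with $G_n=\|\cdot\|_{A_0}$ (i.e.\ Theorem \ref{teoarriba}), applied along an arbitrary sequence $\theta_n\to1^-$, it suffices to prove the two inequalities \eqref{elmal} and \eqref{elbien}. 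The chain of inequalities in the proof of Theorem \ref{ThmExtrapolIntro} then closes.

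\emph{Step 1: the lower inequality \eqref{elmal}.} Take any decomposition $f=g+h$ with $g\in A_0$ and $h\in A_0\cap(A_0,A_1)_{\theta,p}$; otherwise the penalty is $+\infty$. By \eqref{sec2:1}, $\|h\|_{(A_0,A_1)^{\blacktriangleleft}_{\theta,p}}\ge s^{-\theta}K(s,h;A_0,A_1)$ for all $s>0$. Choose $s=t^{1/\theta}$, so that $t\|h\|^{\blacktriangleleft}_{\theta,p}\ge K(t^{1/\theta},h;A_0,A_1)$. Since $K(s,\cdot;A_0,A_1)\le\|\cdot\|_{A_0}$, subadditivity of $K$ gives
\[
K(t^{1/\theta},f;A_0,A_1)\le K(t^{1/\theta},g;A_0,A_1)+K(t^{1/\theta},h;A_0,A_1)\le \|g\|_{A_0}+t\|h\|^{\blacktriangleleft}_{\theta,p}.
\]
Taking the infimum over decompositions gives \eqref{elmal}. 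Together with the monotonicity of $K(s,f)/s$ and the continuity in $s$, this yields the lower bound $\|f\|_{G_1}\le\Gamma_{A_0}\text{-}\liminf F_\theta(f)$ exactly as displayed in the proof of Theorem \ref{ThmExtrapolIntro}. Note that $t^{1-1/\theta}\to1$.

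\emph{Step 2: the upper inequality \eqref{elbien}; this is the main obstacle.} Assume $\|f\|_{G_1}<\infty$. For fixed $n$ and $\varepsilon>0$, the definition \eqref{sec:ante} of $G_1$ combined with \eqref{2.9.7}, or directly the $K$-functional, gives a decomposition $f=f_0+f_1$ with $\|f_0\|_{A_0}+\tfrac1n\|f_1\|_{A_1}\le K(\tfrac1n,f)+\varepsilon/n\le(\|f\|_{G_1}+\varepsilon)/n$. Then $f_1=f-f_0\in A_0\cap A_1$, and testing the variational $K$-functional with $g=f_0$, $h=f_1$ gives
\[
nK(\tfrac1n,f;A_0,F_{\theta_n})\le n\|f_0\|_{A_0}+\|f_1\|^{\blacktriangleleft}_{(A_0,A_1)_{\theta_n,p}}.
\]
The difficulty is controlling the second term uniformly, because the decomposition depends on $n$ and Proposition \ref{prop:lim} is only a pointwise limit. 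Our first attempt uses \eqref{sec2:2} applied not to $(A_0,A_1)$ but to the Gagliardo pair, via \eqref{sec2:6}, which gives $\|f_1\|^{\blacktriangleleft}_{\theta_n,p}\le\|f_1\|_{G_0}^{1-\theta_n}\|f_1\|_{G_1}^{\theta_n}$. Here $\|f_1\|_{G_1}\le\|f\|_{G_1}+\|f_0\|_{G_1}$, and $\|f_0\|_{G_1}$ must be controlled. The better route is to apply \eqref{sec2:2} directly: $\|f_1\|_{A_0}\le\|f\|_{A_0}+1$ and $\|f_1\|_{A_1}\le\|f\|_{G_1}+\varepsilon$, so
\[
\|f_1\|^{\blacktriangleleft}_{\theta_n,p}\le(\|f\|_{A_0}+1)^{1-\theta_n}(\|f\|_{G_1}+\varepsilon)^{\theta_n}\to\|f\|_{G_1}+\varepsilon .
\]
Meanwhile $n\|f_0\|_{A_0}$ is only bounded by $\|f\|_{G_1}+\varepsilon$, which would give a factor of $2$. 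To remove it, we instead decompose at a scale $s_n=\delta_n/n$ with $\delta_n\to0$ slowly. Then $n\|f_0\|_{A_0}\le\delta_n(\|f\|_{G_1}+\varepsilon)\to0$, while $\|f_1\|_{A_1}\le\|f\|_{G_1}+\varepsilon$ still holds. The factor $(\|f\|_{A_0}+1)^{1-\theta_n}$ remains bounded and tends to $1$, so $\delta_n$ does not enter the second term at all. This yields \eqref{elbien} after letting $\varepsilon\to0$.

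Finally, the case $f\notin G_1(A_0,A_1)$ is handled by the lower bound alone: the $\Gamma$-liminf is already $+\infty$. Since the sequence $\theta_n\to1^-$ was arbitrary, the family $\Gamma$-converges.
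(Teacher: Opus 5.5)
Your proof is correct. Step~1 and the reduction of the case $i=1$ to the reversed pair coincide with the paper's argument; the difference lies in how you obtain \eqref{elbien}.

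The paper takes the trivial competitor $h=f$ in the variational $K$-functional, so that $nK(\frac1n,f;A_0,F_{\theta_n})\le F_{\theta_n}(f)$. It then invokes Proposition~\ref{prop:lim}, the pointwise limit $\lim_{\theta\to1^-}\|f\|^{\blacktriangleleft}_{(A_0,A_1)_{\theta,p}}=\|f\|_{G_1(A_0,A_1)}$ for $f\in A_0$. That proposition rests on external pointwise-extrapolation results for $A_0\cap A_1$, transferred through the Gagliardo pair.

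You instead build a non-constant recovery sequence. You take near-optimal $K$-decompositions $f=f_0^{(n)}+f_1^{(n)}$ at a scale $s_n$ with $ns_n\to0$. The pieces $f_1^{(n)}$ lie in $A_0\cap A_1$ and converge to $f$ in $A_0$. Their normalized interpolation norms are controlled by the exact inequality \eqref{sec2:2} together with $K(s,f)\le s\|f\|_{G_1}$.

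Your route is self-contained. It uses only \eqref{sec2:1}, \eqref{sec2:2} and the definition of $G_1$, so it shows the $\Gamma$-convergence does not depend on the pointwise limit theorem. The paper's route is shorter once Proposition~\ref{prop:lim} is available. It also shows that the constant sequence is already a recovery sequence, a fact used later (e.g.\ in Step~1 of the proof of Theorem~\ref{ThmVarIntro}); your argument does not give this.

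A minor point: ``$\delta_n\to0$ slowly'' is unnecessary. The bound $\|f_1\|_{A_1}\le\|f\|_{G_1}+\varepsilon$ holds at every scale, so any $s_n$ with $ns_n\to0$ works, e.g.\ $s_n=n^{-2}$.
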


\begin{proof}
The proof for  the case $i=0$ was already given in Theorem \ref{ThmExtrapolIntro}, modulo the validity of the estimates \eqref{elmal} and \eqref{elbien}. Let us first check \eqref{elmal}: For every $t>0$ and  any decomposition $f = f_0 + f_1$ with $f_0 \in A_0$ and $f_1 \in(A_{0},A_{1})_{\theta,p}$,
we have 
\begin{align}
K(t,f;A_{0},A_{1}) & \leq K(t,f_0;A_{0},A_{1})+K(t,f_1; A_{0},A_{1})   \leq\Vert f_0\Vert_{A_{0}}+K(t,f_1;A_{0},A_{1}).   \label{Al1}
\end{align}
Moreover, by virtue of \eqref{sec2:1} it follows that 
\begin{equation*}
K(t,f_1;A_{0},A_{1})\leq t^{\theta}\Vert f_1\Vert_{(A_{0},A_{1})_{\theta
,p}^{\blacktriangleleft}},\qquad\forall t>0. 
\end{equation*}
Then updating the estimate \eqref{Al1}, we obtain 
\begin{equation}\label{9.9.3}
K(t^{1/\theta},f;A_{0},A_{1})\leq\Vert f_0\Vert_{A_{0}}+t\Vert f_1\Vert
_{(A_{0},A_{1})_{\theta,p}^{\blacktriangleleft}} = \|f_0\|_{A_0} + t F_\theta(f_1),
\end{equation}
where  $F_{\theta}$ is the family of functionals given by \eqref{9.9.1}. 
Moreover, this estimate holds trivially if $f_1 \in A_0 \backslash (A_0, A_1)_{\theta, p}$ (since $F_\theta(f_1) = +\infty$). Then taking the infimum in \eqref{9.9.3} over all $f = f_0 + f_1$ with $f_0, f_1 \in A_0$, we arrive
at 
\begin{equation*}
K(t^{1/\theta},f;A_{0},A_{1})\leq K(t,f;A_{0},F_{\theta}).  
\end{equation*}
This proves \eqref{elmal}. 

Next we turn our attention to  \eqref{elbien}. By the definition of $K$-functional and applying Proposition \ref{prop:lim}, we get
\begin{align*}
	\limsup_{n\rightarrow\infty}nK\Big(\frac{1}{n},f,A_{0},F_{\theta_{n}} \Big) \leq \limsup_{n \to \infty} F_{\theta_n}(f) = \|f\|_{G_1(A_0, A_1)}. 
\end{align*}
This completes the proof of \eqref{9.9.2} with $i=0$. 

The case $i=1$ can be obtained from \eqref{9.9.2} with $i=0$ when applied to the reverse pair $(A_1, A_0)$. Indeed, in this case we have
\begin{equation}\label{14.9.1}
\Gamma_{A_{1}}-\lim_{\theta\rightarrow1^{-}}\Vert
f\Vert_{(A_{1},A_{0})_{\theta,p}^{\blacktriangleleft}}=\Vert
f\Vert_{G_{1}(A_{1},A_{0})},\qquad f\in A_{1}. 
\end{equation}
Now since $K(t,f;A_{1},A_{0})=tK(\frac{1}{t},f;A_{0},A_{1}),$ we get
\begin{align}\label{14.9.2}
\Vert f\Vert_{G_{1}(A_{1},A_{0})} & =\sup_{t > 0}\frac{K(t,f;A_{1},A_{0})}{t}  =\sup_{t > 0}K(t,f;A_{0},A_{1})  =\Vert f\Vert_{G_{0}(A_{0},A_{1})}
\end{align}
and, taking into account that $c_{\theta, p} = c_{1-\theta, p}$, 
\begin{align}
	\Vert
f\Vert_{(A_{1},A_{0})_{\theta,p}^{\blacktriangleleft}} &= c_{\theta, p} \bigg(\int_{0}^{\infty
}[t^{-\theta}K(t,f;A_{1},A_{0})]^{p}\,\frac{dt}{t}\bigg)^{1/p} \nonumber \\
& =  c_{\theta, p} \bigg(\int_{0}^{\infty
}[t^{1-\theta}K(t^{-1},f;A_{0},A_{1})]^{p}\,\frac{dt}{t}\bigg)^{1/p} \nonumber  \\
& = c_{1-\theta, p} \|f\|_{(A_0, A_1)_{1-\theta, p}} = \Vert
f\Vert_{(A_{0},A_{1})_{1-\theta,p}^{\blacktriangleleft}}.  \label{14.9.3} 
\end{align}
Here, we assume that $p < \infty$, while the case $p=\infty$ is even easier. 
As a combination of \eqref{14.9.1}, \eqref{14.9.2} and \eqref{14.9.3}, 
$$
	\Gamma_{A_1}-\lim_{\theta \to 0^{+}} \Vert
f\Vert_{(A_{0},A_{1})_{\theta,p}^{\blacktriangleleft}} = \|f\|_{G_0(A_0, A_1)}, \qquad f \in A_1. 
$$
\end{proof}

Another manifestation of the strong connection between
interpolation/extrapolation and $\Gamma$-convergence uncovered in this paper is the following: 
one can consider the $K$-functional of a given normed  pair as a family of
functionals depending on the penalty parameter $t$, for which the $\Gamma$-limits
are computed. To be more precise, we have:

\begin{theorem}
\label{ThmGKFunct} We have
\begin{equation}
\Gamma_{A_{0}}-\lim_{t\rightarrow0}\frac{K(t,f;A_{0},A_{1})}{t}=\Vert
f\Vert_{G_{1}(A_{0},A_{1})}  \qquad \text{if} \qquad f \in A_0  \label{PJ22.2}
\end{equation}
and 
\begin{equation}
\Gamma_{A_{1}}-\lim_{t\rightarrow\infty}K(t,f;A_{0},A_{1})=\Vert f\Vert
_{G_{0}(A_{0},A_{1})} \qquad \text{if} \qquad f \in A_1.   \label{PJ22.3}
\end{equation}
\end{theorem}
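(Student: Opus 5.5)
We will prove \eqref{PJ22.2} by applying Theorem \ref{teoarriba} (in its version for families indexed by $t\to 0$, taking arbitrary sequences $s_n \to 0$) to the family
\[
H_s(f):=\frac{K(s,f;A_0,A_1)}{s}, \qquad f \in A_0,
\]
regarded as functionals on $X=A_0$. Then \eqref{PJ22.3} will follow from \eqref{PJ22.2} applied to the reversed pair $(A_1,A_0)$, as in the proof of Theorem \ref{Thm1}.

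\emph{Lower bound.} We aim for the analogue of \eqref{elmal}, namely
\[
K(ts,f;A_0,A_1)\leq K(t,f;A_0,H_s)\qquad (f\in A_0).
\]
To obtain it, take any decomposition $f=g+h$ with $g,h\in A_0$. Subadditivity of the classical $K$-functional and $K(\tau,g;A_0,A_1)\le \|g\|_{A_0}$ give
\[
K(ts,f;A_0,A_1)\le \|g\|_{A_0}+K(ts,h;A_0,A_1)\le \|g\|_{A_0}+tK(s,h;A_0,A_1) = \|g\|_{A_0}+t s H_s(h).
\]
This is not quite the form we want, so we instead use the concavity estimate $K(ts,h)\le \max\{1,t\}K(s,h)$. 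For fixed $t$ and $s$ small this yields $K(ts,f;A_0,A_1)\le \max\{1,t\}\,K(t,f;A_0,H_s)/t\cdot t$ up to constants. A cleaner route is to write
\[
K(ts,f)\le \|g\|_{A_0} + t\,K(s,h) \le \|g\|_{A_0}+ts\,H_s(h),
\]
so that $K(ts,f)/(ts)\le \|g\|_{A_0}/(ts)+H_s(h)$. This suggests comparing with $K(ts,f;A_0,H_s)/(ts)$. Accordingly, for fixed $\tau>0$ we bound
\[
\frac{K(\tau,f;A_0,H_s)}{\tau}\ \ge\ \frac{K(\tau s,f;A_0,A_1)}{\tau s}\ \ge\ \sup_{r>0}\ \text{(no)}.
\]
More directly, since $K(\cdot,f)/\cdot$ is nonincreasing, $K(\tau s,f)/(\tau s)\ge K(\tau,f)/\tau$ once $s\le 1$. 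Hence
\[
\liminf_{s\to0}\frac{K(\tau,f;A_0,H_s)}{\tau}\ge \frac{K(\tau,f;A_0,A_1)}{\tau}.
\]
Taking $\sup_\tau$ and invoking \eqref{sec:ante}, the lower $\Gamma$-limit is at least $\|f\|_{G_1(A_0,A_1)}$.

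\emph{Upper bound.} Choosing the trivial decomposition $g=0$, $h=f$ gives $K(t_n,f;A_0,H_{s_n})/t_n\le H_{s_n}(f)$. By \eqref{3.9.1}, $H_{s_n}(f)\to \|f\|_{G_1(A_0,A_1)}$ as $s_n\to 0$. Theorem \ref{teoarriba} then bounds the upper $\Gamma$-limit by $\|f\|_{G_1}$. The case $\|f\|_{G_1}=\infty$ is automatic, because the lower bound already forces $+\infty$. This settles \eqref{PJ22.2}.

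\emph{Reduction of \eqref{PJ22.3}.} Apply \eqref{PJ22.2} to $(A_1,A_0)$ with $f\in A_1$, which gives
\[
\Gamma_{A_1}\text{-}\lim_{t\to0}\frac{K(t,f;A_1,A_0)}{t}=\|f\|_{G_1(A_1,A_0)}.
\]
The identity $K(t,f;A_1,A_0)=tK(1/t,f;A_0,A_1)$ turns the left-hand side into $\Gamma_{A_1}\text{-}\lim_{u\to\infty}K(u,f;A_0,A_1)$, and \eqref{14.9.2} identifies the right-hand side with $\|f\|_{G_0(A_0,A_1)}$.

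The main obstacle is the lower bound, specifically choosing the comparison inequality so that the parameter $s$ disappears in the limit. The monotonicity of $K(t,f)/t$ in $t$ is the key device for this, and it replaces the exponent trick $t^{1/\theta}$ used in \eqref{elmal}.
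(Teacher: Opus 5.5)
Your upper bound (the trivial decomposition $g=0$, $h=f$ combined with \eqref{3.9.1}) and your reduction of \eqref{PJ22.3} to the reversed pair are correct and agree with the paper. The gap is in the lower bound. The conclusion you write down, $\liminf_{s\to0}K(\tau,f;A_0,H_s)/\tau\ge K(\tau,f;A_0,A_1)/\tau$, is true; it is the paper's \eqref{PJ4.3}. Your derivation of it, however, goes through the displayed inequality
\[
\frac{K(\tau,f;A_0,H_s)}{\tau}\ \ge\ \frac{K(\tau s,f;A_0,A_1)}{\tau s},
\]
which is false.

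To see this, take $g=f$, $h=0$ in the definition. This gives $K(\tau,f;A_0,H_s)/\tau\le\Vert f\Vert_{A_0}/\tau$ for every $s$. By \eqref{3.9.1}, the right-hand side tends to $\Vert f\Vert_{G_1(A_0,A_1)}$ as $s\to0$. So the inequality fails for every fixed $\tau$ as soon as $f\in A_0\setminus G_1(A_0,A_1)$. It fails even for the trivial pair $A_1=A_0$ whenever $\tau>1$: by \eqref{7.9.4} the left side is $\Vert f\Vert_{A_0}/\tau$ and the right side is $\Vert f\Vert_{A_0}$ once $\tau s\le1$.

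The target you started from, $K(ts,f;A_0,A_1)\le K(t,f;A_0,H_s)$, is also the wrong analogue of \eqref{elmal}. There the shift $t\mapsto t^{1/\theta}$ disappears as $\theta\to1^-$, whereas your shift $t\mapsto ts$ does not. After dividing by $t$, it only bounds $K(t,f;A_0,H_s)/t$ from below by $K(ts,f;A_0,A_1)/t\le s\Vert f\Vert_{G_1(A_0,A_1)}$, which is useless as $s\to0$. Finally, the step $K(ts,h;A_0,A_1)\le tK(s,h;A_0,A_1)$ in your first attempt holds only for $t\ge1$.

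The missing idea is to apply the monotonicity of $r\mapsto K(r,\cdot\,;A_0,A_1)/r$ to the component $h$ of the decomposition, not to $f$, and without rescaling the parameter. For $0<s\le\tau$ and any $f=g+h$ with $g,h\in A_0$,
\[
K(\tau,f;A_0,A_1)\le\Vert g\Vert_{A_0}+K(\tau,h;A_0,A_1)\le\Vert g\Vert_{A_0}+\tau\,\frac{K(s,h;A_0,A_1)}{s}=\Vert g\Vert_{A_0}+\tau H_s(h).
\]
Taking the infimum over decompositions gives $K(\tau,f;A_0,A_1)\le K(\tau,f;A_0,H_s)$ whenever $s\le\tau$. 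Hence $\liminf_{n\to\infty}K(\tau,f;A_0,H_{s_n})/\tau\ge K(\tau,f;A_0,A_1)/\tau$ for every $\tau>0$. Taking $\sup_{\tau>0}$ and applying Theorem \ref{teoarriba} then gives $\Gamma_{A_0}-\liminf_{n\to\infty}H_{s_n}(f)\ge\Vert f\Vert_{G_1(A_0,A_1)}$, which is exactly the paper's argument. Your first attempt, read with $\tau:=ts$ and $t\ge1$, was already this inequality, so the right estimate was on your page before you set it aside.
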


\begin{proof}
We show \eqref{PJ22.2}. For each $n\in{\mathbb{N}}$, consider 
\begin{equation}\label{1.10.6}
F_{n}(f):=nK\Big(\frac{1}{n},f;A_{0},A_{1}\Big), \qquad f \in A_0. 
\end{equation}
On the one hand, we have the trivial estimate 
\begin{equation}
nK\Big(\frac{1}{n},f;A_{0},F_{n}\Big)\leq F_{n}(f).   \label{PJ4.2}
\end{equation}
On the other hand, given any decomposition $f = f_0 + f_1$ with $f_0, f_1 \in A_0$  and $t>0$, we have 
\begin{align}
K(t,f;A_{0},A_{1}) & \leq K(t,f_0;A_{0},A_{1})+K(t,f_1;A_{0},A_{1})  \leq\Vert f_0\Vert_{A_{0}}+K(t,f_1;A_{0},A_{1}).   \label{PJ4.1}
\end{align}
Without loss of generality, we may assume that $n\in{\mathbb{N}}$ is
sufficiently large so that $\frac{1}{n}\leq t$ and therefore (by basic monotonicity properties of $K$-functionals) 
\begin{equation*}
K(t,f_1;A_{0},A_{1})\leq tnK\Big(\frac{1}{n},f_1;A_{0},A_{1}\Big). 
\end{equation*}
Inserting this into \eqref{PJ4.1}, we arrive at 
\begin{align*}
K(t,f;A_{0},A_{1}) & \leq\Vert f_0\Vert_{A_{0}}+tnK\Big(\frac{1}{n}%
,f_1;A_{0},A_{1}\Big)  =\Vert f_0\Vert_{A_{0}}+tF_{n}(f_1).
\end{align*}
Taking the infimum over all possible decompositions $f = f_0 + f_1$, we obtain 
\begin{equation*}
K(t,f;A_{0},A_{1})\leq K(t,f;A_{0},F_{n}) 
\end{equation*}
if $\frac{1}{n}\leq t$. In particular 
\begin{equation}
K(t,f;A_{0},A_{1})\leq\liminf_{n\rightarrow\infty}K(t,f;A_{0},F_{n}). 
\label{PJ4.3}
\end{equation}

Applying  now Theorem \ref{teoarriba} to the functionals $F_n$ given by \eqref{1.10.6} defined on $X=A_{0}$, combined with \eqref{PJ4.2}
and \eqref{PJ4.3}, gives 
\begin{align*}
\Vert f\Vert_{G_{1}(A_{0},A_{1})} & =\sup_{t>0}\frac{K(t,f;A_{0},A_{1})}{t}
 \leq \sup_{t > 0}\liminf_{n\rightarrow\infty}\frac{K(t,f;A_{0},F_{n})}{t} \\
& \leq\Gamma_{A_{0}}-\liminf_{n\rightarrow\infty}F_{n}(f)  \leq\Gamma_{A_{0}}-\limsup_{n\rightarrow\infty}F_{n}(f) \\
& \leq\limsup_{n\rightarrow\infty}nK\Big(\frac{1}{n},f;A_{0},F_{n}\Big)  \leq\limsup_{n\rightarrow\infty}F_{n}(f) \leq \Vert f\Vert_{G_{1}(A_{0},A_{1})}. 
\end{align*}
As a byproduct, we have shown that 
\begin{equation*}
\Gamma_{A_{0}}-\lim_{n\rightarrow\infty}F_{n}(f)=\Vert
f\Vert_{G_{1}(A_{0},A_{1})} \qquad \text{if} \qquad f \in A_0. 
\end{equation*}
This completes the proof of \eqref{PJ22.2}.

The assertion \eqref{PJ22.3} is an immediate consequence of \eqref{PJ22.2}
applied to the reverse pair $(A_{1},A_{0})$.
\end{proof}

\section{Extrapolation of $\Gamma$-convergence: applications and examples
\label{sec:app&exples}}

In this section, we showcase the versatility of our extrapolation
methodology in handling both $\Gamma$-convergence and pointwise limits
across a wide variety of functionals. Our approach offers a unified
perspective; regardless of the application, the starting point is always a
suitable pair of compatible spaces. Although the execution details differ,
the overall plan of action remains the same and relies on the analysis of
the associated  $K$-functional.

\subsection{$\Gamma$-convergence to Sobolev norms}

\label{SubSecSobolevGamma}

Recall that, for $s\in(0,1)$ and $p\in\lbrack1,\infty)$, the \emph{fractional Sobolev
space} $\dot{W}^{s,p}(\mathbb{R}^{N})$ is equipped with the Gagliardo
seminorm 
\begin{equation}
\Vert f\Vert_{\dot{W}^{s,p}(\mathbb{R}^{N})}:=\bigg(\iint_{\mathbb{R}%
^{N}\times\mathbb{R}^{N}}\frac{|f(x)-f(y)|^{p}}{|x-y|^{sp+N}}\,dx \, dy\bigg)%
^{1/p}.   \label{DefGag}
\end{equation}
An important result that follows from the work of Ponce \cite[Theorem 8]{P}
asserts that the $\Gamma$-limit as $s\rightarrow1^{-}$ in $L_{\text{loc}}^{1}(%
\mathbb{R}^{N})$ of the family of Gagliardo seminorms \eqref{DefGag},
adequately normalized by the prefactor $(1-s)^{1/p}$, is a multiple of the
standard Sobolev seminorm 
\begin{equation*}
\Vert f\Vert_{\dot{W}^{1,p}(\mathbb{R}^{N})}:=\Vert\nabla f\Vert _{L^{p}(%
\mathbb{R}^{N})}. 
\end{equation*}
Specifically, if $p>1$ then 
\begin{equation}
\Gamma_{L_{\text{loc}}^{1}(\mathbb{R}^{N})}-\lim_{s\rightarrow1^{-}}(1-s)^{1/p}%
\Vert f\Vert_{\dot{W}^{s,p}(\mathbb{R}^{N})}=K_{p,N}\,\Vert f\Vert_{\dot{W}%
^{1,p}(\mathbb{R}^{N})},   \label{S5.1}
\end{equation}
where $K_{p,N}$ is an explicit constant that depends only on $p$ and $N$.
See also \cite{BPS}, \cite{CDLKNP} and \cite{GS}.

Next we show that \eqref{S5.1} can be considered as a special case of
Theorem \ref{Thm1} for the pair $(L^{p}(\mathbb{R}^{N}),\dot{W}^{1,p}(%
\mathbb{R}^{N}))$. More generally,  we will show that \eqref{S5.1} also holds in
setting of higher order Sobolev spaces based on rearrangement invariant
spaces. Given a rearrangement invariant space $X$ (in short, r.i. space) on $\mathbb{R}^{N}$
endowed with the Lebesgue measure and $k\in{\mathbb{N}}$, the \emph{Sobolev
space} $\dot{W}^{k}X$ is defined by 
\begin{equation*}
\Vert f\Vert_{\dot{W}^{k}X}:=\sum_{|\alpha|=k}\Vert D^{\alpha}f\Vert_{X}, 
\end{equation*}
where $\alpha=(\alpha_{1},\ldots,\alpha_N)\in{\mathbb{N}}_{0}^{N}$ is a
multi-index with order $|\alpha|=\alpha_{1}+\cdots+\alpha_N$. Let 
\begin{equation*}
W^{k}X:=X\cap\dot{W}^{k}X 
\end{equation*}
equipped  with the norm 
\begin{equation*}
\Vert f\Vert_{W^{k}X}:=\Vert f\Vert_{X}+\Vert f\Vert_{\dot{W}^{k}X}. 
\end{equation*}
On the other hand, the \emph{Besov space} $\dot{B}_{q}^{sk}X,s\in
(0,1),q\in\lbrack1,\infty]$, is the set of all $f\in L_{\text{loc}}^{1}(%
\mathbb{R}^{N})$ such that
\begin{equation}
\Vert f\Vert_{\dot{B}_{q}^{sk}X}:=\bigg(\int_{\mathbb{R}^{N}}\frac{\Vert
\Delta_{h}^{k}f\Vert_{X}^{q}}{|h|^{skq+N}}\,dh\bigg)^{1/q}<\infty 
\label{DefBesovD}
\end{equation}
(with the usual modification if $q=\infty$), where $\Delta_{h}^{k}$ denotes the \emph{$k$-th order difference of step $h$}
given by 
\begin{equation}\label{14.9.5}
\Delta_{h}^{1}f(x)=\Delta_{h}f(x):=f(x+h)-f(x),\qquad\Delta_{h}^{k}:=%
\Delta_{h} \Delta_{h}^{k-1} \quad\text{if}\quad k>1. 
\end{equation}
Let 
\begin{equation*}
\Vert f\Vert_{B_{q}^{sk}X}:=\Vert f\Vert_{X}+\Vert f\Vert_{\dot{B}%
_{q}^{sk}X}.  
\end{equation*}
In the special case $X=L^{p}(\mathbb{R}^{N})$, we shall adopt the standard
notation $\dot{B}_{p,q}^{sk}(\mathbb{R}^{N})$ (resp. $B_{p,q}^{sk}(\mathbb{R}%
^{N})$) rather than $\dot{B}_{q}^{sk}L^{p}(\mathbb{R}^{N})$ (resp. $%
B_{q}^{sk}L^{p}(\mathbb{R}^{N})$). Note that if $s\in(0,1),$ then $\dot{W}^{s, p}(\R^N) = \dot{B}^s_{p, p}(\R^N)$ with 
\begin{equation}
\Vert f\Vert_{\dot{W}^{s,p}(\mathbb{R}^{N})}=\Vert f\Vert_{\dot{B}_{p,p}^{s}(%
\mathbb{R}^{N})}.   \label{CoincGFS}
\end{equation}

\begin{theorem}
\label{Thm54} Let $q\in\lbrack1,\infty]$ and $k\in{\mathbb{N}}$. Then for $%
f\in W^k X$ we have 
\begin{equation*}
\Vert f\Vert_{\dot{W}^{k}X} \lesssim \Gamma_{X}-\liminf_{s\rightarrow1^{-}}(1-s)^{1/q}\Vert f\Vert_{\dot{B}%
_{q}^{sk}X}\leq \Gamma_{X}-\limsup_{s\rightarrow1^{-}}(1-s)^{1/q}\Vert
f\Vert_{\dot {B}_{q}^{sk}X}\lesssim \Vert f\Vert_{\dot{W}^{k}X}. 
\end{equation*}
\end{theorem}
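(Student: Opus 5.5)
Apply Theorem \ref{Thm1} (case $i=0$) to the pair $(A_0,A_1)=(X,\dot{W}^kX)$, then transfer the resulting exact $\Gamma$-limit to the Besov functionals through equivalences valid up to constants.

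First, recall the classical characterization of the $K$-functional for r.i. spaces via the $k$-th modulus of smoothness:
\begin{equation*}
K(t^k,f;X,\dot{W}^kX)\approx \omega_k(f,t)_X:=\sup_{|h|\le t}\Vert\Delta_h^k f\Vert_X ,\qquad t>0,
\end{equation*}
with constants depending only on $k$ and $N$. This is the step I expect to need the most care. The upper bound comes from a Steklov-type averaging decomposition $f=(f-f_t)+f_t$, using Minkowski's inequality in the r.i. norm. The lower bound follows from $\Vert\Delta_h^kg\Vert_X\le |h|^k\Vert g\Vert_{\dot W^kX}$, since translations are isometries on $X$.

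Next, insert this into the definition of $\Vert f\Vert_{\dot B^{sk}_qX}$. Integrating in polar coordinates in $h$ and using the monotonicity of $\omega_k$ (standard Marchaud-type arguments), the Besov seminorm becomes comparable to
\begin{equation*}
\bigg(\int_0^\infty [t^{-sk}\omega_k(f,t)_X]^q\frac{dt}{t}\bigg)^{1/q}.
\end{equation*}
Substituting $u=t^k$ turns this into $k^{-1/q}\Vert f\Vert_{(X,\dot W^kX)_{s,q}}$. Multiplying by $(1-s)^{1/q}$ and comparing with $c_{s,q}=(s(1-s)q)^{1/q}$ gives
\begin{equation*}
(1-s)^{1/q}\Vert f\Vert_{\dot B^{sk}_qX}\approx \Vert f\Vert_{(X,\dot W^kX)_{s,q}^{\blacktriangleleft}}
\end{equation*}
uniformly for $s\in(1/2,1)$, for every $f\in X$. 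The factor $s^{1/q}$ is harmless there, and the case $q=\infty$ is analogous.

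Since the two families of functionals are pointwise equivalent with uniform constants, their $\Gamma_X$-$\liminf$ and $\Gamma_X$-$\limsup$ are equivalent with the same constants: pass to the infimum over the recovery sequences $f_n\to f$ in $X$ in the definitions (\ref{intro1})--(\ref{intro2}). By Theorem \ref{Thm1} with $i=0$, the $\Gamma_X$-limit of the normalized interpolation norms is $\Vert f\Vert_{G_1(X,\dot W^kX)}$.

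It remains to check that $\Vert f\Vert_{G_1(X,\dot W^kX)}\approx\Vert f\Vert_{\dot W^kX}$ for $f\in W^kX$. The upper bound is (\ref{sec:sera}). For the lower bound, use
\begin{equation*}
\sup_{t}t^{-k}\omega_k(f,t)_X\ge \limsup_{h\to0}\Vert\Delta^k_{he_j}f/h^k\Vert_X\gtrsim \Vert\partial_j^kf\Vert_X ,
\end{equation*}
which follows from Fatou's property of $X$ applied to a.e. limits of difference quotients along subsequences. Mixed derivatives are obtained via directions $e_i+e_j$, or more directly because $f\in W^kX$ already lies in $\dot W^kX$, so $G_1$ only needs to dominate its seminorm up to constants, using the $K$-functional equivalence above.
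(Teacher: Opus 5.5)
Your architecture matches the paper's. You apply Theorem~\ref{Thm1} (case $i=0$) to $(X,\dot W^kX)$ and show $(1-s)^{1/q}\Vert f\Vert_{\dot B^{sk}_qX}\approx\Vert f\Vert_{(X,\dot W^kX)^{\blacktriangleleft}_{s,q}}$ uniformly for $s$ near $1$. You then transfer the $\Gamma$-limits by uniform equivalence and identify $G_1(X,\dot W^kX)$ with $\dot W^kX$ on $W^kX$. The differences lie in how the two identifications are obtained, and one of your justifications needs repair.

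\textbf{The Besov step.} The paper uses the \emph{averaged} form of \eqref{PD28.1}, $K(t^k,f;X,\dot W^kX)\approx(t^{-N}\int_{|h|<t}\Vert\Delta^k_hf\Vert_X^q\,dh)^{1/q}$. With it, \eqref{PD28.3} is a single Fubini computation whose constants $\approx(skq+N)^{-1}$ are visibly uniform in $s$.

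You pass through the sup-modulus $\omega_k$ instead. The bound $\Vert f\Vert_{\dot B^{sk}_qX}\lesssim(\int_0^\infty[t^{-sk}\omega_k(f,t)_X]^q\,dt/t)^{1/q}$ is then trivial. The reverse bound, however, does not follow from monotonicity of $\omega_k$ or from Marchaud-type inequalities, because you must control a supremum over all $|h|\le t$ by averages of $\Vert\Delta^k_hf\Vert_X$.

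The missing ingredient is $\omega_k(f,t)_X\lesssim t^{-N}\int_{|y|<(k+1)t}\Vert\Delta^k_yf\Vert_X\,dy$. To get it, average over $|\eta|<t$ the identity $\Delta^k_hf(x)=\sum_{j=1}^k(-1)^j\binom{k}{j}\big[\Delta^k_{j\eta}f(x+jh)-\Delta^k_{h+j\eta}f(x)\big]$, which holds for every $\eta$, and use translation invariance of $\Vert\cdot\Vert_X$. H\"older and Fubini then give the reverse bound with constants uniform in $s\in(0,1)$. For $q=\infty$ the two quantities coincide exactly, so nothing is needed there. With this inserted, your route is equivalent to the paper's: in effect you re-derive the averaged characterization the paper cites.

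\textbf{The $G_1$ step.} Here your route is genuinely different and more elementary. The paper gets both bounds from the exact limits \eqref{10.9.2} and \eqref{12.9.2} of \cite{DLTYY}. You get the upper bound for free from \eqref{sec:sera}. You get the lower bound from the easy half of $K(t^k,f)\approx\omega_k(f,t)_X$ together with the Fatou property applied to a.e.\ convergent difference quotients. Your argument yields only an equivalence, which is all the theorem needs. The paper's heavier input identifies $\lim_{t\to0}K(t,f)/t$ exactly, and that exact value is what Section~\ref{SectionExact} uses.

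Two small fixes are needed:
\begin{itemize}
\item Directions $e_i+e_j$ recover mixed derivatives only when $k=2$. For general $k$, take $\binom{N+k-1}{k}$ unit vectors $\omega^{(m)}$ whose matrix $((\omega^{(m)})^\alpha)_{m,|\alpha|=k}$ is invertible. Then each $D^\alpha f$ is a fixed linear combination of the directional derivatives $\sum_{|\alpha|=k}\frac{k!}{\alpha!}(\omega^{(m)})^\alpha D^\alpha f$. Each of these is controlled by your Fatou argument in direction $\omega^{(m)}$; this is essentially the content of \eqref{PF12.1}.
\item Drop the ``more directly'' alternative. Knowing $f\in\dot W^kX$ never by itself gives $\Vert f\Vert_{\dot W^kX}\lesssim\Vert f\Vert_{G_1(X,\dot W^kX)}$. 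That reverse inequality is exactly the lower-semicontinuity statement your Fatou argument supplies.
\end{itemize}
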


\begin{remark}
In particular, applying Theorem \ref{Thm54} with $X=L^{p}(\mathbb{R}^{N}),k=1
$ and $q=p,$ we obtain (cf. \eqref{CoincGFS}) 
\begin{align*}
C_1 \Vert f\Vert_{\dot{W}^{1,p}(\mathbb{R}^{N})}& \leq \Gamma_{L^{p}(\mathbb{R}^{N})}-\liminf_{s\rightarrow1^{-}}(1-s)^{1/p}\Vert
f\Vert_{\dot{W}^{s,p}(\mathbb{R}^{N})} \\
& \leq \Gamma_{L^{p}(\mathbb{R}%
^{N})}-\limsup_{s\rightarrow1^{-}}(1-s)^{1/p}\Vert f\Vert_{\dot{W}^{s,p}(%
\mathbb{R}^{N})}\leq C_2 \Vert f\Vert_{\dot{W}^{1,p}(\mathbb{R}^{N})}, 
\end{align*}
which gives the $L^p$-topology analog of  \eqref{S5.1}, up to equivalence constants (cf. Section \ref{SectionExact} below for a detailed study on exact equivalence constants). More generally, if $k \in \mathbb{N}$ then
\begin{align}
C_1 \Vert f\Vert_{\dot{W}^{k,p}(\mathbb{R}^{N})}& \leq \Gamma_{L^{p}(\mathbb{R}^{N})}-\liminf_{s\rightarrow1^{-}}(1-s)^{1/p}\Vert
f\Vert_{\dot{B}^{s k}_{p, p}(\mathbb{R}^{N})} \nonumber\\
& \leq \Gamma_{L^{p}(\mathbb{R}%
^{N})}-\limsup_{s\rightarrow1^{-}}(1-s)^{1/p}\Vert f\Vert_{\dot{B}^{s k}_{p, p}(%
\mathbb{R}^{N})}\leq C_2 \Vert f\Vert_{\dot{W}^{k,p}(\mathbb{R}^{N})}.  \label{14.9.8}
\end{align}
\end{remark}

\begin{proof}[Proof of Theorem \protect\ref{Thm54}]
Apply Theorem \ref{Thm1} to the pair $(A_{0},A_{1})=(X,\dot{W}^{k}X)$, then 
\begin{equation}
\Gamma_{X}-\lim_{s\rightarrow1^{-}}\Vert f\Vert_{(X,\dot{W}%
^{k}X)^{\blacktriangleleft}_{s,q}}=\Vert f\Vert_{G_{1}(X,\dot{W}^{k}X)}   \label{PD28.2}
\end{equation}
for every $f\in X$. The method of proof now consists in estimating
appropriately both sides of \eqref{PD28.2}.

Now, for each $t>0$ and $f \in X$,  
\begin{equation}
K(t^{k},f;X,\dot{W}^{k}X)\approx  \begin{cases}
\big(\frac{1}{t^{N}}\int_{|h|<t}\Vert\Delta_{h}^{k}f\Vert_{X}^{q}\,dh\big)%
^{1/q} & \qquad \text{if} \qquad q \in (0, \infty), \\
\omega_k(f, t)_X & \qquad \text{if} \qquad q = \infty,   \label{PD28.1}
\end{cases}
\end{equation}
where
\begin{equation}\label{15.9.1}
\omega_k(f, t)_X:=	\sup_{|h| \leq t} \|\Delta^k_h f\|_X
\end{equation}
is the classical \emph{$k$-th order modulus of smoothness} of $f$. For $X=L^{p}(\mathbb{R}^{N}), \, p \in [1, \infty]$, the formula \eqref{PD28.1} with $q=\infty$ is well known (see e.g. \cite[Theorem 4.12, p. 339, (4.42), p. 341]{BS}), while the case $q < \infty$ may be found
in \cite[Appendix A]{KMX}; in this case, we shall use the simplified notation $\omega_k(f, t)_p$ to denote $\omega_k(f, t)_{L^p(\R^N)}$ and, when there is no danger of confusion, $\omega(f, t)_p := \omega_1(f, t)_p$. For general r.i. spaces,  the proof of \eqref{PD28.1} follows along similar
lines as in the setting of Lebesgue spaces, cf. \cite[Lemma B.1]{DLTYY} for further details.

Let $s\in(0,1)$ and $q \in [1, \infty)$. Using successively the definition of the
interpolation norm, a change of variables,  the characterization of the $K$-functional \eqref{PD28.1} 
and Fubini's theorem, we get 
\begin{align}
\Vert f\Vert_{(X,\dot{W}^{k}X)_{s,q}} & \approx\bigg(\int_{0}^{\infty
}t^{-skq}K(t^{k},f;X,\dot{W}^{k}X)^{q}\,\frac{dt}{t}\bigg)^{1/q}  \notag \\
& \approx\bigg(\int_{0}^{\infty}t^{-skq-N}\int_{|h|<t}\Vert\Delta_{h}^{k}f%
\Vert_{X}^{q}\,dh\,\frac{dt}{t}\bigg)^{1/q}  \notag \\
& \approx\bigg(\int_{\mathbb{R}^{N}}\frac{\Vert\Delta_{h}^{k}f\Vert_{X}^{q}}{%
|h|^{skq+N}}\,dh\bigg)^{1/q}=\Vert f\Vert_{\dot{B}_{q}^{sk}X}. 
\label{PD28.3}
\end{align}
Here, we stress that all underlying equivalence constants are uniform with respect to $s \in (0, 1)$.  The
case $q=\infty$ in \eqref{PD28.3} is even easier and then we shall omit the
details.

We estimate the left-hand side of \eqref{PD28.2} via \eqref{PD28.3}, 
\begin{align*}
\Gamma_{X}-\liminf_{s\rightarrow1^{-}}\Vert f\Vert_{(X,\dot{W}^{k}X)_{s,q}^\blacktriangleleft}
& \lesssim\Gamma_{X}-\liminf_{s\rightarrow1^{-}}(1-s)^{1/q}\Vert f\Vert_{\dot {B%
}_{q}^{sk}X}   \\
& \leq\Gamma_{X}-\limsup_{s\rightarrow1^{-}}(1-s)^{1/q}\Vert f\Vert_{\dot{B}%
_{q}^{sk}X}   \\
& \lesssim\Gamma_{X}-\limsup_{s\rightarrow1^{-}}\Vert f\Vert_{(X,\dot{W}%
^{k}X)_{s,q}^\blacktriangleleft}.   
\end{align*}
Using these estimates in \eqref{PD28.2}, we achieve that, for $f \in X$, 
\begin{align}
	\Vert f\Vert_{G_{1}(X,\dot{W}^{k}X)}  &\lesssim \Gamma_{X}-\liminf_{s\rightarrow1^{-}}(1-s)^{1/q}\Vert f\Vert_{\dot {B%
}_{q}^{sk}X}  \notag \\
& \leq\Gamma_{X}-\limsup_{s\rightarrow1^{-}}(1-s)^{1/q}\Vert f\Vert_{\dot{B}%
_{q}^{sk}X}  \notag \\
& \lesssim \Vert f\Vert_{G_{1}(X,\dot{W}^{k}X)}. \label{13.9.1}
\end{align}

To estimate $\|f\|_{G_1(X, \dot{W}^k X)}$, we will make use of a result obtained in \cite[(6.8)]{DLTYY}: If $f\in\dot{W}^{k}X$ and $q \in (0, \infty)$ then the following limit exists  
\begin{equation*}
	\lim_{t\rightarrow0}\frac{\big(\frac{1}{t^{N/k}}\int_{|h|<t^{1/k}}%
\Vert\Delta_{h}^{k}f\Vert_{X}^{q}\,dh\big)^{1/q}}{t} < \infty
\end{equation*}
and, in addition,  its value (which can be explicitly computed) satisfies
\begin{equation}\label{10.9.2}
	\lim_{t\rightarrow0}\frac{\big(\frac{1}{t^{N/k}}\int_{|h|<t^{1/k}}%
\Vert\Delta_{h}^{k}f\Vert_{X}^{q}\,dh\big)^{1/q}}{t}  \approx \Vert f\Vert_{\dot{W}^{k}X}. 
\end{equation}
Concerning the case $q= \infty$, we make use of the fact that (see \cite[(1.8)]{T} for the case $X = L^p(\R^N)$ but the arguments can be easily adapted to work with any r.i. space)
\begin{equation}\label{12.9.1}
	\lim_{t \to 0} \frac{\omega_k(f, t)_X}{t^k} = \sup_{h \neq 0} \frac{\|\Delta^k_h f\|_X}{|h|^k}
\end{equation}
provided that $f \in \dot{W}^k X$. Note that the right-hand side of \eqref{12.9.1} is finite due to the  fact that  (cf. \cite[(6.6)]{DLTYY}) 
$$
	 \sup_{h \neq 0} \frac{\|\Delta^k_h f\|_X}{|h|^k} \approx \|f\|_{\dot{W}^k X}, \qquad f \in \dot{W}^k X. 
$$
Hence the counterpart of \eqref{10.9.2} related to $q=\infty$ reads 
\begin{equation}\label{12.9.2}
	\lim_{t \to 0} \frac{\omega_k(f, t)_X}{t^k}  \approx \|f\|_{\dot{W}^k X}. 
\end{equation}

Assume that $f\in\dot{W}^{k}X$. Using \eqref{3.9.1}, \eqref{PD28.1} and \eqref{10.9.2}, we derive
\begin{align}
\Vert f\Vert_{G_{1}(X,\dot{W}^{k}X)} & =\lim_{t\rightarrow0}\frac {K(t,f;X,%
\dot{W}^{k}X)}{t}   \approx \lim_{t\rightarrow0}\frac{\big(\frac{1}{t^{N/k}}\int_{|h|<t^{1/k}}%
\Vert\Delta_{h}^{k}f\Vert_{X}^{q}\,dh\big)^{1/q}}{t}  \approx\Vert f\Vert_{\dot{W}^{k}X}   \label{PD28.5i}
\end{align}
if $q < \infty$. The case $q=\infty$ follows similarly but using instead \eqref{12.9.2}.

Updating \eqref{13.9.1} with \eqref{PD28.5i}, we find for $f \in W^k X$
\begin{align*}
\Vert f\Vert_{\dot{W}^{k}X} & \lesssim\Gamma_{X}-\liminf_{s\rightarrow
1^{-}}(1-s)^{1/q}\Vert f\Vert_{\dot{B}_{q}^{sk}X} \\
& \leq\Gamma_{X}-\limsup_{s\rightarrow1^{-}}(1-s)^{1/q}\Vert f\Vert_{\dot{B}%
_{q}^{sk}X}\lesssim\Vert f\Vert_{\dot{W}^{k}X}.
\end{align*}
\end{proof}

\subsection{$\Gamma$-convergence to fractional Sobolev norms via Butzer seminorms}
As pointed out in  \cite{BSY},  it is an interesting question to what extent BBM-type formulae related to the gradient operator (or more generally, higher order derivatives) can be extended to deal with non-local differential operators  such as the fractional Laplacian $(-\Delta)^{t/2}, t > 0$. Note that standard approaches to the classical  BBM formula \eqref{14.9.4} (as well as its $\Gamma$-counterpart \eqref{S5.1}) strongly rely on the local properties of the gradient operator. Next we show how the new machinery provided by Theorem \ref{Thm1} can overcome these issues with the help of Butzer seminorms introduced in \cite{DM}. 

Recall that, for $t > 0$ and $p \in (1, \infty)$, the \emph{Riesz potential space} $\dot{H}^{t, p}(\R^N)$ is endowed with
\begin{equation}\label{15.9.2}
	\|f\|_{\dot{H}^{t, p}(\R^N)} := \|(-\Delta)^{\frac{t}{2}} f\|_{L^p(\R^N)}
\end{equation}
and the \emph{Bessel potential space} $H^{t, p}(\R^N)$ is defined by
$$
	\|f\|_{H^{t, p}(\R^N)}:= \|f\|_{L^p(\R^N)} + \|f\|_{\dot{H}^{t, p}(\R^N)}.  
$$
For $t > 0$ and $h \in \R^N$, the \emph{Butzer operator} is defined by (cf. (1.15) and especially  footnote 8 in \cite{DM})
$$
	\Delta^t_h f (x) := \sum_{j=0}^\infty (-1)^j \binom{t}{j} f(x+ (t-j) h), \qquad x \in \R^N,
$$
where
$$
	\binom{t}{j} := \frac{t(t-1) \cdots (t-j+1)}{j!} \qquad \text{if} \qquad j \in \mathbb{N},  \qquad \binom{t}{0}:= 1. 
$$
Clearly, if $t=k \in \mathbb{N}$ then $\Delta^t_h$ coincides with the standard differences \eqref{14.9.5}. Mimicking the definition of the classical Gagliardo seminorm (cf. \eqref{DefGag})
$$
	\Vert f\Vert_{\dot{W}^{s,p}(\mathbb{R}^{N})} = \bigg(\iint_{\mathbb{R}%
^{N}\times\mathbb{R}^{N}}\frac{|\Delta_h f(x)|^{p}}{|h|^{sp+N}}\,dx \, dh\bigg)%
^{1/p},
$$
one may introduce the \emph{Butzer seminorm} (cf. \cite[Definition 1.1]{DM})
\begin{equation}\label{14.9.6}
	\|f\|_{\dot{W}^{s, p}(\R^N), t} := \bigg(\iint_{\mathbb{R}%
^{N}\times\mathbb{R}^{N}}\frac{|\Delta^t_h f(x)|^{p}}{|h|^{s t p+N}}\,dx \, dh\bigg)%
^{1/p}
\end{equation}
for $p \in (1, \infty)$ and $s \in (0, 1)$. Observe that
$$
	\|f\|_{\dot{W}^{s, p}(\R^N), 1} =  \Vert f\Vert_{\dot{W}^{s,p}(\mathbb{R}^{N})} 
$$
and, more generally,  
\begin{equation}\label{14.9.7}
	 \|f\|_{\dot{W}^{s, p}(\R^N), k} =  \|f\|_{\dot{B}^{s k}_{p, p}(\R^N)} \qquad \text{if} \qquad k \in \mathbb{N}.  
\end{equation}
Accordingly, \eqref{14.9.6} consists of a natural (fractional) extension of the notions of Gagliardo and Besov seminorms.  

\begin{theorem}\label{Thm14.9.26}
	Let $p \in (1, \infty)$ and $t > 0$. Then for $f \in H^{t, p}(\R^N)$ we have
	\begin{align}
	 \|f\|_{\dot{H}^{t, p}(\R^N)} &\lesssim \Gamma_{L^p(\R^N)}-\liminf_{s \to 1^{-}} (1-s)^{1/p} \|f\|_{\dot{W}^{s, p}(\R^N), t} \nonumber \\
		&  \leq \Gamma_{L^p(\R^N)}-\limsup_{s \to 1^{-}} (1-s)^{1/p} \|f\|_{\dot{W}^{s, p}(\R^N), t}  \lesssim  \|f\|_{\dot{H}^{t, p}(\R^N)}.  \label{14.9.9}
	\end{align}
\end{theorem}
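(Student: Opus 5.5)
We mimic the proof of Theorem \ref{Thm54}, now with the pair $(A_0,A_1)=(L^p(\R^N),\dot{H}^{t,p}(\R^N))$. Theorem \ref{Thm1} with $i=0$ and $q=p$ gives, for every $f\in L^p(\R^N)$,
\begin{equation*}
\Gamma_{L^p(\R^N)}-\lim_{s\to1^-}\Vert f\Vert_{(L^p,\dot{H}^{t,p})_{s,p}^{\blacktriangleleft}}=\Vert f\Vert_{G_1(L^p,\dot{H}^{t,p})}.
\end{equation*}
The task then splits into two estimates. The first identifies the interpolation norms with the Butzer seminorms, uniformly in $s$. The second identifies the Gagliardo completion with $\dot{H}^{t,p}$ on $H^{t,p}$.

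\emph{Step 1: the $K$-functional.} The key ingredient is the fractional analogue of \eqref{PD28.1}:
\begin{equation*}
K(r^{t},f;L^p,\dot{H}^{t,p})\approx\Big(\frac{1}{r^{N}}\int_{|h|<r}\Vert\Delta^t_hf\Vert_{L^p}^p\,dh\Big)^{1/p}\approx\omega_t(f,r)_p:=\sup_{|h|\le r}\Vert\Delta_h^tf\Vert_{L^p},
\end{equation*}
with constants independent of $r$. I would take this from the characterizations in \cite{DM} of $K$-functionals for $(L^p,\dot H^{t,p})$ via fractional moduli of smoothness (Wilmes/Butzer type results). For $1<p<\infty$ one can prove it directly with two tools.

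\begin{itemize}
\item[(a)] The Fourier multiplier $(e^{i\xi\cdot h}-1)^t/|\xi|^t$ is an $L^p$-multiplier uniformly in the direction of $h$, after scaling. This gives $\Vert\Delta^t_hg\Vert_{L^p}\lesssim|h|^t\Vert g\Vert_{\dot{H}^{t,p}}$, while $\Vert\Delta^t_hg\Vert_{L^p}\lesssim\Vert g\Vert_{L^p}$ holds because $\sum_j|\binom{t}{j}|<\infty$.
\item[(b)] For the converse, take a Littlewood--Paley type smoothing $g=\varphi_r*f$ and estimate $\Vert f-g\Vert_p+r^t\Vert g\Vert_{\dot H^{t,p}}$ by averages of $\Vert\Delta^t_hf\Vert_p$ over $|h|<r$, using Marcinkiewicz--Mikhlin.
\end{itemize}

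Once Step 1 is in place, the same change of variables and Fubini computation as in \eqref{PD28.3} (the measure $t^{-N}\chi_{|h|<t}\,dt/t$ integrated against $t^{-stp}$) gives
\begin{equation*}
\Vert f\Vert_{(L^p,\dot{H}^{t,p})_{s,p}}\approx\Vert f\Vert_{\dot{W}^{s,p}(\R^N),t},
\end{equation*}
uniformly in $s\in(0,1)$. Multiplying by $c_{s,p}\approx(1-s)^{1/p}$ for $s$ near $1$ transfers the $\Gamma$-limit statement above into the two-sided inequality \eqref{14.9.9}, with $\Vert f\Vert_{G_1}$ in place of $\Vert f\Vert_{\dot H^{t,p}}$, exactly as in \eqref{13.9.1}. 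This uses that $\Gamma$-liminf and $\Gamma$-limsup are monotone under pointwise inequalities between functionals.

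\emph{Step 2: the completion.} Next I would show $\Vert f\Vert_{G_1(L^p,\dot{H}^{t,p})}\approx\Vert f\Vert_{\dot{H}^{t,p}}$ for $f\in H^{t,p}$. The upper bound is immediate from \eqref{sec:sera}. For the lower bound, use \eqref{3.9.1} together with Step 1:
\begin{equation*}
\Vert f\Vert_{G_1}=\lim_{r\to0}\frac{K(r^t,f)}{r^t}\gtrsim\limsup_{r\to0}\sup_{|h|\le r}\frac{\Vert\Delta^t_hf\Vert_p}{|h|^t}.
\end{equation*}
By the Fourier representation, $\Delta^t_hf/|h|^t\to c\,(\partial_{\theta})^{t}f$ in $L^p$, where $\partial_\theta$ is the directional derivative along the unit vector $\theta$ and $(\partial_\theta)^t$ is its fractional power, with symbol $(i\,\theta\cdot\xi)^t$. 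Averaging $\Vert(\partial_\theta)^t f\Vert_{L^p}$ over $\theta$ bounds $\Vert(-\Delta)^{t/2}f\Vert_{L^p}$ from above, by a multiplier argument; alternatively, apply Riesz-transform-type boundedness directly. This gives the lower bound.

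I expect Step 1 to be the main obstacle: the equivalence of the $K$-functional with fractional moduli, with constants uniform in $r$, for non-integer $t$. I would first try to cite \cite{DM}, and otherwise prove it via the multiplier arguments sketched in (a) and (b).
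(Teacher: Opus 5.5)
Your proposal is correct and follows the paper's proof: Theorem \ref{Thm1} for $(L^p,\dot H^{t,p})$, the Butzer-difference characterization of $K(u^t,f;L^p,\dot H^{t,p})$, and the identification $G_1(L^p,\dot H^{t,p})\approx\dot H^{t,p}$ on $H^{t,p}$. For the $K$-functional, the paper cites \cite{KT} in its $L^1$-average and supremum forms and combines them via H\"older and polar coordinates. Your equivalent $L^p$-average form lets the computation \eqref{PD28.3} go through verbatim.

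For the last step, the paper uses $\|f\|_{G_1}\approx\sup_{h\neq0}\|\Delta^t_hf\|_{L^p}/|h|^t$ together with \cite[Lemma 3.6]{DT2}. You instead get the upper bound from \eqref{sec:sera} and sketch the lower bound with directional fractional multipliers. Your lower bound has to be read through $\|f\|_{G_1}=\sup_{r>0}K(r^t,f)/r^t$ with the choice $r=|h|$. For a fixed $r$, the comparison with $\sup_{|h|\le r}\|\Delta^t_hf\|_{L^p}/|h|^t$ goes the wrong way.
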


\begin{remark}
\begin{enumerate}
\item[(i)]	Let $t = k \in \mathbb{N}$ in Theorem \ref{Thm14.9.26}. Using the well-known fact that $$\dot{H}^{k, p}(\R^N) = \dot{W}^{k, p}(\R^N), \qquad p \in (1, \infty),$$
	and \eqref{14.9.7}, then \eqref{14.9.9} gives back \eqref{14.9.8}. 
\item[(ii)] 
	The pointwise counterpart of Theorem \ref{Thm14.9.26} was already obtained in \cite[Theorem 3.4]{DM}. 
	\end{enumerate}
\end{remark}

\begin{proof}[Proof of Theorem \ref{Thm14.9.26}]
	Consider the pair $(A_0, A_1) = (L^p(\R^N), \dot{H}^{t, p}(\R^N))$. Applying Theorem \ref{Thm1} we infer that 
	\begin{equation}\label{14.9.10}
		\Gamma_{L^p(\R^N)}-\lim_{s \to 1^{-}} \|f\|_{(L^p(\R^N), \dot{H}^{t, p}(\R^N))_{s, p}^{\blacktriangleleft}} = \|f\|_{G_1(L^p(\R^N), \dot{H}^{t, p}(\R^N))} 
	\end{equation}
	if $f \in L^p(\R^N)$. 
	
	Next we identify the corresponding interpolation norms and Gagliardo completions that arise in \eqref{14.9.10}.  To do this, we shall employ the following characterization of the $K$-functional relative to $(L^p(\R^N), \dot{H}^{t, p}(\R^N))$ in terms of Butzer operators: For $u > 0$ and $f \in L^p(\R^N)$, 
	\begin{equation}\label{14.9.11}
		K(u^t, f; L^p(\R^N), \dot{H}^{t, p}(\R^N)) \approx \frac{1}{u^N} \int_{|h| < u} \|\Delta^t_h f\|_{L^p(\R^N)} \, dh \approx \sup_{|h| < u} \|\Delta^t_h f\|_{L^p(\R^N)};
	\end{equation}
	cf. \cite[(1.14) and (1.35)]{KT}. 
	
	Assume that $s \in (0, 1)$. By a simple change of variables 
	\begin{align}\label{14.9.12}
		\|f\|_{(L^p(\R^N), \dot{H}^{t, p}(\R^N))_{s, p}} & \approx  \bigg(\int_0^\infty [u^{-s t} K(u^t, f; L^p(\R^N), \dot{H}^{t, p}(\R^N) )]^p \, \frac{du}{u} \bigg)^{1/p}
	\end{align} 
	and using the first estimate in \eqref{14.9.11} together with H\"older's inequality with exponent $p$ and Fubini's theorem,  we obtain that
	\begin{align}
		\|f\|_{(L^p(\R^N), \dot{H}^{t, p}(\R^N))_{s, p}} & \approx   \bigg(\int_0^\infty \bigg[u^{-s t-N}  \int_{|h| < u} \|\Delta^t_h f\|_{L^p(\R^N)} \, dh \bigg]^p \, \frac{du}{u} \bigg)^{1/p} \nonumber \\
		& \lesssim  \bigg(\int_0^\infty u^{-s t p-N}  \int_{|h| < u} \|\Delta^t_h f\|^p_{L^p(\R^N)} \, dh \, \frac{du}{u} \bigg)^{1/p} \nonumber \\
		& \approx \bigg(\int_{\R^N}  |h|^{-s t p -N} \|\Delta^t_h f\|^p_{L^p(\R^N)}  \, dh \bigg)^{1/p} = \|f\|_{\dot{W}^{s, p}(\R^N), t}, \label{14.9.13}
	\end{align}
	where all equivalence constants may be taken to be independent of $s \in (0, 1)$. Conversely,  the second estimate in \eqref{14.9.11} gives 
	\begin{align*}
		  \|f\|_{\dot{W}^{s, p}(\R^N), t} &  \lesssim \bigg(\int_{\R^N}  |h|^{-s t p -N} K(|h|^t, f; L^p(\R^N), \dot{H}^{t, p}(\R^N))^p  \, dh \bigg)^{1/p}
	\end{align*}
	and then applying polar coordinates
	\begin{align}
		 \|f\|_{\dot{W}^{s, p}(\R^N), t} & \lesssim  \bigg(\int_0^\infty  u^{-s t p} K(u^t, f; L^p(\R^N), \dot{H}^{t, p}(\R^N))^p  \, \frac{du}{u} \bigg)^{1/p} \nonumber \\
		 & \approx \|f\|_{(L^p(\R^N), \dot{H}^{t, p}(\R^N))_{s, p}}, \label{14.9.14}
	\end{align}
	where we have used \eqref{14.9.12} in the last step. Once again, we observe that all underlying equivalence constants in \eqref{14.9.14} are independent of $s$. Putting now together \eqref{14.9.13} and \eqref{14.9.14}, we have shown that 
	\begin{equation}\label{14.9.15}
		 \|f\|_{(L^p(\R^N), \dot{H}^{t, p}(\R^N))_{s, p}} \approx  \|f\|_{\dot{W}^{s, p}(\R^N), t} 
	\end{equation}
	uniformly with respect to $s$. 
	
	Next we turn our attention to $G_1(L^p(\R^N), \dot{H}^{t, p}(\R^N))$. It follows from \eqref{14.9.11} that 
	\begin{align}
		\|f\|_{G_1(L^p(\R^N), \dot{H}^{t, p}(\R^N))} &= \sup_{u > 0} \frac{K(u^t, f; L^p(\R^N), \dot{H}^{t, p}(\R^N)) }{u^t} \nonumber \\
		& \approx \sup_{h \neq 0}  \frac{\|\Delta^t_h f\|_{L^p(\R^N)}}{|h|^t}. \label{14.9.16}
	\end{align}
	Moreover, according to \cite[Lemma 3.6]{DT2}, if $f \in H^{t, p}(\R^N)$ then 
	$$
	\|f\|_{\dot{H}^{t, p}(\R^N)}	\approx \sup_{h \neq 0}  \frac{\|\Delta^t_h f\|_{L^p(\R^N)}}{|h|^t}. 
	$$
	Hence \eqref{14.9.16} can be updated to say that 
	\begin{equation}\label{14.9.17}
		\|f\|_{G_1(L^p(\R^N), \dot{H}^{t, p}(\R^N))}  \approx \|f\|_{\dot{H}^{t, p}(\R^N)}. 
	\end{equation}
	
	From \eqref{14.9.10}, \eqref{14.9.15} and \eqref{14.9.17}, we infer that \eqref{14.9.9} holds. 
\end{proof}

\subsection{$\Gamma$-convergence to the total variation}

\label{SusectionBV} Recall  that $f\in L^{1}(\mathbb{R}^{N})$ has bounded
variation if and only if its distributional gradient $\nabla f$ is a finite
measure, i.e. its total variation 
\begin{equation*}
\Vert f\Vert_{\dot{\text{$BV$}}(\mathbb{R}^{N})}:=\sup\bigg\{\int_{\mathbb{R}%
^{N}}f(x)\text{div}\phi(x) \, dx:\phi\in C_{c}^{1}(\mathbb{R}^{N},\mathbb{R}%
^{N}),\Vert\phi\Vert_{L^{\infty}(\mathbb{R}^{N})}\leq 1\bigg\}
\end{equation*}
is finite. 
The higher order counterparts $\dot{BV}^{k}(\mathbb{R}^{N}),k\in\mathbb{N}$,
are defined by 
\begin{equation*}
\Vert f\Vert_{\dot{\text{$BV$}}^{k}(\mathbb{R}^{N})}:=\sum_{|\alpha|=k-1}%
\Vert D^{\alpha}f\Vert_{\dot{\text{$BV$}}(\mathbb{R}^{N})}. 
\end{equation*}
Let $BV^k (\R^N) := L^1(\R^N) \cap \dot{BV}^k(\R^N)$. 

$\Gamma$-convergence results  of non-local functionals to the total variation have 
been widely investigated. In particular, convergence of Gagliardo seminorms \eqref{DefGag} with $%
p=1$ was already addressed by Ponce \cite{P} (see also \cite[Theorem 6.9, p. 240]{BM} and \cite{N}). Next we show how this result follows in a simple way from the general extrapolation principle of Theorem \ref{Thm1}, even more,  the result can be considerably extended  to work with  Besov norms and  higher order total
variation.

\begin{theorem}
\label{ThmTV} Let $q\in\lbrack1,\infty]$ and $k\in{\mathbb{N}}$. Then, for $f \in BV^k (\R^N)$, we have 
\begin{align*}
\Vert f\Vert_{\dot{BV}^{k}(\mathbb{R}^{N})} &\lesssim \Gamma_{L^{1}(\mathbb{R}^{N})}-\liminf_{s\rightarrow1^{-}}(1-s)^{1/q}\Vert
f\Vert_{\dot{B}_{1,q}^{sk}(\mathbb{R}^{N})} \\
& \leq\Gamma_{L^{1}(\mathbb{R}%
^{N})}-\limsup_{s\rightarrow1^{-}}(1-s)^{1/q}\Vert f\Vert_{\dot{B}_{1,q}^{sk}(%
\mathbb{R}^{N})}\lesssim \Vert f\Vert_{\dot{BV}^{k}(\mathbb{R}^{N})}. 
\end{align*}
\end{theorem}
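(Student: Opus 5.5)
The plan is to follow the template of the proof of Theorem \ref{Thm54}, now with the pair $(A_0,A_1)=(L^1(\mathbb{R}^N),\dot{W}^{k,1}(\mathbb{R}^N))$. The Gagliardo completion of this pair should produce the higher order total variation $\dot{BV}^k$. Theorem \ref{Thm1} with $i=0$ gives, for $f\in L^1(\mathbb{R}^N)$,
\begin{equation*}
\Gamma_{L^1(\mathbb{R}^N)}-\lim_{s\to1^-}\Vert f\Vert_{(L^1,\dot{W}^{k,1})^{\blacktriangleleft}_{s,q}}=\Vert f\Vert_{G_1(L^1,\dot{W}^{k,1})}.
\end{equation*}
It then remains to identify both sides up to constants that are uniform in $s$.

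For the interpolation side, I would use \eqref{PD28.1} with $X=L^1(\mathbb{R}^N)$. This characterization is valid for $p=1$: both the modulus version and the averaged $L^q$-version hold in $L^1$, as cited there. Repeating the computation \eqref{PD28.3} verbatim (change of variables $t\mapsto t^k$, then Fubini) gives
\begin{equation*}
\Vert f\Vert_{(L^1,\dot{W}^{k,1})_{s,q}}\approx\Vert f\Vert_{\dot{B}^{sk}_{1,q}(\mathbb{R}^N)}
\end{equation*}
uniformly in $s\in(0,1)$. Since $c_{s,q}\approx(1-s)^{1/q}$ as $s\to1^-$, the normalized interpolation norms are comparable to $(1-s)^{1/q}\Vert f\Vert_{\dot{B}^{sk}_{1,q}}$. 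Exactly as in \eqref{13.9.1}, the $\Gamma$-$\liminf$ and $\Gamma$-$\limsup$ of $(1-s)^{1/q}\Vert f\Vert_{\dot{B}^{sk}_{1,q}}$ are then squeezed between constant multiples of $\Vert f\Vert_{G_1(L^1,\dot{W}^{k,1})}$. Here one uses that multiplying a family of functionals by constants bounded above and below affects the $\Gamma$-limits only by those constants.

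The main step is the identification
\begin{equation*}
\Vert f\Vert_{G_1(L^1,\dot{W}^{k,1})}\approx\Vert f\Vert_{\dot{BV}^k(\mathbb{R}^N)},\qquad f\in L^1(\mathbb{R}^N).
\end{equation*}
This is a genuinely new ingredient compared with Theorem \ref{Thm54}, because $\dot{W}^{k,1}$ is not reflexive and its completion is strictly larger. By \eqref{3.9.1} and \eqref{PD28.1} with $q=\infty$,
\begin{equation*}
\Vert f\Vert_{G_1}\approx\sup_{t>0}\frac{\omega_k(f,t)_1}{t^k}=\sup_{h\neq0}\frac{\Vert\Delta_h^kf\Vert_{L^1}}{|h|^k}.
\end{equation*}
The task is therefore the classical fact that a $k$-th order Lipschitz condition in $L^1$ characterizes $\dot{BV}^k$.

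For the upper bound, I would mollify: $f_\varepsilon=f*\rho_\varepsilon$ lies in $\dot{W}^{k,1}$ and satisfies $\Vert f_\varepsilon\Vert_{\dot{W}^{k,1}}\le\Vert f\Vert_{\dot{BV}^k}$. The identity $\Delta_h^kf_\varepsilon=\int\cdots\int D^k_hf_\varepsilon$ then gives $\Vert\Delta_h^kf_\varepsilon\Vert_1\lesssim|h|^k\Vert f\Vert_{\dot{BV}^k}$, and we let $\varepsilon\to0$ in $L^1$.

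For the lower bound, I would use Gagliardo's sequential description \eqref{2.9.7}. Take $f_n\in\dot{W}^{k,1}$ with $f_n\to f$ in $L^1$ and $\sup_n\Vert f_n\Vert_{\dot{W}^{k,1}}\lesssim\Vert f\Vert_{G_1}$; such a sequence exists, for instance by choosing near-optimal decompositions in $K(t_n,f)$ with $t_n\to0$. Lower semicontinuity of the total variation under $L^1$ convergence, applied to $D^\alpha f_n$ with $|\alpha|=k-1$, then yields $\Vert f\Vert_{\dot{BV}^k}\le\liminf_n\Vert f_n\Vert_{\dot{W}^{k,1}}$.

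The delicate point is that $L^1$ convergence of $f_n$ need not imply convergence of the intermediate derivatives $D^\alpha f_n$. I would handle this by testing against $\partial_j\partial^\alpha\phi$, with $\phi$ a compactly supported smooth vector field satisfying $\Vert\phi\Vert_\infty\le1$, and integrating by parts so that all derivatives fall on the test function. Distributional convergence then suffices, and the duality definition of $\dot{BV}$ gives the bound. Combining the three steps yields the statement for $f\in BV^k(\mathbb{R}^N)$.
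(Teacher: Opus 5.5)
Your proposal is correct. Its first half is exactly what the paper does: Theorem \ref{Thm1} for the pair $(L^1,\dot W^{k,1})$, plus the uniform-in-$s$ identification of the normalized interpolation norms with $(1-s)^{1/q}\Vert f\Vert_{\dot B^{sk}_{1,q}}$. The paper gets this by quoting \eqref{13.9.1} with $X=L^1$.

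The difference lies in the identification $\Vert f\Vert_{G_1(L^1,\dot W^{k,1})}\approx\Vert f\Vert_{\dot{BV}^k}$.

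\textbf{The paper's route.} It works entirely with moduli of smoothness. Via \eqref{PJ1.2} it reduces to $\sup_t t^{-k}\omega_k(f,t)_1$. For $k=1$ it quotes the classical characterization \eqref{13.9.2} of $\dot{BV}$. For $k>1$ it invokes the estimates \eqref{PJ1.4}, which relate $\omega_k(f,t)_1$ to first-order moduli of $D^\alpha f$, $|\alpha|=k-1$; the upper one is a Marchaud-type inequality. From these it gets $\sup_t t^{-k}\omega_k(f,t)_1\approx\sup_{|\alpha|=k-1}\sup_t t^{-1}\omega_1(D^\alpha f,t)_1$, and then applies the $k=1$ case to each $D^\alpha f$.

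\textbf{Your route.} You prove the two inequalities directly:
\begin{enumerate}
\item[(i)] mollification for $\Vert f\Vert_{G_1}\lesssim\Vert f\Vert_{\dot{BV}^k}$;
\item[(ii)] Gagliardo's sequential description \eqref{2.9.7}, combined with duality and lower semicontinuity of the total variation, for the reverse inequality.
\end{enumerate}
This is self-contained and dispenses with both external results, the $L^1$ Marchaud-type estimate being the less trivial one. It is really the relaxation characterization of $G_1$, so it carries over to pairs where no modulus-of-smoothness description is available.

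Your upper bound does not even need moduli. The inequality $K(t,f;L^1,\dot W^{k,1})\le\Vert f-f_\varepsilon\Vert_{L^1}+t\Vert f_\varepsilon\Vert_{\dot W^{k,1}}$ with $\varepsilon\to0$ already gives $\Vert f\Vert_{G_1}\lesssim\Vert f\Vert_{\dot{BV}^k}$.

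The paper's route, in turn, yields as a by-product the equivalence between $k$-th order moduli of $f$ and first-order moduli of its $(k-1)$-th derivatives. That fits its general program of working with explicit expressions for $K$-functionals.

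One harmless slip: with the paper's norm conventions, $\Vert f_\varepsilon\Vert_{\dot W^{k,1}}\le\Vert f\Vert_{\dot{BV}^k}$ holds only up to a dimensional constant.
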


\begin{proof} From \eqref{13.9.1} applied with $X=L^{1}(\mathbb{R}^{N})$, it follows that for $f \in L^1(\R^N)$, 
\begin{align*}
\Vert f\Vert_{G_{1}(L^{1}(\mathbb{R}^{N}),\dot{W}^{k,1}(\mathbb{R}%
^{N}))} & \lesssim \Gamma_{L^{1}(\mathbb{R}^{N})}-\liminf_{s\rightarrow1^{-}}(1-s)^{1/q}\Vert
f\Vert_{\dot{B}_{1,q}^{sk}(\mathbb{R}^{N})} \\
& \leq \Gamma_{L^{1}(\mathbb{R}%
^{N})}-\limsup_{s\rightarrow1^{-}}(1-s)^{1/q}\Vert f\Vert_{\dot {B}_{1,q}^{sk}(%
\mathbb{R}^{N})} \\
& \lesssim \Vert f\Vert_{G_{1}(L^{1}(\mathbb{R}^{N}),\dot{W}^{k,1}(\mathbb{R}%
^{N}))}.
\end{align*}
To conclude it remains to prove that 
\begin{equation}
\Vert f\Vert_{G_{1}(L^{1}(\mathbb{R}^{N}),\dot{W}^{k,1}(\mathbb{R}%
^{N}))}\approx\Vert f\Vert_{\dot{\text{$BV$}}^{k}(\mathbb{R}^{N})}, \qquad f \in \dot{BV}^k(\R^N). 
\label{PJ1.1}
\end{equation}

Let us compute $G_{1}(L^{1}(\mathbb{R}^{N}),\dot{W}^{k,1}(\mathbb{R}^{N}))$.
Using \eqref{PD28.1}, 
\begin{align}
\Vert f\Vert_{G_{1}(L^{1}(\mathbb{R}^{N}),\dot{W}^{k,1}(\mathbb{R}^{N}))} &
=\sup_{t > 0}\frac{K(t^{k},f;L^{1}(\mathbb{R}^{N}),\dot{W}^{k,1}(%
\mathbb{R}^{N}))}{t^{k}}     \approx\sup_{t > 0}\frac{\omega_{k}(f,t)_{1}}{t^{k}}.   \label{PJ1.2}
\end{align}
Here,  we use the abbreviated notation $\omega_k(f, t)_1 := \omega_k(f, t)_{L^1(\R^N)}$.  To prove \eqref{PJ1.1}
when $k=1$ we use \eqref{PJ1.2} and the well-known characterization (see e.g. \cite{CDDD}) of the
total variation in terms of the moduli of smoothness: 
\begin{equation}\label{13.9.2}
\Vert f\Vert_{\dot{\text{$BV$}}(\mathbb{R}^{N})}\approx \sup_{t > 0} \frac{\omega(f, t)_1}{t}.
\end{equation}
 It follows from \eqref{PJ1.2} that 
$$
	\Vert f\Vert_{G_{1}(L^{1}(\mathbb{R}^{N}),\dot{W}^{1,1}(\mathbb{R}^{N}))}  \approx \Vert f\Vert_{\dot{\text{$BV$}}(\mathbb{R}^{N})}
$$
and then \eqref{PJ1.1} with $k=1$ holds. For the case $k>1$ we use  the classical
estimates for the moduli of smoothness of derivatives  (cf. \cite[(12.2) and (12.3), p. 117]%
{DT}) 
\begin{equation}
t^{-(k-1)}\omega_{k}(f,t)_{1}\lesssim\sup_{|\alpha|=k-1}\omega_{1}(D^{\alpha
}f,t)_{1}\lesssim\int_{0}^{t}u^{-(k-1)}\omega_{k}(f,u)_{1}\,\frac{du}{u}. 
\label{PJ1.4}
\end{equation}
Note that \eqref{PJ1.4} leads to 
\begin{equation*}
\sup_{t > 0}\frac{\omega_{k}(f,t)_{1}}{t^{k}}\approx\sup_{|\alpha
|=k-1}\sup_{t > 0}\frac{\omega_{1}(D^{\alpha}f,t)_{1}}{t}
\end{equation*}
and therefore, by \eqref{PJ1.2} and \eqref{13.9.2},  
\begin{align*}
\Vert f\Vert_{G_{1}(L^{1}(\mathbb{R}^{N}),\dot{W}^{k,1}(\mathbb{R}%
^{N}))} & \approx\sup_{|\alpha|=k-1}\sup_{t > 0}\frac{\omega_{1}(D^{\alpha
}f,t)_{1}}{t} \\
& \approx\sup_{|\alpha|=k-1}\Vert D^{\alpha}f\Vert_{\dot {\text{$%
BV$}}(\mathbb{R}^{N})} \approx \|f\|_{\dot{BV}^k(\R^N)}. 
\end{align*}
This proves the desired claim \eqref{PJ1.1} with $k > 1$ and completes the proof. 
\end{proof}

\subsection{$\Gamma$-convergence of BMO-type functionals to the total
variation\label{sec:bmotype}}

For $f\in L_{\text{loc}}^{1}(\mathbb{R}^{N})$ and $\varepsilon>0$, consider
the BMO-type functional given by 
\begin{equation}
\kappa_{\varepsilon}(f):=\varepsilon^{N-1}\sup_{\mathcal{F}%
_{\varepsilon}}\sum_{Q\in\mathcal{F}_{\varepsilon}} \frac{1}{|Q|}\int_{Q}|f-f_{Q}|, 
\label{BMO-type}
\end{equation}
where $\mathcal{F}_{\varepsilon}$ is a family of pairwise disjoint cubes
with side length $\varepsilon$ and arbitrary orientation. As usual, $%
f_{Q}:=\frac{1}{|Q|} \int_{Q}f$, the mean value of $f$ on $Q$. These functionals were
first introduced by Ambrosio, Bourgain, Brezis and Figalli \cite{ABBF} to
provide a characterization of sets with finite perimeter. This result was later extended in  \cite{DePFS, FMS, PS2} to  $SBV$-functions (i.e.,
special functions of bounded variation). However,
as shown in \cite{DePFS} for general $f\in\dot{B V }(\mathbb{R}^{N})$
the limit $\lim_{\varepsilon\rightarrow0}\kappa_{\varepsilon}(f)$ may not
exist. This failure can be overcome if one works with the additional
flexibility provided by the $\Gamma$-convergence. Indeed, it was shown by
Arroyo-Rabasa, Bonicatto and Del Nin \cite{ARBDN} that 
\begin{equation}
\Gamma_{L_{\text{loc}}^{1}(\mathbb{R}^{N})}-\lim_{\varepsilon\rightarrow
0}\kappa_{\varepsilon}(f)=\frac{1}{4}\,\Vert f\Vert_{\dot{BV}(\mathbb{R}%
^{N})}   \label{ARBDN}
\end{equation}
for $f \in L^1_{\text{loc}}(\R^N)$. 
We note that the appearance of the prefactor $1/4$ is related to the optimal
Poincar\'{e} constant for affine maps on the unit cube.

We now prove that \eqref{ARBDN} (up to the optimal constant, cf. Section \ref{SectionExact} for a remedy) is also part of
the general extrapolation phenomenon described in Section \ref%
{SectionGammaExtrapol}.

\begin{theorem}
We have, for $f \in L^1(\R^N)$,  
\begin{equation}
\Vert f\Vert_{\dot {%
BV}(\mathbb{R}^{N})} \lesssim \Gamma_{L^{1}(\mathbb{R}^{N})}-\liminf_{\varepsilon\rightarrow0}\kappa_{%
\varepsilon}(f)\leq \Gamma_{L^{1}(\mathbb{R}^{N})}-\limsup
_{\varepsilon\rightarrow0}\kappa_{\varepsilon}(f)\lesssim \Vert f\Vert_{\dot {%
BV}(\mathbb{R}^{N})}.   \label{ARBDN2}
\end{equation}
\end{theorem}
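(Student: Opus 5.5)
We will realize the BMO-type functionals $\kappa_\varepsilon$ as (equivalents of) rescaled $K$-functionals and then invoke Theorem \ref{ThmGKFunct}, which gives
\[
\Gamma_{L^1(\mathbb{R}^N)}-\lim_{t\to 0}\frac{K(t,f;L^1(\mathbb{R}^N),\dot{W}^{1,1}(\mathbb{R}^N))}{t}=\|f\|_{G_1(L^1(\mathbb{R}^N),\dot{W}^{1,1}(\mathbb{R}^N))}
\]
for $f\in L^1(\mathbb{R}^N)$. By \eqref{PJ1.1} with $k=1$, the right-hand side is $\approx\|f\|_{\dot{BV}(\mathbb{R}^N)}$, with the convention that it is $+\infty$ if $f\notin \dot{BV}$. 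Consequently \eqref{ARBDN2} reduces to a two-sided estimate
\begin{equation*}
\kappa_\varepsilon(f)\approx \frac{K(\varepsilon,f;L^1(\mathbb{R}^N),\dot{W}^{1,1}(\mathbb{R}^N))}{\varepsilon}\approx\frac{\omega(f,\varepsilon)_1}{\varepsilon},
\end{equation*}
with constants independent of $\varepsilon$ and $f$. Such a uniform equivalence passes directly through the $\Gamma$-$\liminf$ and $\Gamma$-$\limsup$, since these are monotone in the functional. The second equivalence is \eqref{PD28.1} with $q=\infty$, so everything hinges on comparing $\kappa_\varepsilon$ with the modulus of smoothness.

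\textbf{Upper bound} $\kappa_\varepsilon(f)\lesssim \varepsilon^{-1}\omega(f,\varepsilon)_1$. For a cube $Q$ of side $\varepsilon$, any orientation,
\[
\frac{1}{|Q|}\int_Q|f-f_Q|\le \frac{1}{|Q|^2}\int_Q\int_Q|f(x)-f(y)|\,dy\,dx .
\]
Substituting $y=x+h$ with $|h|\le\sqrt N\varepsilon$ gives
\[
\frac{1}{|Q|}\int_Q|f-f_Q|\le \varepsilon^{-2N}\int_{|h|\le \sqrt N\varepsilon}\int_Q|\Delta_hf(x)|\,dx\,dh .
\]
Summing over the disjoint family $\mathcal F_\varepsilon$ removes the restriction $x\in Q$, so the sum is at most $\varepsilon^{-2N}\int_{|h|\le\sqrt N\varepsilon}\|\Delta_h f\|_1\,dh\lesssim \varepsilon^{-N}\omega(f,\sqrt N\varepsilon)_1$. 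Multiplying by $\varepsilon^{N-1}$ and using $\omega(f,\sqrt N\varepsilon)_1\lesssim\omega(f,\varepsilon)_1$ gives the bound.

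\textbf{Lower bound} $\varepsilon^{-1}\omega(f,c\varepsilon)_1\lesssim\kappa_\varepsilon(f)$, up to a harmless rescaling of the argument. We tile $\mathbb{R}^N$ by the grid $\varepsilon\mathbb{Z}^N+a$, $a\in[0,\varepsilon)^N$. Each such tiling is admissible in the supremum, so averaging over $a$ yields
\[
\varepsilon^{1-N}\kappa_\varepsilon(f)\ge \varepsilon^{-N}\int_{[0,\varepsilon)^N}\sum_{Q}\frac{1}{|Q|}\int_Q|f-f_Q|\,da .
\]
We then bound $\|\Delta_hf\|_1$ for $|h|\le\varepsilon/2$ by this average. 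If $x$ and $x+h$ lie in the same cube $Q$, then $|f(x+h)-f(x)|\le|f(x+h)-f_Q|+|f(x)-f_Q|$. For a fixed $x$, the set of shifts $a$ for which $x,x+h$ share a cube has measure $\gtrsim\varepsilon^N$. Integrating over $a$ and $x$ gives
\[
\|\Delta_hf\|_1\lesssim \varepsilon^{-N}\int_a\sum_Q\int_Q|f-f_Q|\,da=\varepsilon^{N}\cdot(\text{the average above})\lesssim \varepsilon\,\kappa_\varepsilon(f).
\]
Hence $\omega(f,\varepsilon/2)_1\lesssim\varepsilon\kappa_\varepsilon(f)$, and since $\omega(f,\varepsilon)_1\le 2\,\omega(f,\varepsilon/2)_1$ the lower bound follows.

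\textbf{Conclusion and main obstacle.} Combining the two bounds with \eqref{PD28.1} gives $\kappa_\varepsilon(f)\approx K(\varepsilon,f;L^1,\dot W^{1,1})/\varepsilon$, and Theorem \ref{ThmGKFunct} then yields \eqref{ARBDN2}. I expect the lower bound to be the delicate step. It requires checking carefully that for $|h|\le\varepsilon/2$ the set of grid shifts separating $x$ from $x+h$ has proportion bounded away from $1$; this is a coordinatewise computation giving proportion at least $2^{-N}$. It also requires that the pointwise triangle inequality, after integration in $a$, is controlled by the averaged sum without overcounting. Rotated cubes are not needed for this bound, since the axis-parallel grids already suffice.
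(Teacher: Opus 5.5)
Your proposal is correct, and its architecture matches the paper's proof. You apply Theorem \ref{ThmGKFunct} to the pair $(L^1(\mathbb{R}^N),\dot W^{1,1}(\mathbb{R}^N))$ and identify the Gagliardo completion through \eqref{PJ1.1} with $k=1$, read as $+\infty$ off $\dot{BV}$. You then transfer the $\Gamma$-limits through a uniform two-sided comparison of $\kappa_\varepsilon$ with $K(\varepsilon,\cdot\,;L^1,\dot W^{1,1})/\varepsilon$.

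The one real difference is how that comparison is obtained. The paper imports it from \cite{DS} in the form $\kappa_{c\varepsilon}\lesssim K(\varepsilon,\cdot)/\varepsilon\lesssim\kappa_{c'\varepsilon}$, which needs a harmless reparametrization of $\varepsilon$ inside the $\Gamma$-limits. You instead prove it directly, passing through $\omega(f,\varepsilon)_1$ and \eqref{PD28.1} with $q=\infty$.

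Both of your bounds check out.
\begin{itemize}
\item The upper bound uses only two facts: $|x-y|\le\sqrt N\varepsilon$ for $x,y$ in a cube of side $\varepsilon$ and any orientation, and disjointness of the family. These give $\kappa_\varepsilon(f)\lesssim\varepsilon^{-1}\omega(f,\sqrt N\varepsilon)_1\lesssim\varepsilon^{-1}\omega(f,\varepsilon)_1$.
\item In the lower bound, the axis-parallel grid shifts keep $x$ and $x+h$ in a common cube on a set of shifts of measure $\prod_i(\varepsilon-|h_i|)\ge(\varepsilon/2)^N$. The inequality $\int_{Q\cap(Q-h)}|f(x+h)-f_Q|\,dx\le\int_Q|f-f_Q|$ then rules out overcounting. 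This gives $\|\Delta_h f\|_{L^1}\le 2^{N+1}\varepsilon\,\kappa_\varepsilon(f)$ for $|h|\le\varepsilon/2$, and hence $\omega(f,\varepsilon)_1\lesssim\varepsilon\,\kappa_\varepsilon(f)$.
\end{itemize}

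Your route buys a self-contained, elementary argument that needs no rescaling of $\varepsilon$ at all: $\kappa_\varepsilon(f)\approx\omega(f,\varepsilon)_1/\varepsilon$ with constants independent of $\varepsilon$ and $f$. It also makes clear that rotated cubes matter only for the upper bound. The paper's route buys brevity, at the cost of relying on an external characterization.
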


\begin{proof}
To apply the extrapolation method we shall use the following characterization
of the $K$-functional for the pair $(L^{1},\dot{W}^{1,1})$, which has been recently obtained in \cite%
{DS}: for all $\varepsilon>0$ and $f \in L^1(\R^N)$, 
\begin{equation}
\kappa_{c \varepsilon}(f) \lesssim \frac{K(\varepsilon,f;L^{1}(\mathbb{R}^{N}),\dot{W}^{1,1}(\mathbb{R}%
^{N}))}{\varepsilon} \lesssim \kappa_{c' \varepsilon}(f)   \label{K-Funct-DS}
\end{equation}
for some constants $c, c' > 0$. 
Accordingly, the validity of \eqref{ARBDN2} follows immediately from Theorem %
\ref{ThmGKFunct} specialised to the pair $(A_0, A_1) = (L^1(\R^N), \dot{W}^{1, 1}(\R^N))$ and the formula \eqref{PJ1.1} with $k=1$ (which remains valid adequately interpreted for $f \in L^1(\R^N)$).
\end{proof}

\subsection{$\Gamma$-convergence to Sobolev norms in metric measure spaces}

\label{GConvMetric} Let $(\mathcal{X},d,\mu)$ be a complete and \emph{%
doubling} metric measure space in the sense that $\mu$ is a positive Borel
measure for which there exists a constant $C_{d}\geq1$ such that for every
ball $B(x,r):=\{y\in\mathcal{X}:d(x,y)<r\},\,x\in\mathcal{X}$ and $r>0$ it
holds 
\begin{equation}
0<\mu(B(x,2r))\leq C_{d}\,\mu(B(x,r))<\infty.   \label{DefDoubling}
\end{equation}

Recall that 
$\mathcal{X}$ is said to support a $p$-\emph{Poincar\'{e} inequality}, $%
p\in\lbrack1,\infty)$, if there exist constants $C>0$ and $\tau\geq1$ such
that for every ball $B(x,r)$ and every Lipschitz function $f:\mathcal{X}%
\rightarrow\mathbb{R}$, 
\begin{equation}
\fint_{B(x,r)}|f-f_{B(x,r)}|\,d\mu\leq Cr\bigg(\fint_{B(x,\tau r)}(\text{Lip }%
f)^{p}\,d\mu\bigg)^{1/p}.   \label{DefPoin}
\end{equation}
Here, we use the standard notation $f_{B(x,r)}:= \fint_{B(x, r)} f \, d \mu:= \frac{1}{\mu(B(x, r))}\int_{B(x,r)}f\,d\mu$ and $%
\text{Lip }f$ is the \emph{pointwise upper Lipschitz constant} of $f$: 
\begin{equation*}
\text{Lip }f(y):=\limsup_{r\rightarrow0}\sup_{x\in\mathcal{X}:d(x,y)\leq r}%
\frac{|f(x)-f(y)|}{r}. 
\end{equation*}

In this subsection, we assume that $(\mathcal{X},d,\mu)$ is a \emph{Cheeger
space} or \emph{PI space}, i.e., it satisfies \eqref{DefDoubling} and %
\eqref{DefPoin} for some $p \geq 1$. The \emph{Korevaar--Schoen type energy functionals} are
defined for $f\in L_{\text{loc}}^{p}(\mathcal{X})$ and $t>0$ by 
\begin{equation}
E_{p,t}(f):=\bigg(\int_{\mathcal{X}}\fint_{B(x,t)}|f(x)-f(y)|^{p}\,d\mu(y)%
\,d\mu(x)\bigg)^{1/p}.   \label{DefKS}
\end{equation}
The associated \emph{Korevaar--Schoen space} is given by (cf. \cite{KS}) 
\begin{equation*}
\Vert f\Vert_{KS^{1,p}(\mathcal{X})}:=\sup_{t>0}\frac{E_{p,t}(f)}{t}<\infty. 
\end{equation*}
In the setting of Cheeger spaces, it is well known (cf. \cite[Chapter 10]%
{HKST}) that $KS^{1,p}(\mathcal{X}),p\in(1,\infty)$ coincides with Newtonian
Sobolev spaces, Hajlasz-Sobolev spaces and furthermore 
\begin{equation*}
\Vert f\Vert_{KS^{1,p}(\mathcal{X})}\approx\limsup_{t\rightarrow0}\frac{%
E_{p,t}(f)}{t}. 
\end{equation*}
In particular, when $p=1$ then $KS^{1,1}(\mathcal{X})$ coincides with the space
of functions of bounded variation $\dot{BV}(\mathcal{X})$.

Let $s\in(0,1)$ and $q\in\lbrack1,\infty]$. Following \cite{GKS}, the Besov
space $\dot{\mathcal{B}}_{p,q}^{s}(\mathcal{X})$ is formed by all $f\in L_{%
\text{loc}}^{p}(\mathcal{X})$ that satisfy 
\begin{equation*}
\Vert f\Vert_{\dot{\mathcal{B}}_{p,q}^{s}(\mathcal{X})}:=\bigg(%
\int_{0}^{\infty}[t^{-s}E_{p,t}(f)]^{q}\,\frac{dt}{t}\bigg)^{1/q}<\infty 
\end{equation*}
(with the usual convention if $q=\infty$). Note that with equivalence
constants that are independent of $s$ (cf. \cite[Theorem 5.2]{GKS}) 
\begin{equation}
\Vert f\Vert_{\dot{\mathcal{B}}_{p,p}^{s}(\mathcal{X})}\approx\bigg(\int_{%
\mathcal{X}}\int_{\mathcal{X}}\frac{|f(x)-f(y)|^{p}}{d(x,y)^{sp}%
\mu(B(x,d(x,y)))}\,d\mu(y)\,d\mu(x)\bigg)^{1/p}.   \label{EquivBGMMS}
\end{equation}
In the Euclidean setting $\mathcal{X}=\mathbb{R}^{N}$ equipped with the
Lebesgue measure, it is known that (recall  
\eqref{DefBesovD}) 
\begin{equation*}
\Vert f\Vert_{\dot{\mathcal{B}}_{p,q}^{s}(\mathbb{R}^{N})}\approx\Vert
f\Vert_{\dot{B}_{p,q}^{s}(\mathbb{R}^{N})}
\end{equation*}
and, in particular, with equivalence constants independent of $s$ (cf. %
\eqref{EquivBGMMS}) 
\begin{equation*}
\Vert f\Vert_{\dot{\mathcal{B}}_{p,p}^{s}(\mathbb{R}^{N})}\approx\Vert
f\Vert_{\dot{W}^{s,p}(\mathbb{R}^{N})}. 
\end{equation*}

\begin{theorem}
\label{ThmGCSMMS} Let $p \in [1, \infty)$ and $q \in[1, \infty]$. Then, for $f \in L^p(\mathcal{X}) \cap KS^{1, p}(\mathcal{X})$,  
\begin{align} 
\|f\|_{KS^{1, p}(\mathcal{X})}& \lesssim \Gamma_{L^{p}(\mathcal{X})}-\liminf_{s \to1^{-}} (1-s)^{1/q} \|f\|_{\dot{%
\mathcal{B}}^{s}_{p, q}(\mathcal{X})} \nonumber \\
& \leq \Gamma_{L^{p}(\mathcal{X}%
)}-\limsup_{s \to1^{-}} (1-s)^{1/q} \|f\|_{\dot{\mathcal{B}}^{s}_{p, q}(%
\mathcal{X})} \lesssim \|f\|_{KS^{1, p}(\mathcal{X})}.  \label{PJ4.7}
\end{align}
In particular, if $p=q$ then (cf. \eqref{EquivBGMMS}) 
\begin{align}
 \|f\|_{KS^{1, p}(\mathcal{%
X})}^{p} &\lesssim \Gamma_{L^{p}(\mathcal{X})}-\liminf_{s \to1^{-}} (1-s) \int_{\mathcal{X}} \int_{%
\mathcal{X}} \frac{|f(x)-f(y)|^{p}}{d(x, y)^{s p} \mu(B(x, d(x, y)))} \, d
\mu(y) \, d \mu(x)   \notag \\
&  \leq \Gamma_{L^{p}(\mathcal{X})}-\limsup_{s \to1^{-}} (1-s)
\int_{\mathcal{X}} \int_{\mathcal{X}} \frac{|f(x)-f(y)|^{p}}{d(x, y)^{s p}
\mu(B(x, d(x, y)))} \, d \mu(y) \, d \mu(x) \lesssim \|f\|_{KS^{1, p}(\mathcal{%
X})}^{p}.   \label{PJ4.8}
\end{align}
\end{theorem}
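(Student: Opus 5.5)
We follow the scheme of Theorem \ref{Thm54}. Apply Theorem \ref{Thm1} with $i=0$ to the pair $(A_0,A_1)=(L^p(\mathcal{X}),KS^{1,p}(\mathcal{X}))$, where $A_1$ carries the seminorm $\|\cdot\|_{KS^{1,p}(\mathcal{X})}$. This gives, for every $f\in L^p(\mathcal{X})$,
\begin{equation*}
\Gamma_{L^p(\mathcal{X})}-\lim_{s\to1^-}\|f\|_{(L^p(\mathcal{X}),KS^{1,p}(\mathcal{X}))_{s,q}^{\blacktriangleleft}}=\|f\|_{G_1(L^p(\mathcal{X}),KS^{1,p}(\mathcal{X}))}.
\end{equation*}
The remaining work is to identify each side up to constants that do not depend on $s$.

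\textbf{Step 1: identify the $K$-functional.} The key estimate is
\begin{equation*}
K(t,f;L^p(\mathcal{X}),KS^{1,p}(\mathcal{X}))\approx E_{p,t}(f)+\min\{1,t\}\cdot 0,
\end{equation*}
that is, $K(t,f)\approx E_{p,t}(f)$ for small $t$, with the large-$t$ regime handled by the trivial bound $K\le\|f\|_{L^p}$.

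For the lower estimate $E_{p,t}(f)\lesssim K(t,f)$, decompose $f=f_0+f_1$. Then $E_{p,t}(f_0)\lesssim\|f_0\|_{L^p}$, which follows from doubling and Fubini, since $\mu(B(x,t))\approx\mu(B(y,t))$ when $d(x,y)<t$. Also $E_{p,t}(f_1)\le t\|f_1\|_{KS^{1,p}}$ by the definition of the $KS$ norm. Taking the infimum over decompositions gives the bound.

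For the upper estimate, use the discrete convolution (mollification) $f_t=\sum_i\varphi_i f_{B_i}$ built on a Lipschitz partition of unity subordinate to a cover by balls of radius $t$. Standard PI/doubling arguments, as in \cite{HKST} and \cite{GKS}, give
\begin{equation*}
\|f-f_t\|_{L^p}\lesssim E_{p,Ct}(f),\qquad \|f_t\|_{KS^{1,p}}\lesssim t^{-1}E_{p,Ct}(f).
\end{equation*}
One also needs the monotonicity-type comparison $E_{p,Ct}\lesssim E_{p,t}$, which follows from doubling through a chaining argument along intermediate points. This construction is the main technical obstacle, and I would rely on the cited literature for it. The second bound needs a Lipschitz estimate on $f_t$ at scale $t$ in terms of local oscillations. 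This uses doubling but does not use Poincaré.

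\textbf{Step 2: the interpolation norms.} Inserting $K\approx E_{p,t}$ into \eqref{5.10.1} gives
\begin{equation*}
\|f\|_{(L^p,KS^{1,p})_{s,q}}\approx\|f\|_{\dot{\mathcal B}^s_{p,q}(\mathcal X)}
\end{equation*}
uniformly in $s$. Two points need care. First, the region $t>1$ contributes at most $\approx\|f\|_{L^p}$, since $E_{p,t}(f)\lesssim\|f\|_{L^p}$ there. Second, $c_{s,q}\approx(1-s)^{1/q}$ as $s\to1^-$, and the factor $s^{1/q}$ it carries is harmless. Consequently the normalized interpolation norm and $(1-s)^{1/q}\|f\|_{\dot{\mathcal B}^s_{p,q}}$ are comparable, and exactly as in \eqref{13.9.1} this transfers to the $\Gamma$-liminf and $\Gamma$-limsup.

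\textbf{Step 3: the Gagliardo completion.} Using \eqref{3.9.1} together with Step 1,
\begin{equation*}
\|f\|_{G_1}=\sup_{t>0}\frac{K(t,f)}{t}\approx\sup_{t>0}\frac{E_{p,t}(f)}{t}=\|f\|_{KS^{1,p}(\mathcal X)}.
\end{equation*}
This proves \eqref{PJ4.7}. Finally, \eqref{PJ4.8} follows by taking $q=p$, raising \eqref{PJ4.7} to the $p$-th power, and invoking \eqref{EquivBGMMS}, whose constants are independent of $s$.
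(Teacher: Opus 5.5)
Your proposal is correct and is essentially the paper's proof: apply Theorem \ref{Thm1} to $(L^{p}(\mathcal{X}),KS^{1,p}(\mathcal{X}))$, then use $K(t,f;L^{p}(\mathcal{X}),KS^{1,p}(\mathcal{X}))\approx E_{p,t}(f)$ to identify the normalized interpolation norms with $(1-s)^{1/q}\|f\|_{\dot{\mathcal{B}}^{s}_{p,q}(\mathcal{X})}$ uniformly in $s$, and the Gagliardo completion $G_1$ with $KS^{1,p}(\mathcal{X})$. The paper simply quotes \cite[Theorem 4.1]{GKS} for this equivalence at all $t>0$, so your small/large-$t$ split is unnecessary; you should also drop the side claim that the bound on $\|f_t\|_{KS^{1,p}(\mathcal{X})}$ avoids the Poincar\'e inequality, since controlling $E_{p,r}(f_t)/r$ for $r\gg t$ (like your chaining step) needs the connectivity that the PI assumption provides.
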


\begin{remark}
The counterpart of \eqref{PJ4.8} associated to 
pointwise convergence may be found in \cite{Mu} and \cite{DiMS}.
\end{remark}

\begin{proof}[Proof of Theorem \protect\ref{ThmGCSMMS}]
We apply Theorem \ref{Thm1} to the pair $(A_{0},A_{1})=(L^{p}(\mathcal{X}%
),KS^{1,p}(\mathcal{X}))$. It follows from \cite[Theorem 4.1]{GKS} that 
\begin{equation}
K(t,f;L^{p}(\mathcal{X}),KS^{1,p}(\mathcal{X}))\approx E_{p,t}(f). 
\label{PJ4.4}
\end{equation}
As a consequence, we have
\begin{equation}
\Vert f\Vert_{(L^{p}(\mathcal{X}),KS^{1,p}(\mathcal{X}))_{s,q}}\approx\Vert
f\Vert_{\dot{\mathcal{B}}_{p,q}^{s}(\mathcal{X})},   \label{PJ4.5}
\end{equation}
with underlying equivalence constants independent of $s\in(0,1)$.
Furthermore, from \eqref{PJ4.4}, we also derive that 
\begin{equation}
\Vert f\Vert_{G_{1}(L^{p}(\mathcal{X}),KS^{1,p}(\mathcal{X}))}\approx\Vert
f\Vert_{KS^{1,p}(\mathcal{X})}.   \label{PJ4.6}
\end{equation}
Combining \eqref{PJ4.5} and \eqref{PJ4.6} with Theorem \ref{Thm1}, we arrive
at the desired estimates \eqref{PJ4.7}.
\end{proof}

A different approach to Sobolev spaces on metric measure spaces has been
recently proposed by Alonso Ruiz and Baudoin in \cite{ARB}. At the heart of
their approach is a new characterization of $KS^{1,p}(\mathcal{X})$ in terms
of $\Gamma$-limits of the functionals $E_{p,t}(f)$
given by \eqref{DefKS}. Namely, Theorem 3.1 in \cite{ARB} guarantees the
existence of a positive sequence of numbers $(t_{n})_{n\in{\mathbb{N}}}$
with $\lim_{n\rightarrow\infty}t_{n}=0$ such that\footnote{%
Our notation is slightly different from \cite{ARB}, more precisely, the
definition of $E_{p,t}(f)$ as given in \cite[(2)]{ARB} corresponds to $\big(%
\frac{E_{p,t}(f)}{t}\big)^{p}$ according to the notation used in %
\eqref{DefKS}.} 
\begin{equation}\label{4.10.2}
\mathcal{E}_{p}(f):=\Gamma_{L^p(\mathcal{X})}-\lim_{n\rightarrow\infty}\bigg(\frac{%
E_{p,t_{n}}(f)}{t_{n}}\bigg)^{p}
\end{equation}
exists and, for $f \in L^p(\mathcal{X})$,  
\begin{equation}\label{4.10.3}
f \in KS^{1,p}(\mathcal{X}) \iff \mathcal{E}_{p}(f)<\infty. 
\end{equation}
We show that this characterization is a simple consequence of the $\Gamma $%
-extrapolation method.

\begin{theorem}\label{Thm4.10.1}
Let $1 \leq p < \infty$. Then, for $f \in L^p(\mathcal{X})$,  
\begin{equation}  \label{PJ5.1}
\|f\|_{KS^{1, p}(\mathcal{X})} \lesssim \Gamma_{L^{p}(\mathcal{X})}-\liminf_{t \to0} \frac{E_{p, t}(f)}{t}
\leq \Gamma_{L^{p}(\mathcal{X})}-\limsup_{t \to0} \frac{E_{p, t}(f)}{t}
\lesssim \|f\|_{KS^{1, p}(\mathcal{X})}.
\end{equation}
\end{theorem}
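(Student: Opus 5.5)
The plan is to reduce \eqref{PJ5.1} to Theorem \ref{ThmGKFunct}, exactly as in the proof of the BMO-type result \eqref{ARBDN2}, using the pair $(A_0,A_1)=(L^p(\mathcal{X}),KS^{1,p}(\mathcal{X}))$. By \eqref{PJ22.2}, for every $f\in L^p(\mathcal{X})$,
\begin{equation*}
\Gamma_{L^p(\mathcal{X})}-\lim_{t\to0}\frac{K(t,f;L^p(\mathcal{X}),KS^{1,p}(\mathcal{X}))}{t}=\Vert f\Vert_{G_1(L^p(\mathcal{X}),KS^{1,p}(\mathcal{X}))}.
\end{equation*}
Theorem \ref{ThmGKFunct} was proved along sequences $t=1/n$. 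The same argument, based on Theorem \ref{teoarriba} and the monotonicity of $K$, works along any sequence $t_n\to0$. So I will read the $\Gamma$-limits in \eqref{PJ5.1} sequentially along arbitrary $t_n\to0$, or equivalently as the family version.

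Next I compare the functionals. The $K$-functional estimate \eqref{PJ4.4} from \cite[Theorem 4.1]{GKS} gives
\begin{equation*}
c_1\,E_{p,t}(f)\le K(t,f;L^p(\mathcal{X}),KS^{1,p}(\mathcal{X}))\le c_2\,E_{p,t}(f),
\end{equation*}
with constants independent of $t$ and $f$. This has to be applied pointwise to all competitors $f_n\in L^p(\mathcal{X})$ in the $\Gamma$-definitions, not just to $f$. Since $\Gamma$-$\liminf$ and $\Gamma$-$\limsup$ are monotone with respect to pointwise inequalities between functionals and are positively homogeneous, we obtain
\begin{equation*}
\Gamma-\liminf_{t\to0}\frac{E_{p,t}(f)}{t}\ge c_2^{-1}\,\Gamma-\lim_{t\to0}\frac{K(t,f)}{t},\qquad \Gamma-\limsup_{t\to0}\frac{E_{p,t}(f)}{t}\le c_1^{-1}\,\Gamma-\lim_{t\to0}\frac{K(t,f)}{t}.
\end{equation*}
Both right-hand sides equal multiples of $\Vert f\Vert_{G_1}$.

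Finally I identify the Gagliardo completion via \eqref{PJ4.6}. By \eqref{3.9.1} and \eqref{PJ4.4},
\begin{equation*}
\Vert f\Vert_{G_1}=\sup_{t>0}\frac{K(t,f)}{t}\approx\sup_{t>0}\frac{E_{p,t}(f)}{t}=\Vert f\Vert_{KS^{1,p}(\mathcal{X})}.
\end{equation*}
This holds for every $f\in L^p(\mathcal{X})$, with both sides possibly infinite. Combining the three steps yields \eqref{PJ5.1}, including the case where both sides are $+\infty$.

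The main obstacle is making sure \eqref{PJ4.4} holds for all $f\in L^p(\mathcal{X})$ and all $t>0$ with uniform constants. This uniformity is what allows the comparison to pass through the infima over recovery sequences. I would check it against \cite{GKS} in the Cheeger-space setting, including $p=1$, where $KS^{1,1}=\dot{BV}(\mathcal{X})$. If only small $t$ were covered, that would still suffice, since the $\Gamma$-limits only involve $t\to0$. The identification of $G_1$ would then use $\limsup_{t\to0}E_{p,t}(f)/t\approx\Vert f\Vert_{KS^{1,p}}$, which is recalled before Theorem \ref{ThmGCSMMS}.
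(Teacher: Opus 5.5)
Your proposal is correct and matches the paper's proof: apply Theorem \ref{ThmGKFunct} to the pair $(L^p(\mathcal{X}),KS^{1,p}(\mathcal{X}))$, transfer the $\Gamma$-limits through the two-sided estimate \eqref{PJ4.4} (applied to every competitor, not just $f$), and identify the Gagliardo completion via \eqref{PJ4.6}. You also observe that the proof of Theorem \ref{ThmGKFunct}, written for $t=1/n$, works verbatim along any $t_n\to0$, which supplies a detail the paper leaves implicit.
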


\begin{remark}
	Note that existence of a $\Gamma$-convergent subfamily in \eqref{PJ5.1} is always guaranteed in light of \cite[Theorem 8.5]{DALM}. In particular, the functional $\mathcal{E}_p$ in \eqref{4.10.2} is well-defined and \eqref{4.10.3} holds.  
\end{remark}

\begin{proof}[Proof of Theorem \ref{Thm4.10.1}]
Consider the pair $(A_{0}, A_{1}) = (L^{p}(\mathcal{X}), KS^{1, p}(\mathcal{X%
}))$ and apply Theorem \ref{ThmGKFunct}. Then \eqref{PJ5.1} follows
immediately from the characterization of the corresponding $K$-functional
given in \eqref{PJ4.4} and the Gagliardo completion computed in \eqref{PJ4.6}%
.
\end{proof}

\subsection{$\Gamma$-convergence to rearrangement invariant norms}

The Maz'ya--Shaposhnikova formula \cite{MS} claims that, for $f\in
C_{0}^{\infty}(\mathbb{R}^{N})$, 
\begin{equation*}
\lim_{s\rightarrow0^{+}}s^{1/p}\Vert f\Vert_{\dot{W}^{s,p}(\mathbb{R}%
^{N})}=c_{N, p}\,\Vert f\Vert_{L^{p}(\mathbb{R}^{N})},  
\end{equation*}
where $c_{N, p}$ is a known explicit constant. The asymptotic analysis of the
Gagliardo seminorms $\Vert f\Vert_{\dot{W}^{s,p}(\mathbb{R}^{N})}$ as $%
s\rightarrow0^{+}$ in the setting of $\Gamma$-convergence was first
investigated by De Luca, Novaga, and Ponsiglione \cite{DLNP} for fractional
perimeters, i.e., restricted to $f=\mathbf{1}_{E}$, the characteristic
function of the set $E$. This restriction has been removed recently by
Crismale, De Luca, Kubin, Ninno and Ponsiglione \cite{CDLKNP}. More precisely, these authors
work with functions $f\in L^{2}$ whose support is contained in a Lipschitz
bounded domain and applied their results to derive stability of fractional
heat flows.

Next we apply the $\Gamma$-extrapolation method to extend the result of \cite%
{CDLKNP} to the context of r.i.  spaces $X$ on $\mathbb{R}%
^N$. This may be considered as the counterpart of Theorem \ref{Thm54} when
the smoothness parameter $s \to 0^{+}$. We shall follow
the notation introduced in Subsection \ref{SubSecSobolevGamma}.

\begin{theorem}\label{THMGammaMS}
Let $q\in\lbrack1,\infty]$ and $k\in{\mathbb{N}}$. Then for $f\in W^k X$ we have
\begin{equation}
 \Vert f\Vert_{X} \lesssim \Gamma_{\dot{W}^{k}X}-\liminf_{s\rightarrow0^{+}}s^{1/q}\Vert f\Vert_{\dot{B}%
_{q}^{sk}X}\leq \Gamma_{\dot{W}^{k}X}-\limsup_{s\rightarrow0^{+}}s^{1/q}\Vert
f\Vert_{\dot{B}_{q}^{sk}X}\lesssim \Vert f\Vert_{X}.   \label{PJ6.3}
\end{equation}
In particular, for $k=1,X=L^{p}(\mathbb{R}^{N})$ and $q=p\in\lbrack1,\infty)
$, 
\begin{equation}
\Vert f\Vert_{L^{p}(\mathbb{R}^{N})} \lesssim \Gamma_{\dot{W}^{1,p}(\mathbb{R}^{N})}-\liminf_{s\rightarrow0^{+}}s^{1/p}\Vert
f\Vert_{\dot{W}^{s,p}(\mathbb{R}^{N})}\leq \Gamma_{\dot{W}^{1,p}(\mathbb{R}%
^{N})}-\limsup_{s\rightarrow0^{+}}s^{1/p}\Vert f\Vert_{\dot{W}^{s,p}(\mathbb{R}%
^{N})}\lesssim \Vert f\Vert_{L^{p}(\mathbb{R}^{N})}.   \label{PJ6.4}
\end{equation}
\end{theorem}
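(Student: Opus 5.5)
The plan is to apply Theorem \ref{Thm1} with $i=1$ to the pair $(A_0,A_1)=(X,\dot{W}^kX)$. For $f\in\dot{W}^kX$ this gives
\begin{equation*}
\Gamma_{\dot{W}^kX}-\lim_{s\to0^+}\Vert f\Vert_{(X,\dot{W}^kX)_{s,q}^{\blacktriangleleft}}=\Vert f\Vert_{G_0(X,\dot{W}^kX)}.
\end{equation*}
Two things then remain. First, compare the normalized interpolation norm with $s^{1/q}\Vert f\Vert_{\dot{B}_q^{sk}X}$, uniformly in $s$ near $0$. Second, identify $G_0(X,\dot{W}^kX)$ with $X$, up to constants, on $W^kX$.

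For the first step I reuse \eqref{PD28.3}. It gives $\Vert f\Vert_{(X,\dot{W}^kX)_{s,q}}\approx\Vert f\Vert_{\dot{B}_q^{sk}X}$ with constants independent of $s\in(0,1)$, and it rests on the $K$-functional characterization \eqref{PD28.1}. The normalizing factor is $c_{s,q}=(s(1-s)q)^{1/q}$, which is comparable to $s^{1/q}$ as $s\to0^+$ for $q<\infty$; for $q=\infty$ it equals $1$, matching $s^{1/q}=1$. Multiplying through, and noting that two-sided comparisons with uniform constants pass to both $\Gamma$-liminf and $\Gamma$-limsup in the fixed topology of $\dot{W}^kX$, I get
\begin{equation*}
\Vert f\Vert_{G_0}\lesssim\Gamma-\liminf_{s\to0^+}s^{1/q}\Vert f\Vert_{\dot{B}_q^{sk}X}\le\Gamma-\limsup_{s\to0^+}s^{1/q}\Vert f\Vert_{\dot{B}_q^{sk}X}\lesssim\Vert f\Vert_{G_0}.
\end{equation*}
This is the same manipulation as in \eqref{13.9.1}, with $\theta\to0$ in place of $\theta\to1$.

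The second step, computing the Gagliardo completion $G_0$, is where I expect the main obstacle. By \eqref{3.9.1}, $\Vert f\Vert_{G_0}=\lim_{t\to\infty}K(t,f;X,\dot{W}^kX)$. The upper bound is immediate from \eqref{sec:sera}: $\Vert f\Vert_{G_0}\le\Vert f\Vert_X$. For the lower bound I would use \eqref{PD28.1} with $q=\infty$, which gives $\lim_{t\to\infty}K(t^k,f)\approx\sup_{h}\Vert\Delta_h^kf\Vert_X$. It then suffices to show
\begin{equation*}
\sup_h\Vert\Delta_h^kf\Vert_X\gtrsim\Vert f\Vert_X \quad\text{for } f\in X.
\end{equation*}
I would expand $\Delta_h^kf=\sum_{j=0}^k(-1)^{k-j}\binom{k}{j}f(\cdot+jh)$. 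As $|h|\to\infty$, the translates $f(\cdot+jh)$, $j=1,\dots,k$, should become asymptotically disjointly supported from $f$ and from each other. In an r.i. space I would then estimate $\Vert\Delta_h^kf\Vert_X\ge\Vert f\Vert_X-o(1)$ by testing against a cutoff to a large ball, so that the term $(-1)^kf$ dominates on that ball. This is cleanest when $f$ is truncated to a large ball, and truncation is controlled when $X$ has absolutely continuous norm. In the general r.i. case I would instead argue by duality through the associate space $X'$, testing against $g\mathbf 1_{B_R}$, or fall back on the Fatou property. This limiting argument is the delicate point. Once $\Vert f\Vert_{G_0(X,\dot{W}^kX)}\approx\Vert f\Vert_X$ holds for $f\in W^kX$, inserting it into the display above yields \eqref{PJ6.3}.

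The special case \eqref{PJ6.4} then follows by taking $X=L^p$, $k=1$, $q=p$ and using \eqref{CoincGFS}.
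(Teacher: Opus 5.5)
Your plan is the paper's proof. You apply Theorem \ref{Thm1} with $i=1$ to $(X,\dot W^kX)$, use the $s$-uniform equivalence \eqref{PD28.3} (with $c_{s,q}\approx s^{1/q}$ as $s\to0^+$), and identify $\|f\|_{G_0(X,\dot W^kX)}$ with $\|f\|_X$ through \eqref{PD28.1}. The only difference is in that last step. The paper quotes \cite{DLTYY} for $\lim_{|h|\to\infty}\|\Delta_h^kf\|_X\approx\|f\|_X$ and calls the case $q=\infty$ clear, whereas you prove the lower bound directly. Using only the $q=\infty$ form of \eqref{PD28.1} is legitimate, since $G_0$ does not depend on $q$.

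Your truncation argument is complete when $X$ has absolutely continuous norm. Put $f_R=f\mathbf 1_{B_R}$ and take $|h|>2R$. Then the translates $f_R(\cdot+jh)$, $0\le j\le k$, have disjoint supports, so $|\Delta_h^kf_R|\ge|f_R|$ a.e. Hence
\begin{equation*}
\|\Delta_h^kf\|_X\ge\|f_R\|_X-2^k\|f-f_R\|_X ,
\end{equation*}
and letting $R\to\infty$ gives $\liminf_{|h|\to\infty}\|\Delta_h^kf\|_X\ge\|f\|_X$.

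Your fallback for a general r.i. space (duality with $X'$ or the Fatou property) cannot work, because the inequality you want is false there. Take $X=L^\infty(\mathbb R^N)$ and $f\equiv1$. Then $f\in W^kL^\infty$ and all of its differences vanish, so $K(t,1;L^\infty,\dot W^kL^\infty)=0$ for every $t$ and $\|1\|_{G_0}=0$. The constant sequence then shows that the $\Gamma$-$\liminf$ in \eqref{PJ6.3} equals $0$, while $\|1\|_{L^\infty}=1$. The same happens in $L^p+L^\infty$.

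So the left inequality of \eqref{PJ6.3} needs an extra hypothesis, for example absolute continuity of the norm of $X$, as assumed in Section \ref{SectionExact} and Corollary \ref{CorCmp1}. The paper's ``it is clear'' step glosses over exactly this point. With that hypothesis added, your proof is correct and self-contained.
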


\begin{proof}
We apply Theorem \ref{Thm1} to the pair $(A_{0},A_{1})=(X,\dot{W}^{k}X)$.
Recall that (cf. \eqref{PD28.3}) 
\begin{equation}
\Vert f\Vert_{(X,\dot{W}^{k}X)_{s,q}}\approx\Vert f\Vert_{\dot{B}_{q}^{sk}X} 
\label{PJ6.1}
\end{equation}
uniformly in $s\in(0,1)$. Moreover, according to  \cite[%
pp. 230-231]{DLTYY} 
\begin{equation*}
\lim_{t\rightarrow\infty}\frac{1}{t^{N}}\int_{|h|<t}\Vert\Delta_{h}^{k}f%
\Vert_{X}^{q}\,dh=C_{N}\lim_{|h|\rightarrow\infty}\Vert\Delta_{h}^{k}f%
\Vert_{X}^{q}\approx\Vert f\Vert_{X}^{q}
\end{equation*}
if $q < \infty$. In the case $q=\infty$, it is clear that (cf. \eqref{15.9.1})
$$
	\lim_{t \to \infty} \omega_k(f, t)_X = \sup_{h \in \R^N} \|\Delta^k_h f\|_X \approx \|f\|_X. 
$$
Consequently, combining with \eqref{PD28.1} we obtain 
\begin{equation}
\Vert f\Vert_{G_{0}(X,\dot{W}^{k}X)}=\lim_{t\rightarrow\infty}K(t^{k},f;X,%
\dot{W}^{k}X)\approx\Vert f\Vert_{X}.   \label{PJ6.2}
\end{equation}
An application of Theorem \ref{Thm1}, taking into account \eqref{PJ6.1} and %
\eqref{PJ6.2}, gives the desired assertion \eqref{PJ6.3}.
\end{proof}

\begin{remark}
The method of proof of Theorem \ref{THMGammaMS} admits some improvements. First, it is possible to update the mode of $\Gamma$-convergence  
from $\dot{W}^{k} X$ to $X$. Second, the result is valid for $f \in X$. These issues, together with exact $\Gamma$-formulae,  will be discussed in Section \ref{SectionExact}  below. 
\end{remark}

\subsection{$\Gamma$-convergence of heat-type energies\label{sec:garo}}

The \emph{heat semigroup} $(H_{t})_{t>0}$ is defined for $f\in L^{1}(\mathbb{%
R}^{N})$ by 
\begin{equation*}
H_{t}f(x):=h_{t}\ast f, 
\end{equation*}
where for $x\in\mathbb{R}^{N}$ and $t>0$, $h_{t}(x)$ is the \emph{heat kernel%
} 
\begin{equation}
h_{t}(x):=\frac{e^{-\frac{|x|^{2}}{4t}}}{(4\pi t)^{\frac{N}{2}}}. 
\label{DefHeatKernel}
\end{equation}

The $\Gamma$-convergence of heat-type energies to the total variation has
been recently studied by Gennaioli and Stefani. In particular, Theorem 1.6
in \cite{GS} asserts that 
\begin{equation}
\Gamma_{L^{1}(\mathbb{R}^{N})}-\lim_{t\rightarrow0}\frac{1}{\sqrt{t}}\int_{%
\mathbb{R}^{N}}H_{t}(|f-f(x)|)(x)\,dx=\frac{2}{\sqrt{\pi}}\,\Vert f\Vert_{%
\dot{BV}(\mathbb{R}^{N})}.   \label{FormulaGS}
\end{equation}
As observed in \cite[(1.12)]{GS}, this formula is of special relevance when
applied to $f=\mathbf{1}_{E}$ based on its reformulation in terms of the amount of heat that
remains inside the set $E$ asymptotically as $t\rightarrow0$. The method of 
\cite{GS} is a refinement of Ponce's approach \cite{P} and involves families of
non-local functionals that arise in BBM formulae combined with a useful
observation from \cite[Theorem B]{GT} (where the pointwise counterpart of %
\eqref{FormulaGS} was obtained even in the setting of Carnot groups)
connecting heat-type energies with the BBM formulae. Next we shall provide a
completely different approach to \eqref{FormulaGS} based on the general $\Gamma$-extrapolation phenomenon. In particular, this approach is self-contained and independent of BBM
formulae.

\begin{theorem}
\label{ThmJ22.1} Let $f\in BV(\mathbb{R}^{N}),$ then 
\begin{align*}
\Vert
f\Vert_{\dot{BV}(\mathbb{R}^{N})}& \lesssim \Gamma_{L^{1}(\mathbb{R}^{N})}-\liminf_{t\rightarrow0}\frac{1}{\sqrt{t}}%
\int_{\mathbb{R}^{N}}H_{t}(|f-f(x)|)(x)\,dx & \\
& \leq \Gamma_{L^{1}(\mathbb{R}^{N})}-\limsup_{t\rightarrow 0}%
\frac{1}{\sqrt{t}}\int_{\mathbb{R}^{N}}H_{t}(|f-f(x)|)(x)\,dx\lesssim \Vert
f\Vert_{\dot{BV}(\mathbb{R}^{N})}.
\end{align*}
\end{theorem}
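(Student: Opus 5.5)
The plan is to follow the pattern of the proof of Theorem \ref{Thm4.10.1} and of the BMO-type result: apply Theorem \ref{ThmGKFunct} to the pair $(A_0,A_1)=(L^1(\mathbb{R}^N),\dot{W}^{1,1}(\mathbb{R}^N))$, and then compare the heat-type energy with the $K$-functional. Theorem \ref{ThmGKFunct} gives
$$\Gamma_{L^1(\mathbb{R}^N)}-\lim_{\varepsilon\to 0}\frac{K(\varepsilon,f;L^1,\dot W^{1,1})}{\varepsilon}=\|f\|_{G_1(L^1,\dot W^{1,1})},$$
and \eqref{PJ1.1} with $k=1$ identifies the right-hand side with $\|f\|_{\dot{BV}(\mathbb{R}^N)}$, up to constants. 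The statement therefore follows once we prove the two-sided estimate
\begin{equation*}
\frac{K(c\sqrt{t},f;L^1,\dot W^{1,1})}{\sqrt t}\lesssim \frac{1}{\sqrt t}\int_{\mathbb{R}^N}H_t(|f-f(x)|)(x)\,dx\lesssim \frac{K(c'\sqrt t,f;L^1,\dot W^{1,1})}{\sqrt t}
\end{equation*}
uniformly in $t>0$. Indeed, $\Gamma$-$\liminf$ and $\Gamma$-$\limsup$ are monotone in the functional, and rescaling $\varepsilon=c\sqrt t$ only changes the values by fixed constants.

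To get this estimate, write the heat energy via Fubini as
$$\int_{\mathbb{R}^N}\int_{\mathbb{R}^N} h_t(h)\,|f(x+h)-f(x)|\,dh\,dx=\int_{\mathbb{R}^N} h_t(h)\,\|\Delta_h f\|_{L^1}\,dh.$$
For the upper bound, use $\|\Delta_h f\|_{L^1}\le \omega(f,|h|)_1$ and \eqref{PD28.1} with $q=\infty$, which gives $\omega(f,u)_1\approx K(u,f;L^1,\dot W^{1,1})$. Then use the quasi-concavity bound $K(u,f)\le \max\{1,u/\sqrt t\}\,K(\sqrt t,f)$. This yields
$$\int h_t(h)\|\Delta_hf\|_{L^1}\,dh\lesssim K(\sqrt t,f)\int_{\mathbb{R}^N} h_t(h)\bigl(1+|h|/\sqrt t\bigr)\,dh\lesssim K(\sqrt t,f),$$
because the Gaussian moment $\int h_t(h)|h|\,dh$ is of order $\sqrt t$.

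For the lower bound, keep only the region $|h|<\sqrt t$, where $h_t(h)\gtrsim t^{-N/2}$. This gives
$$\int h_t(h)\|\Delta_hf\|_{L^1}\,dh\gtrsim \frac{1}{t^{N/2}}\int_{|h|<\sqrt t}\|\Delta_hf\|_{L^1}\,dh\approx K(\sqrt t,f;L^1,\dot W^{1,1}),$$
where the last step is \eqref{PD28.1} with $q=1$ and $k=1$.

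The main obstacle is making sure the equivalences from \eqref{PD28.1} are available for general $f\in L^1$ with constants uniform in $t$. This uniformity holds since \eqref{PD28.1} is scale-invariant. Once the comparison holds for all $f\in L^1$, the $\Gamma$-limits in $L^1$ are sandwiched, and the result follows for $f\in BV(\mathbb{R}^N)$. This gives the claimed bounds up to multiplicative constants; the exact constant $2/\sqrt\pi$ of \eqref{FormulaGS} is deferred to Section \ref{SectionExact}.
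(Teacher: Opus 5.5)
Your proof is correct, and it takes a different and shorter route than the paper's.

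\textbf{The paper's route.} The paper first reduces to $f\ge 0$. It uses the layer-cake formula to rewrite the energy as $\int_0^\infty\|H_t\mathbf 1_{E_\tau^f}-\mathbf 1_{E_\tau^f}\|_{L^1}\,d\tau$. It then passes through the semigroup pair, using $K(t,g;L^1,\dot H^{2,1})\approx\|H_tg-g\|_{L^1}$. The comparison with $K(\sqrt t,g;L^1,\dot W^{1,1})$ is split in two:
\begin{itemize}
\item the upper bound holds for every $g$, via $\omega_2\lesssim\omega_1$;
\item the lower bound holds only for indicator functions, via \eqref{PJ6.15} and $|\Delta_h\mathbf 1_E|\le|\Delta_h^4\mathbf 1_E|$.
\end{itemize}
It closes with reverse Fatou for the $\limsup$, a Fatou-type interchange of $\Gamma$-$\liminf$ and $\int d\tau$, Theorem \ref{ThmGKFunct} applied to each level set, the coarea formula, and finally the splitting $f=f^+-f^-$.

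\textbf{Your route.} You work with $f$ directly. You write the energy as $\int h_t(h)\|\Delta_hf\|_{L^1}\,dh$ and sandwich it between constant multiples of $K(\sqrt t,f;L^1,\dot W^{1,1})$, for every $f\in L^1$ and every $t>0$. Then one application of Theorem \ref{ThmGKFunct} together with \eqref{PJ1.1} finishes the proof. This is the same mechanism the paper uses for Theorem \ref{TheoremJ22.5}; your lower bound is exactly \eqref{20.9.3} with $p=1$. Your upper bound, via quasi-concavity of $K$ and the first Gaussian moment, replaces the directional Sobolev spaces used there. You could even skip the moduli of smoothness there, since $\|\Delta_hf\|_{L^1}\le 2K(|h|,f;L^1,\dot W^{1,1})$ follows in one line from any decomposition $f=g+k$.

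\textbf{What each approach buys.} Your approach avoids:
\begin{itemize}
\item the indicator-only lower bound;
\item the Fatou-type interchange of $\Gamma$-$\liminf$ with the integral in $\tau$, which the paper asserts without detail;
\item the coarea formula and the reduction to $f\ge0$.
\end{itemize}
Because your two-sided comparison is uniform over all of $L^1$, it also gives equi-coercivity directly through Theorem \ref{Thm5**}. The paper's detour mainly buys the heat-content interpretation $\|H_t\mathbf 1_E-\mathbf 1_E\|_{L^1}$ and the link with the $(L^1,\dot H^{2,1})$ semigroup $K$-functional; neither is needed for this estimate.

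Your closing remark about recovering the exact constant $2/\sqrt\pi$ from Section \ref{SectionExact} is not needed for the statement. That section does not actually carry out the heat-kernel case.
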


\begin{proof}
Assume first that $f \geq 0$. By definition of $H_t$, we have  
\begin{equation}
\int_{\mathbb{R}^{N}}H_{t}(|f-f(x)|)(x)\,dx=\int_{\mathbb{R}^{N}}\int_{%
\mathbb{R}^{N}}h_{t}(x-y)|f(y)-f(x)|\,dy\,dx.    \label{uba}
\end{equation}
It is convenient to rewrite this formula using superlevel sets. Indeed, let 
\begin{equation*}
E_{\tau}^{f}:=\{x\in\mathbb{R}^{N}:f(x)>\tau\},  \qquad \tau \in \R, 
\end{equation*}
and observe that 
\begin{equation}
|f(x)-f(y)|=\int_0^\infty |\mathbf{1}_{E_{\tau}^{f}}(x)-\mathbf{1}%
_{E_{\tau}^{f}}(y)|\,d\tau.   \label{uba1}
\end{equation}
Update (\ref{uba}) using (\ref{uba1}) and then apply Fubini's theorem to obtain 
\begin{align}
\int_{\mathbb{R}^{N}}H_{t}(|f-f(x)|)(x)\,dx &  =\int_0^\infty \int_{\mathbb{R}^{N}}\bigg(\int_{\mathbb{R}%
^{N}}h_{t}(x-y)|\mathbf{1}_{E_{\tau}^{f}}(x)-\mathbf{1}_{E_{\tau}^{f}}(y)|%
\,dy\bigg)dx\,d\tau  \notag \\
& =\int_0^\infty \int_{\mathbb{R}^{N}}H_{t}(|\mathbf{1}_{E_{\tau}^{f}}-%
\mathbf{1}_{E_{\tau}^{f}}(x)|)(x)\,dx\,d\tau.   \label{PJ6.5}
\end{align}
Since $\int_{\mathbb{R}^{N}}h_{t}(x)\,dx=1,t>0,$ one can easily verify that 
\begin{equation*}
\int_{\mathbb{R}^{N}}H_{t}(|\mathbf{1}_{E_{\tau}^{f}}-\mathbf{1}%
_{E_{\tau}^{f}}(x)|)(x)\,dx=\Vert H_{t}\mathbf{1}_{E_{\tau}^{f}}-\mathbf{1}%
_{E_{\tau }^{f}}\Vert_{L^{1}(\mathbb{R}^{N})}
\end{equation*}
and inserting this into \eqref{PJ6.5}, we find
that 
\begin{equation*}
\int_{\mathbb{R}^{N}}H_{t}(|f-f(x)|)(x)\,dx=\int_0^\infty \Vert H_{t}%
\mathbf{1}_{E_{\tau}^{f}}-\mathbf{1}_{E_{\tau}^{f}}\Vert_{L^{1}(\mathbb{R}%
^{N})}\,d\tau. 
\end{equation*}
Hence, the desired $\Gamma$-limit can be reduced to 
\begin{equation}
\Gamma_{L^{1}(\mathbb{R}^{N})}-\lim_{t\rightarrow0}\frac{1}{\sqrt{t}}\int_{%
\mathbb{R}^{N}}H_{t}(|f-f(x)|)(x)\,dx=\Gamma_{L^{1}(\mathbb{R}%
^{N})}-\lim_{t\rightarrow0}\int_0^\infty \frac{\Vert H_{t}\mathbf{1}%
_{E_{\tau }^{f}}-\mathbf{1}_{E_{\tau}^{f}}\Vert_{L^{1}(\mathbb{R}^{N})}}{%
\sqrt{t}}\,d\tau.   \label{PJ6.6}
\end{equation}

Consider the Riesz potential  space given by 
\begin{equation*}
\dot{H}^{2,1}(\mathbb{R}^{N}):=\{f\in L^{1}(\mathbb{R}^{N}):\Vert
f\Vert_{\dot{H}^{2,1}(\mathbb{R}^{N})}:=\Vert\Delta f\Vert _{L^{1}(%
\mathbb{R}^{N})}<\infty\};
\end{equation*}
cf. \eqref{15.9.2}. 
It is clear that 
\begin{equation}
\Vert f\Vert_{\dot{H}^{2,1}(\mathbb{R}^{N})}\lesssim\Vert f\Vert_{%
\dot{W}^{2,1}(\mathbb{R}^{N})}   \label{PJ6.7}
\end{equation}
although the converse is not  true. We will make use of the known
fact that (see e.g. \cite[p. 99]{DT}) 
\begin{equation}
K(t,f;L^{1}(\mathbb{R}^{N}),\dot{H}^{2,1}(\mathbb{R}%
^{N}))\approx\Vert H_{t}f-f\Vert_{L^{1}(\mathbb{R}^{N})}    \label{PJ6.8}
\end{equation}
if $f \in L^1(\R^N)$. 
Specifically, we use \eqref{PJ6.8}
for the function $\mathbf{1}_{E_{\tau}^{f}} \in L^1(\R^N)$ in order to write 
\begin{equation}
\int_0^\infty \frac{\Vert H_{t}\mathbf{1}_{E_{\tau}^{f}}-\mathbf{1}%
_{E_{\tau}^{f}}\Vert_{L^{1}(\mathbb{R}^{N})}}{\sqrt{t}}\,d\tau\approx \int_0^\infty \frac{K(t,\mathbf{1}_{E_{\tau}^{f}};L^{1}(\mathbb{R}^{N}),\dot{H}^{2,1}(\mathbb{R}^{N}))}{\sqrt{t}}\,d\tau.   \label{PJ6.9}
\end{equation}
To estimate the right-hand side of \eqref{PJ6.6} we can use \eqref{PJ6.7}
and \eqref{PD28.1},  which lead to 
\begin{align}\label{PJ.14}
K(t,f;L^{1}(\mathbb{R}^{N}),\dot{H}^{2,1}(\mathbb{R}^{N})) &
\lesssim K(t,f;L^{1}(\mathbb{R}^{N}),\dot{W}^{2,1}(\mathbb{R}^{N})) \approx \omega_{2}(f,\sqrt{t})_{1}
\end{align}
and combined with the trivial estimate 
\begin{equation*}
\omega_{2}(f,t)_{1}\lesssim \omega_{1}(f,t)_{1}
\end{equation*}
we derive 
\begin{equation*}
K(t,f;L^{1}(\mathbb{R}^{N}),\dot{H}^{2,1}(\mathbb{R}%
^{N}))\lesssim\omega_{1}(f,\sqrt{t})_{1}
\end{equation*}
or equivalently (apply again \eqref{PD28.1}) 
\begin{equation}
K(t,f;L^{1}(\mathbb{R}^{N}),\dot{H}^{2,1}(\mathbb{R}^{N}))\lesssim
K(\sqrt{t},f;L^{1}(\mathbb{R}^{N}),\dot{W}^{1,1}(\mathbb{R}^{N})). 
\label{PJ6.17}
\end{equation}
Hence, we can estimate \eqref{PJ6.9} as follows 
\begin{equation}
\int_0^\infty \frac{\Vert H_{t}\mathbf{1}_{E_{\tau}^{f}}-\mathbf{1}%
_{E_{\tau}^{f}}\Vert_{L^{1}(\mathbb{R}^{N})}}{\sqrt{t}}\,d\tau\lesssim \int_0^\infty \frac{K(\sqrt{t},\mathbf{1}_{E_{\tau}^{f}};L^{1}(\mathbb{R}^{N}),%
\dot{W}^{1,1}(\mathbb{R}^{N}))}{\sqrt{t}}\,d\tau.   \label{PJ6.18}
\end{equation}
As a consequence, using the trivial fact that $\Gamma-\limsup$ of any family of functionals is dominated by the corresponding pointwise $\limsup$, we have 
\begin{align}
\Gamma_{L^{1}(\mathbb{R}^{N})}-\limsup_{t\rightarrow0}\int_0^\infty \frac{%
\Vert H_{t}\mathbf{1}_{E_{\tau}^{f}}-\mathbf{1}_{E_{\tau}^{f}}\Vert_{L^{1}(%
\mathbb{R}^{N})}}{\sqrt{t}}\,d\tau & \nonumber \\ 
& \hspace{-6cm}\lesssim\limsup_{t\rightarrow 0}\int_0^\infty \frac{K(\sqrt{t},\mathbf{1}_{E_{\tau}^{f}};L^{1}(\mathbb{R}^{N}),%
\dot{W}^{1,1}(\mathbb{R}^{N}))}{\sqrt{t}}\,d\tau.   \label{PJ6.10}
\end{align}
Note that, by \eqref{PJ1.1}, 
\begin{equation*}
\sup_{t>0}\frac{K(t,f;L^{1}(\mathbb{R}^{N}),\dot{W}^{1,1}(\mathbb{R}^{N}))}{t%
}=\Vert f\Vert_{G_{1}(L^{1}(\mathbb{R}^{N}),\dot{W}^{1,1}(\mathbb{R}%
^{N}))}\approx\Vert f\Vert_{\dot{BV}(\mathbb{R}^{N})}. 
\end{equation*}
Applying this to $\mathbf{1}_{E_{\tau}^{f}}$, we have 
\begin{equation}
\sup_{t>0}\frac{K(t,\mathbf{1}_{E_{\tau}^{f}};L^{1}(\mathbb{R}^{N}),\dot {W}%
^{1,1}(\mathbb{R}^{N}))}{t}\approx\Vert\mathbf{1}_{E_{\tau}^{f}}\Vert _{\dot{%
BV}(\mathbb{R}^{N})}   \label{PJ6.11}
\end{equation}
and, by the classical co-area formula for $BV$ (see e.g. 
\cite[Theorem 2.2]{CDDD}): 
\begin{equation}
\int_{0}^\infty \Vert\mathbf{1}_{E_{\tau}^{f}}\Vert_{\dot{BV}(%
\mathbb{R}^{N})}\,d\tau=\Vert f\Vert_{\dot{BV}(\mathbb{R}^{N})}<\infty. 
\label{PJ6.12}
\end{equation}
Accordingly, one can apply the reverse Fatou's lemma together with a simple change of
variables to get that 
\begin{align*}
\limsup_{t\rightarrow0}\int_0^\infty \frac{K(\sqrt{t},\mathbf{1}_{E_{\tau
}^{f}};L^{1}(\mathbb{R}^{N}),\dot{W}^{1,1}(\mathbb{R}^{N}))}{\sqrt{t}}%
\,d\tau & \leq\int_0^\infty \limsup_{t\rightarrow0}\frac{K(t,\mathbf{1}%
_{E_{\tau }^{f}};L^{1}(\mathbb{R}^{N}),\dot{W}^{1,1}(\mathbb{R}^{N}))}{t}%
\,d\tau \\
& \hspace{-5cm}=\int_0^\infty \sup_{t>0}\frac{K(t,\mathbf{1}%
_{E_{\tau}^{f}};L^{1}(\mathbb{R}^{N}),\dot{W}^{1,1}(\mathbb{R}^{N}))}{t}%
\,d\tau \approx\Vert f\Vert_{\dot{BV}(\mathbb{R}^{N})},
\end{align*}
where the last step follows from \eqref{PJ6.11}-\eqref{PJ6.12}. Plugging
this into \eqref{PJ6.10}, we arrive at 
\begin{equation}
\Gamma_{L^{1}(\mathbb{R}^{N})}-\limsup_{t\rightarrow0}\int_0^\infty \frac{%
\Vert H_{t}\mathbf{1}_{E_{\tau}^{f}}-\mathbf{1}_{E_{\tau}^{f}}\Vert_{L^{1}(%
\mathbb{R}^{N})}}{\sqrt{t}}\,d\tau\lesssim\Vert f\Vert_{\dot {BV}(\mathbb{R}%
^{N})}.   \label{PJ6.13}
\end{equation}

Next we focus on the lower estimate of \eqref{PJ6.6}. To achieve this, we
will apply the following converse estimate to \eqref{PJ.14} that may be
found in \cite[Theorem 7, (4.3)]{W} 
\begin{equation}
\omega_{4}(f,\sqrt{t})_{1}\lesssim K(t,f;L^{1}(\mathbb{R}^{N}),\dot {H}^{2,1}(\mathbb{R}^{N})).   \label{PJ6.15}
\end{equation}
Moreover, in the special case that $f=\mathbf{1}_{E}$ for a set $E\subset 
\mathbb{R}^{N}$ with $|E| < \infty$, it is routine to check that 
\begin{equation*}
|\Delta_{h}\mathbf{1}_{E}(x)|\leq|\Delta_{h}^{4}\mathbf{1}_{E}(x)| 
\end{equation*}
for every $x,h\in\mathbb{R}^{N}$ and then 
\begin{equation*}
\omega_{1}(\mathbf{1}_{E},t)_{1}\leq\omega_{4}(\mathbf{1}_{E},t)_{1},\qquad%
\forall t>0. 
\end{equation*}
Thanks to \eqref{PD28.1}, we obtain 
\begin{equation}
K(t,\mathbf{1}_{E};L^{1}(\mathbb{R}^{N}),\dot{W}^{1,1}(\mathbb{R}%
^{N}))\lesssim\omega_{4}(\mathbf{1}_{E},t)_{1},\qquad\forall t>0. 
\label{PJ6.16}
\end{equation}
Putting together \eqref{PJ6.15} and \eqref{PJ6.16}, we have 
\begin{equation*}
K(\sqrt{t},\mathbf{1}_{E};L^{1}(\mathbb{R}^{N}),\dot{W}^{1,1}(\mathbb{R}%
^{N}))\lesssim K(t,\mathbf{1}_{E};L^{1}(\mathbb{R}^{N}),\dot{H}%
^{2,1}(\mathbb{R}^{N})), 
\end{equation*}
which shows that the converse estimate of \eqref{PJ6.17} also holds for
characteristic functions associated to  sets of finite measure (but not for general functions in $L^1(\R^N)$). Consequently, from 
\eqref{PJ6.9}, we get 
\begin{equation*}
\int_0^\infty \frac{\Vert H_{t}\mathbf{1}_{E_{\tau}^{f}}-\mathbf{1}%
_{E_{\tau}^{f}}\Vert_{L^{1}(\mathbb{R}^{N})}}{\sqrt{t}}\,d\tau\gtrsim  \int_0^\infty \frac{K(\sqrt{t},\mathbf{1}_{E_{\tau}^{f}};L^{1}(\mathbb{R}^{N}),%
\dot{W}^{1,1}(\mathbb{R}^{N}))}{\sqrt{t}}\,d\tau. 
\end{equation*}
Then a simple application of the Fatou's lemma in the setting of $\Gamma $%
-convergence leads to 
\begin{align}
\Gamma_{L^{1}(\mathbb{R}^{N})}-\liminf_{t\rightarrow0}\int_0^\infty \frac{%
\Vert H_{t}\mathbf{1}_{E_{\tau}^{f}}-\mathbf{1}_{E_{\tau}^{f}}\Vert_{L^{1}(%
\mathbb{R}^{N})}}{\sqrt{t}}\,d\tau & \nonumber \\
& \hspace{-6cm} \gtrsim\Gamma _{L^{1}(\mathbb{R}%
^{N})}-\liminf_{t\rightarrow0}\int_0^\infty \frac {K(\sqrt{t},\mathbf{1}%
_{E_{\tau}^{f}};L^{1}(\mathbb{R}^{N}),\dot{W}^{1,1}(\mathbb{R}^{N}))}{\sqrt{t%
}}\,d\tau  \notag \\
&\hspace{-6cm}  \geq\int_0^\infty \Gamma_{L^{1}(\mathbb{R}^{N})}-\liminf_{t\rightarrow
0}\frac{K(t,\mathbf{1}_{E_{\tau}^{f}};L^{1}(\mathbb{R}^{N}),\dot{W}^{1,1}(%
\mathbb{R}^{N}))}{t}\,d\tau,   \label{PJ6.19}
\end{align}
where we have also applied a change of variables ($\sqrt{t}\leftrightarrow t$%
) in the last step. Invoke now Theorem \ref{ThmGKFunct} with $%
(A_{0},A_{1})=(L^{1}(\mathbb{R}^{N}),\dot{W}^{1,1}(\mathbb{R}^{N}))$ and %
\eqref{PJ1.1} to find that 
\begin{equation*}
\Gamma_{L^{1}(\mathbb{R}^{N})}-\lim_{t\rightarrow0}\frac{K(t,\mathbf{1}%
_{E_{\tau}^{f}};L^{1}(\mathbb{R}^{N}),\dot{W}^{1,1}(\mathbb{R}^{N}))}{t}%
=\Vert\mathbf{1}_{E_{\tau}^{f}}\Vert_{G_{1}(L^{1}(\mathbb{R}^{N}),\dot {W}%
^{1,1}(\mathbb{R}^{N}))}\approx\Vert\mathbf{1}_{E_{\tau}^{f}}\Vert _{\dot{BV}%
(\mathbb{R}^{N})}
\end{equation*}
with equivalence constants that are independent of $f$ and $\tau$. Using
this estimate into \eqref{PJ6.19} and applying \eqref{PJ6.12}, we get 
\begin{equation}
\Gamma_{L^{1}(\mathbb{R}^{N})}-\liminf_{t\rightarrow0}\int_0^\infty \frac{%
\Vert H_{t}\mathbf{1}_{E_{\tau}^{f}}-\mathbf{1}_{E_{\tau}^{f}}\Vert_{L^{1}(%
\mathbb{R}^{N})}}{\sqrt{t}}\,d\tau\gtrsim\int_0^\infty \Vert\mathbf{1}%
_{E_{\tau}^{f}}\Vert_{\dot{BV}(\mathbb{R}^{N})}\,d\tau=\Vert f\Vert_{\dot{BV}%
(\mathbb{R}^{N})}.   \label{PJ6.20}
\end{equation}

Finally, by \eqref{PJ6.6}, \eqref{PJ6.13}, and \eqref{PJ6.20}, we  
conclude that 
\begin{align*}
\Gamma_{L^{1}(\mathbb{R}^{N})}-\liminf_{t\rightarrow0}\frac{1}{\sqrt{t}}%
\int_{\mathbb{R}^{N}}H_{t}(|f-f(x)|)(x)\,dx & \\
& \hspace{-5cm}\approx\Gamma_{L^{1}(\mathbb{R}^{N})}-\limsup_{t\rightarrow 0}%
\frac{1}{\sqrt{t}}\int_{\mathbb{R}^{N}}H_{t}(|f-f(x)|)(x)\,dx\approx\Vert
f\Vert_{\dot{BV}(\mathbb{R}^{N})}
\end{align*}
for $0 \leq f \in BV(\R^N)$. 

The general case $f \in BV(\R^N)$ follows from the previous case via standard arguments based on the  decomposition $f = f^{+}-f^{-}$ with $f^{+}, f^{-} \geq 0$. Note that $$\|f\|_{\dot{BV}(\R^N)} = \|f^{+}\|_{\dot{BV}(\R^N)}+ \|f^{-}\|_{\dot{BV}(\R^N)}$$ and (cf. \eqref{uba}) 
$$
	\int_{\mathbb{R}^{N}}H_{t}(|f-f(x)|)(x)\,dx= \int_{\mathbb{R}^{N}}H_{t}(|f^{+}-f^{+}(x)|)(x)\,dx  +  \int_{\mathbb{R}^{N}}H_{t}(|f^{-}-f^{-}(x)|)(x)\,dx.  
$$
\end{proof}

The extension of Theorem \ref{ThmJ22.1} to Sobolev and Lebesgue spaces is given in the
following result.

\begin{theorem}
\label{TheoremJ22.5} Let $p\in\lbrack1,\infty)$. If $f\in W^{1,p}(%
\mathbb{R}^{N})$ then 
\begin{align}
 \Vert
f\Vert_{\dot{W}^{1,p}(\mathbb{R}^{N})}^{p} &\lesssim \Gamma_{L^{p}(\mathbb{R}^{N})}-\liminf_{t\rightarrow0}\frac{1}{t^{p/2}}\int_{%
\mathbb{R}^{N}}H_{t}(|f-f(x)|^{p})(x)\,dx   \notag \\
& \leq \Gamma_{L^{p}(\mathbb{R}^{N})}-\limsup_{t\rightarrow 0}%
\frac{1}{t^{p/2}}\int_{\mathbb{R}^{N}}H_{t}(|f-f(x)|^{p})(x)\,dx\lesssim \Vert
f\Vert_{\dot{W}^{1,p}(\mathbb{R}^{N})}^{p}   \label{PJ21.2}
\end{align}
and 
\begin{align}
\Vert f\Vert_{L^{p}(\mathbb{R}^{N})}^{p} &\lesssim \Gamma_{\dot{W}^{1,p}(\mathbb{R}^{N})}-\liminf_{t\rightarrow\infty}\int_{%
\mathbb{R}^{N}}H_{t}(|f-f(x)|^{p})(x)\,dx   \notag \\
& \leq \Gamma_{\dot{W}^{1,p}(\mathbb{R}^{N})}-\limsup
_{t\rightarrow\infty}\int_{\mathbb{R}^{N}}H_{t}(|f-f(x)|^{p})(x)\,dx\lesssim 
\Vert f\Vert_{L^{p}(\mathbb{R}^{N})}^{p}.   \label{PJ22.4}
\end{align}
\end{theorem}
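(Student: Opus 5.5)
The plan is to reduce both estimates to the case of the pair $(A_0,A_1)=(L^p(\mathbb{R}^N),\dot{W}^{1,p}(\mathbb{R}^N))$ and then to Theorem \ref{ThmGKFunct}, as was done for Theorem \ref{Thm4.10.1}. First we rewrite the heat energy. Since $\int h_t=1$ and $h_t$ is radial,
\begin{equation*}
\int_{\mathbb{R}^N}H_t(|f-f(x)|^p)(x)\,dx=\int_{\mathbb{R}^N}h_t(h)\,\|\Delta_h f\|_{L^p(\mathbb{R}^N)}^p\,dh=:\mathcal{H}_t(f).
\end{equation*}
The key intermediate claim is a two-sided estimate, uniform in $t>0$ and $f\in L^p(\mathbb{R}^N)$:
\begin{equation}\label{planheat}
\mathcal{H}_t(f)^{1/p}\approx K(\sqrt{t},f;L^p(\mathbb{R}^N),\dot{W}^{1,p}(\mathbb{R}^N)).
\end{equation}

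For the lower bound in \eqref{planheat}, we note that $h_t(h)\gtrsim t^{-N/2}$ on $|h|<\sqrt{t}$. Hence $\mathcal{H}_t(f)\gtrsim t^{-N/2}\int_{|h|<\sqrt t}\|\Delta_hf\|_p^p\,dh$, and by \eqref{PD28.1} with $k=1$ and $q=p$ this is $\approx K(\sqrt t,f)^p$. For the upper bound we use $\|\Delta_hf\|_p\le 2K(|h|,f)$. (This follows from $\|\Delta_h g\|_p\le|h|\|\nabla g\|_p$ and $\|\Delta_h f_0\|_p\le 2\|f_0\|_p$.) Then $K(|h|,f)\le\max\{1,|h|/\sqrt t\}K(\sqrt t,f)$, and so
\begin{equation*}
\mathcal{H}_t(f)\lesssim K(\sqrt t,f)^p\int_{\mathbb{R}^N}h_t(h)\max\{1,|h|^p/t^{p/2}\}\,dh.
\end{equation*}
The last integral, after scaling $h=\sqrt t\,y$, is a finite constant independent of $t$ because of the Gaussian decay. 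This proves \eqref{planheat}.

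With \eqref{planheat} in hand, \eqref{PJ21.2} follows. By monotonicity of $\Gamma$-limits under uniform comparison of functionals, $t^{-p/2}\mathcal{H}_t(f)\approx\big(K(\sqrt t,f)/\sqrt t\big)^p$, and $\Gamma$-limits commute with the increasing continuous map $x\mapsto x^p$. Substituting $s=\sqrt t\to0$ in Theorem \ref{ThmGKFunct}, \eqref{PJ22.2} gives the $\Gamma_{L^p}$-limit $\|f\|_{G_1(L^p,\dot W^{1,p})}$. Finally, $\|f\|_{G_1(L^p,\dot W^{1,p})}\approx\|f\|_{\dot W^{1,p}}$ for $f\in W^{1,p}$ by \eqref{PD28.5i} with $k=1$ (for $p=1$ this is \eqref{PJ1.1}).

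For \eqref{PJ22.4}, we use \eqref{planheat} with $t\to\infty$ and \eqref{PJ22.3} in $\Gamma_{\dot W^{1,p}}$. This gives the limit $\|f\|_{G_0(L^p,\dot W^{1,p})}$, which is $\approx\|f\|_{L^p}$ by \eqref{PJ6.2}. We must check that the functionals are well defined on $\dot W^{1,p}$: $K(\sqrt t,f)\le\sqrt t\|f\|_{\dot W^{1,p}}$ there, so they are finite, and the comparison \eqref{planheat} extends to $f\in L^p+\dot W^{1,p}$ by the same argument.

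The main obstacle is the upper bound in \eqref{planheat}, since the Gaussian weight is not compactly supported. The step to verify carefully is that the polynomial growth $\max\{1,|h|/\sqrt t\}^p$ is absorbed by $h_t$ uniformly in $t$; the scaling argument above should handle it. A secondary point is justifying that uniform two-sided comparability of functionals transfers to the $\Gamma$-lower and upper limits, which is immediate from the definitions \eqref{intro1}--\eqref{intro2}.
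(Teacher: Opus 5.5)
Your proof is correct. The lower bound coincides with the paper's: restrict $h_t$ to $|h|<\sqrt t$, apply \eqref{PD28.1}, then \eqref{PJ22.2}. The upper bound follows a different route.

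\textbf{The paper's upper bound.} It bounds the energy $\mathcal{H}_t(f)=\int h_t(h)\|\Delta_h f\|_{L^p}^p\,dh$ directly by the norm. Each direction $\xi$ is controlled through the semigroup formula \eqref{PJ11.1} and $\|f\|_{G_1(L^p,\dot W^{1,p}_\xi)}\le|\xi|\,\|\nabla f\|_{L^p}$. Integrating against $h_1$ gives $t^{-p/2}\mathcal{H}_t(f)\lesssim\|f\|_{\dot W^{1,p}}^p$ for all $t$. The paper then uses that the $\Gamma$-$\limsup$ is dominated by the pointwise $\limsup$, and leaves \eqref{PJ22.4} as ``similar''.

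\textbf{Your upper bound.} You keep the $K$-functional on the upper side too, via $\|\Delta_h g\|_{L^p}\le 2K(|h|,g)$ and $K(s,g)\le\max\{1,s/t\}K(t,g)$. The same finite $p$-th moment of $h_1$ then yields a uniform two-sided equivalence $\mathcal{H}_t(g)^{1/p}\approx K(\sqrt t,g;L^p,\dot W^{1,p})$. Both \eqref{PJ21.2} and \eqref{PJ22.4} become direct corollaries of Theorem \ref{ThmGKFunct}, exactly as in Theorem \ref{Thm4.10.1}.

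\textbf{What each route buys.} Yours avoids directional Sobolev spaces and semigroup theory. Because it identifies the $\Gamma$-limit with $\|f\|_{G_1}$ for every $f\in L^p$, at $p=1$ it gives Theorem \ref{ThmJ22.1} for all $f\in BV$ via \eqref{PJ1.1}, bypassing the coarea and $\dot H^{2,1}$ argument used there. The paper's direction-by-direction bound is what survives for fractional heat kernels with $2s=p$. There the $p$-th moment of $h_1^s$ diverges and your two-sided comparison fails: for nonzero $f\in C_c^\infty$ the energy is of order $t|\log t|$ as $t\to0$, whereas $K(t^{1/(2s)},f)^p$ is of order $t$.

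\textbf{One point to state explicitly.} For \eqref{PJ22.4} you need the lower comparison for $g\in\dot W^{1,p}\setminus L^p$. This is not literally \eqref{PD28.1}, which is stated for $g\in X$. It follows by rerunning the mollifier construction behind \eqref{PD28.1} for $g\in L^p+\dot W^{1,p}$.
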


\begin{remark}
The $\Gamma$-limit in \eqref{PJ21.2} was already obtained in \cite[Theorem
1.6]{GS} by a different approach. On the other hand, \eqref{PJ22.4} is apparently new (in particular, in view of the method presented in Section \ref{SectionExact} below,  it seems plausible  to update its mode of $\Gamma$-convergence to $L^p$ and compute exact formulae). 
\end{remark}

\begin{proof}[Proof of Theorem \protect\ref{TheoremJ22.5}]
Given $t>0$, we can apply simple changes of variables and Fubini's theorem
to get 
\begin{align*}
\int_{\mathbb{R}^{N}}H_{t}(|f-f(x)|^{p})(x)\,dx & =\int_{\mathbb{R}%
^{N}}\int_{\mathbb{R}^{N}}h_{t}(\xi)|f(x+\xi)-f(x)|^{p}\,d\xi\,dx \\
& =\int_{\mathbb{R}^{N}}h_{t}(\xi)\Vert\Delta_{\xi}f\Vert_{L^{p}(\mathbb{R}%
^{N})}^{p}\,d\xi \\
& =\int_{\mathbb{R}^{N}}h_{1}(\xi)\Vert\Delta_{\sqrt{t}\xi}f\Vert _{L^{p}(%
\mathbb{R}^{N})}^{p}\,d\xi.
\end{align*}
As a consequence, we derive 
\begin{equation}
\frac{1}{t^{p/2}}\int_{\mathbb{R}^{N}}H_{t}(|f-f(x)|^{p})(x)\,dx=\int_{%
\mathbb{R}^{N}}h_{1}(\xi)\bigg(\frac{\Vert\Delta_{\sqrt{t}\xi}f\Vert _{L^{p}(%
\mathbb{R}^{N})}}{\sqrt{t}}\bigg)^{p}\,d\xi.   \label{PJ11.3}
\end{equation}

For each fixed $\xi\in\mathbb{R}^{N}\backslash\{0\}$, let us denote by $\dot{%
W}_{\xi}^{1,p}(\mathbb{R}^{N})$ the directional Sobolev space equipped with 
\begin{equation*}
\Vert f\Vert_{\dot{W}_{\xi}^{1,p}(\mathbb{R}^{N})}:=\Vert D_{\xi}f\Vert
_{L^{p}(\mathbb{R}^{N})}, 
\end{equation*}
where $D_{\xi}f(x) := \langle \nabla f(x), \xi \rangle$. We also let 
\begin{equation*}
T_{t,\xi}f(x):=f(x+t\xi) 
\end{equation*}
for $t>0$ and $x\in\mathbb{R}^{N}$. It is plain to see that $\{T_{t,\xi
}\}_{t>0}$ defines a semigroup of linear operators acting on $L^{p}(\mathbb{R%
}^{N})$ with corresponding generator $\Lambda_{\xi}f=D_{\xi}f.$ In
particular, the domain of $\Lambda_{\xi}$ coincides with $\dot{W}%
_{\xi}^{1,p}(\mathbb{R}^{N})$, more precisely, 
\begin{equation*}
\Vert f\Vert_{D(\Lambda_{\xi})}=\Vert f\Vert_{\dot{W}_{\xi}^{1,p}(\mathbb{R}%
^{N})}. 
\end{equation*}
By virtue of \cite[Theorem 6.7.3]{BL}, we can characterize the $K$%
-functional relative to $(L^{p}(\mathbb{R}^{N}),D(\Lambda_{\xi}))=(L^{p}(%
\mathbb{R}^{N}),\dot{W}_{\xi}^{1,p}(\mathbb{R}^{N}))$ as 
\begin{equation}
K(t,f;L^{p}(\mathbb{R}^{N}),\dot{W}_{\xi}^{1,p}(\mathbb{R}^{N}))\approx
\sup_{0<s\leq t}\Vert T_{s,\xi}f-f\Vert_{L^{p}(\mathbb{R}^{N})}=\sup_{0<s\leq t}%
\Vert\Delta_{s\xi}f\Vert_{L^{p}(\mathbb{R}^{N})},   \label{PJ11.1}
\end{equation}
where the hidden equivalence constant may be taken to be independent of $\xi$.


Using \eqref{PJ11.1} in the formula \eqref{PJ11.3}, we obtain the following
estimate 
\begin{equation}
\frac{1}{t^{p/2}}\int_{\mathbb{R}^{N}}H_{t}(|f-f(x)|^{p})(x)\,dx\lesssim
\int_{\mathbb{R}^{N}}h_{1}(\xi)\bigg(\frac{K(\sqrt{t},f;L^{p}(\mathbb{R}%
^{N}),\dot{W}_{\xi}^{1,p}(\mathbb{R}^{N}))}{\sqrt{t}}\bigg)^{p}\,d\xi. 
\label{PJ11.4}
\end{equation}
Moreover, by \eqref{sec:sera} and Cauchy--Schwarz inequality, 
\begin{align}
\frac{1}{t^{p/2}}\int_{\mathbb{R}^{N}}H_{t}(|f-f(x)|^{p})(x)\,dx &  \lesssim \int_{\mathbb{R}^{N}}h_{1}(\xi)\sup_{t > 0}\bigg(\frac{K(\sqrt{t},f;L^{p}(\mathbb{R}%
^{N}),\dot{W}_{\xi}^{1,p}(\mathbb{R}^{N}))}{\sqrt{t}}\bigg)^{p}\,d\xi \nonumber \\
& = \int_{\R^N} h_1(\xi) \|f\|_{G_1(L^p(\R^N), \dot{W}^{1, p}_\xi(\R^N))}^p  \, d \xi\ \nonumber\\
&\leq\int_{\mathbb{R}^{N}}h_{1}(\xi)\Vert f\Vert_{\dot {W}%
_{\xi}^{1,p}(\mathbb{R}^{N})}^{p}\,d\xi \nonumber \\ 
&\leq\bigg(\int_{\mathbb{R}^{N}}h_{1}(\xi)|\xi|^{p}\,d\xi%
\bigg)\Vert f\Vert_{\dot{W}^{1,p}(\mathbb{R}^{N})}^{p}.   \label{PJ11.4a}
\end{align}
Since $h_{1}(\xi)$ is a radial function (recall \eqref{DefHeatKernel}), we
can apply polar coordinates together with a simple change of variables to show the finiteness of the last integral,
more precisely, 
\begin{equation}
\int_{\mathbb{R}^{N}}h_{1}(\xi)|\xi|^{p}\,d\xi=c_{N}\int_{0}^{%
\infty}h_{1}(u)u^{p+N-1}\,du=c_{N}^{\prime}\int_{0}^{%
\infty}e^{-u^{2}}u^{p+N-1}\,du<\infty.   \label{PJ11.5}
\end{equation}
In particular, by \eqref{PJ11.4a} and \eqref{PJ11.5}, 
\begin{align}
\Gamma_{L^{p}(\mathbb{R}^{N})}-\limsup_{t\rightarrow0}\frac{1}{t^{p/2}}\int_{%
\mathbb{R}^{N}}H_{t}(|f-f(x)|^{p})(x)\,dx &  \notag \\
& \hspace{-6cm}\lesssim\limsup_{t\rightarrow0} \frac{1}{t^{p/2}}\int_{%
\mathbb{R}^{N}}H_{t}(|f-f(x)|^{p})(x)\,dx  \lesssim \Vert f\Vert_{\dot{W}^{1,p}(\mathbb{R}^{N})}^{p}.
\label{PJ11.6}
\end{align}

On the other hand, by \eqref{PJ11.3}, the monotonicity of heat kernels (cf. %
\eqref{DefHeatKernel}), and a simple change of variables, we have 
\begin{align}
\frac{1}{t^{p/2}}\int_{\mathbb{R}^{N}}H_{t}(|f-f(x)|^{p})(x)\,dx & \geq
\int_{|\xi|<1}h_{1}(\xi)\bigg(\frac{\Vert\Delta_{\sqrt{t}\xi}f\Vert _{L^{p}(%
\mathbb{R}^{N})}}{\sqrt{t}}\bigg)^{p}\,d\xi \nonumber \\
& \gtrsim\frac{1}{t^{\frac{p+N}{2}}}\int_{|\xi|<\sqrt{t}}\Vert\Delta_{\xi
}f\Vert_{L^{p}(\mathbb{R}^{N})}^{p}\,d\xi. \label{20.9.3}
\end{align}
Then, by virtue of \eqref{PD28.1} (with $k=1,q=p$ and $X=L^{p}(\mathbb{R}%
^{N})$), the previous estimate reads as 
\begin{equation*}
\frac{1}{t^{p/2}}\int_{\mathbb{R}^{N}}H_{t}(|f-f(x)|^{p})(x)\,dx\gtrsim %
\bigg(\frac{K(\sqrt{t},f;L^{p}(\mathbb{R}^{N}),\dot{W}^{1,p}(\mathbb{R}^{N}))%
}{\sqrt{t}}\bigg)^{p}.  
\end{equation*}
We are now in a position to invoke \eqref{PJ22.2} with $(A_{0},A_{1})=(L^{p}(%
\mathbb{R}^{N}),\dot{W}^{1,p}(\mathbb{R}^{N}))$ so that 
\begin{align}
\Gamma_{L^{p}(\mathbb{R}^{N})}-\liminf_{t\rightarrow0}\frac{1}{t^{p/2}}\int_{%
\mathbb{R}^{N}}H_{t}(|f-f(x)|^{p})(x)\,dx &  \notag \\
& \hspace{-5.5cm}\gtrsim\Gamma_{L^{p}(\mathbb{R}^{N})}-\liminf_{t\rightarrow
0}\bigg(\frac{K(t,f;L^{p}(\mathbb{R}^{N}),\dot{W}^{1,p}(\mathbb{R}^{N}))}{t}%
\bigg)^{p}  \notag \\
& \hspace{-5.5cm}=\Vert f\Vert_{G_{1}(L^{p}(\mathbb{R}^{N}),\dot{W}^{1,p}(%
\mathbb{R}^{N}))}^{p}\approx\Vert f\Vert_{\dot{W}^{1,p}(\mathbb{R}%
^{N})}^{p},   \label{PJ21.1}
\end{align}
where the last step follows from \eqref{PD28.5i}.

Putting together \eqref{PJ11.6} and \eqref{PJ21.1}, we arrive at the desired
estimates  \eqref{PJ21.2}. On the other hand, the proof of %
\eqref{PJ22.4} can be obtained in a similar fashion but now relying instead
on \eqref{PJ22.3}.
\end{proof}

Our method also works with fractional heat kernels. More precisely,  $%
(H_{t}^{s})_{t>0}$ denotes the fractional heat semigroup with $s\in(0,1)$
given by 
\begin{equation*}
H_{t}^{s}f=h_{t}^{s}\ast f. 
\end{equation*}
It can be shown that 
\begin{equation*}
h_{t}^{s}(x)\approx\frac{t}{\big(t^{\frac{1}{s}}+|x|^{2}\big)^{\frac{N+2s}{2}%
}}   
\end{equation*}
for all $x\in\mathbb{R}^{N}$ and $t>0$. However, apart from the
distinguished case $s=1/2$, there is not an explicit formula for $h_{t}^{s}$%
, cf. \cite[Section 6.2]{GS} for a concise overview on fractional heat
kernels.

The extension of \eqref{PJ21.2} to fractional heat kernels may be found in
the following result, which was already obtained in \cite[Theorem 1.7]{GS}.
Next we give a new proof of this result based on the $\Gamma$-extrapolation
approach.

\begin{theorem}
Let $s\in(0,1)$ and $p\in[1,\infty)$ with\footnote{We restrict our attention to the most interesting case $2 s \geq p$. We mention that, for the remaining range $2 s < p$, somewhat expected limit formulae hold, cf. \cite[Theorem 1.7]{GS}.} $2s\geq p$. If $f\in W^{1,p}(\mathbb{R}^{N})$ then%
\begin{align*}
\Vert f\Vert_{\dot{W}^{1,p}(\mathbb{R}^{N})}^{p} & \lesssim \Gamma_{L^{p}(\mathbb{R}^{N})}-\liminf_{t\rightarrow0}\frac{1}{\psi_{s,p}(t)}%
\int_{\mathbb{R}^{N}}H_{t}^{s}(|f-f(x)|^{p})(x)\,dx   \\
& \leq  \Gamma_{L^{p}(\mathbb{R}^{N})}-\limsup_{t\rightarrow0}\frac{1}{\psi
_{s,p}(t)}\int_{\mathbb{R}^{N}}H_{t}^{s}(|f-f(x)|^{p})(x)\,dx \lesssim \Vert f\Vert_{\dot{W}^{1,p}(\mathbb{R}^{N})}^{p},
\end{align*}
where 
\begin{equation*}
\psi_{s,p}(t)=\left\{ 
\begin{array}{cl}
t^{\frac{p}{2s}} &\qquad  \text{if}\qquad2s>p, \\ 
t|\log t| &\qquad  \text{if}\qquad2s=p.%
\end{array}
\right. 
\end{equation*}
\end{theorem}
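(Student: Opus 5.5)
Mirroring the proof of Theorem \ref{TheoremJ22.5}, we first rewrite the energy in terms of differences. Changing variables and applying Fubini gives
\begin{equation*}
\int_{\mathbb{R}^{N}}H^{s}_{t}(|f-f(x)|^{p})(x)\,dx=\int_{\mathbb{R}^{N}}h^{s}_{t}(\xi)\Vert\Delta_{\xi}f\Vert_{L^{p}(\mathbb{R}^{N})}^{p}\,d\xi .
\end{equation*}
We then insert the two-sided kernel bound $h^{s}_{t}(\xi)\approx t\,(t^{1/s}+|\xi|^{2})^{-\frac{N+2s}{2}}$ and use polar coordinates. Because $\Delta_\xi$ depends on the direction, we average over $|\xi|=r$ by setting $\Phi(r):=\fint_{|\xi|=r}\Vert\Delta_\xi f\Vert_{L^p}^p\,d\sigma$. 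This leaves the one-dimensional integral $t\int_0^\infty (t^{1/s}+r^2)^{-\frac{N+2s}{2}}\Phi(r)\,r^{N-1}\,dr$. In what follows, $\Phi(r)$ should be compared with $r^{p}\,\big(K(r,f;L^p,\dot W^{1,p})/r\big)^p$, using \eqref{PD28.1} with $k=1$ and $q=p$.

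\textbf{Upper bound.} Since $\Gamma$-$\limsup$ is dominated by the pointwise $\limsup$, it suffices to bound the energy itself. For $f\in \dot W^{1,p}$ we have $\Vert\Delta_\xi f\Vert_p\le \min\{|\xi|\,\Vert f\Vert_{\dot W^{1,p}},\,2\Vert f\Vert_p\}$. We split the integral at $r=t^{1/(2s)}$ and at $r=1$. On $r<t^{1/(2s)}$ the contribution is $\lesssim t^{1-\frac{N+2s}{2s}}\,t^{\frac{p+N}{2s}}\Vert f\Vert^p_{\dot W^{1,p}}=t^{p/(2s)}\Vert f\Vert^p_{\dot W^{1,p}}$. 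On $t^{1/(2s)}<r<1$ it is $\lesssim t\int r^{p-1-2s}\,dr\,\Vert f\Vert_{\dot W^{1,p}}^p$, which is $\lesssim t^{p/(2s)}$ when $2s>p$ and $\approx t|\log t|$ when $2s=p$. On $r>1$ we get $\lesssim t\Vert f\Vert_p^p=o(\psi_{s,p}(t))$. This last term involves $\Vert f\Vert_p$, but it becomes negligible after division by $\psi_{s,p}$, which is why we assume $f\in W^{1,p}$. Together these give the $\limsup$ bound $\lesssim \Vert f\Vert^p_{\dot W^{1,p}}$.

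\textbf{Lower bound.} This is the step I expect to be the main obstacle, since the dominant scale differs between the two cases. When $2s>p$ we keep only $r<t^{1/(2s)}$, where $h_t^s\gtrsim t^{-N/(2s)}$, and obtain
\begin{equation*}
\frac{1}{\psi_{s,p}(t)}\int H^s_t(\cdots)\gtrsim \frac{1}{\tau^{p+N}}\int_{|\xi|<\tau}\Vert\Delta_\xi f\Vert_p^p\,d\xi\approx\Big(\frac{K(\tau,f;L^p,\dot W^{1,p})}{\tau}\Big)^p,\qquad \tau=t^{1/(2s)},
\end{equation*}
exactly as in \eqref{20.9.3}. Then \eqref{PJ22.2} together with \eqref{PD28.5i} gives the $\Gamma$-$\liminf$ bound $\gtrsim\Vert f\Vert_{G_1}^p\approx \Vert f\Vert^p_{\dot W^{1,p}}$. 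Here we use that $\tau\to0$ monotonically in $t$ and that $\Gamma$-$\liminf$ respects pointwise inequalities between families.

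When $2s=p$ the mass comes instead from the logarithmic range $t^{1/(2s)}<r<1$. There we write the lower bound as
\begin{equation*}
\frac{1}{|\log t|}\int_{t^{1/(2s)}}^{1}\Big(\frac{K(r,f)}{r}\Big)^p\frac{dr}{r}
\end{equation*}
(up to constants), again via \eqref{PD28.1}, which is an average of the quantities $F_r(f)=(K(r,f)/r)^p$ with respect to $d\mu_t=\mathbf 1_{(t^{1/(2s)},1)}\frac{dr}{r|\log t|}$. Here $\mu_t$ has mass $\approx 1$ and concentrates at $r\to0$. Let $f_t\to f$ in $L^p$. For each fixed $\delta$, the set $(\delta,1)$ has $\mu_t$-mass tending to zero. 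For $r<\delta$ we compare $F_r(f_t)$ with $\inf_{r'<\delta}F_{r'}(f_t)$, and by \eqref{PJ22.2} the latter has $\liminf$ as $t\to0$ at least $\Vert f\Vert_{G_1}^p-\varepsilon(\delta)$. This needs a $\Gamma$-liminf statement for infima over $r'<\delta$, which follows from the lower-bound half of Theorem \ref{teoarriba} / \eqref{PJ4.3}, because $K(\cdot,g)/\cdot$ is nonincreasing so that $F_{r}(g)\ge K(\rho,g)^p/\rho^p$ for $r\le\rho$. Letting $\delta\to0$ concludes the case $2s=p$, and with it the theorem.
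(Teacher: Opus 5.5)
Your proof is correct. For the upper bound and for the case $2s>p$ it is essentially the paper's argument. The paper omits $2s>p$ as a line-by-line repetition of Theorem \ref{TheoremJ22.5}, which is exactly your restriction to $|\xi|<t^{1/(2s)}$ followed by \eqref{PJ22.2}. For $2s=p$, its splitting into $I$, $II$, $III$ is your three-region split in rescaled variables, with slightly different cut-offs, and it rests on the same bound $\|\Delta_\xi f\|_{L^p}\le\min\{|\xi|\,\|f\|_{\dot W^{1,p}},2\|f\|_{L^p}\}$.

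The genuine difference is the lower bound when $2s=p$. The paper bounds the energy below by the log-normalized truncated Gagliardo energy \eqref{PJ26.1} and then imports the classical BBM $\Gamma$-limit \eqref{PJ26.2} from \cite{BM}. You instead rewrite the lower bound as a $\frac{dr}{r|\log t|}$-average of $F_r=(K(r,\cdot)/r)^p$. You then use only two facts: $r\mapsto K(r,g)/r$ is nonincreasing, and $|K(\delta,g)-K(\delta,f)|\le\|g-f\|_{L^p}$. For $f_t\to f$ in $L^p$ and fixed $\delta$ this gives $\liminf_{t\to0}(\ldots)\gtrsim\frac1p\,F_\delta(f)$, and letting $\delta\to0$ together with \eqref{3.9.1} and \eqref{PD28.5i} finishes.

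Your route keeps the whole proof inside the extrapolation framework and removes the dependence on an external $\Gamma$-convergence theorem. This fits the paper's stated aim in this section of proceeding independently of BBM formulae. The paper's route is shorter but leans on that imported result.

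Two points to tidy up:
\begin{itemize}
\item The Fubini computation that turns \eqref{PD28.1} into $\frac{1}{|\log t|}\int_{t^{1/p}}^{1}(K(r,f)/r)^p\,\frac{dr}{r}$ also generates a ball term $\frac{1}{|\log t|}\,t^{-1-N/p}\int_{|\xi|<t^{1/p}}\|\Delta_\xi f\|_{L^p}^p\,d\xi$. You must therefore keep the inner region $|\xi|<t^{1/(2s)}$, where $h^s_t\gtrsim t^{-N/(2s)}$, rather than only the logarithmic range. Alternatively, compare ball and annulus averages of $\|\Delta_\xi f\|_{L^p}^p$.
\item For fixed $\delta$, the bound on $\liminf_{t\to0}\inf_{r'<\delta}F_{r'}(f_t)$ does not come from \eqref{PJ22.2}. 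It is just $\inf_{r'<\delta}F_{r'}\ge F_\delta$ plus the $L^p$-continuity of $F_\delta$, with $\varepsilon(\delta)=\|f\|_{G_1}^p-F_\delta(f)\to0$.
\end{itemize}
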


\begin{proof}
The proof in the case $2s>p$ follows line by line the argument given in the
proof of Theorem \ref{TheoremJ22.5} and then it is omitted. Therefore, we
now focus on the limiting case $2s=p$, where some changes, albeit still
minor, are needed.

Assume that $2s=p$. Applying a simple change of variables, we can write 
\begin{equation}
\frac{1}{\psi_{s,p}(t)}\int_{\mathbb{R}^{N}}H_{t}^{s}(|f-f(x)|^{p})(x)%
\,dx=t^{\frac{N}{2s}}\int_{\mathbb{R}^{N}}\frac{h_{t}^{s}(t^{\frac{1}{2s}%
}\xi)}{\psi_{s,p}(t)}\Vert\Delta_{t^{\frac{1}{2s}}\xi}f\Vert_{L^{p}(\mathbb{R%
}^{N})}^{p}\,d\xi.   \label{PJ25.2}
\end{equation}
Accordingly, by \eqref{PJ11.1}, we have 
\begin{equation}
\frac{1}{\psi_{s,p}(t)}\int_{\mathbb{R}^{N}}H_{t}^{s}(|f-f(x)|^{p})(x)\,dx%
\lesssim I+II+III,   \label{PJ25.6}
\end{equation}
where 
\begin{equation*}
I:=\frac{1}{|\log t|}\int_{|\xi|<1}\frac{1}{(1+|\xi|^{2})^{\frac{N+2s}{2}}}%
\bigg(\frac{K(t^{\frac{1}{p}},f;L^{p}(\mathbb{R}^{N}),\dot{W}_{\xi}^{1,p}(%
\mathbb{R}^{N}))}{t^{\frac{1}{p}}}\bigg)^{p}\,d\xi, 
\end{equation*}%
\begin{equation*}
II:=\frac{1}{|\log t|}\int_{1<|\xi|<\frac{1}{t}}\frac{1}{(1+|\xi|^{2})^{%
\frac{N+2s}{2}}}\bigg(\frac{K(t^{\frac{1}{p}},f;L^{p}(\mathbb{R}^{N}),\dot{W}%
_{\xi}^{1,p}(\mathbb{R}^{N}))}{t^{\frac{1}{p}}}\bigg)^{p}\,d\xi, 
\end{equation*}
and 
\begin{equation*}
III:=\frac{1}{|\log t|}\int_{|\xi|>\frac{1}{t}}\frac{1}{(1+|\xi|^{2})^{\frac{%
N+2s}{2}}}\bigg(\frac{K(t^{\frac{1}{p}},f;L^{p}(\mathbb{R}^{N}),\dot{W}%
_{\xi}^{1,p}(\mathbb{R}^{N}))}{t^{\frac{1}{p}}}\bigg)^{p}\,d\xi. 
\end{equation*}
Here, without loss of generality, we may assume that $t<1$. We now
estimate each of these three terms. Firstly,  $I$ can be controlled using ideas
similar to those that led to \eqref{PJ11.4}: 
\begin{equation}
I\leq\frac{1}{|\log t|}\bigg(\int_{|\xi|<1}\frac{|\xi|^{p}}{(1+|\xi |^{2})^{%
\frac{N+2s}{2}}}\,d\xi\bigg)\Vert f\Vert_{\dot{W}^{1,p}(\mathbb{R}%
^{N})}^{p}\approx\frac{1}{|\log t|}\Vert f\Vert_{\dot{W}^{1,p}(\mathbb{R}%
^{N})}^{p}.   \label{PJ25.7}
\end{equation}
To estimate $III$, we use \eqref{sec:sera} to get 
\begin{align}
	III &\leq \frac{1}{t |\log t|} \int_{|\xi| > \frac{1}{t}} \frac{1}{(1+|\xi|^2)^{\frac{N+2 s}{2}}} \sup_{t > 0} K(t^{\frac{1}{p}}, f; L^p(\R^N), \dot{W}^{1, p}_\xi(\R^N))^p \, d \xi \nonumber \\
	& =   \frac{1}{t |\log t|} \int_{|\xi| > \frac{1}{t}} \frac{1}{(1+|\xi|^2)^{\frac{N+2 s}{2}}} \|f\|_{G_0(L^p(\R^N), \dot{W}^{1, p}_\xi(\R^N))}^p \, d \xi \nonumber \\
	& \leq \frac{1}{t |\log t|}  \bigg( \int_{|\xi| > \frac{1}{t}} \frac{1}{(1+|\xi|^2)^{\frac{N+2 s}{2}}}  \, d \xi  \bigg) \|f\|_{L^p(\R^N)}^p \nonumber \\
	& \approx  \frac{1}{t |\log t|}  \bigg( \int_{|\xi| > \frac{1}{t}} \frac{1}{|\xi|^{N+ 2 s}}  \, d \xi  \bigg) \|f\|_{L^p(\R^N)}^p  \approx 
\frac{t^{p-1}}{|\log t|}\Vert f\Vert_{L^{p}(\mathbb{R}^{N})}^{p}. \label{PJ25.8}
\end{align}
Finally, we can estimate $II$ as follows: 
\begin{equation}
II\lesssim\frac{1}{|\log t|}\bigg(\int_{1<|\xi|<\frac{1}{t}}\frac{1}{|\xi
|^{N}}\,d\xi\bigg)\Vert f\Vert_{\dot{W}^{1,p}(\mathbb{R}^{N})}^{p}\approx%
\Vert f\Vert_{\dot{W}^{1,p}(\mathbb{R}^{N})}^{p}.   \label{PJ25.9}
\end{equation}
Inserting \eqref{PJ25.7}--\eqref{PJ25.9} into \eqref{PJ25.6}, we arrive at 
\begin{equation*}
\frac{1}{\psi_{s,p}(t)}\int_{\mathbb{R}^{N}}H_{t}^{s}(|f-f(x)|^{p})(x)\,dx%
\lesssim\bigg(1+\frac{1}{|\log t|}\bigg)\Vert f\Vert_{\dot{W}^{1,p}(\mathbb{R%
}^{N})}^{p}+\frac{t^{p-1}}{|\log t|}\Vert f\Vert _{L^{p}(\mathbb{R}%
^{N})}^{p}. 
\end{equation*}
Consequently, 
\begin{align}
\Gamma_{L^{p}(\mathbb{R}^{N})}-\limsup_{t\rightarrow0}\frac{1}{\psi_{s,p}(t)}%
\int_{\mathbb{R}^{N}}H_{t}^{s}(|f-f(x)|^{p})(x)\,dx &  \notag \\
& \hspace{-6cm}\lesssim\limsup_{t\rightarrow0}\bigg[\bigg(1+\frac{1}{|\log t|%
}\bigg)\Vert f\Vert_{\dot{W}^{1,p}(\mathbb{R}^{N})}^{p}+\frac{t^{p-1}}{|\log
t|}\Vert f\Vert_{L^{p}(\mathbb{R}^{N})}^{p}\bigg]=\Vert f\Vert_{\dot{W}%
^{1,p}(\mathbb{R}^{N})}^{p}.   \label{PJ26.3}
\end{align}

Conversely, by \eqref{PJ25.2} and a change of variables we see that 
\begin{equation}\label{PJ26.1}
\frac{1}{\psi_{s,p}(t)}\int_{\mathbb{R}^{N}}H_{t}^{s}(|f-f(x)|^{p})(x)\,dx%
\gtrsim\frac{1}{|\log t|}\int_{t^{\frac{1}{p}}<|\xi|<1}\frac {%
\Vert\Delta_{\xi}f\Vert_{L^{p}(\mathbb{R}^{N})}^{p}}{|\xi|^{N+p}}\,d\xi. 
\end{equation}
Moreover, the $\Gamma$-limit of the right-hand side of \eqref{PJ26.1} is a
prototypical example of the classical BBM formula, namely, 
\begin{equation}
\Gamma_{L^{p}(\mathbb{R}^{N})}-\lim_{t\rightarrow0}\frac{1}{|\log t|}%
\int_{t<|\xi|<1}\frac{\Vert\Delta_{\xi}f\Vert_{L^{p}(\mathbb{R}^{N})}^{p}}{%
|\xi|^{N+p}}\,d\xi=c\Vert f\Vert_{\dot{W}^{1,p}(\mathbb{R}^{N})}^{p}; 
\label{PJ26.2}
\end{equation}
cf. \cite[Section 6.7.5]{BM}. Then it follows from \eqref{PJ26.1} and %
\eqref{PJ26.2} that 
\begin{equation*}
\Gamma_{L^{p}(\mathbb{R}^{N})}-\liminf_{t\rightarrow0}\frac{1}{\psi_{s,p}(t)}%
\int_{\mathbb{R}^{N}}H_{t}^{s}(|f-f(x)|^{p})(x)\,dx\gtrsim\Vert f\Vert_{\dot{%
W}^{1,p}(\mathbb{R}^{N})}^{p}, 
\end{equation*}
which combined with \eqref{PJ26.3} gives the desired result in the limiting
case $2s=p$.
\end{proof}

\begin{remark}
	The formula  \eqref{PJ26.2} is a special case of the general BBM formulation: 
	\begin{equation}\label{16.9.1}
		\Gamma_{L^p(\R^N)} - \lim_{t \to 0} \int_{\R^N} \|\Delta_\xi f\|_{L^p(\R^N)}^p \phi_t(|\xi|) \, d \xi = c \Vert f\Vert_{\dot{W}^{1,p}(\mathbb{R}^{N})}^{p}
	\end{equation}
	for a suitable family $\{\phi_t\}$ of nonnegative kernels. One can show that  \eqref{16.9.1} is also part of a general extrapolation phenomenon in the spirit of Theorem \ref{Thm1}, but now formulated in terms of  non-standard interpolation weights, cf. \cite{DLTYY}. However, this extension requires some further technicalities and we believe it deserves a separate study. 
\end{remark}

\section{Exact formulae for $\Gamma$-limits}

\label{SectionExact} In this section we confront an apparent practical
limitation of the  interpolation/extrapolation method: explicit formulae
for the $K$-functional are only available in a handful of classical cases.
Consequently, although the limiting procedures we employ are exact (cf. Theorem \ref{Thm1}), they
typically characterize the target norms only up to multiplicative constants.
Indeed, beyond some classical pairs such as $(L^{1},L^{\infty})$, $(C,C^{1})$, or $%
(L^{\infty},L^{\infty}(1/t))$, one generally possesses merely approximate
characterizations of the form 
\begin{equation}
K(t,f;A_{0},A_{1})\approx G(t,f),\qquad f\in A_{0}+A_{1},\ t>0, 
\label{PJ22.6}
\end{equation}
where $G(t,f)$ is an explicit, computable expression.

The goal of this section is to show that this apparent drawback can be overcome by a straightforward
refinement of the method. A moment's reflection shows that one can
effectively turn \eqref{PJ22.6} on its head: the quantities actually
computed in our arguments carried out in Section \ref{sec:app&exples} are, for all practical purposes, limits of the
expressions 
\begin{equation}
\Vert f\Vert_{\theta,p,G}^{\blacktriangleleft}:=c_{\theta,p}\left(
\int_{0}^{\infty}t^{-\theta p}G(t,f)^{p}\,\frac{dt}{t}\right) ^{1/p}. 
\label{PJ22.7}
\end{equation}
It therefore suffices to repeat the arguments of Proposition \ref{prop:lim}
and Theorem \ref{Thm1} with $G(t,f)$ in place of the true $K$-functional,
while carefully tracking the resulting constants (which turn out to be
explicitly computable in the situations of interest). In this way one can
normalize and recover the exact interpolation norms and their limiting
behavior.

To make these comments more rigorous, we now discuss in some detail the
model case studied in Section \ref{SubSecSobolevGamma} corresponding to the
pair $(A_{0},A_{1})=(X,\dot{W}^{k}X)$, where $X$ is a r.i. space on $\mathbb{%
R}^{N}$ equipped with an absolutely continuous norm\footnote{%
Recall the well-known fact that $X$ has absolutely continuous norm if and
only if $\Vert f\mathbf{1}_{E_{n}}\Vert_{X}\rightarrow0$ for every sequence $%
E_{n}\downarrow\emptyset$; cf. \cite[Proposition 3.2, Chapter 1]{BS}.} and $p \in [1, \infty)$. In
light of the characterization of the $K$-functional given in \eqref{PD28.1},
a natural candidate for $G$ in \eqref{PJ22.6} and \eqref{PJ22.7} is 
\begin{equation}
G(t,f):=\bigg(\frac{1}{t^{N/k}}\int_{|h|<t^{1/k}}\Vert\Delta_{h}^{k}f\Vert
_{X}^{p}\,dh\bigg)^{1/p}.   \label{PJ23.10}
\end{equation}
In particular, the following convolution-type estimate is satisfied: If $%
f\in X$ and $g\in L^{1}(\mathbb{R}^{N})$ then 
\begin{equation}
G(t,f\ast g)\leq G(t,f)\Vert g\Vert_{L^{1}(\mathbb{R}^{N})}.   \label{PJ23.1}
\end{equation}
To see this note that since every r.i. space $X$ is an exact interpolation
space between $L^{1}(\mathbb{R}^{N})$ and $L^{\infty}(\mathbb{R}^{N})$ (cf. 
\cite[Theorem 2.2, p. 106]{BS}), interpolating the endpoint cases we have 
\begin{equation}
\Vert f\ast g\Vert_{X}\leq\Vert f\Vert_{X}\Vert g\Vert_{L^{1}(\mathbb{R}%
^{N})}.   \label{PJ23.1new}
\end{equation}
Therefore (\ref{PJ23.1}) follows from the elementary identity $%
\Delta_{h}^{k}(f\ast g)=\Delta_{h}^{k}f\ast g$ and \eqref{PJ23.1new}. For $%
\delta>0$, we define 
\begin{equation*}
\eta_{\delta}(x):=\delta^{-N}\eta\Big(\frac{x}{\delta}\Big),\qquad x\in%
\mathbb{R}^{N}, 
\end{equation*}
where $\eta \geq 0$ is a fixed smooth function with compact support, $\Vert\eta
\Vert_{L^{1}(\mathbb{R}^{N})}=1$, and let $f_{\delta}:=f\ast\eta_{\delta}.$
Using \eqref{PJ22.7} and \eqref{PJ23.1}, we see that 
\begin{equation}
\Vert f_{\delta}\Vert_{\theta,p,G}^{\blacktriangleleft}\leq\Vert
f\Vert_{\theta,p,G}^{\blacktriangleleft}\Vert\eta_{\delta}\Vert_{L^{1}(%
\mathbb{R}^{N})}=\Vert f\Vert_{\theta,p,G}^{\blacktriangleleft}. 
\label{1.7}
\end{equation}


On the other hand, from the sub-additivity of $G$, i.e., 
\begin{equation*}
G(t,f+g)\leq G(t,f)+G(t,g) 
\end{equation*}
we immediately derive the triangle inequality for the functional $\Vert
\cdot\Vert_{\theta,p, G}^{\blacktriangleleft}$ introduced in \eqref{PJ22.7}: 
\begin{equation}
\Vert f+g\Vert_{\theta,p,G}^{\blacktriangleleft}\leq\Vert f\Vert_{\theta
,p,G}^{\blacktriangleleft}+\Vert g\Vert_{\theta,p,G}^{\blacktriangleleft}. 
\label{7.2}
\end{equation}

Assume for a moment that the pointwise limit as $\theta\rightarrow1^{-}$ of
the functional $\Vert f\Vert_{\theta,p,G}^{\blacktriangleleft}$ exists. Let  
\begin{equation}
L(f):=\lim_{\theta\rightarrow1^{-}}\Vert
f\Vert_{\theta,p,G}^{\blacktriangleleft }.   \label{PJ23.4}
\end{equation}
Let $\{f_{\theta}\}_{\theta\in(0,1)}$ be such that $f_{\theta}\rightarrow f$
in $X$ as $\theta\rightarrow1^{-}$, and let $f_{\theta,\delta}:=f_{\theta}%
\ast\eta_{\delta}.\,$By \eqref{1.7} 
\begin{equation}
\Vert f_{\theta,\delta}\Vert_{\theta,p,G}^{\blacktriangleleft}\leq\Vert
f_{\theta}\Vert_{\theta,p,G}^{\blacktriangleleft},   \label{7.1}
\end{equation}
which combined with \eqref{7.2}, yields 
\begin{align}
|\Vert f_{\theta,\delta}\Vert_{\theta,p,G}^{\blacktriangleleft}-L(f_{\delta
})| & \leq|\Vert
f_{\theta,\delta}\Vert_{\theta,p,G}^{\blacktriangleleft}-\Vert
f_{\delta}\Vert_{\theta,p,G}^{\blacktriangleleft }|+|\Vert
f_{\delta}\Vert_{\theta,p,G}^{\blacktriangleleft}-L(f_{\delta })|  \notag \\
& \leq\Vert
f_{\theta,\delta}-f_{\delta}\Vert_{\theta,p,G}^{\blacktriangleleft}+|\Vert
f_{\delta}\Vert_{\theta,p,G}^{\blacktriangleleft }-L(f_{\delta})|. 
\label{7.3n}
\end{align}
It follows from \eqref{PJ22.6} (cf. also \eqref{PD28.1}) and \eqref{sec2:2}
that, for every $\theta\in(0,1)$ and $f\in W^{k}X$, 
\begin{equation*}
\Vert f\Vert_{\theta,p,G}^{\blacktriangleleft}\approx\Vert f\Vert_{(X,\dot {W%
}^{k}X)_{\theta,p}^{\blacktriangleleft}}\leq\Vert f\Vert_{W^{k}X}
\end{equation*}
and, in particular, 
\begin{equation}
\Vert f_{\theta,\delta}-f_{\delta}\Vert_{\theta,p,G}^{\blacktriangleleft
}\lesssim\Vert f_{\theta,\delta}-f_{\delta}\Vert_{W^{k}X},   \label{7.4}
\end{equation}
where the hidden equivalence constant is independent of $\theta$ and $%
\delta$. The fact that $f_{\theta}\rightarrow f$ in $X$ as $\theta \to 1^{-}$ guarantees that $f_{\theta,\delta}\rightarrow f_{\delta}$ in $%
W^{k}X$ as $\theta \to 1^{-}$ requires a proof. On the one hand, the convergence in $X$ follows
trivially from the fact that (cf. \eqref{PJ23.1new}) 
\begin{equation}
\Vert f_{\theta,\delta}-f_{\delta}\Vert_{X}\leq\Vert f_{\theta}-f\Vert_{X}, 
\label{PJ23.2}
\end{equation}
while the convergence in $\dot{W}^{k}X$ is a consequence of 
\begin{equation}
\Vert D^{\alpha}f_{\theta,\delta}-D^{\alpha}f_{\delta}\Vert_{X}\leq\Vert
D^{\alpha}\eta_{\delta}\Vert_{L^{1}(\mathbb{R}^{N})}\Vert
f_{\theta}-f\Vert_{X}   \label{PJ23.3}
\end{equation}
which is valid for every $\alpha\in{\mathbb{N}}_{0}^{N}$ with $|\alpha|=k$.
By virtue of \eqref{7.4}--\eqref{PJ23.3}, we have 
\begin{equation}
\Vert f_{\theta,\delta}-f_{\delta}\Vert_{\theta,p,G}^{\blacktriangleleft
}\lesssim\bigg(1+\sum_{|\alpha|=k}\Vert D^{\alpha}\eta_{\delta}\Vert _{L^{1}(%
\mathbb{R}^{N})}\bigg)\Vert f_{\theta}-f\Vert_{X}   \label{7.5}
\end{equation}
uniformly with respect to $\theta$ and $\delta$. Inserting \eqref{7.5} into %
\eqref{7.3n}, we arrive at 
\begin{equation*}
|\Vert f_{\theta,\delta}\Vert_{\theta,p,G}^{\blacktriangleleft}-L(f_{\delta})|\lesssim \bigg(%
1+\sum_{|\alpha|=k}\Vert D^{\alpha}\eta_{\delta}\Vert_{L^{1}(\mathbb{R}^{N})}%
\bigg)\Vert f_{\theta}-f\Vert_{X}+|\Vert
f_{\delta}\Vert_{\theta,p,G}^{\blacktriangleleft}-L(f_{\delta})| 
\end{equation*}
and from \eqref{PJ23.4} we derive 
\begin{equation}
\lim_{\theta\rightarrow1^{-}}\Vert f_{\theta,\delta}\Vert_{\theta
,p,G}^{\blacktriangleleft}=L(f_{\delta}),\qquad\forall\delta>0.   \label{7.6}
\end{equation}
Combining \eqref{7.1} and \eqref{7.6} yields 
\begin{equation}
\liminf_{\theta\rightarrow1^{-}}\Vert
f_{\theta}\Vert_{\theta,p,G}^{\blacktriangleleft}\geq\lim_{\theta%
\rightarrow1^{-}}\Vert f_{\theta,\delta
}\Vert_{\theta,p,G}^{\blacktriangleleft}=L(f_{\delta}).   \label{Pdjjsjas2}
\end{equation}
Assume further that $L$ (cf. \eqref{PJ23.4}) fulfils 
\begin{equation}
\liminf_{\delta\rightarrow0}L(f_{\delta})\geq L(f).   \label{Pdjjsjas}
\end{equation}
Under this assumption, one can take limits as $\delta\rightarrow0$ in %
\eqref{Pdjjsjas2} to arrive at 
\begin{equation*}
\liminf_{\theta\rightarrow1^{-}}\Vert
f_{\theta}\Vert_{\theta,p,G}^{\blacktriangleleft}\geq L(f) 
\end{equation*}
and taking now the infimum over all possible $\{f_{\theta}\}_{\theta \in (0, 1)}$ with $f_{\theta
}\rightarrow f$ in $X$ as $\theta\rightarrow1^{-}$, we conclude that 
\begin{equation}
\Gamma_{X}-\liminf_{\theta\rightarrow1^{-}}\Vert
f\Vert_{\theta,p,G}^{\blacktriangleleft}\geq L(f).   \label{14.4}
\end{equation}
Since $L(f)<\infty$ the converse estimate of \eqref{14.4} also
holds and we obtain 
\begin{equation}
\Gamma_{X}-\lim_{\theta\rightarrow1^{-}}\Vert
f\Vert_{\theta,p,G}^{\blacktriangleleft}=L(f).   \label{PJ23.5}
\end{equation}
We mention that the mollifier based technique employed above  is inspired by some ideas developed earlier in \cite[Theorem
6.9]{BM} in the setting of the classical Gagliardo seminorms \eqref{S5.1}.

The following step consists of computing $L(f)$ in \eqref{PJ23.5}. In this
regard, we will make use of \cite[Lemma 1]{KMX}, which is valid for any
function $G$, not necessarily $K$-functionals (and then it is enough to work
with characterizations of $K$-functionals in the sense of \eqref{PJ22.6}).
Namely, if\footnote{%
The stronger assumption $\Vert f\Vert_{\theta_{0},p, G}^{\blacktriangleleft}<\infty$ for some $%
\theta_{0}\in(0,1)$ was made in \cite{KMX}, but one can check that the same
proof works with the more general assumption \eqref{PJ23.7}.} 
\begin{equation}
G(t,f)\lesssim1,\qquad\forall t>0   \label{PJ23.7}
\end{equation}
and $\lim_{t \to 0} G(t, f)/t$ exists  then 
\begin{equation}
L(f)=\lim_{t\rightarrow0}\frac{G(t,f)}{t}.   \label{PJ23.6}
\end{equation}

Combining now \eqref{PJ23.5} with \eqref{PJ23.6}, we establish 
\begin{equation}
\Gamma_{X}-\lim_{\theta\rightarrow1^{-}}\Vert
f\Vert_{\theta,p,G}^{\blacktriangleleft}=\lim_{t\rightarrow0}\frac{G(t,f)}{t}
\label{PJ23.5*}
\end{equation}
for every $f\in X$ such that the right-hand side is finite (note that the required condition \eqref{PJ23.7} is
automatically satisfied because, by \eqref{PD28.1}, $G(t,f)\approx K(t,f;X,%
\dot{W}^{k}X)\leq\Vert f\Vert_{X}$). The existence and exact value of the
limit given in the right-hand side of \eqref{PJ23.5*} was already studied in 
\cite{DLTYY}. In particular, it is shown in \cite[(6.8)]{DLTYY} that  if $f\in\dot{W}^{k}X$ then 
\begin{equation}
\lim_{t\rightarrow0}\frac{G(t,f)}{t}=(kp+N)^{-1/p}\bigg(\int_{\mathbb{S}%
^{N-1}}\bigg\|\sum_{|\alpha|=k} \frac{k!}{\alpha!} \,  \omega^{\alpha}D^{\alpha}f\bigg\|%
_{X}^{p}\,d\sigma^{N-1}(\omega)\bigg)^{1/p},   \label{PJ23.8}
\end{equation}
where 
$\sigma^{N-1}$ denotes the surface measure on the $(N-1)$-dimensional sphere 
$\mathbb{S}^{N-1}$. Note that, as a consequence of \eqref{PJ23.6} and %
\eqref{PJ23.8} and an application of the Fatou's lemma, the functional $L(f)$
introduced in \eqref{PJ23.4} satisfies indeed the needed assumption %
\eqref{Pdjjsjas} for $f \in W^k X$. Therefore, in light of \eqref{PJ23.5*}, we have 
\begin{equation}
\Gamma_{X}-\lim_{\theta\rightarrow1^{-}}\Vert
f\Vert_{\theta,p,G}^{\blacktriangleleft}=(kp+N)^{-1/p}\bigg(\int_{\mathbb{S}%
^{N-1}}\bigg\|\sum_{|\alpha|=k} \frac{k!}{\alpha!} \,  \omega^{\alpha}D^{\alpha}f\bigg\|%
_{X}^{p}\,d\sigma^{N-1}(\omega)\bigg)^{1/p}   \label{PJ23.9}
\end{equation}
if $f \in W^k X$. 

Finally, in a similar spirit as done in \eqref{PD28.3} but now tracking the
value of the equivalence constants on each step, it is possible to compute
the functionals $\Vert f\Vert_{\theta,p,G}^{\blacktriangleleft}$ defined by \eqref{PJ22.7} with %
\eqref{PJ23.10}. Specifically, applying a simple change of variables
together with Fubini's theorem, we have 
\begin{align*}
(c_{\theta, p}^{-1}\Vert f\Vert_{\theta,p, G}^{\blacktriangleleft})^p & =k\int_{0}^{\infty}t^{-\theta
kp-N}\int_{|h|<t}\Vert\Delta_{h}^{k}f\Vert_{X}^{p}\,dh\,\frac{dt}{t} \\
& =k\int_{\mathbb{R}^{N}}\Vert\Delta_{h}^{k}f\Vert_{X}^{p}\int_{|h|}^{\infty
}t^{-\theta kp-N}\,\frac{dt}{t}\,dh \\
& =\frac{k}{\theta kp+N}\int_{\mathbb{R}^{N}}|h|^{-\theta kp-N}\Vert
\Delta_{h}^{k}f\Vert_{X}^{p}\,dh \\
& =\frac{k}{\theta kp+N}\Vert f\Vert_{\dot{B}_{p}^{\theta k}X}^{p}.
\end{align*}
Then \eqref{PJ23.9} reads as 
\begin{equation}
\Gamma_{X}-\lim_{\theta\rightarrow1^{-}}(1-\theta)^{1/p}\Vert f\Vert_{\dot{B}%
_{p}^{\theta k}X}=\bigg(\frac{1}{kp}\int_{\mathbb{S}^{N-1}}\bigg\|\sum
_{|\alpha|=k} \frac{k!}{\alpha!}  \, \omega^{\alpha}D^{\alpha}f\bigg\|_{X}^{p}\,d\sigma^{N-1}(\omega)%
\bigg)^{1/p}.   \label{Pdjjsjas3}
\end{equation}
Moreover, it is known that (cf. \cite[(6.3)]{DLTYY}) 
\begin{equation}
\bigg(\int_{\mathbb{S}^{N-1}}\bigg\|\sum_{|\alpha|=k} \frac{k!}{\alpha!} \, \omega^{\alpha}D^{%
\alpha }f\bigg\|_{X}^{p}\,d\sigma^{N-1}(\omega)\bigg)^{1/p}\approx\Vert
f\Vert _{\dot{W}^{k}X}.   \label{PF12.1}
\end{equation}
Specializing \eqref{Pdjjsjas3} with $X=L^{p}(\mathbb{R}^{N}),p\in(1,\infty),$ and $k=1$, we recover the classical formula \eqref{S5.1}: 
\begin{equation}
\Gamma_{L^{p}(\mathbb{R}^{N})}-\lim_{\theta\rightarrow1^{-}}(1-\theta)^{1/p}%
\Vert f\Vert_{\dot{W}^{\theta,p}(\mathbb{R}^{N})}=\bigg(\frac{C_{p,N}}{p}%
\bigg)^{1/p}\Vert\nabla f\Vert_{L^{p}(\mathbb{R}^{N})},   \label{Pdjjsjas4}
\end{equation}
where 
\begin{equation}
C_{p,N}:=\int_{\mathbb{S}^{N-1}}|e\cdot\omega|^{p}\,d\sigma^{N-1}(\omega) 
\label{PF10.1}
\end{equation}
for any fixed $e\in\mathbb{S}^{N-1}$. We mention that the above method still
applies if $p=1$ in \eqref{Pdjjsjas4}, even in the more general setting of $%
\dot{BV}(\mathbb{R}^{N})$, then sharpening the results from
Section \ref{SusectionBV}.

This methodology is robust enough to deal with the limit $\theta\rightarrow
0^{+}$. For this purpose, one only has to replace $L(f)$ in \eqref{PJ23.4} by 
\begin{equation*}
L(f):=\lim_{\theta\rightarrow0^{+}}\Vert
f\Vert_{\theta,p,G}^{\blacktriangleleft}
\end{equation*}
and apply that (cf. Corollary 6.35, the discussion on p. 35 and the formula
(6.34) in \cite{DLTYY}) 
\begin{equation*}
L(f)=\bigg(\frac{|\mathbb{S}^{N-1}|}{N}\bigg)^{1/p}\lim_{|h|\rightarrow%
\infty }\Vert\Delta_{h}^{k}f\Vert_{X}, 
\end{equation*}
leading to the following counterpart for \eqref{Pdjjsjas3}: If $f\in X$ then 
\begin{equation*}
\Gamma_{X}-\lim_{\theta\rightarrow0^{+}}\theta^{1/p}\Vert f\Vert_{\dot{B}%
_{p}^{\theta k}X}=\bigg(\frac{|\mathbb{S}^{N-1}|}{kp}\bigg)%
^{1/p}\lim_{|h|\rightarrow\infty}\Vert\Delta_{h}^{k}f\Vert_{X}. 
\end{equation*}
Note that $\lim_{|h|\rightarrow\infty}\Vert\Delta_{h}^{k}f\Vert_{X}\approx%
\Vert f\Vert_{X}$. In particular, if $X=L^{p}(\mathbb{R}^{N}),p\in\lbrack1,%
\infty)$, and $k=1$ the previous formula yields (cf. \cite[(6.34)]{DLTYY}) 
\begin{equation*}
\Gamma_{L^{p}(\mathbb{R}^{N})}-\lim_{\theta\rightarrow0^{+}}\theta^{1/p}\Vert
f\Vert_{\dot{W}^{\theta,p}(\mathbb{R}^{N})}=\bigg(\frac{2|\mathbb{S}^{N-1}|}{%
p}\bigg)^{1/p}\Vert f\Vert_{L^{p}(\mathbb{R}^{N})}
\end{equation*}
for $f \in L^p(\R^N)$, which gives a refinement of \eqref{PJ6.4}. We leave further details to the
interested reader.

The road map we have followed in the above discussion can be applied to deal
with other settings where we have existence of corresponding pointwise
limits for smooth functions (although not necessarily for general functions). We leave the discussion for another occasion since a full
development will take us outside the scope of the present paper.

\section{Extrapolation of compactness\label{SectionCompactness}}

The theorems we prove in this section combine extrapolation with the
powerful compactness theorem for real interpolation spaces of Cwikel \cite%
{C2}. In an informal manner, Cwikel's result says that if an operator is
compact on at least one of the endpoints then compactness is preserved on 
\emph{all} intermediate spaces generated by real interpolation. The precise
statement reads as follows:

\begin{theorem}
\label{Cwikelsthm} Let $(A_{0},A_{1})$ and $(B_{0},B_{1})$ be interpolation
pairs of Banach spaces and let $T:A_{0}+A_{1}\longrightarrow B_{0}+B_{1}$ be
a linear operator such that $T:A_{0}\longrightarrow B_{0}$ is bounded and $%
T:A_{1}\longrightarrow B_{1}$ is compact. Then for any $\theta \in (0, 1)$ and $p \in [1, \infty]$,
\begin{equation*}
T:(A_{0},A_{1})_{\theta,p}\longrightarrow(B_{0},B_{1})_{\theta,p}  
\end{equation*}
is compact. 
\end{theorem}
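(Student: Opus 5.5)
Theorem \ref{Cwikelsthm} is Cwikel's theorem \cite{C2}, which the paper cites rather than reproves. What follows is the route I would take to reconstruct it; it is not a complete argument, and the tail estimate is left open. The plan is a discretization argument built on the $J$-method, together with one elementary fact: the interpolation inequality
$$
\Vert y\Vert_{(B_0,B_1)_{\theta,p}}\lesssim \Vert y\Vert_{B_0}^{1-\theta}\Vert y\Vert_{B_1}^{\theta},\qquad y\in B_0\cap B_1
$$
(cf. \eqref{sec2:2}). It says that a set which is bounded in $B_0$ and totally bounded in $B_1$, and lies in $B_0\cap B_1$, is totally bounded in $(B_0,B_1)_{\theta,p}$. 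The task is therefore to reduce compactness of $T$ on $(A_0,A_1)_{\theta,p}$ to sets of this type, plus tails that are small uniformly.

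\textbf{Step 1 (discrete decomposition).} By the equivalence theorem I write every $f$ in the unit ball of $(A_0,A_1)_{\theta,p}$ as $f=\sum_{\nu\in\mathbb{Z}}u_\nu$. Here $u_\nu\in A_0\cap A_1$ and $\Vert\{2^{-\nu\theta}J(2^\nu,u_\nu)\}\Vert_{\ell^p}\lesssim 1$. Applying $T$ gives $Tf=\sum_\nu Tu_\nu$ with $Tu_\nu\in B_0\cap B_1$. I then split the sum into three pieces:
\begin{itemize}
\item a middle block $S_M f=\sum_{|\nu|\le M}Tu_\nu$;
\item a lower tail, $\nu<-M$;
\item an upper tail, $\nu>M$.
\end{itemize}

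\textbf{Step 2 (middle block).} For fixed $M$, each $u_\nu$ with $|\nu|\le M$ is bounded in $A_1$ by $2^{M(1-\theta)}$ times a constant, and bounded in $A_0$ as well. Hence the finitely many sequences $\{Tu_\nu\}$ are bounded in $B_0$, since $T:A_0\to B_0$ is bounded. They are relatively compact in $B_1$, since $T:A_1\to B_1$ is compact. A diagonal subsequence therefore makes each $Tu_\nu$ Cauchy in $B_1$ while staying bounded in $B_0$. By the interpolation inequality it is then Cauchy in $(B_0,B_1)_{\theta,p}$. So the image of the unit ball under $S_M$ is totally bounded.

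\textbf{Step 3 (tails, the main obstacle).} I need
$$
\sup_f\Big\Vert \sum_{|\nu|>M}Tu_\nu\Big\Vert_{(B_0,B_1)_{\theta,p}}\to 0 \qquad (M\to\infty).
$$
This does not follow from boundedness alone, because $T$ is only known to be bounded into $(B_0,B_1)_{\theta,p}$. Removing the tail is exactly the difficulty that Cwikel's argument overcomes.

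My first attempt would treat the two sides differently:
\begin{itemize}
\item On the side where the $A_1$-norm dominates, I would use compactness of $T:A_1\to B_1$ to replace $T$ by a nearby finite-dimensional-valued operator up to $\varepsilon$ in $B_1$. I would then re-estimate the tail of the approximant through the $K$-functional of $B$.
\item On the other side, I would use the $\ell^p$-weights $2^{-\nu\theta}$ and the $A_0\to B_0$ bound to get decay.
\end{itemize}
If $p=\infty$ blocks the uniform tail control, I would first prove the result for one $p<\infty$. I would then pass to $(\theta,\infty)$ through reiteration, writing $(A_0,A_1)_{\theta,\infty}$ between $(A_0,A_1)_{\theta_0,p}$ and $(A_0,A_1)_{\theta_1,p}$, which is still a one-sided compact situation.

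If the direct tail estimate fails, I would fall back on Cwikel's reduction to sequence-space pairs. Here $(B_0,B_1)_{\theta,p}$ is a retract of the weighted $\ell^p$ space of $B_0+B_1$-valued sequences $\ell^p_\theta(B_0\cap B_1)$. On that space truncation projections commute with the relevant estimates, and the argument of Steps 2--3 can be made uniform.
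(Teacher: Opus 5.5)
Your Steps 1--2 are sound: the middle blocks form a finite sum of sets that are bounded in $B_0$ and totally bounded in $B_1$, hence totally bounded in $(B_0,B_1)_{\theta,p}$ by \eqref{sec2:2}. The paper only quotes this theorem from \cite{C2}, so your argument has to stand on its own, and as written it does not. Step 3 is the whole content of the theorem, and the mechanisms you propose for it do not work.

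Put $a_\nu:=2^{-\nu\theta}J(2^\nu,u_\nu)$. Boundedness only gives $\Vert Tu_\nu\Vert_{B_0}\lesssim 2^{\nu\theta}a_\nu$ and $\Vert Tu_\nu\Vert_{B_1}\lesssim 2^{-\nu(1-\theta)}a_\nu$, hence $\Vert Tu_\nu\Vert_{(B_0,B_1)_{\theta,p}}\lesssim a_\nu$ with no decay in $\nu$. The $\ell^p$-mass of $(a_\nu)$ can sit entirely in $|\nu|>M$, so the weights $2^{-\nu\theta}$ and the $A_0\to B_0$ bound give nothing uniform on either tail. For $\nu\to-\infty$ the smallness in $B_0$ is exactly compensated by the size in $B_1$. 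Compactness must therefore be used on both tails.

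Your device for this, a finite-rank operator $\varepsilon$-close to $T$ in $\mathcal{L}(A_1,B_1)$, is not available in general. Compact operators into spaces without the approximation property need not be norm limits of finite-rank ones. Even when such an approximant exists, you would also need it uniformly bounded $A_0\to B_0$. That is an approximation hypothesis on $(B_0,B_1)$, which is exactly what Cwikel's theorem dispenses with. The $p=\infty$ detour through reiteration is circular, since it needs the one-sided theorem for a $(\cdot,\cdot)_{\eta,\infty}$ space. The sequence-space fallback just defers to Cwikel's proof.

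The missing idea is to apply compactness to normalized pieces rather than to $T$. Set $\lambda_\nu:=2^{-\nu(1-\theta)}a_\nu$ and $y_\nu:=Tu_\nu/\lambda_\nu$, and let $U_{A_1}$ be the unit ball of $A_1$. Then $y_\nu\in T(U_{A_1}\cap A_0)$, a subset of $B_0\cap B_1$ that is totally bounded in $B_1$, and $\Vert y_\nu\Vert_{B_0}\le \Vert T\Vert_{A_0\to B_0}2^{\nu}$.

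For the lower tail $\nu<-M$, use $\rho(\delta):=\sup\{\Vert y\Vert_{B_1}: y\in T(U_{A_1}\cap A_0),\ \Vert y\Vert_{B_0}\le\delta\}\to 0$ as $\delta\to 0$. This holds because a $B_1$-limit of elements tending to $0$ in $B_0$ must vanish, by uniqueness of limits in $B_0+B_1$.

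For the upper tail $\nu>M$, pick a finite $\varepsilon$-net $\{z_i\}$ of $T(U_{A_1}\cap A_0)$ in the $B_1$-norm, made of points of that set. Then $C_\varepsilon:=\max_i\Vert z_i\Vert_{B_0}<\infty$. Split $Tu_\nu=\lambda_\nu z_{i(\nu)}+x_\nu$. The terms $\lambda_\nu z_{i(\nu)}$ have total norm $\lesssim C_\varepsilon^{1-\theta}\sum_{\nu>M}\lambda_\nu\lesssim C_\varepsilon^{1-\theta}2^{-M(1-\theta)}$.

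On the lower tail set $x_\nu:=Tu_\nu$. On both tails the pieces then satisfy $\Vert x_\nu\Vert_{B_0}\lesssim 2^{\nu\theta}a_\nu$ and $\Vert x_\nu\Vert_{B_1}\le \varepsilon\,2^{-\nu(1-\theta)}a_\nu$. Here $\varepsilon=\rho(\Vert T\Vert 2^{-M})$ on the lower tail, and one takes $2^M\ge C_\varepsilon$ on the upper tail.

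Resum these pieces in the $J$-method of $(B_0,B_1)$ at the shifted scales $2^{\nu+k}$ with $2^k\approx 1/\varepsilon$. This gives $2^{-(\nu+k)\theta}J(2^{\nu+k},x_\nu)\lesssim \varepsilon^\theta a_\nu$, hence $\Vert \sum x_\nu\Vert_{(B_0,B_1)_{\theta,p}}\lesssim\varepsilon^\theta\Vert (a_\nu)\Vert_{\ell^p}$. This is an $\ell^p$ bound, not an $\ell^1$ bound, uniform in $f$ and valid also for $p=\infty$. Choosing $\varepsilon$ and then $M$ makes both tails uniformly small, and your Step 2 concludes.
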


Our first result on extrapolation of compactness is the following:

\begin{theorem}
\label{Thm5 copy(1)} Let $(A_{0},A_{1})$ and $(B_{0},B_{1})$ be
interpolation pairs of Banach spaces. Let $T:A_{0}+A_{1}\longrightarrow
B_{0}+B_{1}$ be a linear operator such that $T:A_{0}\longrightarrow B_{0}$
is bounded and $T:A_{1}\longrightarrow B_{1}$ is compact. Let $%
p\in\lbrack1,\infty ],\{\theta_{n}\}_{n \in \mathbb{N}}\subset(0,1)$ with $\lim_{n\rightarrow%
\infty}\theta_{n}=1$, and let $\{u_{n}\}_{n \in \mathbb{N}}$ be a sequence such that 
\begin{equation}
\liminf_{n\rightarrow\infty}\Vert
u_{n}\Vert_{(A_{0},A_{1})_{\theta_{n},p}^{\blacktriangleleft}}<\infty. 
\label{vuelta1}
\end{equation}
Then given any $\eta\in(0,1)$ there exists $u\in B_{1}+(B_{0},B_{1})_{\eta,p}^{\blacktriangleleft}
$ such that $Tu_{n}\rightarrow u$ in $B_{1}+(B_{0},B_{1})_{\eta,p}^{\blacktriangleleft}$ as $%
n\rightarrow\infty$ (up to a subsequence).
\end{theorem}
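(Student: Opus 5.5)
The plan is to split each $u_n$ at the scale $t=1$ into a piece bounded in $A_1$ and a piece bounded in a fixed intermediate space $(A_0,A_1)_{\eta,p}$. Compactness of $T$ on $A_1$ handles the first piece, and Theorem \ref{Cwikelsthm} handles the second.

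\textbf{Step 1 (uniform $K$-bounds).} By \eqref{vuelta1}, after passing to a subsequence we may assume $\Vert u_n\Vert_{(A_0,A_1)^{\blacktriangleleft}_{\theta_n,p}}\le M$ for all $n$. By \eqref{sec2:1} this gives
\begin{equation*}
K(t,u_n;A_0,A_1)\le M t^{\theta_n}\qquad\text{for all } t>0.
\end{equation*}
Fix $\eta\in(0,1)$ and pick $\sigma\in(\eta,1)$. Discarding finitely many terms, we may assume $\theta_n\ge\sigma$ for all $n$. Then $K(t,u_n;A_0,A_1)\le M t^{\sigma}$ for $t\le 1$ and $K(t,u_n;A_0,A_1)\le M t^{\theta_n}$ for $t\ge1$. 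In particular $K(1,u_n;A_0,A_1)\le M$.

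\textbf{Step 2 (splitting).} Choose a decomposition $u_n=a_n+b_n$ with $\Vert a_n\Vert_{A_0}+\Vert b_n\Vert_{A_1}\le 2M$. Then $\{b_n\}$ is bounded in $A_1$. For $a_n$ we estimate the $K$-functional in two regimes.
\begin{itemize}
\item For $t\le1$, subadditivity gives $K(t,a_n)\le K(t,u_n)+K(t,b_n)\le Mt^{\sigma}+2Mt\le 3Mt^{\sigma}$.
\item For $t\ge1$, we use the trivial bound $K(t,a_n)\le\Vert a_n\Vert_{A_0}\le 2M$.
\end{itemize}
Inserting these into \eqref{5.10.1} with parameter $\eta$, the integral near $0$ converges because $\sigma>\eta$, and the integral near $\infty$ converges because $\eta>0$. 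Hence $\sup_n\Vert a_n\Vert_{(A_0,A_1)_{\eta,p}}<\infty$, with the obvious modification for $p=\infty$.

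\textbf{Step 3 (compactness and conclusion).} Since $T:A_1\to B_1$ is compact, a subsequence of $\{Tb_n\}$ converges in $B_1$ to some $v$. By Theorem \ref{Cwikelsthm}, $T:(A_0,A_1)_{\eta,p}\to(B_0,B_1)_{\eta,p}$ is compact, so a further subsequence of $\{Ta_n\}$ converges in $(B_0,B_1)_{\eta,p}$ to some $w$; the normalized norm is an equivalent norm for fixed $\eta$. Put $u:=v+w$. Then
\begin{equation*}
\Vert Tu_n-u\Vert_{B_1+(B_0,B_1)^{\blacktriangleleft}_{\eta,p}}\le\Vert Tb_n-v\Vert_{B_1}+\Vert Ta_n-w\Vert_{(B_0,B_1)^{\blacktriangleleft}_{\eta,p}}\to0
\end{equation*}
along the subsequence.

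The only delicate point is Step 2. One must ensure that the $A_0$-part of the splitting lies in a \emph{fixed} interpolation space, uniformly in $n$, even though the parameters $\theta_n$ vary. This is exactly why we cut at $t=1$ and use $\theta_n\ge\sigma>\eta$ for small $t$ together with $\Vert a_n\Vert_{A_0}$ for large $t$. Beyond that, it only remains to check that the constant $c_{\theta_n,p}$ plays no role; it does not, since only \eqref{sec2:1} is used.
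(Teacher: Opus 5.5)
Your proof is correct and follows the same route as the paper. In both, you show that $\{u_n\}$ is bounded in $A_1+(A_0,A_1)_{\eta,p}$ and then combine compactness of $T:A_1\to B_1$ with Cwikel's theorem on $(A_0,A_1)_{\eta,p}$. The one difference is in how the boundedness is obtained. The paper cites the extrapolation estimate \eqref{PF3.1}, whose constant is uniform in $\theta$ and $\eta$. Your splitting at $t=1$ proves the needed bound directly, which is enough because only a fixed $\sigma\in(\eta,1)$ is required. Discarding finitely many terms also lets you handle an arbitrary prescribed $\eta$ cleanly.
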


Specialising Theorem \ref{Thm5 copy(1)} with the degenerate pair $(B_0, B_1) = (A_0, A_0)$ (and hence, by \eqref{sec2:3},  $(B_0, B_1)_{\eta, p} = A_0$) and $T = \text{id}$ (the identity operator) we establish Theorem \ref{ThmCwiGen}.   

\begin{proof}[Proof of Theorem \ref{Thm5 copy(1)}]
We will make use of the following extrapolation estimate (cf. \cite[%
Corollary 2.1]{KM}): Let $0<\eta<\theta<1$ then for all $f\in(A_{0},A_{1})_{%
\theta,p}^{\blacktriangleleft}$ we have 
\begin{equation}
\Vert f\Vert_{A_{1}+(A_{0},A_{1})_{\eta,p}^{\blacktriangleleft}}\leq M\Vert
f\Vert_{(A_{0},A_{1})_{\theta,p}^{\blacktriangleleft}},   \label{PF3.1}
\end{equation}
where $M$ is a positive constant independent of $\theta$ and $\eta$. Now, on
account that $\theta_{n}\rightarrow1$ as $n\rightarrow\infty$, it is
possible to choose $\eta\in(0,1)$ such that $\eta<\theta_{n}$ for every $%
n\in {\mathbb{N}}$. Consequently,  by \eqref{PF3.1}, 
\begin{equation*}
\Vert f\Vert_{A_{1}+(A_{0},A_{1})_{\eta,p}^{\blacktriangleleft}}\leq M\Vert
f\Vert_{(A_{0},A_{1})_{\theta_{n},p}^{\blacktriangleleft}}
\end{equation*}
for all $f\in(A_{0},A_{1})_{\theta_{n},p}^{\blacktriangleleft}$. Note that
since the constant $M$ is independent of $n$, we can apply the previous
inequality to the $u_{n}$, 
\begin{equation*}
\Vert u_{n}\Vert_{A_{1}+(A_{0},A_{1})_{\eta,p}^{\blacktriangleleft}}\leq
M\Vert
u_{n}\Vert_{(A_{0},A_{1})_{\theta_{n},p}^{\blacktriangleleft}},\qquad\forall
n\in{\mathbb{N}},
\end{equation*}
which combined with \eqref{vuelta1} yields 
\begin{equation}
\liminf_{n\rightarrow\infty}\Vert
u_{n}\Vert_{A_{1}+(A_{0},A_{1})_{\eta,p}^{\blacktriangleleft}}<\infty.   \label{P5.1}
\end{equation}

Since $T$ satisfies
the assumptions of Cwikel's theorem (cf. Theorem \ref{Cwikelsthm}), 
\begin{equation}\label{16.9.2}
T:(A_{0},A_{1})_{\eta,p}^{\blacktriangleleft} \longrightarrow(B_{0},B_{1})_{\eta,p}^{\blacktriangleleft}
\end{equation}
is a compact operator. It follows from \eqref{16.9.2} and the assumption $T: A_1 \longrightarrow B_1$ is compact  that
\begin{equation}\label{16.9.3}
T:A_{1}+(A_{0},A_{1})_{\eta,p}^{\blacktriangleleft}\longrightarrow B_{1}+(B_{0},B_{1})_{\eta,p}^{\blacktriangleleft} 
\end{equation}
is also compact. Using (\ref{P5.1}), we may select a bounded subsequence (still named $%
\{u_{n}\}_{n\in \mathbb{N}})$ in $A_{1}+(A_{0},A_{1})_{\eta,p}^{\blacktriangleleft}$.  In view of \eqref{16.9.3} we derive that $\{Tu_{n}\}_{n \in \mathbb{N}}$ is precompact in $B_{1}+(B_{0},B_{1})_{\eta,p}^{\blacktriangleleft}$ and
consequently there exists a subsequence $\{u_{n_{k}}\}_{k \in \mathbb{N}}$ such that $%
\{Tu_{n_{k}}\}_{k \in \mathbb{N}}$ converges in $B_{1}+(B_{0},B_{1})_{\eta,p}^{\blacktriangleleft}$.
\end{proof}

Next we record a  variant of Theorem \ref{Thm5 copy(1)} involving only $K$%
-functionals (rather than interpolation norms).

\begin{theorem}
\label{Thm5**} Let $(A_{0},A_{1})$ and $(B_{0},B_{1})$ be
interpolation pairs of normed spaces.  Suppose that $T: A_0 + A_1 \longrightarrow B_0 + B_1$ is a linear operator such that $%
T:A_{0}\longrightarrow B_0$ is bounded and $T:A_{1}\longrightarrow B_1$ is
compact. Let $\{t_{n}\}_{n \in \mathbb{N}}$ be an infinitesimal and let $\{u_{n}\}_{n \in \mathbb{N}}$ be a
sequence such that 
\begin{equation}
\liminf_{n\rightarrow\infty}\frac{K(t_{n},u_{n};A_{0},A_{1})}{t_{n}}%
<\infty.   \label{PF5.1}
\end{equation}
Then there exists $v\in B_1$ such that $Tu_{n}\rightarrow v$
in $B_0 + B_1$ as $n\rightarrow\infty$ (up to a subsequence).
\end{theorem}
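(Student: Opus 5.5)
Pass to a subsequence along which $K(t_n,u_n;A_0,A_1)\le C t_n$ for some finite $C$, using \eqref{PF5.1}. By the definition of the $K$-functional, for each $n$ I would pick a decomposition $u_n=a_n+b_n$ with $a_n\in A_0$, $b_n\in A_1$ and
\begin{equation*}
\|a_n\|_{A_0}+t_n\|b_n\|_{A_1}\le 2Ct_n + t_n^2 .
\end{equation*}
This gives two pieces of information at once. The $A_0$-part is small: $\|a_n\|_{A_0}\le 2Ct_n+t_n^2\to 0$. The $A_1$-part is bounded: $\|b_n\|_{A_1}\le 2C+t_n$, which is bounded because $t_n\to 0$.

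The two pieces are then handled separately. Since $T:A_0\to B_0$ is bounded, $\|Ta_n\|_{B_0}\le \|T\|\,\|a_n\|_{A_0}\to 0$, so $Ta_n\to 0$ in $B_0$ and hence in $B_0+B_1$. Since $T:A_1\to B_1$ is compact and $\{b_n\}$ is bounded in $A_1$, a further subsequence gives $Tb_{n_k}\to v$ in $B_1$ for some $v\in B_1$. This step needs $B_1$ complete (or at least that the compact image has its limit in $B_1$). For merely normed spaces I would read compactness as relative compactness of the image of bounded sets in $B_1$ itself, so the limit lies in $B_1$. Because $B_1\hookrightarrow B_0+B_1$ with $\|\cdot\|_{B_0+B_1}\le\|\cdot\|_{B_1}$, the convergence also holds in $B_0+B_1$.

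Finally, linearity of $T$ on $A_0+A_1$ gives $Tu_{n_k}=Ta_{n_k}+Tb_{n_k}\to 0+v=v$ in $B_0+B_1$. The only delicate point is interpreting compactness in the normed (possibly incomplete) setting so that the limit $v$ genuinely belongs to $B_1$. Otherwise the argument is a direct consequence of the near-optimal decomposition furnished by the $K$-functional, and it does not need Cwikel's theorem.
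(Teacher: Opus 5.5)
Your proof is correct and follows the paper's argument essentially verbatim. You pass to a subsequence on which $K(t_n,u_n;A_0,A_1)/t_n$ stays bounded and take a near-optimal decomposition $u_n=a_n+b_n$, so that $\|a_n\|_{A_0}=O(t_n)\to 0$ while $\{b_n\}$ is bounded in $A_1$; you then use boundedness of $T$ on $A_0$, compactness of $T$ on $A_1$, and the triangle inequality in $B_0+B_1$. Your reading of compactness, namely that $T$ maps bounded sets to sets relatively compact in $B_1$ itself, is the standard one and is exactly what the paper implicitly uses to place the limit $v$ in $B_1$.
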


\begin{proof}
Let 
$$
	M:= \liminf_{n \to \infty} \frac{K(t_n, u_n; A_0, A_1)}{t_n}.
$$
According to \eqref{PF5.1}, there exist subsequences of $\{t_n\}_{n \in \mathbb{N}}$ and $\{u_n\}_{n \in \mathbb{N}}$ such that 
\begin{equation}\label{PF5.1*}
	\lim_{n \to \infty}  \frac{K(t_n, u_n; A_0, A_1)}{t_n} = M. 
\end{equation}
	For every $n \in \mathbb{N}$, there exists a decomposition $u_n = w_n + v_n \in A_0 + A_1$ such that
	\begin{equation}\label{PF5.2}
		\|w_n\|_{A_0} + t_n \|v_n\|_{A_1} \leq K(t_n, u_n; A_0, A_1) + t_n.
	\end{equation}
	In particular, 
	\begin{equation}\label{PF5.2*}
		\|v_n\|_{A_1} \leq  \frac{K(t_n, u_n; A_0, A_1)}{t_n} + 1
	\end{equation}
	and then, by virtue of \eqref{PF5.1*}, there exists a subsequence of $\{v_n\}_{n \in \mathbb{N}}$ (which is relabelled to simplify the notation) uniformly bounded in $A_1$.  By the compactness of $T: A_1 \longrightarrow B_1$, there exists $v \in B_1$ and a subsequence of $\{v_n\}_{n \in \mathbb{N}}$  such that 
	\begin{equation}\label{PF13.6}
		\lim_{n \to \infty} T v_n = v \quad \text{in} \quad B_1.
		\end{equation}
		 On the other hand, from the assumption that $T: A_0 \longrightarrow B_0$,  \eqref{PF5.2} and \eqref{PF5.1*}, we also derive  
	\begin{equation}\label{20.9.1}
	\|T w_n\|_{B_0}	\lesssim \|w_n\|_{A_0}  \leq K(t_n, u_n; A_0, A_1) + t_n \lesssim t_n 
	\end{equation}
	and hence $T w_n \to 0$ in $B_0$ as $n \to \infty$. Therefore, from the estimate 
	$$
		\|T u_n - v\|_{B_0 + B_1} \leq \|T w_n \|_{B_0} + \|T v_n -v\|_{B_1}
	$$
	and \eqref{PF13.6}, \eqref{20.9.1}, we derive that   $T u_n \to v$ in $B_0+B_1$ as $n \to \infty$. 
\end{proof}

\begin{remark}\label{RemarkThm5**}
	Let us focus on    Theorem \ref{Thm5**} with $B_0 = B_1 = A_0$, i.e., $T: A_0 \longrightarrow A_0$ and $T: A_1 \longrightarrow A_0$.  Assume further that $T: A_1 \longrightarrow A_1$ boundedly and the norm on $A_1$ is lower semicontinuous with respect to the convergence in $A_0$. Then Theorem \ref{Thm5**} can be reinforced to ensure that the limit $v \in A_0 \cap A_1$. Indeed, by taking limits in \eqref{PF5.2*}, we know that $\liminf_{n \to \infty} \|v_n\|_{A_1} < \infty$ and hence $\liminf_{n \to \infty} \|T v_n\|_{A_1} < \infty$. Since $T v_n \to v$ in $A_0$ (cf. \eqref{PF13.6}), we get $\|v\|_{A_1} \leq \liminf_{n \to \infty} \|T v_n\|_{A_1} < \infty$.  
\end{remark}

\subsection{Applications}\label{Section7.1}

Next we write down some immediate applications of Theorems \ref{Thm5 copy(1)}  and %
\ref{Thm5**} for some special choices of interpolation pairs $(A_{0}, A_{1})$. 

\begin{corollary}
\label{CorCmp1} Let $X$ be a r.i. space with absolutely continuous norm,
defined on a cube $Q\subset\mathbb{R}^{N}$. Let $q\in\lbrack1,\infty]$ and $%
k\in{\mathbb{N}}$. Suppose that $\{f_{n}\}_{n \in \N}$ is a bounded sequence in $X$
such that\footnote{%
The Besov spaces $\dot{B}_{q}^{s k}X$ on $Q$ are defined in a similar fashion
as their counterparts on $\mathbb{R}^{N}$ (cf. \eqref{DefBesovD}): $\Vert
f\Vert_{\dot{B}_{q}^{sk}X}=(\int_{\mathbb{R}^{N}}|h|^{-skq-N}\Vert%
\Delta_{h}^{k}f\Vert_{X(Q_{kh})}^{q}\,dh)^{1/q}$, where $Q_{kh}=\{x\in
Q:x+kh\in Q\}$.} 
\begin{equation}
\sup_{n\in{\mathbb{N}}}(1-s_{n})^{1/q}\Vert f_{n}\Vert_{\dot{B}%
_{q}^{s_{n}k}X}<\infty   \label{PF13.3}
\end{equation}
for some sequence $s_{n}\rightarrow1^{-}$. Then there exists $f\in X$ such
that $f_{n}\rightarrow f$ in $X$ as $n\rightarrow\infty$ (up to a
subsequence).
\end{corollary}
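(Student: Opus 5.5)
The natural route is to apply Theorem \ref{ThmCwiGen} (equivalently, Theorem \ref{Thm5 copy(1)} with $(B_0,B_1)=(A_0,A_0)$ and $T=\mathrm{id}$) to the pair
$$(A_0,A_1)=(X,\ W^kX),\qquad W^kX:=X\cap \dot{W}^kX \ \text{on } Q,$$
normed by $\|f\|_X+\|f\|_{\dot W^kX}$. I use the inhomogeneous space $W^kX$ rather than $\dot W^kX$ so that $A_1$ is a Banach space and the embedding $A_1\hookrightarrow A_0$ can be compact. This requires three ingredients.

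\emph{Step 1: compactness of $W^kX\hookrightarrow X$ on the cube.} Bounded sets in $W^kX(Q)$ are bounded in $W^{k,1}(Q)$, since $X\hookrightarrow L^1(Q)$ on a finite measure space. By Rellich--Kondrachov they are therefore precompact in $L^1(Q)$. To upgrade this to precompactness in $X$, I would use uniform equi-integrability in $X$: a bounded set in $W^{1}X(Q)$ has uniformly absolutely continuous $X$-norms. This can be seen via rearrangement estimates such as $f^{**}-f^*\lesssim t^{1/N}|\nabla f|^{**}$, or via the Sobolev embedding into a strictly smaller r.i.\ space. Together with the absolute continuity of the norm of $X$ and $L^1$-convergence in measure, a Vitali-type argument gives convergence in $X$. \textbf{I expect this to be the main obstacle}, since it is the only genuinely function-space step and needs care for general r.i.\ $X$. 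If needed, I would restrict to the case where $X$ has nontrivial upper Boyd index, or cite a known compact-embedding result for r.i.\ Sobolev spaces.

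\emph{Step 2: uniform identification of the interpolation norms.} I need
$$\|f\|_{(X,W^kX)_{s,q}}\lesssim \|f\|_X+\|f\|_{\dot B^{sk}_qX}$$
with constants independent of $s$, including after the normalization $c_{s,q}\approx (1-s)^{1/q}$ near $s=1$. The plan is to use the cube analogue of \eqref{PD28.1}, namely
$$K(t^k,f;X,W^kX)\approx \min(1,t^k)\|f\|_X+\Big(t^{-N}\int_{|h|<t}\|\Delta_h^kf\|_{X(Q_{kh})}^q\,dh\Big)^{1/q}$$
for $t\le 1$, with $K\approx\|f\|_X$ for $t\ge1$. This follows from Johnen--Scherer-type extension arguments on the cube. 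Splitting the $t$-integral at $t=1$ and repeating the Fubini computation of \eqref{PD28.3} then gives
$$\|f_n\|_{(X,W^kX)^{\blacktriangleleft}_{s_n,q}}\lesssim \|f_n\|_X+(1-s_n)^{1/q}\|f_n\|_{\dot B^{s_nk}_qX}.$$
The $\|f\|_X$ part contributes $c_{s,q}(\int_1^\infty t^{-sq}\,dt/t)^{1/q}\approx(1-s)^{1/q}s^{-1/q}\lesssim1$ for $s$ near $1$, and the small-$t$ $\|f\|_X$ term is similarly bounded by $c_{s,q}(\int_0^1 t^{(1-s)kq}\,dt/t)^{1/q}\lesssim1$.

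\emph{Step 3: conclusion.} By the boundedness of $\{f_n\}$ in $X$ and \eqref{PF13.3}, Step 2 shows that $\liminf_n\|f_n\|_{(X,W^kX)^{\blacktriangleleft}_{s_n,q}}<\infty$. Theorem \ref{ThmCwiGen}, with $\theta_n=s_n\to1$, combined with Step 1, then yields a subsequence converging in $X$ to some $f\in X$.
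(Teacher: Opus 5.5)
Your proposal is correct and follows the paper's proof. It uses the same choice $(A_0,A_1)=(X,W^kX)$ with $T=\mathrm{id}$ and the same compactness of $W^kX\hookrightarrow X$; the paper simply quotes this from the literature, whereas you sketch a Rellich--Kondrachov plus uniform-absolute-continuity argument. It also reaches the same uniform bound $\|f_n\|^{\blacktriangleleft}_{(X,W^kX)_{s_n,q}}\lesssim \|f_n\|_X+(1-s_n)^{1/q}\|f_n\|_{\dot B^{s_nk}_qX}$, which the paper gets from $K(t,f;X,W^kX)\approx t\|f\|_X+K(t,f;X,\dot W^kX)$ for $t<1$ plus a reduction of the interpolation integral to $(0,1)$, exactly as your split at $t=1$ does.

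One small correction: your first displayed goal, for the unnormalized norm with an $s$-independent constant in front of $\|f\|_X$, is false. Since $K(t,f;X,W^kX)\ge \min(1,t)\|f\|_X$, that term genuinely carries a factor $(1-s)^{-1/q}$. The normalized estimate you actually derive and use is the right one, so the argument is unaffected.
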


\begin{remark}
This result extends the classical BBM compactness criteria for the fractional
Sobolev spaces $\dot{W}^{s,p}$ (cf. \cite[Theorem 4]{BBM2})$,$ that
corresponds to the choice $X=L^{p},p\in\lbrack1,\infty),q=p$ and $k=1$ (cf. %
\eqref{CoincGFS}).
\end{remark}

\begin{proof}[Proof of Corollary \protect\ref{CorCmp1}]
We apply Theorem  \ref{Thm5 copy(1)}  to the pairs $(A_{0},A_{1})=(X,W^{k}X), (B_0, B_1) = (X, X)$ and $T =\text{id}$. Since $X
$ has absolutely continuous norm, the embedding $W^{k}X\hookrightarrow X$ is
compact (cf. \cite{Pu}). It is easy to see that, for any interpolation pair $%
(A_{0},A_{1})$, $t\in(0,1)$ and $f\in A_{0}$, we have 
\begin{equation}
K(t,f;A_{0},A_{0}\cap A_{1})\approx t\Vert
f\Vert_{A_{0}}+K(t,f;A_{0},A_{1}).   \label{PF13.1}
\end{equation}
Under the assumption $A_{1}\hookrightarrow A_{0}$, we have 
\begin{equation}
\Vert f\Vert_{(A_{0},A_{1})_{\theta,q}}\approx\bigg(\int_{0}^{1}[t^{-\theta
}K(t,f;A_{0},A_{1})]^{q}\,\frac{dt}{t}\bigg)^{1/q}   \label{PF13.2}
\end{equation}
with equivalence constants that are independent of $\theta\rightarrow1^{-}$,
cf. \cite[Lemma 3]{M-Lim}. Using \eqref{PF13.1} and \eqref{PF13.2}, we
compute 
\begin{align*}
\Vert f\Vert_{(X,W^{k}X)_{\theta,q}} & \approx\bigg(\int_{0}^{1}[t^{-\theta
}K(t,f;X,W^{k}X)]^{q}\,\frac{dt}{t}\bigg)^{1/q} \\
& \approx\bigg(\int_{0}^{1}[t^{-\theta}K(t,f;X,\dot{W}^{k}X)]^{q}\,\frac {dt%
}{t}\bigg)^{1/q}+\bigg(\int_{0}^{1}t^{(1-\theta)q}\,\frac{dt}{t}\bigg)%
^{1/q}\Vert f\Vert_{X} \\
& \lesssim\Vert f\Vert_{(X,\dot{W}^{k}X)_{\theta,q}}+(1-\theta)^{-1/q}\Vert
f\Vert_{X},
\end{align*}
where the constants on each step are independent of $\theta\rightarrow1^{-}$%
. In particular, with constants independent of $n\in \N,$ 
\begin{equation*}
\Vert f_{n}\Vert_{(X,W^{k}X)_{s_{n},q}}\lesssim\Vert f_{n}\Vert_{(X,\dot {W}%
^{k}X)_{s_{n},q}}+(1-s_{n})^{-1/q}\Vert f_{n}\Vert_{X}. 
\end{equation*}
Using \eqref{PD28.3} adapted to $Q$, the previous estimate can be expressed
as 
\begin{equation*}
\Vert f_{n}\Vert_{(X,W^{k}X)_{s_{n},q}}\lesssim\Vert f_{n}\Vert_{\dot{B}%
_{q}^{s_{n}k}X}+(1-s_{n})^{-1/q}\Vert f_{n}\Vert_{X}
\end{equation*}
and recalling the normalization constants $%
c_{s_{n},q}=(s_{n}(1-s_{n})q)^{1/q}$:  
\begin{equation*}
\Vert
f_{n}\Vert_{(X,W^{k}X)_{s_{n},q}^{\blacktriangleleft}}=c_{s_{n},q}\Vert
f_{n}\Vert_{(X,W^{k}X)_{s_{n},q}}\lesssim(1-s_{n})^{1/q}\Vert f_{n}\Vert _{%
\dot{B}_{q}^{s_{n}k}X}+\Vert f_{n}\Vert_{X}. 
\end{equation*}
In view of \eqref{PF13.3} and the boundedness of $\{f_{n}\}_{n\in \N}$ in $X$%
, condition \eqref{vuelta1} is satisfied and we can apply Theorem \ref{Thm5 copy(1)} 
to conclude.
\end{proof}

The following compactness criterion in terms of the BMO-type functionals $%
\kappa_{\varepsilon}$ (cf. \eqref{BMO-type}) was recently obtained in \cite[%
Theorem 2]{ARBDN}. We give a simpler alternative proof via our extrapolation
method.

\begin{corollary}
\label{CorCompactAR} Assume that $\{\varepsilon_{n}\}_{n \in \N}$ is infinitesimal and
let $\{f_{n}\}_{n\in \N}\subset L_{\emph{loc}}^{1}(\mathbb{R}^{N})$ be such
that 
\begin{equation}
\liminf_{n\rightarrow\infty}\kappa_{\varepsilon_{n}}(f_{n})<\infty. 
\label{PF13.5}
\end{equation}
Then there exist a sequence $\{c_{n}\}_{n\in \N}\subset\mathbb{R}$ and $f\in
L_{\emph{loc}}^{1}(\mathbb{R}^{N})\cap\dot{\emph{BV}}(\mathbb{R}^{N})$ such
that $(f_{n}-c_{n})\rightarrow f$ in $L_{\emph{loc}}^{1}(\mathbb{R}^{N})$
(up to a subsequence).
\end{corollary}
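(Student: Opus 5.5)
The plan is to combine the $K$-functional characterization \eqref{K-Funct-DS} with Theorem \ref{Thm5**}, applied on cubes. Fix a cube $Q\subset\mathbb{R}^N$, or better a large ball. Restricted to $Q$, the functionals $\kappa_\varepsilon$ dominate their localized versions, because the cubes in $\mathcal{F}_\varepsilon$ may be taken inside $Q$. So \eqref{PF13.5} together with the local analogue of \eqref{K-Funct-DS} on $Q$ should give
\begin{equation*}
\liminf_{n\to\infty}\frac{K(c\varepsilon_n,f_n;L^1(Q),\dot{W}^{1,1}(Q))}{c\varepsilon_n}<\infty .
\end{equation*}
The local analogue is the natural cube version of the result in \cite{DS}, up to changing constants $c,c'$. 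After passing to a subsequence, we therefore get decompositions $f_n=w_n+v_n$ with $\|w_n\|_{L^1(Q)}\lesssim\varepsilon_n$ and $\sup_n\|\nabla v_n\|_{L^1(Q)}<\infty$, exactly as in \eqref{PF5.2}--\eqref{PF5.2*}.

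The homogeneous seminorm does not control $L^1$ norms, and this is where the constants $c_n$ enter. Set $c_n:=(v_n)_Q$. By the Poincaré inequality on $Q$, the sequence $\{v_n-c_n\}$ is bounded in $W^{1,1}(Q)$, so Rellich--Kondrachov (compactness of $W^{1,1}(Q)\hookrightarrow L^1(Q)$) yields $v_n-c_n\to f$ in $L^1(Q)$ along a subsequence. This is precisely Theorem \ref{Thm5**} with $T=\mathrm{id}$, $A_0=B_0=B_1=L^1(Q)$ and $A_1=W^{1,1}(Q)$, applied to the sequence $f_n-c_n$, since $K(t,f_n-c_n;L^1,\dot W^{1,1})=K(t,f_n;L^1,\dot W^{1,1})$. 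Since $w_n\to0$ in $L^1(Q)$, we get $f_n-c_n\to f$ in $L^1(Q)$. Lower semicontinuity of total variation under $L^1$ convergence, as in Remark \ref{RemarkThm5**}, gives $\|f\|_{\dot{BV}(Q)}\le\liminf_n\|\nabla v_n\|_{L^1(Q)}\lesssim \liminf_n\kappa_{\varepsilon_n}(f_n)$.

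Next we globalize, and this step I expect to be the main obstacle. Take the exhausting sequence $Q_j=(-j,j)^N$ and run a diagonal argument. The difficulty is that the constants depend on $j$: $c_n^{(j)}=(v_n^{(j)})_{Q_j}$. I would fix the normalization once, with $c_n:=$ the mean of $f_n$ over $Q_1$. On $Q_j$ the sequence $f_n-c_n^{(j)}$ converges in $L^1(Q_j)$, hence so do its means over $Q_1\subset Q_j$. It follows that $c_n-c_n^{(j)}$ converges to a real number $a_j$, and thus $f_n-c_n$ converges in $L^1(Q_j)$ for every $j$. These limits are compatible on overlaps, so they define $f\in L^1_{\mathrm{loc}}(\mathbb{R}^N)$ with $f_n-c_n\to f$ in $L^1_{\mathrm{loc}}$ along the diagonal subsequence.

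Finally, the BV bound is uniform in $j$. The constants in the localized \eqref{K-Funct-DS} and in the lower semicontinuity step are scale-invariant and do not depend on $Q_j$, so $\|f\|_{\dot{BV}(\mathbb{R}^N)}=\sup_j\|f\|_{\dot{BV}(Q_j)}<\infty$. The points needing the most care are this uniformity of constants across cubes and the cube version of \eqref{K-Funct-DS}; if the latter is unavailable, I would instead argue directly that small local $\kappa_\varepsilon$ on $Q$ gives an $L^1(Q)$-approximation by a piecewise-affine or mollified function with controlled gradient.
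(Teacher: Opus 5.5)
Your proposal is correct and is essentially the paper's argument: a localized \eqref{K-Funct-DS} bounds the $K$-functional of $(L^1(Q),\dot W^{1,1}(Q))$, and the resulting decomposition together with Poincar\'e and Rellich--Kondrachov gives compactness, after which lower semicontinuity of the total variation yields $f\in\dot{BV}$. (The paper packages the compactness as Theorem \ref{Thm5**} with $Tf=f-f_Q$ on $(L^1(Q),\dot{BV}(Q))$ and $c_n=(f_n)_Q$, whereas you take $T=\mathrm{id}$ and subtract $(v_n)_Q$ by hand.)

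The paper works on a single cube ``for simplicity'', so your exhaustion/diagonal step supplies the full $L^1_{\mathrm{loc}}(\mathbb{R}^N)$ statement correctly, using the fixed normalization $c_n=(f_n)_{Q_1}$ and constants that are uniform under scaling. The only slip concerns your appeal to Theorem \ref{Thm5**} with the inhomogeneous $A_1=W^{1,1}(Q)$. That hypothesis needs $\sup_n\|f_n-c_n\|_{L^1(Q)}<\infty$, not merely invariance of the homogeneous $K$-functional under constants. Your decomposition does provide this bound, and your direct argument does not depend on the citation anyway.
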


\begin{proof}
For simplicity, we shall restrict our attention to function spaces defined
over a cube $Q$. Consider the pairs, $(A_{0},A_{1})=(L^{1}(Q),\dot{BV}%
(Q)), (B_0, B_1) = (L^1(Q), L^1(Q))$ and the linear operator $Tf=f-f_{Q}$. Clearly, $T$ maps $%
L^{1}(Q)$ into itself. On the other hand, by the classical
Rellich--Kondrachov theorem, $T:\dot{BV}(Q)\longrightarrow L^{1}(Q)$ is
compact. To be able to apply Theorem \ref{Thm5**} \ we need to verify that %
\eqref{PF5.1} holds: 
\begin{equation}
\liminf_{n\rightarrow\infty}\frac{K(\varepsilon_{n},f_{n};L^{1}(Q),\dot {BV}%
(Q))}{\varepsilon_{n}}<\infty.   \label{PF13.4}
\end{equation}
To see this, we can make use of the trivial embedding $\dot{W}%
^{1,1}(Q)\hookrightarrow\dot{BV}(Q)$ and \eqref{K-Funct-DS}
(more precisely, its local counterpart) to get 
\begin{equation*}
\frac{K(\varepsilon_{n},f_{n};L^{1}(Q),\dot{BV}(Q))}{%
\varepsilon_{n}}\lesssim\frac{K(\varepsilon_{n},f_{n};L^{1}(Q),\dot{W}%
^{1,1}(Q))}{\varepsilon_{n}}\approx\kappa_{\varepsilon_{n}}(f_{n}) 
\end{equation*}
and hence \eqref{PF13.4} follows from \eqref{PF13.5}. By virtue of Theorem %
\ref{Thm5**}, there exists $f\in L^{1}(Q)$ such that $%
Tf_{n}=f_{n}-(f_{n})_{Q}\rightarrow f$ in $L^{1}(Q)$ as $n\rightarrow\infty$%
. In addition, since $\Vert Tf\Vert_{\dot{BV}\text{(}Q)}=\Vert f\Vert_{\dot{%
BV}(Q)}$ and the $\dot{BV}(Q)$ norm is lower semicontinuous with respect to
the $L^{1}(Q)$-convergence, it turns out that $f\in\dot{BV}(Q)$, cf. Remark \ref{RemarkThm5**}. This proves the desired result
with scalars $c_{n}=(f_{n})_{Q}$.
\end{proof}

In a similar fashion as Corollary \ref{CorCompactAR}, we can give an
elementary proof of the compactness result in terms of heat kernels from 
\cite[Theorem 1.6]{GS}.

\begin{corollary}
Let $p\in[1,\infty)$. Assume that $\{t_{n}\}_{n \in \N}$ is infinitesimal and let $%
\{f_{n}\}_{n \in \N}$ be a bounded sequence in $L^{p}(\mathbb{R}^{N})$ such that 
\begin{equation}
\liminf_{n\rightarrow\infty}\frac{1}{t_{n}^{p/2}}\int_{\mathbb{R}%
^{N}}H_{t_{n}}(|f_{n}-f_{n}(x)|^{p})(x)\,dx<\infty.   \label{PF6.2}
\end{equation}
Then (up to a subsequence) $f_{n}\rightarrow f$ in $L_{\emph{loc}}^{p}(%
\mathbb{R}^{N})$ for some $f\in L_{\emph{loc}}^{p}(\mathbb{R}^{N})$ (in addition, $f \in W^{1, p}_{\emph{loc}}(\R^N)$ if $p > 1$ and $f \in BV_{\emph{loc}}(\R^N)$ if $p=1$).
\end{corollary}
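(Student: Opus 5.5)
The natural route is the one used for Corollary \ref{CorCompactAR}: apply Theorem \ref{Thm5**} to a suitable pair, with the heat-type energy controlling the relevant $K$-functional. Since compactness is only local, we first fix a cube $Q\subset\mathbb{R}^N$ (it suffices to treat a countable exhausting family of cubes and then pass to a diagonal subsequence). We take
\[
(A_0,A_1)=(L^p(Q),\dot{W}^{1,p}(Q)),\qquad (B_0,B_1)=(L^p(Q),L^p(Q)),
\]
and define $T$ as restriction to $Q$, composed with $f\mapsto f-f_Q$ if needed. The Rellich--Kondrachov theorem gives compactness of $T\colon \dot{W}^{1,p}(Q)\to L^p(Q)$ (for $p=1$ one may use $\dot{BV}(Q)$ in place of $A_1$). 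Because $\{f_n\}$ is bounded in $L^p$, the averages $(f_n)_Q$ are bounded scalars, so a further subsequence lets us drop the constants.

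\textbf{Step 1: controlling the $K$-functional.} We must verify \eqref{PF5.1}, namely
\[
\liminf_{n\to\infty}\frac{K(\sqrt{t_n},f_n;L^p,\dot{W}^{1,p})}{\sqrt{t_n}}<\infty .
\]
The lower estimate \eqref{20.9.3} in the proof of Theorem \ref{TheoremJ22.5} holds for arbitrary $f\in L^p(\mathbb{R}^N)$, since it uses only \eqref{PJ11.3} and the monotonicity of the heat kernel. Combined with \eqref{PD28.1} for $k=1$, $q=p$, $X=L^p$, it gives
\[
\frac{1}{t^{p/2}}\int_{\mathbb{R}^N}H_t(|f-f(x)|^p)(x)\,dx\gtrsim\bigg(\frac{K(\sqrt{t},f;L^p(\mathbb{R}^N),\dot{W}^{1,p}(\mathbb{R}^N))}{\sqrt{t}}\bigg)^p .
\]
Applying this to $f_n$ with $t=t_n$ and using \eqref{PF6.2} yields the required bound on $\mathbb{R}^N$, with the infinitesimal $\sqrt{t_n}$ in place of $t_n$.

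\textbf{Step 2: localizing to $Q$ (expected main obstacle).} We need the local analogue
\[
K(s,f;L^p(Q),\dot{W}^{1,p}(Q))\le K(s,f;L^p(\mathbb{R}^N),\dot{W}^{1,p}(\mathbb{R}^N)).
\]
This follows immediately by restricting any decomposition $f=f_0+f_1$ to $Q$, since restriction does not increase either norm. We do not need the converse inequality.

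\textbf{Step 3: conclusion.} Theorem \ref{Thm5**} now produces $v\in L^p(Q)$ with $Tf_n\to v$ in $L^p(Q)$ along a subsequence, hence $f_n\to f$ in $L^p(Q)$ after absorbing the convergent averages. For the extra regularity, we use Remark \ref{RemarkThm5**}. By \eqref{PF5.2*}, the $A_1$-parts $v_n$ satisfy $\liminf\|v_n\|_{\dot{W}^{1,p}(Q)}<\infty$, while $Tv_n\to v$ in $L^p(Q)$. For $p>1$, reflexivity and lower semicontinuity of $\|\nabla\cdot\|_{L^p(Q)}$ under $L^p$-convergence give $f\in W^{1,p}(Q)$. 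For $p=1$, lower semicontinuity of the total variation gives $f\in BV(Q)$. A diagonal argument over an increasing family of cubes then gives $f\in L^p_{\mathrm{loc}}$, with $f\in W^{1,p}_{\mathrm{loc}}$ or $BV_{\mathrm{loc}}$ respectively.
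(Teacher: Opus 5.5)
Your proposal is correct and follows the paper's proof: the lower bound \eqref{20.9.3} combined with \eqref{PD28.1} verifies \eqref{PF5.1}, then Theorem \ref{Thm5**} gives the convergence and Remark \ref{RemarkThm5**} the extra regularity. The differences are only technical. You use the homogeneous pair $(L^p(Q),\dot W^{1,p}(Q))$ with $Tf=f-f_Q$, which is genuinely required rather than optional, since constants have zero $\dot W^{1,p}$-seminorm; the paper instead uses $(L^p(Q),W^{1,p}(Q))$ with $T=\mathrm{id}$ and absorbs $\sup_n\|f_n\|_{L^p}$ through \eqref{PF13.1}. Your localization via the global $K$-functional plus restriction, and your use of lower semicontinuity of the total variation for $p=1$, are slightly more elementary substitutes for the paper's cube version of \eqref{PD28.1} and its replacement of $W^{1,1}(Q)$ by $BV(Q)$ via \eqref{29.9.3}.
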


\begin{proof}
To avoid unnecessary technicalities, we shall restrict our attention to function spaces defined on a fixed cube $Q \subset \R^N$. It follows from \eqref{20.9.3} that 
$$
	\frac{1}{t_n^{\frac{p+N}{2}}}\int_{|\xi|<\sqrt{t_n}}\Vert\Delta_{\xi
}f_n\Vert_{L^{p}(Q_\xi)}^{p}\,d\xi \lesssim \frac{1}{t_{n}^{p/2}}\int_{\mathbb{R%
}^{N}}H_{t_{n}}(|f_{n}-f_{n}(x)|^{p})(x)\,dx, 
$$
which can be rewritten in terms of $K$-functionals (cf. \eqref{PD28.1}) as follows
\begin{equation*}
\bigg(\frac{K(\sqrt{t_{n}},f_{n};L^{p}(Q),\dot{W}^{1,p}(Q))}{\sqrt{t_{n}}}\bigg)^{p}\lesssim\frac{1}{t_{n}^{p/2}}\int_{\mathbb{R%
}^{N}}H_{t_{n}}(|f_{n}-f_{n}(x)|^{p})(x)\,dx 
\end{equation*}
for every $n\in{\mathbb{N}}$ and, in particular (cf. \eqref{PF13.1} and %
\eqref{PF6.2}), 
\begin{align*}
\liminf_{n\rightarrow\infty}\frac{K(\sqrt{t_{n}},f_{n};L^{p}(Q),W^{1,p}(Q))}{\sqrt{t_{n}}} & \\
& \hspace{-4.5cm}\lesssim\sup_{n\in{\mathbb{N}}}\Vert f_{n}\Vert _{L^{p}(Q)}+\liminf_{n\rightarrow\infty}\frac{K(\sqrt{t_{n}}%
,f_{n};L^{p}(Q),\dot{W}^{1,p}(Q))}{\sqrt{t_{n}}} \\
& \hspace{-4.5cm} \lesssim \sup_{n\in{\mathbb{N}}}\Vert f_{n}\Vert _{L^{p}(\R^N)} + \liminf_{n \to \infty}  \frac{1}{t_{n}^{1/2}} \bigg(\int_{\mathbb{R%
}^{N}}H_{t_{n}}(|f_{n}-f_{n}(x)|^{p})(x)\,dx \bigg)^{1/p} < \infty. 
\end{align*}
Then the desired convergence assertion is an immediate consequence of
Theorem \ref{Thm5**} with $(A_{0},A_{1})=(L^{p}(Q),W^{1,p}(Q)), (B_0, B_1) = (L^p(Q), L^p(Q))$ and $T= \text{id}$ on $Q$. Moreover, one can apply Remark \ref{RemarkThm5**} to conclude that $f \in W^{1, p}(Q)$ if $p > 1$. The case $p=1$ needs some further justifications: It is possible to replace the role of $W^{1, 1}(Q)$ by $BV(Q)$ in the above computations since $$G_0 (L^1(Q), W^{1, 1}(Q)) = L^1(Q) \qquad \text{and} \qquad  G_1(L^1(Q), W^{1, 1}(Q)) = BV(Q)$$ and then (cf. \eqref{sec2:5})
\begin{equation}\label{29.9.3}
K(t, f; L^1(Q), BV(Q)) \approx K(t, f; L^1(Q), W^{1, 1}(Q)).
\end{equation}
\end{proof}

Let $(\mathcal{X}, d, \mu)$ be a compact metric measure space satisfying the
assumptions from Section \ref{GConvMetric}. The following result, which has
been recently derived in \cite[Theorem 5.3]{ARB}, is also a consequence of
the extrapolation approach.

\begin{corollary}
Let $p\in\lbrack1,\infty)$. Assume that $\{t_{n}\}_{n \in \N}$ is infinitesimal and $%
\{f_{n}\}_{n \in \N}\subset L^{p}(\mathcal{X})$ such that 
\begin{equation}
\liminf_{n\rightarrow\infty}\bigg[\bigg(\frac{E_{p,t_{n}}(f_{n})}{t_{n}}%
\bigg)^{p}+\Vert f_{n}\Vert_{L^{p}(\mathcal{X})}^{p}\bigg]<\infty. 
\label{PF8.1}
\end{equation}
Then $\{f_{n}\}_{n \in \N}$ contains a subsequence that converges strongly in $L^{p}(%
\mathcal{X})$.
\end{corollary}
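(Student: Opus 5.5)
The plan is to apply Theorem \ref{Thm5**} with $(A_{0},A_{1})=(L^{p}(\mathcal{X}),KS^{1,p}(\mathcal{X})\cap L^{p}(\mathcal{X}))$ (normed by $\Vert f\Vert_{L^{p}}+\Vert f\Vert_{KS^{1,p}}$), $(B_{0},B_{1})=(L^{p}(\mathcal{X}),L^{p}(\mathcal{X}))$ and $T=\text{id}$. This follows the same template as Corollary \ref{CorCompactAR} and the heat kernel corollary. Since $B_{0}+B_{1}=L^{p}(\mathcal{X})$, the conclusion of Theorem \ref{Thm5**} is exactly strong $L^{p}$ convergence of a subsequence, which is what we want.

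\textbf{Step 1: verify the $K$-functional hypothesis \eqref{PF5.1}.} By \eqref{PF13.1},
\[
K(t,f;L^{p},L^{p}\cap KS^{1,p})\approx t\Vert f\Vert_{L^{p}}+K(t,f;L^{p},KS^{1,p})
\]
for $t\in(0,1)$. By \eqref{PJ4.4}, $K(t,f;L^{p},KS^{1,p})\approx E_{p,t}(f)$. Hence
\[
\frac{K(t_{n},f_{n};A_{0},A_{1})}{t_{n}}\lesssim \Vert f_{n}\Vert_{L^{p}(\mathcal{X})}+\frac{E_{p,t_{n}}(f_{n})}{t_{n}}.
\]
The $\liminf$ of the right-hand side is finite by \eqref{PF8.1}, using $a+b\lesssim (a^{p}+b^{p})^{1/p}$. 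We must check that the constants in \eqref{PJ4.4} do not depend on $t$ or $f$; this is part of the statement of \cite[Theorem 4.1]{GKS}.

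\textbf{Step 2: compactness of $T:A_{1}\to B_{1}$, i.e.\ of the embedding $L^{p}(\mathcal{X})\cap KS^{1,p}(\mathcal{X})\hookrightarrow L^{p}(\mathcal{X})$.} This is the step I expect to be the main obstacle, since it is the only ingredient that is not already recorded in the paper. It is a Rellich--Kondrachov type theorem on compact doubling spaces supporting a Poincar\'e inequality. For $p>1$ we can identify $KS^{1,p}$ with the Haj\l asz--Sobolev space $M^{1,p}(\mathcal{X})$ \cite[Chapter 10]{HKST}, for which compactness in $L^{p}$ on a compact doubling space is classical (Haj\l asz--Koskela). For $p=1$, $KS^{1,1}=\dot{BV}(\mathcal{X})$, and compactness of $BV(\mathcal{X})\hookrightarrow L^{1}(\mathcal{X})$ holds on compact PI spaces (Miranda).

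If one wants a self-contained route instead, I would argue directly from the Poincar\'e inequality. Approximate $f$ by the discrete averages $\sum_{i} f_{B_{i}}\varphi_{i}$ over a finite cover of $\mathcal{X}$ by balls of radius $r$ with a Lipschitz partition of unity $\{\varphi_{i}\}$. The $L^{p}$ error of this approximation is controlled by $E_{p,Cr}(f)\lesssim r\Vert f\Vert_{KS^{1,p}}$, which gives total boundedness of bounded sets in $A_{1}$.

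\textbf{Step 3: conclude.} With Steps 1 and 2 in place, Theorem \ref{Thm5**} yields $v\in L^{p}(\mathcal{X})$ and a subsequence with $f_{n}\to v$ in $L^{p}(\mathcal{X})$. This proves the corollary. Optionally, by Remark \ref{RemarkThm5**} and lower semicontinuity of $\Vert\cdot\Vert_{KS^{1,p}}$ under $L^{p}$ convergence (via Fatou applied to $E_{p,t}$ for each fixed $t$), the limit also belongs to $KS^{1,p}(\mathcal{X})$.
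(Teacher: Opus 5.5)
Your proposal is correct and coincides with the paper's argument. The paper applies Theorem \ref{Thm5**} with exactly your choices $(A_0,A_1)=(L^{p}(\mathcal{X}),L^{p}(\mathcal{X})\cap KS^{1,p}(\mathcal{X}))$, $(B_0,B_1)=(L^{p}(\mathcal{X}),L^{p}(\mathcal{X}))$ and $T=\mathrm{id}$, checks \eqref{PF5.1} through \eqref{PF13.1} and \eqref{PJ4.4} as in your Step 1, and simply cites \cite[(68)]{ARB} for the compact embedding. Your alternative justifications of that embedding are also sound; your averaging argument in fact needs only doubling and compactness of $\mathcal{X}$, not the Poincar\'e inequality, since $\|f-\sum_i f_{B_i}\varphi_i\|_{L^p}\lesssim E_{p,3r}(f)\le 3r\|f\|_{KS^{1,p}(\mathcal{X})}$.
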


\begin{proof}
We wish to apply Theorem \ref{Thm5**} for the choice $(A_{0}, A_{1}) =
(L^{p}(\mathcal{X}),L^{p}(\mathcal{X}) \cap KS^{1, p}(\mathcal{X})), (B_0, B_1) =(
L^{p}(\mathcal{X}), L^{p}(\mathcal{X}))$ and $T = \text{id}$ (the compactness of the embedding 
$L^{p}(\mathcal{X}) \cap KS^{1, p}(\mathcal{X}) \hookrightarrow L^{p}(%
\mathcal{X})$ is known, cf. \cite[(68)]{ARB}). From \eqref{PF13.1} and %
\eqref{PJ4.4}, 
\begin{equation*}
\frac{K(t, f; L^{p}(\mathcal{X}), L^{p}(\mathcal{X}) \cap KS^{1, p}(\mathcal{%
X}))}{t} \approx\|f\|_{L^{p}(\mathcal{X})} + \frac{E_{p, t}(f)}{t}
\end{equation*}
and hence the validity of \eqref{PF8.1} implies \eqref{PF5.1}.
\end{proof}

\section{Variational problems via $\Gamma$-extrapolation}\label{Section8}

\subsection{Proof of Theorem \ref{ThmVarIntro}}
\emph{Step 1.}  We start by showing that 
\begin{equation}\label{20.9.5}
\Gamma_{A_0}- \lim_{\theta \to 1^{-}} E_\theta = E_1.
\end{equation}
Let $u\in A_{0}$ and
let $\{\theta_{n}\}_{n\in \N}$ be a sequence in $(0,1)$ converging to $1$. We wish  to find a sequence $\{u_{n}\}_{n\in \N}$ in $A_{0}$ and
converging to $u$ such that%
\begin{equation}
\limsup_{n\rightarrow\infty}E_{\theta_{n}}(u_{n})\leq E_1(u)   \label{mmvar0}
\end{equation}
and so
\begin{equation}\label{20.9.6}
	\Gamma_{A_0}-\limsup_{n \to \infty} E_{\theta_n} \leq E_1. 
\end{equation}
Recall that (cf. (\ref{sec2:7})) 
\begin{equation}
\lim_{n\rightarrow\infty}\Vert
u\Vert_{(A_{0},A_{1})_{\theta_{n},p}^{\blacktriangleleft}}=\left\Vert
u\right\Vert _{G_{1}(A_{0},A_{1})}, \qquad u \in A_0.   \label{mmvar1}
\end{equation}
We now show that the constant sequence $u_{n}:=u,n \in \N,$ does the job$.$ We
may suppose that the right-hand side of (\ref{mmvar0}) is finite since
otherwise the claim \eqref{mmvar0} becomes trivial, therefore, since $f\in B,$ we must have 
$u\in B.$ Then, by \eqref{mmvar1},  
\begin{align*}
\limsup_{n\rightarrow\infty}E_{\theta_{n}}(u) & =\limsup
_{n\rightarrow\infty}(\left\Vert f-u\right\Vert _{B}^{q}+\lambda\Vert
u\Vert_{(A_{0},A_{1})_{\theta_{n},p}^{\blacktriangleleft}}) \\
& = \left\Vert f-u\right\Vert _{B}^{q}+\lambda\limsup_{n \to \infty} \Vert u\Vert
_{(A_{0},A_{1})_{\theta_{n},p}^{\blacktriangleleft}} \\
& = \left\Vert f-u\right\Vert _{B}^{q}+ \lambda \|u\|_{G_1(A_0, A_1)}  =E_1(u).
\end{align*}
This proves \eqref{mmvar0}. 

Let $u\in A_{0}$ and let $\{u_{n}\}_{n\in \N}$ 
be a sequence converging to $u$ in $A_{0},$ we want to show that%
\begin{equation}\label{20.9.4}
E_1(u)\leq\liminf_{n\to \infty}E_{\theta_{n}}(u_{n}). 
\end{equation}
Without loss of generality, we may assume that $\liminf_{n\to \infty}E_{\theta_{n}}(u_{n})<%
\infty.$ Then there exist subsequences  of $\{\theta_n\}_{n \in \N}$ and $\{u_{n}\}_{n \in \N}$ (which are still denoted by $\{\theta_n\}_{n \in \N}$ and $\{u_{n}\}_{n \in \N}$) such that  
\begin{equation*}
\liminf_{n \to \infty} E_{\theta_{n}}(u_{n})=\lim_{n \to \infty} E_{\theta_{n}}(u_{n})<\infty, 
\end{equation*}
and the sequence $\{f-u_{n}\}_{n \in \N}$ is bounded in $B$.  Now, since $u_{n}\rightarrow u$ in $A_{0},$ we can
extract a further subsequence such that $u_{n}\rightarrow u$ a.e. (cf. \cite[Theorem 1.4, p. 3]{BS}) and $
f-u_{n}\rightarrow f-u$ a.e. Since $B$ satisfies the Fatou
property (cf. \cite[Lemma 1.5, pp. 4-5]{BS}) we have%
\begin{equation}
\left\Vert f-u\right\Vert
_{B}^{q}\leq\liminf_{n\to\infty}\left\Vert f-u_{n}\right\Vert
_{B}^{q}.   \label{mmvar8}
\end{equation}
On account of the $\Gamma_{A_{0}}-\lim_{n\rightarrow\infty}\Vert
u\Vert_{(A_{0},A_{1})_{\theta_{n},p}^{\blacktriangleleft}}=\left\Vert
u\right\Vert _{G_{1}(A_{0},A_{1})}$ (cf. Theorem \ref{Thm1}) we have%
\begin{equation}
\left\Vert u\right\Vert
_{G_{1}(A_{0},A_{1})}\leq\liminf_{n\to\infty }\Vert
u_{n}\Vert_{(A_{0},A_{1})_{\theta_{n},p}^{\blacktriangleleft}}. 
\label{mmvar9}
\end{equation}
Combining (\ref{mmvar8}) and (\ref{mmvar9}),  we obtain%
\begin{align*}
E_1(u) & =\left\Vert f-u\right\Vert _{B}^{q}+\lambda\left\Vert u\right\Vert
_{G_{1}(A_{0},A_{1})} \\
& \leq\liminf_{n\to\infty}\left\Vert f-u_{n}\right\Vert
_{B}^{q}+\lambda\liminf_{n\to\infty}\Vert
u_{n}\Vert_{(A_{0},A_{1})_{\theta_{n},p}^{\blacktriangleleft}} \\
& \leq\liminf_{n\to\infty}(\left\Vert f-u_{n}\right\Vert
_{B}^{q}+\lambda\Vert
u_{n}\Vert_{(A_{0},A_{1})_{\theta_{n},p}^{\blacktriangleleft }}) \\
& =\liminf_{n\rightarrow\infty}E_{\theta_{n}}(u_{n})
\end{align*}
and then \eqref{20.9.4} holds. Taking now the infimum in  \eqref{20.9.4} over all possible sequences $\{u_n\}_{n \in \N}$ with $u_n \to u$ in $A_0$, we derive that
\begin{equation}\label{20.9.7}
	E_1 \leq \Gamma_{A_0}-\liminf_{n \to \infty} E_{\theta_n}. 
\end{equation}

The desired result \eqref{20.9.5} follows as a combination of \eqref{20.9.6} and  \eqref{20.9.7}.

\emph{Step 2.} We are going to show existence of minimizers of $E_1$.  Let $\{u_{n}\}_{n\in \N}$ be a sequence
of near-minimizers of $E_1,$ or in other words, 
\begin{equation}
E_1(u_{n})\leq\inf_{u}E_1(u)+\varepsilon_{n},   \label{tambien}
\end{equation}\label{23.9.2}
where $\varepsilon_{n}>0$ with $\lim_{n \to \infty} \varepsilon_{n}=0.$ In particular, $%
\{u_{n}\}_{n\in \N}$ is uniformly  bounded in $B \cap G_{1}(A_{0},A_{1})$, that is, 
\begin{equation}\label{5.10.3}
	\|u_n\|_B + \frac{K(t, u_n; A_0, A_1)}{t} \lesssim 1, \qquad \forall t > 0, \qquad \forall n \in \N.
\end{equation}
Assume that $\{\theta_n\} \subset (0, 1)$ with $\theta_n \to 1$. Applying \eqref{PF13.2} to the ordered pair $(A_0, A_0 \cap A_1)$  and \eqref{PF13.1}, we get
\begin{align}
	 \|u_n\|_{(A_0, A_0 \cap A_1)_{\theta_n, p}^\blacktriangleleft} & \approx c_{\theta_n, p} \bigg(\int_{0}^{1}[t^{-\theta_n
}K(t,u_n;A_{0},A_0 \cap A_{1})]^{p}\,\frac{dt}{t}\bigg)^{1/p} \nonumber \\
& \approx  c_{\theta_n, p} \bigg(\int_{0}^{1}[t^{-\theta_n
}K(t,u_n;A_{0}, A_{1})]^{p}\,\frac{dt}{t}\bigg)^{1/p}  + c_{\theta_n, p} \bigg(\int_0^1 t^{(1-\theta_n) p} \, \frac{dt}{t} \bigg)^{1/p} \|u_n\|_{A_0} \nonumber \\
& \approx c_{\theta_n, p} \bigg(\int_{0}^{1}[t^{-\theta_n
}K(t,u_n;A_{0}, A_{1})]^{p}\,\frac{dt}{t}\bigg)^{1/p}  + \|u_n\|_{A_0} \nonumber \\
&  \lesssim c_{\theta_n, p} \bigg(\int_{0}^{1}[t^{-\theta_n
}K(t,u_n;A_{0}, A_{1})]^{p}\,\frac{dt}{t}\bigg)^{1/p}  + \|u_n\|_{B}, \label{23.9.3}
\end{align}
where we have used the fact that $B \hookrightarrow A_0$ in the last step. It follows from \eqref{5.10.3} that
\begin{align*}
	 \|u_n\|_{(A_0, A_0 \cap A_1)_{\theta_n, p}^\blacktriangleleft} & \lesssim c_{\theta_n, p} \bigg(\int_0^1 t^{(1-\theta_n) p} \, \frac{dt}{t} \bigg)^{1/p} + 1 \approx 1
\end{align*}
with all underlying equivalence constants independent of $n$. Hence 
$$
	\liminf_{n \to \infty} \|u_n\|_{(A_0, A_0 \cap A_1)_{\theta_n, p}^\blacktriangleleft} < \infty
$$
and we are in a position to apply Theorem \ref{ThmCwiGen} to the pair $(A_0, A_0 \cap A_1)$ (cf. \eqref{23.9.1}). 
Therefore we can find $\bar{u}\in A_{0}$ and a subsequence, still named $%
\{u_{n}\}_{n \in \N},$ such that $u_{n}\rightarrow\bar{u}$ in $A_{0}.$ Passing through
a subsequence if necessary, we can assume that $u_{n}\rightarrow\bar{u}$ a.e.. 
Moreover, (\ref{tambien}) implies that the sequence $\{f-u_{n}\}_{n\in \N}$
is bounded in $B$ and, by Fatou's property,%
\begin{equation}
\left\Vert f-\bar{u}\right\Vert _{B}^{q}\leq\liminf_{n\to\infty
}\left\Vert f-u_{n}\right\Vert _{B}^{q}.   \label{tambien1}
\end{equation}
Since for every $t>0,$ the map $f\in A_{0}\mapsto K(t,f;A_{0},A_{1})$ is
continuous\footnote{Use that $|K(t, f; A_0, A_1)-K(t, g; A_0, A_1)| \leq K(t, f-g; A_0, A_1) \leq \|f-g\|_{A_0}$.} it  readily follows that the norm on $G_{1}(A_{0},A_{1})$ is lower semicontinuous with
respect to convergence in the norm $A_{0}.$ As a consequence 
\begin{equation}
\left\Vert \bar{u}\right\Vert
_{G_{1}(A_{0},A_{1})}\leq\liminf_{n\to \infty}\left\Vert
u_{n}\right\Vert _{G_{1}(A_{0},A_{1})}.   \label{tambien2}
\end{equation}
Adding (\ref{tambien1}) and (\ref{tambien2}),  we obtain (cf. also \eqref{tambien})
\begin{align*}
E_1(\bar{u}) & \leq\liminf_{n\rightarrow\infty}E_1(u_{n})  \leq\inf_{u}E_1(u),    
\end{align*}
establishing $\bar{u}$ as a minimizer of $E_1$.


\emph{Step 3.} Next we are going to analyze  the convergence of near-minimizers. Let $\{\bar{u}_{\theta}\}_{\theta \in (0, 1)}$ be a family
of near-minimizers of $E_{\theta}$, say 
\begin{equation*}
E_{\theta}(\bar{u}_{\theta})\leq\inf_{u}E_{\theta}(u)  + \varepsilon_\theta
\end{equation*}
with $\lim_{\theta \to 1^{-}} \varepsilon_\theta =0$. Noting that $\inf_u E_\theta(u) \leq  \|f\|_B^q$, 
it follows that the family $\{E_{\theta}(\bar{u}_{\theta})\}$ is uniformly bounded provided that $\theta \in (\theta_0, 1)$ for some $\theta_0 \in (0, 1)$. Without loss of generality, we may assume that $\theta_0 =0$.  In
particular, the sets 
\begin{equation}\label{22.9.1}
\{\|\bar{u}_\theta\|_B\}_{\theta \in (0, 1)} \qquad \text{and} \qquad     \{\left\Vert \bar{u}_{\theta
}\right\Vert _{(A_{0},A_{1})_{\theta,p}^{\blacktriangleleft}}\}_{\theta
\in(0,1)} \qquad  \text{are uniformly bounded.}
\end{equation}
Relying on \eqref{23.9.3}, we can estimate
$$
	\|\bar{u}_{\theta}\|_{(A_0, A_0 \cap A_1)_{\theta, p}^{\blacktriangleleft}} \lesssim \|\bar{u}_\theta\|_{(A_0, A_1)_{\theta, p}^{\blacktriangleleft}} + \|\bar{u}_{\theta}\|_B
$$
provided that $\theta \to 1^{-}$. Then, by \eqref{22.9.1}, 
$$
	\liminf_{\theta \to 1^{-}} \|\bar{u}_{\theta}\|_{(A_0, A_0 \cap A_1)_{\theta, p}^{\blacktriangleleft}}  < \infty. 
$$
 Therefore, by Theorem \ref{ThmCwiGen} applied to $(A_0, A_0 \cap A_1)$, there exists a
subsequence $\{\bar{u}_{\theta_{n}}\}_{n\in \N}$ and $%
\bar{u}\in A_{0}$ such that $\bar{u}_{\theta_{n}}\rightarrow\bar{u}$ in $A_{0},$
with $\theta_{n}\rightarrow1^{-}$.  On account of Step 1,  $\Gamma_{A_{0}}-\lim_{n \to \infty}
E_{\theta_{n}}=E_1$ and then  it follows by the fundamental theorem of $\Gamma$-convergence (see e.g. \cite[Theorem 1.21]{Bra} and \cite[Theorem 7.8]{DALM}) that $\bar{u}$ is a minimizer of $E_1$ and%
\begin{equation}
\lim_{n\rightarrow\infty}E_{\theta_{n}}(\bar{u}_{\theta_{n}})=E_1(\bar{u}),
\label{tambien8}
\end{equation}
i.e. \eqref{20.9.9} holds. 

\emph{Step 4.} It remains to check that the convergence assertion obtained in the previous step $\bar{u}_{\theta_n} \to \bar{u}$ in $A_0$ can be updated to say that  $\bar{u}_{\theta_n} \to \bar{u}$ in $B$ under the additional assumption that $B$ is uniformly convex.   In
particular, (\ref{tambien8}) implies that%
\begin{equation}\label{21.9.1}
\left\Vert f-\bar{u}\right\Vert _{B}^{q}+\lambda\left\Vert \bar{u}%
\right\Vert _{G_{1}(A_{0},A_{1})}\geq\limsup_{n\rightarrow\infty}\left\Vert
f-\bar{u}_{\theta_{n}}\right\Vert _{B}^{q}+\lambda\liminf_{n\rightarrow\infty
}\left\Vert \bar{u}_{\theta_{n}}\right\Vert
_{(A_{0},A_{1})_{\theta_{n},p}^{\blacktriangleleft}}.
\end{equation}
Furthermore, from Theorem \ref{Thm1}, we know 
$$
	\|\bar{u}\|_{G_1(A_0, A_1)} = \Gamma_{A_0}-\lim_{\theta \to 1^{-}} \|\bar{u}\|_{(A_0, A_1)_{\theta, p}^{\blacktriangleleft}},
$$ 
and, since $\bar{u}_{\theta_n} \to \bar{u}$ in $A_0$,  we infer that (cf. also \eqref{22.9.1})
$$
	\|\bar{u}\|_{G_1(A_0, A_1)} \leq \liminf_{n \to \infty}  \|\bar{u}_{\theta_n}\|_{(A_0, A_1)_{\theta_n, p}^\blacktriangleleft} < \infty.
$$
Inserting this into \eqref{21.9.1}, we arrive at
\begin{equation}\label{21.9.2}
	\left\Vert f-\bar{u}\right\Vert _{B}^{q} \geq \limsup_{n\rightarrow\infty}\left\Vert
f-\bar{u}_{\theta_{n}}\right\Vert _{B}^{q}
\end{equation}
and, on the other hand, by the Fatou property on $B$ (noting that, possibly passing to a subsequence,  $\bar{u}_{\theta_n} \to \bar{u}$ a.e. since $\bar{u}_{\theta_n} \to \bar{u}$ in $A_0$):
\begin{equation}\label{21.9.3}
\left\Vert f-\bar{u}\right\Vert _{B}^{q} \leq \liminf_{n \to \infty} \|f-\bar{u}_{\theta_n}\|_B^q. 
\end{equation}
Putting together \eqref{21.9.2} and \eqref{21.9.3}, we obtain that
\begin{equation}
\lim_{n \to \infty} \left\Vert f-\bar{u}_{\theta_{n}}\right\Vert _{B}=\left\Vert f-\bar {u}%
\right\Vert _{B}.   \label{tambien9}
\end{equation}
According to the Milman--Pettis theorem \cite[Theorem 3.31, p. 77]{B}, since $B$ is uniformly convex it is reflexive. Hence from the boundedness  of  $%
\{f-\bar{u}_{\theta _{n}}\}_{n\in \N}$ in $B$, it follows that $%
\{f-\bar{u}_{\theta_{n}}\}_{n\in \N}$ is weakly precompact and consequently, going
through a subsequence, there exists $\beta\in B$ such that $%
f-\bar{u}_{\theta_{n}}\rightharpoonup\beta.$ Since $B \hookrightarrow A_0$, we  also derive that $
f-\bar{u}_{\theta_{n}}\rightharpoonup\beta$ in $A_0$. On the other hand,  $f-\bar{u}_{\theta_n} \to f-\bar{u}$ in $A_0$. Uniqueness of weak-limit yields that  $\beta = f-\bar{u}$ and then  (\ref{tambien9}) is now supplemented with $f-\bar{u}_{\theta_{n}}%
\rightharpoonup f-\bar{u}$ in $B$. Since $B$ is uniformly convex it
satisfies the Radon--Riesz property (cf. \cite[Proposition 3.32, p. 78]{B}) which yields%
\begin{equation*}
f-\bar{u}_{\theta_{n}}\rightarrow f-\bar{u} \qquad \text{in} \qquad B. 
\end{equation*}
Consequently%
\begin{equation*}
\lim_{n \to \infty} \left\Vert \bar{u}-\bar{u}_{\theta_{n}}\right\Vert _{B}=\lim_{n \to \infty} \left\Vert
(f-\bar{u}_{\theta_{n}})-(f-\bar{u})\right\Vert _{B}=0 
\end{equation*}
as we wished to show. \qed

\begin{remark}\label{Remark15}
An alternative argument to Step 4 leading to convergence  in certain Banach function spaces that are
not uniformly convex including $L^{1}$, and even Orlicz spaces that satisfy
the $\Delta_{2}$ condition, is to dispense with weak convergence and replace
it by a.e. convergence through the use of the Brezis--Lieb lemma \cite%
{BrLi}. In fact,  the Brezis--Lieb lemma simply states that if $%
f_{n}\rightarrow f$ a.e. and $\{f_n\}_{n \in \N}$ is uniformly bounded in $L^p, p >0,$ then 
\begin{equation*}
\lim_{n\rightarrow\infty}(\left\Vert f_{n}\right\Vert _{L^{p}}^p-\left\Vert
f-f_{n}\right\Vert _{L^{p}}^p)=\left\Vert f\right\Vert _{L^{p}}^p. 
\end{equation*}
In particular, a.e. convergence and $\lim_{n \to \infty} \left\Vert f_{n}\right\Vert
_{L^{1}}=\left\Vert f\right\Vert _{L^{1}}$ yield norm convergence in $L^1$.

In the general setting that $B$ is a Banach function space, the above proof (cf. Step 3) reveals that (up to a subfamily) $\bar{u}_\theta \to \bar{u}$ as $\theta \to 1^{-}$ in $A_0$. 
\end{remark}

\subsection{Applications of Theorem \ref{ThmVarIntro}}

In this subsection, we assume that all function spaces are defined over a
fixed cube $Q$ with sides parallel to the axes of coordinates in $\mathbb{R}%
^{N}$.

\begin{theorem}
\label{ThmGenMinimization} Let $B$ be a Banach function space that is uniformly convex and let $X$ be
a r.i. space with absolutely continuous norm such that $B \hookrightarrow X$. Let $q>0,p\in\lbrack1,\infty),k\in{\mathbb{N%
}}$ and $\lambda>0$. Assume that $f\in B$. For each $s\in(0,1)$, let $\bar{u}%
_{s}$ be a near-minimizer  for the functional 
\begin{equation*}
E_{s}(u):=\Vert f-u\Vert_{B}^{q}+\lambda(1-s)^{1/p}\Vert u\Vert_{\dot{B}%
_{p}^{sk}X}. 
\end{equation*}
Then (up to a subfamily) $\bar{u}_{s}\rightarrow\bar{u}$ in $B$ as $%
s\rightarrow1^{-}$, where $\bar{u}$ is a minimizer of 
\begin{equation*}
E_1(u):=\Vert f-u\Vert_{B}^{q}+\lambda F(u) 
\end{equation*}
and\footnote{%
Note that $F(u)\approx\Vert u\Vert_{\dot{W}^{k}X}$ if $u \in \dot{W}^k X$, cf. \eqref{PF12.1}.} 
\begin{equation*}
F(u):=\bigg(\frac{1}{kp}\int_{\mathbb{S}^{N-1}}\bigg\|\sum_{|\alpha|=k} \frac{k!}{\alpha!} \, 
\omega^{\alpha}D^{\alpha}u\bigg\|_{X}^{p}\,d\sigma^{N-1}(\omega)\bigg)%
^{1/p}. 
\end{equation*}
\end{theorem}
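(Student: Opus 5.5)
The plan is to rerun the four steps of the proof of Theorem \ref{ThmVarIntro}, with the pair $(A_0,A_1)=(X,\dot{W}^kX)$ on $Q$. The only change is that the normalized interpolation norm is replaced by the functional
$$\Phi_s(u):=(1-s)^{1/p}\Vert u\Vert_{\dot{B}_p^{sk}X}.$$
Theorem \ref{ThmVarIntro} cannot be applied literally, because $\Phi_s$ is not exactly $\Vert\cdot\Vert_{(X,\dot{W}^kX)_{s,p}^{\blacktriangleleft}}$. The computation at the end of Section \ref{SectionExact} gives
$$\Vert u\Vert^{\blacktriangleleft}_{s,p,G}=\Big(\tfrac{skp}{skp+N}\Big)^{1/p}\Phi_s(u),$$
and the prefactor does not tend to $1$ as $s\to1^-$. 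I therefore need two ingredients about $\Phi_s$: an exact $\Gamma$-limit, and uniform comparability with the interpolation norms.

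\emph{Step 1: $\Gamma$-convergence of the energies.} By \eqref{Pdjjsjas3}, adapted to $Q$, I have $\Gamma_X-\lim_{s\to1^-}\Phi_s=F$ on $W^kX$. Outside $W^kX$ both sides are $+\infty$: indeed, \eqref{PD28.3} together with Theorem \ref{Thm1} gives $\Gamma_X-\liminf\Phi_s\gtrsim\Vert u\Vert_{G_1(X,\dot W^kX)}$, which is infinite there. With this in hand, Step 1 of the proof of Theorem \ref{ThmVarIntro} goes through unchanged.
\begin{itemize}
\item For the recovery sequence I take the constant sequence when $u\in W^kX$. Pointwise convergence $\Phi_s(u)\to F(u)$ follows from \eqref{PJ23.6} and \eqref{PJ23.8}, since the factor $(skp/(skp+N))^{1/p}$ has an explicit limit.
\item For the liminf inequality I combine the $\Gamma$-liminf for $\Phi_s$ with the Fatou property of $B$, using that $B\hookrightarrow X$ is a Banach function space. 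This yields $\Gamma_X-\lim E_s=E_1$.
\end{itemize}

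\emph{Step 2: compactness.} By \eqref{PD28.3}, with constants uniform in $s$, the functional $\Phi_s$ is comparable to $\Vert\cdot\Vert_{(X,\dot W^kX)_{s,p}^{\blacktriangleleft}}$. The estimate \eqref{23.9.3} then controls $\Vert \bar u_s\Vert_{(X,W^kX)^{\blacktriangleleft}_{s,p}}$ by $\Phi_s(\bar u_s)+\Vert\bar u_s\Vert_B$. Both terms are bounded for near-minimizers, because $E_s(\bar u_s)\le \Vert f\Vert_B^q+\varepsilon_s$. Since $X$ has absolutely continuous norm, $W^kX\hookrightarrow X$ is compact by \cite{Pu}, and Theorem \ref{ThmCwiGen} gives a subsequence with $\bar u_{s_n}\to\bar u$ in $X$.

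\emph{Step 3: minimizers and energies.} The fundamental theorem of $\Gamma$-convergence then shows that $\bar u$ minimizes $E_1$ and that $E_{s_n}(\bar u_{s_n})\to E_1(\bar u)$. Existence of a minimizer of $E_1$ also follows this way. Alternatively, it follows as in Step 2 of the proof of Theorem \ref{ThmVarIntro}, using lower semicontinuity of $F$ in $X$; this holds because $F$ is itself a $\Gamma$-limit.

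\emph{Step 4: convergence in $B$.} This is verbatim Step 4 of the proof of Theorem \ref{ThmVarIntro}. From $F(\bar u)\le\liminf\Phi_{s_n}(\bar u_{s_n})$ and the Fatou property I get $\Vert f-\bar u_{s_n}\Vert_B\to\Vert f-\bar u\Vert_B$. Reflexivity of $B$ (Milman--Pettis) gives weak convergence of $f-\bar u_{s_n}$ in $B$, the weak limit is identified through $B\hookrightarrow X$, and the Radon--Riesz property then upgrades this to norm convergence in $B$.

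The main obstacle is Step 1 on the cube $Q$, that is, transferring the exact formula \eqref{Pdjjsjas3} from $\mathbb R^N$ to $Q$. There the differences $\Delta_h^k$ live on $Q_{kh}$, and the mollification argument of Section \ref{SectionExact} is not translation invariant. My first attempt would be to run the mollifier argument with $\eta_\delta$ supported in a ball of radius $\delta$, working on interior cubes $Q'\Subset Q$, where convolution commutes with $\Delta_h^k$ for small $|h|$. The large-$|h|$ part of $\Phi_s$ is killed by the factor $(1-s)^{1/p}$. I would then let $Q'\uparrow Q$ using the absolute continuity of the norm of $X$.
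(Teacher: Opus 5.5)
Your plan is the paper's own: rerun the four steps of the proof of Theorem \ref{ThmVarIntro}, taking the $\Gamma$-limit from the exact formula \eqref{Pdjjsjas3} and the compactness from the argument of Corollary \ref{CorCmp1}, which your Step 2 reproduces. Your prefactor computation is correct, and so is your observation that the transfer to the cube $Q$ needs an argument the paper does not spell out. However, one claim in your Step 1 is false in a case the hypotheses allow.

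\textbf{The false claim.} You assert that off $W^kX$ both $\Gamma_X-\lim_{s\to1^-}\Phi_s$ and $F$ equal $+\infty$, because $\Vert u\Vert_{G_1(X,\dot W^kX)}=\infty$ there. This holds when $G_1(X,\dot W^kX)=\dot W^kX$, e.g. for $X=L^p$ with $1<p<\infty$. But $X=L^1$ is an r.i.\ space with absolutely continuous norm, and by \eqref{PJ1.1} (and its analogue on $Q$) one has $G_1(L^1,\dot W^{k,1})=\dot{BV}^k$. By Theorem \ref{ThmTV} with $q=p$, the $\Gamma$-limsup of $\Phi_s$ is therefore finite on $BV^k\setminus W^{k,1}$.

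\textbf{Consequences.} If $F=+\infty$ off $W^{k,1}$, then:
\begin{enumerate}
\item[(i)] your Step 1 identifies the wrong $\Gamma$-limit;
\item[(ii)] your assertion that $F$ is lower semicontinuous in $X$ fails;
\item[(iii)] Step 3 would force $\bar u\in W^{k,1}$ and produce a minimizer of $E_1$ over $W^{k,1}$, which need not exist. For $k=1$, $B=L^2$, $q=2$, strict density shows the infimum over $W^{1,1}$ equals that of the relaxed ROF-type problem over $BV$, whose minimizers generally have jumps.
\end{enumerate}
This is exactly the case the paper uses for Corollary \ref{CorMinSob} with $p=1$ and for Corollary \ref{CoroIP1}, where the limit functional is $\Vert u\Vert_{\dot{BV}}$.

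\textbf{The fix.} Extend $F$ to $G_1(X,\dot W^kX)$. For $X=L^1$, $k=1$, replace $\Vert\omega\cdot\nabla u\Vert_{L^1}$ by the total variation $|D_\omega u|(Q)$ of the directional derivative measure; the case $k>1$ is analogous. Then prove the exact $\Gamma$-limit on this larger domain:
\begin{enumerate}
\item[(a)] \emph{Liminf.} Use the mollifier inequality \eqref{Pdjjsjas2}. Condition \eqref{Pdjjsjas} now follows from the $L^1$-lower semicontinuity of $u\mapsto|D_\omega u|(Q)$ (a supremum of linear functionals) together with Fatou's lemma in $\omega$.
\item[(b)] \emph{Limsup.} Either take the constant sequence, using the pointwise limit $t^{-1}\Vert\Delta_{t\omega}u\Vert_{L^1}\to|D_\omega u|$ for $u\in BV$ (a D\'avila-type BBM limit). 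Or use strict density of smooth functions combined with Reshetnyak continuity.
\end{enumerate}
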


Before going to the proof of Theorem \ref{ThmGenMinimization}, let us 
state the special case when $X=L^{p}$ and $k=1$.

\begin{corollary}
\label{CorMinSob} Let $B$ be a Banach function space that is uniformly convex  with $B \hookrightarrow L^p$ for some $p \in [1, \infty)$ (e.g., $B = L^r$ with\footnote{The case $B=L^1$ is also admissible, cf. Remark \ref{Remark15}.} $r \in [p, \infty)$). Let $q>0$ and $\lambda>0$. Assume that $f \in B$.  For
each $s\in(0,1)$, let $\bar{u}_{s}$ be a near-minimizer of 
\begin{equation*}
E_{s}(u):=\Vert f-u\Vert_{B}^{q}+\lambda(1-s)^{1/p}\Vert u\Vert_{\dot{W}%
^{s,p}}. 
\end{equation*}
Then (up to a subfamily) $\bar{u}_{s}\rightarrow\bar{u}$ in $B$ as $s\rightarrow1^{-}$, where $%
\bar{u}$ is a minimizer of 
\begin{equation*}
E_1(u):=\Vert f-u\Vert_{B}^{q}+\lambda\bigg(\frac{C_{p,N}}{p}\bigg)^{1/p}F(u) 
\end{equation*}
with 
\begin{equation*}
F(u)=\left\{ 
\begin{array}{cl}
\Vert\nabla u\Vert_{L^{p}} & \qquad  \text{if}\qquad p\in(1,\infty), \\ 
\Vert u\Vert_{\dot{BV}} & \qquad  \text{if}\qquad p=1.%
\end{array}
\right. 
\end{equation*}
Here, the constant $C_{p,N}$ is given by \eqref{PF10.1}. 
\end{corollary}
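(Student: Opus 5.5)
We deduce Corollary \ref{CorMinSob} from Theorem \ref{ThmGenMinimization} with $X=L^p(Q)$, $k=1$. We do not re-run the variational argument; we only identify the data.

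\emph{Hypotheses.} $L^p(Q)$ is a rearrangement invariant space on the cube $Q$, and for $p\in[1,\infty)$ its norm is absolutely continuous. The inclusion $B\hookrightarrow L^p$ is assumed, and $B$ is uniformly convex; for $B=L^1$ we instead use Remark \ref{Remark15}. Using \eqref{CoincGFS} (adapted to $Q$ as in the footnote to Corollary \ref{CorCmp1}), we have $\|u\|_{\dot{B}^{s}_{p}L^p}=\|u\|_{\dot{B}^s_{p,p}}=\|u\|_{\dot W^{s,p}}$. Hence $E_s$ coincides with the functional of Theorem \ref{ThmGenMinimization}. That theorem gives, up to a subfamily, $\bar u_s\to\bar u$ in $B$, where $\bar u$ minimizes $\|f-u\|_B^q+\lambda F_{\rm gen}(u)$ and
\[
F_{\rm gen}(u)=\Big(\frac1p\int_{\mathbb S^{N-1}}\|\omega\cdot\nabla u\|_{L^p}^p\,d\sigma^{N-1}(\omega)\Big)^{1/p}.
\]
For $k=1$ the multinomial sum is $\sum_j\omega_j\partial_j u=\omega\cdot\nabla u$.

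\emph{Identifying the constant for $p>1$.} By Fubini,
\[
\int_{\mathbb S^{N-1}}\|\omega\cdot\nabla u\|_{L^p}^p\,d\sigma=\int_Q\int_{\mathbb S^{N-1}}|\omega\cdot\nabla u(x)|^p\,d\sigma(\omega)\,dx .
\]
By rotation invariance of $\sigma^{N-1}$, the inner integral equals $|\nabla u(x)|^p\int_{\mathbb S^{N-1}}|e\cdot\omega|^p\,d\sigma=C_{p,N}|\nabla u(x)|^p$ whenever $\nabla u(x)\neq0$, and it vanishes otherwise. Therefore $F_{\rm gen}(u)=(C_{p,N}/p)^{1/p}\|\nabla u\|_{L^p}$, which is exactly the limit functional in the statement. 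Off $\dot W^{1,p}$ both functionals are $+\infty$, so the two limit functionals agree on all of $L^p$.

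\emph{The case $p=1$.} This is the main obstacle: Theorem \ref{ThmGenMinimization} is phrased with $\dot W^{k}X$-type derivatives, whereas the correct limit is the total variation. I would go back to its proof, namely Theorem \ref{ThmVarIntro}, whose limit functional is the $\Gamma$-limit of the normalized interpolation norms. Following Section \ref{SectionExact}, that limit is the $\Gamma_{L^1}$-limit of $(1-s)\|u\|_{\dot W^{s,1}}$.

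First, on $W^{1,1}$ this $\Gamma$-limit equals $(C_{1,N})\int|\nabla u|$ by the computation above. To reach all of $BV$, take $u\in BV(Q)$ and apply the mollification argument of Section \ref{SectionExact}: $u_\delta=u*\eta_\delta$ satisfies $\|u_\delta\|_{\dot W^{s,1}}\le\|u\|_{\dot W^{s,1}}$ by \eqref{1.7}. Since $u_\delta\to u$ in $L^1$ and $\|\nabla u_\delta\|_{L^1}\to|Du|(Q)$ (strict convergence), we obtain the liminf inequality $\ge C_{1,N}|Du|(Q)$. The reverse inequality follows from a diagonal recovery sequence built from the $u_\delta$, combined with lower semicontinuity of $\Gamma$-limsup.

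The same anisotropic identity holds for measures: $\int_{\mathbb S^{N-1}}|\omega\cdot Du|(Q)\,d\sigma=C_{1,N}|Du|(Q)$, which follows from the polar decomposition $Du=\nu|Du|$. This gives the limit functional $\lambda C_{1,N}\|u\|_{\dot{BV}}$. Compactness and convergence of minimizers then proceed exactly as in Steps 2--4, with $G_1(L^1,\dot W^{1,1})=\dot{BV}$ from \eqref{PJ1.1} and \eqref{29.9.3}.
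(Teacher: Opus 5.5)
For $p\in(1,\infty)$ your argument is the paper's. The corollary is Theorem~\ref{ThmGenMinimization} with $X=L^p$, $k=1$, and your Fubini/rotation-invariance computation is exactly the passage from \eqref{Pdjjsjas3} to \eqref{Pdjjsjas4}. For $p=1$ the paper only remarks, after \eqref{Pdjjsjas4}, that the method extends to $\dot{BV}$. Your mollification argument for the lower bound is a reasonable way to make this precise. Run it with lower semicontinuity of the total variation rather than strict convergence, so that it also yields $+\infty$ for $u\in L^1\setminus\dot{BV}$.

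The gap is in the upper bound for $p=1$. Your diagonal sequence of mollifications is a recovery sequence for the regularizer $(1-s)\|u\|_{\dot W^{s,1}}$ alone, and it converges to $u$ only in $L^1$; you then invoke Steps 2--4. But Step 3 rests on Step 1, the $\Gamma_{A_0}$-convergence of the full functionals $E_s$, and $\Gamma$-limits are not additive. The fidelity term $\|f-u\|_B^q$ is only lower semicontinuous under $L^1$-convergence (Fatou property), so your sequence recovers $E_1(u)$ only if, in addition, $\|f-u_\delta\|_B\to\|f-u\|_B$. That holds for $B=L^r$, but nothing in the hypotheses on a general uniformly convex Banach function space $B$ guarantees it.

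The paper's Step 1 avoids this by using the constant sequence together with the pointwise limit \eqref{mmvar1}. What you are missing is therefore the pointwise bound
\[
\limsup_{s\to1^-}(1-s)\|u\|_{\dot W^{s,1}}\le C_{1,N}\|u\|_{\dot{BV}},\qquad u\in\dot{BV},
\]
which is the upper half of D\'avila's $BV$ version of \eqref{14.9.4}. On $\mathbb R^N$ or on the torus it is elementary. One has $\|\Delta_h u\|_{L^1}\le |h|\,\big|\tfrac{h}{|h|}\cdot Du\big|(\mathbb R^N)$, and integrating over $|h|<1$ in polar coordinates gives exactly $\int_{\mathbb S^{N-1}}|\omega\cdot Du|\,d\sigma^{N-1}(\omega)=C_{1,N}\|u\|_{\dot{BV}}$. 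The region $|h|\ge1$ contributes only $O(1-s)$. With this bound, Step 1 holds for the exact functionals and the rest of your argument goes through.

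Also, $u*\eta_\delta$ and \eqref{1.7} require $\mathbb R^N$ or periodic functions. On a non-periodic cube you must extend, or work on $\mathbb T^N$ as in Corollary~\ref{CoroIP1}; the paper also leaves this point informal.
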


\begin{proof}[Proof of Theorem \protect\ref{ThmGenMinimization}]
One may be tempted to apply directly Theorem \ref{ThmVarIntro} (taking into account
the switch in the roles of the parameters $s$ and $\theta$) to the family of
functionals 
\begin{equation*}
\widetilde{E}_{s}(u):=\Vert f-u\Vert_{B}^{q}+\lambda \Vert u\Vert_{(X,%
\dot{W}^{k}X)_{s,p}^{\blacktriangleleft}},\qquad s\in(0,1), 
\end{equation*}
and 
\begin{equation*}
\widetilde{E}_1(u):=\Vert f-u\Vert_{B}^{q}+\lambda\Vert u\Vert_{G_{1}(X,\dot {W}%
^{k}X)}. 
\end{equation*}
In this regard, recall that $X \cap \dot{W}^k X = W^k X \hookrightarrow X$ compactly (cf. the proof of Corollary \ref{CorCmp1}). Unfortunately, we only have at our disposal the characterizations %
\eqref{PD28.3} and \eqref{PD28.5i}, namely, 
\begin{equation*}
\Vert u\Vert_{(X,\dot{W}^{k}X)_{s,p}}\approx\Vert u\Vert_{\dot{B}%
_{p}^{sk}X}\qquad\text{and}\qquad\Vert u\Vert_{G_{1}(X,\dot{W}%
^{k}X)}\approx\Vert u\Vert_{\dot{W}^{k}X}
\end{equation*}
and hence 
\begin{equation*}
E_{s}(u)\approx\widetilde{E}_{s}(u)\qquad\text{and}\qquad E_1(u)\approx 
\widetilde{E}_1(u). 
\end{equation*}
In principle, the asymptotic behavior of near-minimizers of $\widetilde{E}%
_{s}$ may be quite different from the corresponding behavior of near-minimizers of $E_{s}$. But it is possible to circumvent the obstruction
following, line by line, the proof of Theorem \ref{ThmVarIntro} but now applying the
exact formulae derived in Section \ref{SectionExact} (more precisely, %
\eqref{Pdjjsjas3}) and Corollary \ref{CorCmp1}. 
\end{proof}


\subsection{A variant of Theorem \ref{ThmVarIntro} involving $K$-functionals%
}

Assume that $B,A_{0}$ and $A_{1}$ satisfy the assumptions of Theorem \ref{ThmVarIntro}. Given $f\in B, \lambda, q>0$, and $\varepsilon
\in(0,1)$, consider the functional 
\begin{equation*}
\mathfrak{F}_{\varepsilon}(u):=\Vert f-u\Vert_{B}^{q}+\lambda\,\frac {%
K(\varepsilon,u;A_{0},A_{1})}{\varepsilon}. 
\end{equation*}

\begin{theorem}
\label{Thm7new} Suppose  that $\bar{u}_{\varepsilon}$ is a near-minimizer of $%
\mathfrak{F}_{\varepsilon}$. Then (up to a subfamily) $\bar{u}_{\varepsilon}\rightarrow\bar{u}$
in $B$ as $\varepsilon\rightarrow0^{+}$, where $\bar{u}$ is a minimizer of
the functional $E_1$ defined by \eqref{23.9.4} and $E_1(\bar{u}%
)=\lim_{\varepsilon \rightarrow0^{+}}\mathfrak{F}_{\varepsilon}(\bar{u}%
_{\varepsilon}).$ 
\end{theorem}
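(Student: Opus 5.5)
The plan is to follow the four steps of the proof of Theorem \ref{ThmVarIntro}, with the interpolation norms $\|\cdot\|_{(A_0,A_1)_{\theta,p}^{\blacktriangleleft}}$ replaced by the rescaled $K$-functionals $K(\varepsilon,\cdot;A_0,A_1)/\varepsilon$. Throughout, fix an arbitrary sequence $\varepsilon_n\to0^+$.

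\emph{Step 1: $\Gamma$-convergence.} We show $\Gamma_{A_0}-\lim_{n\to\infty}\mathfrak{F}_{\varepsilon_n}=E_1$. For the $\limsup$ inequality we use the constant recovery sequence $u_n=u$. By \eqref{3.9.1}, monotonicity gives
\[
\frac{K(\varepsilon_n,u;A_0,A_1)}{\varepsilon_n}\uparrow\|u\|_{G_1(A_0,A_1)},
\]
so $\mathfrak{F}_{\varepsilon_n}(u)\to E_1(u)$. For the $\liminf$ inequality, take $u_n\to u$ in $A_0$ with bounded energies. The $B$-term is handled exactly as in \eqref{mmvar8}: pass to an a.e.\ convergent subsequence and use the Fatou property of $B$. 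The penalty term is controlled by \eqref{PJ22.2} of Theorem \ref{ThmGKFunct}, which gives
\[
\|u\|_{G_1}\le\liminf_n K(\varepsilon_n,u_n;A_0,A_1)/\varepsilon_n.
\]
Existence of a minimizer of $E_1$ is already proved in Step 2 of Theorem \ref{ThmVarIntro}, so it can be quoted.

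\emph{Step 2: equi-coercivity (main obstacle).} Comparing with $u=0$ gives $\mathfrak{F}_\varepsilon(\bar u_\varepsilon)\le\|f\|_B^q+o(1)$. Hence $\|\bar u_{\varepsilon}\|_B$ and $K(\varepsilon,\bar u_\varepsilon;A_0,A_1)/\varepsilon$ are bounded, and $\|\bar u_\varepsilon\|_{A_0}\lesssim\|\bar u_\varepsilon\|_B$ since $B\hookrightarrow A_0$. We then invoke Theorem \ref{Thm5**} with $(A_0,A_0\cap A_1)$, $(B_0,B_1)=(A_0,A_0)$ and $T=\mathrm{id}$; the compactness of $T:A_0\cap A_1\to A_0$ is exactly \eqref{23.9.1}. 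Its hypothesis \eqref{PF5.1} follows from \eqref{PF13.1}, which for $\varepsilon<1$ gives
\[
\frac{K(\varepsilon,\bar u_\varepsilon;A_0,A_0\cap A_1)}{\varepsilon}\approx\|\bar u_\varepsilon\|_{A_0}+\frac{K(\varepsilon,\bar u_\varepsilon;A_0,A_1)}{\varepsilon}\lesssim1.
\]
Theorem \ref{Thm5**} then yields a subsequence with $\bar u_{\varepsilon_n}\to\bar u$ in $A_0$. This is the step that needs care: \eqref{PF13.1} must be available with constants independent of $\varepsilon$, and Theorem \ref{Thm5**} is stated for normed pairs, which is fine here.

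\emph{Step 3: identify the limit.} By Step 1 and the fundamental theorem of $\Gamma$-convergence, $\bar u$ minimizes $E_1$ and $\mathfrak{F}_{\varepsilon_n}(\bar u_{\varepsilon_n})\to E_1(\bar u)$.

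\emph{Step 4: upgrade to convergence in $B$.} Repeat Step 4 of Theorem \ref{ThmVarIntro} verbatim. The $\Gamma$-$\liminf$ for the penalty term, together with the energy convergence, forces $\limsup_n\|f-\bar u_{\varepsilon_n}\|_B\le\|f-\bar u\|_B$, while Fatou gives the reverse inequality, so the norms converge. Reflexivity of $B$ gives weak convergence $f-\bar u_{\varepsilon_n}\rightharpoonup f-\bar u$, where the weak limit is identified through $A_0$. The Radon--Riesz property of the uniformly convex space $B$ then yields $\bar u_{\varepsilon_n}\to\bar u$ in $B$.
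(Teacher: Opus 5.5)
Your proposal is correct and follows the route the paper indicates. The paper leaves this proof to the reader as a modification of the proof of Theorem~\ref{ThmVarIntro}, using Theorem~\ref{ThmGKFunct} for the $\Gamma$-convergence and Theorem~\ref{Thm5**} with $T=\mathrm{id}$, $B_0=B_1=A_0$ and $A_1$ replaced by $A_0\cap A_1$ (via \eqref{PF13.1}, whose constants are indeed absolute for $\varepsilon<1$) for the compactness, which is exactly what you carry out. One minor point: the arrow $\uparrow$ in your Step~1 presumes $\varepsilon_n$ is decreasing, but convergence to $\|u\|_{G_1(A_0,A_1)}$ along any $\varepsilon_n\to0^+$ follows from the monotonicity of $t\mapsto K(t,u;A_0,A_1)/t$.
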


The proof of Theorem \ref{Thm7new} consists of a simple modification of the
one given for Theorem \ref{ThmVarIntro}  but now relying on Theorems \ref{ThmGKFunct}
and \ref{Thm5**} (with $T= \text{id}$, $B_0= B_1=A_{0}$, and $A_1$ is replaced by $A_0 \cap A_1$). We
leave further details to the interested reader.

\section{Applications to image processing}

\label{SectionIP}

Unless otherwise stated, in this section we shall work on function spaces  defined on $\mathbb{T}^{2}\equiv\lbrack0,1]^{2}$. The
classical restoration problem in image processing can be stated as follows:
Given an observed image $f$ on $\mathbb{T}^{2}$ and a fidelity parameter $%
\lambda>0$, we seek solutions of 
\begin{equation}
\bar{u}=\text{argmin}_{u}\{\Vert f-u\Vert_{B}^{q}+\lambda F(u)\}, 
\label{IP1}
\end{equation}
where $q>0$ and the functional $F$ is adequately chosen to capture the main
features of images (edges, texture, ...), while the error norm on $B$
measures spurious features of images (noise, blur, ...). Nowadays, it is
well understood that the total variation $F(u)=\Vert u\Vert_{\dot{BV}%
}=\Vert\nabla u\Vert_{\mathcal{M}}$ is a natural candidate for the
regularization term in \eqref{IP1} allowing for discontinuities across
edges. In particular, the  Rudin--Osher--Fatemi model \cite{ROF}
corresponds to the choices $B=L^{2},F(u)=\Vert u\Vert_{\dot{BV}}$ and $q=2$: 
\begin{equation}
\bar{u}=\text{argmin}_{u}\{\Vert f-u\Vert_{L^{2}}^{2}+\lambda\Vert u\Vert_{%
\dot{BV}}\}.   \tag{ROF}  \label{IP2}
\end{equation}
Recall that \eqref{IP2} has a unique solution $\bar{u}$ for $f \in L^2$. However, this model lacks contrast invariance
in the sense that if $\bar{u}$ is the solution of \eqref{IP2}, it may happen
that $c\bar{u}$ is not the solution of \eqref{IP2} associated to the input $%
cf$. This obstruction can be overcome with the well-known Chan--Esedoglu
model \cite{CE}, which corresponds to $B=L^{1},F(u)=\Vert u\Vert_{\dot{BV}}$
and $q=1$: 
\begin{equation}
\bar{u}=\text{argmin}_{u}\{\Vert f-u\Vert_{L^{1}}+\lambda\Vert u\Vert _{\dot{%
BV}}\}.   \tag{CE}  \label{IP3}
\end{equation}
The lack of strong convexity inherited to $L^{1}$
does not ensure uniqueness of solution to \eqref{IP3}.

\subsection{From the non-local to the local setting: asymptotic analysis\label{sec:asymp}}

Both \eqref{IP2} and \eqref{IP3} are examples of local denoising models
whose corresponding regularizing term (namely, $F(u)=\Vert u\Vert_{\dot{BV}}$) is
defined in terms of a local operator. Despite their success, these models
may consider fine structures, textures and details of observed images with
low regularity as noise contribution. This issue served as a motivation  to
introduce denoising models based on non-local operators \cite{BCM, GO, GO2}. In particular, the
non-local counterparts of \eqref{IP2} and \eqref{IP3} given by 
\begin{equation*}
\bar{u}_s=\text{argmin}_{u}\{\Vert f-u\Vert_{L^{2}}^{2}+\lambda\Vert u\Vert_{%
\dot{W}^{s,1}}\}
\end{equation*}
and 
\begin{equation*}
\bar{u}_s=\text{argmin}_{u}\{\Vert f-u\Vert_{L^{1}}+\lambda\Vert u\Vert_{\dot {%
W}^{s,1}}\}
\end{equation*}
where $s\in(0,1),$ have been shown to satisfy remarkable properties, see
e.g. \cite{AK}, \cite{Be} and \cite{NO}.

As a special case of Corollary \ref{CorMinSob} (or more precisely, its
periodic counterpart) with $N=2,p=1$ and $B=L^{r}$, we obtain the following
result.

\begin{corollary}
\label{CoroIP1} Let $f\in L^{r},r\in\lbrack1,\infty),$ and $q, \lambda>0$. For each $%
s\in(0,1)$, let $\bar{u}_{s}$ be a near-minimizer of 
\begin{equation*}
E_{s}(u)=\Vert f-u\Vert_{L^{r}}^{q}+\frac{\lambda}{4}(1-s)\Vert u\Vert _{%
\dot{W}^{s,1}}.   
\end{equation*}
Then there exists $\bar{u}\in L^{r}$ such that $\bar{u}_{s}\rightarrow\bar{u}
$ (up to a subfamily) in $L^{r}$ as $s\rightarrow1^{-}$, where $\bar{u}$ is a
minimizer of 
\begin{equation}
E_1(u)=\Vert f-u\Vert_{L^{r}}^{q}+\lambda\Vert u\Vert_{\dot{BV}}. 
\label{PF15.1}
\end{equation}
In particular, \eqref{PF15.1} with $r=q=2$ coincides with \eqref{IP2} and %
\eqref{PF15.1} with $r=q=1$ coincides with \eqref{IP3}.
\end{corollary}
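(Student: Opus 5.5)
The plan is to deduce Corollary \ref{CoroIP1} from Corollary \ref{CorMinSob}, in its periodic version on $\mathbb{T}^2$. We take $N=2$, $p=1$ and $B=L^r$. The first step is to check the hypotheses. For $r\in(1,\infty)$ the space $L^r(\mathbb{T}^2)$ is a uniformly convex Banach function space. Since $\mathbb{T}^2$ has finite measure, $L^r\hookrightarrow L^1$, so the condition $B\hookrightarrow L^p$ with $p=1$ holds.

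For $r=1$ uniform convexity fails. There we instead follow Remark \ref{Remark15}. Steps 1--3 of the proof of Theorem \ref{ThmVarIntro} already give $\bar u_s\to\bar u$ in $A_0=L^1$, which is exactly convergence in $B=L^1$. So for $r=1$ no Radon--Riesz argument is needed.

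The second step is to identify the constants. Corollary \ref{CorMinSob}, applied with penalty parameter $\lambda'$, involves the functional
\[
\Vert f-u\Vert_{L^r}^q+\lambda'(1-s)\Vert u\Vert_{\dot W^{s,1}}
\]
and produces the limit functional
\[
\Vert f-u\Vert_{L^r}^q+\lambda' C_{1,2}\Vert u\Vert_{\dot{BV}}.
\]
By \eqref{PF10.1},
\[
C_{1,2}=\int_{\mathbb{S}^1}|e\cdot\omega|\,d\sigma^1(\omega)=\int_0^{2\pi}|\cos\phi|\,d\phi=4.
\]
Hence choosing $\lambda'=\lambda/4$ yields exactly $E_s$ and the functional $E_1$ in \eqref{PF15.1}. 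The final sentences of the corollary then follow by direct inspection: $r=q=2$ gives the functional in \eqref{IP2}, and $r=q=1$ gives the functional in \eqref{IP3}.

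The main obstacle is justifying the periodic version of Corollary \ref{CorMinSob}, that is, of Theorem \ref{ThmGenMinimization} with $X=L^1(\mathbb{T}^2)$ and $k=1$, in the case $p=1$. This requires two inputs. The first is the exact $\Gamma$-formula \eqref{Pdjjsjas3}, with $\dot W^{1,1}$ replaced by $\dot{BV}$, on the torus. We obtain it by rerunning the mollification argument of Section \ref{SectionExact}: convolution on $\mathbb{T}^2$ still satisfies \eqref{PJ23.1}, and the limit \eqref{PJ23.8} for $p=1$ extends to $\dot{BV}$ via lower semicontinuity of $L(f_\delta)$ as $\delta\to0$. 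The second is the compactness statement of Corollary \ref{CorCmp1}. On the torus this reduces to the compact embedding $BV(\mathbb{T}^2)\hookrightarrow L^1(\mathbb{T}^2)$, which replaces $W^{1,1}$ by means of \eqref{29.9.3}.

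With these two ingredients in hand, Steps 1--4 of the proof of Theorem \ref{ThmVarIntro} go through verbatim, with the paper's remark on the $p=1$ case in Section \ref{SectionExact} providing the needed extension to $\dot{BV}$.
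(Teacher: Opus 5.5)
Your proposal is correct and follows the paper's route: the paper obtains this corollary as the periodic specialization of Corollary \ref{CorMinSob} with $N=2$, $p=1$, $B=L^r$. Your computation $C_{1,2}=\int_0^{2\pi}|\cos\phi|\,d\phi=4$, which accounts for the factor $\lambda/4$, and your appeal to Remark \ref{Remark15} for the non-uniformly convex case $r=1$ are the details the paper leaves implicit.
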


\begin{remark}
Given $Q_{0}$ a cube in $\mathbb{R}^{N}$, let us denote by $\kappa
_{\varepsilon}$ the counterpart of the functional introduced in %
\eqref{BMO-type} adequately adapted to $Q_{0}$ (in the sense that elements
of $\mathcal{F}_{\varepsilon}$ are contained in $Q_{0}$). Motivated by the $%
\Gamma $-convergence formula \eqref{ARBDN}, the variational analysis of the
following functionals has been carried out in \cite{GuS}: 
\begin{equation*}
\mathfrak{F}_{\varepsilon}(u)=\Vert f-u\Vert_{L^{q}(Q_{0})}^{q}+\kappa
_{\varepsilon}(u), 
\end{equation*}
where $q\in\lbrack1,\infty)$. If $q>1$ then \cite[Theorem 1.3]{GuS} claims
convergence (up to subfamily) in $L^{q}$ of the unique minimizers of $%
\mathfrak{F}_{\varepsilon}$ to the unique minimizer of 
\begin{equation*}
\Vert f-u\Vert_{L^{q}(Q_{0})}^{q}+\frac{1}{4}\Vert u\Vert_{\dot{BV}(Q_{0})}. 
\end{equation*}
If $q=1$ there is an analog in terms of near-minimizers, cf. \cite[Theorem
4.1]{GuS}. In particular, the minimization problems \eqref{IP2} and %
\eqref{IP3} can be understood asymptotically
from $\mathfrak{F}_{\varepsilon}$. Here we observe that this result is
intimately connected with a special case of the general phenomenon based on $%
K$-functionals described in Theorem \ref{Thm7new}. Indeed, let $%
B=L^{q}(Q_{0}),A_{0}=L^{1}(Q_{0})$ and $A_{1}=\dot{W}^{1,1}(Q_{0})$ (and so $A_0 \cap A_1 = W^{1, 1}(Q_0) \hookrightarrow A_0 = L^1(Q_0)$ compactly) and
recall that the functional $\kappa_{\varepsilon}(f)$ is comparable to $%
\frac {K(\varepsilon,f;A_{0},A_{1})}{\varepsilon}$ (cf. \eqref{K-Funct-DS})
and $G_{1}(A_{0},A_{1})=\dot{BV}(Q_{0})$ (cf. \eqref{PJ1.1}).
\end{remark}

\subsection{From the discrete to the continuous setting: rates of convergence via $K$-functionals}\label{Section9.2}

We start by describing the functional framework. Let $e_{j},j=1,2,$ be the two coordinate
vectors in $\mathbb{R}^{2}$. The total variation can
be defined as  
\begin{equation}
\text{TV}(f):=\sup_{h>0}h^{-1}\sum_{j=1}^{2}\Vert\Delta_{he_{j}}f%
\Vert_{L^{1}}=\lim_{h\rightarrow0^{+}}h^{-1}\sum_{j=1}^{2}\Vert\Delta_{he_{j}}f%
\Vert_{L^{1}}.   \label{PF18.2}
\end{equation}
Recall the well-known fact that (cf. \cite[(2.8)]{CDPX}) 
\begin{equation*}
\text{TV}(f)\approx\Vert f\Vert_{\dot{BV}}=\sup\bigg\{\int_{\mathbb{T}^{2}}f%
\text{div}\phi:\phi\in C^{1}(\mathbb{T}^{2};\mathbb{R}^{2}),\quad
\Vert\,|\phi|\,\Vert_{L^{\infty}}\leq1\bigg\}. 
\end{equation*}
Here, $|x|=\sqrt{x_{1}^{2}+x_{2}^{2}}$ for $x=(x_{1},x_{2})\in\mathbb{R}^{2}$%
. Following \cite{BNS, BCCLL}, we are interested in the $\ell_{1}$-anisotropic
version of \eqref{IP2} associated to \eqref{PF18.2}, i.e., 
\begin{equation}
E(u):=\frac{1}{2}\Vert f-u\Vert_{L^{2}}^{2}+\text{TV}(u).   \label{PF18.3}
\end{equation}
For each $n\in\mathbb{N}$, let
\begin{equation}
S_{i,j}^{h}=[ih,(i+1)h)\times\lbrack jh,(j+1)h),\qquad0\leq i,j\leq n-1, 
\label{PF18.7}
\end{equation}
with side length $h=1/n$ and let $X_n = \operatorname{span} \mathbf{1}_{S^h_{i, j}}$. Consider the following discretized total variation%
\footnote{%
As usual, in the periodic setting, $S_{n,j}^{h}$ (respectively, $S_{i,n}^{h}$%
) should be identified with $S_{0,j}^{h}$ (respectively, $S_{i,0}^{h}$).} 
\begin{equation}
\text{TV}_{n}(f):=h%
\sum_{i,j=0}^{n-1}(|f_{S_{i,j}^{h}}-f_{S_{i+1,j}^{h}}|+|f_{S_{i,j}^{h}}-f_{S_{i,j+1}^{h}}|), 
\label{PF18.6}
\end{equation}
where $f_{S_{i,j}^{h}}:=\frac{1}{|S_{i,j}^{h}|}\int_{S_{i,j}^{h}}f$ is the mean value of $f$ on $S^h_{i, j}$. Let $P_n$ denote the projection operator onto $X_n$  given by 
\begin{equation}
P_{n}f(x):=\sum_{0\leq i,j\leq n-1} f_{S_{i, j}^h} \mathbf{1}_{S_{i,j}^{h}}(x).  \label{PF19.2}
\end{equation}
Then  (cf. \cite[(2.12)%
]{CDPX})
\begin{equation}\label{24.9.1}
\text{TV}_{n}(f) = \text{TV}(P_{n}f).  
\end{equation}
Accordingly, given $f \in L^2$,  the corresponding discretized version of \eqref{PF18.3} is
defined as 
\begin{equation*}
E_{n}(u):=\frac{1}{2}\Vert f-u\Vert_{L^{2}}^{2}+\text{TV}_{n}(u). 
\end{equation*}
Let us denote by $\bar{u}$ and $\bar{u}_{n}$ the unique minimizers\footnote{Note that $\bar{u}_n =  \argmin_{u \in X_n} E(u)$.} of $%
E$ and $E_{n}$, more precisely, 
$$
	\bar{u} := \argmin_{u \in \dot{BV}} E(u), \qquad \bar{u}_n := \argmin_{u \in X_n} E_n(u),
$$
with corresponding values
$$
	\bar{E} := E(\bar{u}), \qquad \bar{E}_n := E_n (\bar{u}_n). 
$$



\subsubsection{Proof of Theorem \ref{Thm1Intro}}
It follows from (cf. \eqref{24.9.1} and \cite[(2.11)]{CDPX}) 
\begin{equation}\label{24.9.8}
\text{TV}_{n}(u) = \text{TV}(P_n u) \leq\text{TV}(u) 
\end{equation}
and the fact $P_n^2 = P_n$ that 
\begin{align}  
\bar{E} = \frac{1}{2}\|f-\bar{u}\|_{L^{2}}^{2} + \text{TV}(\bar{u}) &\geq%
\frac{1}{2}\|f-\bar{u}\|_{L^{2}}^{2} + \text{TV}(P_n\bar{u}) \nonumber \\
& = E (P_n \bar{u}) + \frac{1}{2} \Big(\|f-\bar{u}\|^2_{L^2} - \|f-P_n \bar{u}\|^2_{L^2} \Big). \label{24.9.2}
\end{align}
Using that $\bar{u}_n$ is a minimizer of $E$ on $X_n$ and $P_n \bar{u} \in X_n$, we derive (by strong convexity properties of $E$) 
$$
	E(P_n \bar{u}) \geq E(\bar{u}_n) + \frac{1}{2} \|P_n \bar{u}-\bar{u}_n\|^2_{L^2}
$$
or equivalently (noting that $E_n = E$ on $X_n$)
$$
	E(P_n \bar{u}) \geq E_n(\bar{u}_n) + \frac{1}{2} \|P_n \bar{u}-\bar{u}_n\|^2_{L^2} = \bar{E}_n + \frac{1}{2} \|P_n \bar{u}-\bar{u}_n\|^2_{L^2}. 
$$
Inserting this estimate into \eqref{24.9.2}, we arrive at
\begin{align*}
\bar{E}  \geq \bar{E}_n + \frac{1}{2} \Big(\|f-\bar{u}\|^2_{L^2} - \|f-P_n \bar{u}\|^2_{L^2} + \|P_n \bar{u}-\bar{u}_n\|_{L^2}^2 \Big)
\end{align*}
and then, by the triangle inequality, 
\begin{align}
	\frac{1}{4}\|\bar{u}-\bar{u}_n\|_{L^2}^2 &\leq \frac{1}{2} \Big(\|P_n \bar{u} - \bar{u}_n\|_{L^2}^2 + \|P_n \bar{u} - \bar{u}\|_{L^2}^2 \Big) \nonumber \\
	& \leq  \bar{E}-\bar{E}_n  + \frac{1}{2} \|P_n \bar{u} - \bar{u}\|_{L^2}^2  - \frac{1}{2} \|f-\bar{u}\|^2_{L^2} + \frac{1}{2} \|f-P_n \bar{u}\|_{L^2}^2. \label{24.9.4}
\end{align}

Elementary computations enable to write 
\begin{equation}\label{24.9.3}
\Vert f-P_{n}\bar{u}\Vert_{L^{2}}^{2}=\Vert
f-P_{n}f\Vert_{L^{2}}^{2}+\Vert P_{n}f-P_{n}\bar{u}\Vert_{L^{2}}^{2}+2%
\int_{\mathbb{T}^{2}}(f-P_{n}f)(P_{n}f-P_{n}\bar{u}).  
\end{equation}
Observe that $f-P_{n}f$ is orthogonal to $X_n$ and, in particular,
this is the case for $P_{n}f-P_{n}\bar{u} \in X_n$ so that  
\begin{equation*}
\int_{\mathbb{T}^{2}}(f-P_{n}f)(P_{n}f-P_{n}\bar{u})=0. 
\end{equation*}
Hence \eqref{24.9.3} can be updated to 
\begin{equation*}
\Vert f-P_{n}\bar{u}\Vert_{L^{2}}^{2}=\Vert
f-P_{n}f\Vert_{L^{2}}^{2}+\Vert P_{n}f-P_{n}\bar{u}\Vert_{L^{2}}^{2}
\end{equation*}
and applying the basic fact that $P_n$ is a contraction on $L^2$ we obtain
\begin{equation}\label{24.9.5}
	\Vert f-P_{n}\bar{u}\Vert_{L^{2}}^{2} \leq \Vert
f-P_{n}f\Vert_{L^{2}}^{2}+\Vert f-\bar{u} \Vert_{L^{2}}^{2}.
\end{equation}
Now plugging \eqref{24.9.5} into \eqref{24.9.4}, one has that 
\begin{equation}\label{24.9.6}
	\frac{1}{4}\|\bar{u}-\bar{u}_n\|_{L^2}^2 \leq \bar{E}-\bar{E}_n  + \frac{1}{2} \Big( \|P_n \bar{u} - \bar{u}\|_{L^2}^2 + \|f-P_n f\|^2_{L^2} \Big).   
\end{equation}


Next we estimate the term  $\|P_n \bar{u}-\bar{u}\|_{L^2}$. To proceed, we first observe that from  \eqref{24.9.8} and the fact that $P_n^2 = P_n$,  we obtain 
\begin{align*}
	E_{P_n f} (P_n u) &= \frac{1}{2} \|P_n f - P_n u\|_{L^2}^2 + \text{TV} (P_n u)  \\
	& =  \frac{1}{2} \|P_n(P_n f - u)\|_{L^2}^2 + \text{TV} (P_n u)  \\
	&  \leq \frac{1}{2} \|P_n f-u\|_{L^2}^2 + \text{TV} (u) = E_{P_n f}(u)
\end{align*}
for every $u \in \dot{BV}$. Consider the (nonlinear) operator $T$ defined on $L^2$ as 
$$
	Tf := \bar{u} = \argmin_{u \in \dot{BV}} E_f(u),
$$
where $E_f$ is given by \eqref{PF18.3} with given data $f$. As a consequence, if $u = T P_n f$ 
 then 
$$
	E_{P_n f}(T P_n f) \leq E_{P_n f} (P_n (T P_n f) ) \leq E_{P_n f}(T P_n f),
$$
i.e., $E_{P_n f} (T P_n f) = E_{P_n f} (P_n(T P_n f))$ and, by uniqueness of minimizer, we derive that $T P_n f = P_n(T P_n f) \in X_n$. Accordingly, since $P_n$ is an orthogonal projection onto $X_n$, we get
\begin{equation}\label{24.9.9}
	\|\bar{u}-P_n \bar{u}\|_{L^2} = \inf_{v \in X_n} \|\bar{u} -v\|_{L^2} \leq \|\bar{u}-T P_n f\|_{L^2}  = \|T f - T P_n f\|_{L^2}  \leq \|f-P_n f\|_{L^2},
\end{equation}
where the last step is well known, cf.  \cite[Proposition 12.27]{BC}.

Applying \eqref{24.9.9} in \eqref{24.9.6}, we have
\begin{equation}\label{24.9.10}
	\frac{1}{4}\|\bar{u}-\bar{u}_n\|_{L^2}^2 \leq \bar{E}-\bar{E}_n  +  \|f-P_n f\|^2_{L^2}.  
\end{equation}

We claim that for $p\in(1,2]$ it holds 
\begin{equation}
\Vert f-P_{n}f\Vert_{L^{2}}\lesssim K(n^{-2+\frac{2}{p}},f;L^{2},\dot{W}%
^{1,p}).   \label{19.5Nov}
\end{equation}
Indeed, given any $g\in\dot{W}^{1,p}$, it follows from the contraction properties of $P_n$ on $L^2$  that 
\begin{align}
\Vert f-P_{n}f\Vert_{L^{2}} & \leq\Vert f-g\Vert_{L^{2}}+\Vert
P_{n}f-P_{n}g\Vert_{L^{2}}+\Vert g-P_{n}g\Vert_{L^{2}}  \notag \\
& \leq 2\Vert f-g\Vert_{L^{2}}+\Vert g-P_{n}g\Vert_{L^{2}}. 
\label{23.1Nov}
\end{align}
Recall that the classical $2$D Poincar\'{e} inequality asserts that 
\begin{equation}
\bigg(\frac{1}{|Q|}\int_{Q}|g(x)-g_{Q}|^{2}\,dx\bigg)^{1/2}\lesssim \ell(Q)%
\bigg(\frac{1}{|Q|}\int_{Q}|\nabla g(x)|^{p}\,dx\bigg)^{1/p}   \label{PIneq}
\end{equation}
for every cube $Q$ with side length  $\ell(Q)$ and $1\leq p\leq2$. Applying \eqref{PIneq} to each cube $%
S_{i,j}^{h}$ (cf. \eqref{PF18.7}), we derive 
\begin{align*}
\Vert g-P_{n}g\Vert_{L^{2}}^{2} & =\sum_{0\leq i,j\leq n-1}\int_{S_{i,j}^{h}}%
|g(x)- g_{S_{i, j}^h} |^{2}\,dx \\
& \lesssim\sum_{0\leq i,j\leq n-1}|S_{i,j}^{h}|^{1-\frac{2}{p}}\ell
(S_{i,j}^{h})^{2}\bigg(\int_{S_{i,j}^{h}}|\nabla g(x)|^{p}\,dx\bigg)^{2/p} \\
& =n^{-4(1-\frac{1}{p})}\sum_{0\leq i,j\leq n-1}\bigg(\int_{S_{i,j}^{h}}|%
\nabla g(x)|^{p}\,dx\bigg)^{2/p} \\
& \leq n^{-4(1-\frac{1}{p})}\bigg(\sum_{0\leq i,j\leq
n-1}\int_{S_{i,j}^{h}}|\nabla g(x)|^{p}\,dx\bigg)^{2/p} \\
& =n^{-4(1-\frac{1}{p})}\Vert\nabla g\Vert_{L^{p}}^{2}
\end{align*}
where we have also used that $p\leq2$ in the penultimate step. Consequently,
we have (cf. \eqref{23.1Nov}) 
\begin{equation*}
\Vert f-P_{n}f\Vert_{L^{2}}\lesssim\Vert f-g\Vert_{L^{2}}+n^{-2(1-\frac{1}{p}%
)}\Vert\nabla g\Vert_{L^{p}}, 
\end{equation*}
therefore taking the infimum over all $g\in\dot{W}^{1,p}$, we arrive at the
desired estimate \eqref{19.5Nov}.

Putting together \eqref{24.9.10} and \eqref{19.5Nov}, we obtain  (note that $\bar{E} \leq \bar{E}_n$)
\begin{equation*}
\|\bar{u}-\bar{u}_{n}\|_{L^{2}} \lesssim K(n^{-2(1-\frac{1}{p})}, f; L^{2}, 
\dot{W}^{1, p})
\end{equation*}
and
$$
	\bar{E}_n- \bar{E} \lesssim  K(n^{-2(1-\frac{1}{p})}, f; L^{2}, 
\dot{W}^{1, p})^2. 
$$

The proof of \eqref{29.9.1} is a slight modification of the above methodology. Indeed, if $u_1 = T f_1$ and $u_2 = T f_2$ then
$$
	E_{f_1} (u_2) -E_{f_1}(u_1) \geq \frac{1}{2} \|u_1-u_2\|_{L^2}^2
$$
and a similar estimate holds when the roles of $u_1$ and $u_2$ are switched. Summing these two estimates, we arrive at  
\begin{equation}\label{29.9.2}
  \langle f_1-f_2, u_1-u_2 \rangle \geq  \|u_1-u_2\|^2_{L^2}.
\end{equation}
Next we wish to apply \eqref{29.9.2} with $u_1 = \bar{u} = T f$ and $u_2 = \bar{u}_n$. To do this, we make the basic observation from \cite[(26)]{KSe} that $\bar{u}_n = T P_n f$. Then
$$
	\langle f-P_n f, \bar{u}-\bar{u}_n \rangle \geq \|\bar{u}-\bar{u}_n\|^2_{L^2}
$$
and, using orthogonality properties, one may rewrite the last inequality as 
$$
	\langle f-P_n f, \bar{u}-P_n \bar{u} \rangle \geq \|\bar{u}-\bar{u}_n\|^2_{L^2}. 
$$   
By the Cauchy-Schwarz inequality and applying \eqref{19.5Nov} twice, we have 
\begin{equation}\label{29.9.4}
	\|\bar{u}-\bar{u}_n\|^2_{L^2} \lesssim K(n^{-2+\frac{2}{p}}, f; L^2, \dot{W}^{1, p}) K(n^{-1}, \bar{u}; L^2, \dot{W}^{1, 2}).  
\end{equation}
Furthermore, using the characterization of $K$-functionals in terms of moduli of smoothness (cf. \eqref{PD28.1}),  it is easy to see that 
$$
	K(t, \bar{u}; L^2, \dot{W}^{1, 2})^2 \lesssim \|\bar{u}\|_{L^\infty} K(t, \bar{u}; L^1, \dot{W}^{1, 1})
$$
for all $t > 0$. In fact, the last estimate admits the following self-improving  formulation  (cf. \eqref{29.9.3})
$$
	K(t, \bar{u}; L^2, \dot{W}^{1, 2})^2 \lesssim \|\bar{u}\|_{L^\infty} K(t, \bar{u}; L^1, \dot{BV}).
$$
Inserting this into \eqref{29.9.4}, we find that 
\begin{align*}
	\|\bar{u}-\bar{u}_n\|^2_{L^2} &\lesssim  K(n^{-2+\frac{2}{p}}, f; L^2, \dot{W}^{1, p}) \Big( \|\bar{u}\|_{L^\infty} K(n^{-1}, \bar{u}; L^1, \dot{BV}) \Big)^{1/2} \\
	& \lesssim n^{-1/2}K(n^{-2+\frac{2}{p}}, f; L^2, \dot{W}^{1, p})   (\|\bar{u}\|_{L^\infty} \text{TV}(\bar{u}))^{1/2}. 
\end{align*}
This finishes the proof of \eqref{29.9.1}. \qed

\subsubsection{Some consequences of Theorem \ref{Thm1Intro}}

%

\begin{theorem}
\label{Coro8.32} Let $f\in B_{2,\infty}^{\alpha}, \, \alpha \in (0, 1)$. Then 
\begin{equation}\label{27.9.3}
\Vert\bar{u}-\bar{u}_{n}\Vert_{L^{2}}\lesssim \begin{cases} \|f\|_{\dot{B}^\alpha_{2, \infty}} n^{-\alpha} & \qquad  \text{if}\qquad  \alpha \geq \frac{1}{2}, \\
	(\|\bar{u}\|_{L^\infty} \emph{TV}(\bar{u}))^{1/4}	\|f\|_{\dot{B}^\alpha_{2, \infty}}^{1/2} n^{-\frac{\alpha}{2} -\frac{1}{4}} & \qquad  \text{if} \qquad  \alpha < \frac{1}{2}.
\end{cases}
\end{equation}
Furthermore, these estimates are optimal in the following sense:
\begin{enumerate}
\item[(i)] If $\alpha \in [\frac{1}{2}, 1)$ then there exists $f \in B^{\alpha}_{2, \infty}$ such that
\begin{equation}\label{4.10.4}
	\|\bar{u}-\bar{u}_n\|_{L^2}^2 \approx n^{-\alpha}, \qquad \forall n \geq 4. 
\end{equation}
\item[(ii)] If $\alpha \in (0, \frac{1}{2})$ then  there exists $n_0 \in \N$ such that   \begin{equation}\label{1.10.5}
			 	\sup_{f \in \mathcal{K}} \|\bar{u}-\bar{u}_n\|_{L^2} \approx n^{-\frac{\alpha}{2} - \frac{1}{4}}, \qquad \forall n \geq n_0, 
			 \end{equation}
			 where
			 \begin{equation}\label{1.10.3}
			 \mathcal{K} = \{f \in L^2: \|f\|_{\dot{B}^\alpha_{2, \infty}} \leq C, \quad \|\bar{u}\|_{L^\infty} \leq  1, \quad \emph{TV}(\bar{u}) \leq 2 \}
			 \end{equation}
			 for a certain constant $C > 0$ that is independent of $n$. 
\end{enumerate}

\end{theorem}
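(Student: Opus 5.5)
The upper bounds in \eqref{27.9.3} follow from Theorem \ref{Thm1Intro} with $p=2$, so $\gamma=1$. By \eqref{2.9.2}, $K(t,f;L^2,\dot W^{1,2})\approx\omega(f,t)_2\lesssim t^\alpha\|f\|_{\dot B^\alpha_{2,\infty}}$. Inserting $t=n^{-1}$ into \eqref{enlaprimera} gives $\|\bar u-\bar u_n\|_{L^2}\lesssim n^{-\alpha}\|f\|_{\dot B^\alpha_{2,\infty}}$ for every $\alpha\in(0,1)$. For $\alpha<\frac12$ we use \eqref{29.9.1} instead: it gives $\|\bar u-\bar u_n\|_{L^2}^2\lesssim n^{-1/2}n^{-\alpha}\|f\|_{\dot B^\alpha_{2,\infty}}(\|\bar u\|_{L^\infty}\text{TV}(\bar u))^{1/2}$. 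Taking square roots yields the exponent $\frac{\alpha}{2}+\frac14$, which beats $\alpha$ precisely when $\alpha<\frac12$. This gives the case split. Note that $\bar u\in L^\infty\cap\dot{BV}$ is automatic by the footnote to Theorem \ref{Thm1Intro}.

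For the optimality statement (i), with $\alpha\ge\frac12$, I would use a one-dimensional datum $f(x_1,x_2)=g(x_1)$. By uniqueness and invariance, both $\bar u$ and $\bar u_n$ are then functions of $x_1$ alone, and the problem reduces to 1D TV denoising and its discrete analogue. Take $g$ a lacunary series $\sum_k 2^{-\alpha k}\lambda_k\cos(2\pi 2^k x_1)$ with small amplitudes, or alternatively a function with a $|x_1-x_0|^{\alpha-1/2}$-type singularity, which lies in $B^\alpha_{2,\infty}$. Using \eqref{29.9.2} in reverse, that is strong monotonicity of $T$, the lower bound reduces to showing $\langle f-P_nf,\bar u-\bar u_n\rangle$ is comparable to $\|f-P_nf\|_{L^2}^2\approx n^{-2\alpha}$. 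This holds if, on most of the domain, $T$ acts like the identity up to a shift, which happens where $f$ has small local oscillation relative to $\lambda$ (regions where the TV constraint is inactive is not quite right; rather, flat regions are created only at extrema). I would pick $f$ so that $\bar u=f-c$ away from finitely many plateaus, compute explicitly, and verify the claimed $\approx$. (The display \eqref{4.10.4} has a squared norm against $n^{-\alpha}$, which is consistent with $\|\cdot\|\approx n^{-\alpha/2}$ hmm; I would match whatever normalization the explicit example produces, aiming at $\|\bar u-\bar u_n\|^2\approx n^{-2\alpha}$ and reconciling with the stated form.)

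For (ii), with $\alpha<\frac12$, the upper bound for $\sup_{\mathcal K}$ is immediate from \eqref{27.9.3}. For the lower bound I would build, for each $n$, a datum $f_n$: the indicator of a set $E$, rescaled, plus an oscillating perturbation at frequency about $n$ and amplitude about $n^{-\alpha}$. The jump makes $\bar u$ have $\text{TV}(\bar u)\le2$, $\|\bar u\|_{L^\infty}\le1$, and a jump not aligned with the grid (place it at an offset $h/2$), so $\|\bar u-P_n\bar u\|_{L^2}^2\approx h$. The perturbation should be tuned so that the discretization error on the cells crossing the jump interacts to give $\|\bar u-\bar u_n\|_{L^2}^2\approx n^{-1/2-\alpha}$. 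Controlling $\bar u$ and $\bar u_n$ explicitly near the jump, via the explicit ROF solution of a disk or half-plane indicator in 1D reductions, is the main obstacle. I expect to have to settle for a 1D-reduced construction where the calibration or dual certificate is explicit, with uniform constants $C$ and $n_0$.
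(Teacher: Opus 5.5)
Your upper bounds in \eqref{27.9.3} are correct and follow the paper's argument; the paper simply packages \eqref{enlaprimera} and \eqref{29.9.1} into one $\min$. You are also right that the square in \eqref{4.10.4} is inconsistent with that upper bound: what the paper actually proves is $\|\bar u-\bar u_n\|_{L^2}\approx n^{-\alpha}$.

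\textbf{The gap.} Neither optimality statement is proved. For (i) you are missing the observation that makes the lower bound free. Since $\bar u_n\in X_n$ and $P_n$ is the orthogonal projection onto $X_n$, we have $\|\bar u-\bar u_n\|_{L^2}\ge\|\bar u-P_n\bar u\|_{L^2}$, so nothing about $\bar u_n$ is needed. What is needed is a datum whose minimizer is known exactly and carries the singularity. The paper prescribes the minimizer first:
\begin{itemize}
\item $g(x)=g_1(x_1)$, with $g_1$ piecewise monotone and equal to $\operatorname{sgn}(x_1-\frac{7}{8})|x_1-\frac{7}{8}|^{\alpha-1/2}$ near $\frac{7}{8}$;
\item $f=g-\operatorname{div}z$, with a smooth $z=(z_1,0)$, $|z_1|\le 1$, $z_1=-1$ where $g_1$ decreases and $z_1=1$ where it increases.
\end{itemize}
Then $\langle f-g,g\rangle=\mathrm{TV}(g)$ certifies $\bar u=g$. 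Rescaling the cell containing $\frac{7}{8}$ gives $\|g-P_ng\|_{L^2}\gtrsim n^{-\alpha}$ uniformly in the grid offset. This needs $\alpha>\frac{1}{2}$; the case $\alpha=\frac{1}{2}$ is quoted from the literature.

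Your inner-product route is not \eqref{29.9.2} ``in reverse'', since that inequality bounds $\|\bar u-\bar u_n\|$ from above. It yields a lower bound only via the identity $\langle f-P_nf,\bar u-\bar u_n\rangle=\langle f-P_nf,\bar u-P_n\bar u\rangle$ plus Cauchy--Schwarz, and you never prove the needed lower bound on that quantity. The lacunary option is unsuitable: such an $f$ is nowhere monotone. Hence $\bar u$ is flattened around a dense set of local extrema, rather than equal to $f$ off finitely many plateaus.

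For (ii) the explicit construction, which is the entire content, is missing, and the scaling you propose cannot work. Suppose $\bar u$ is an indicator whose only off-grid jump lies in one column $S$ of cells, of area $h$. Then \eqref{29.9.2} (with $\bar u_n=TP_nf$) and the identity above give
$$\|\bar u-\bar u_n\|_{L^2}^2\le\langle f-P_nf,\bar u-P_n\bar u\rangle\le\|f-P_nf\|_{L^1(S)}.$$
For an $O(1)$ indicator plus a perturbation of amplitude $n^{-\alpha}$, the right side is $O(h)$. That gives only the trivial rate $n^{-1/2}$, which is $o(n^{-\alpha/2-1/4})$ when $\alpha<\frac{1}{2}$. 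To reach $n^{-\alpha-1/2}$ for the squared error, the datum must oscillate inside that single column with size $\approx n^{1/2-\alpha}$. This is compatible with a uniform $\dot B^\alpha_{2,\infty}$ bound precisely because it is localized and $\alpha<\frac{1}{2}$.

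The paper does exactly this, with $f_n=\mathbf{1}_{[\iota,1)}(x_1)+\varphi'(x_1)$, where $\iota$ is a cell midpoint and $\varphi$ is piecewise linear.
\begin{itemize}
\item This places an antisymmetric spike $\mp 2\varepsilon/h$ on the two half-cells around $\iota$, with $\varepsilon\approx h^{\alpha+1/2}$.
\item The calibration $z_1=1-\varphi$ certifies $\bar u=\mathbf{1}_{[\iota,1)}$, and $P_n$ annihilates the spike.
\item A second explicit calibration $\zeta_1$, for the datum $P_nf_n$, shows $\bar u_n=TP_nf_n=\frac{\varepsilon}{\iota+\varepsilon}\mathbf{1}_{[0,\iota+\varepsilon)}+\mathbf{1}_{[\iota+\varepsilon,1)}$.
\end{itemize}
Hence $\|\bar u-\bar u_n\|_{L^2}^2=\frac{\varepsilon\iota}{\iota+\varepsilon}\gtrsim n^{-\alpha-1/2}$.
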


\begin{proof}
Applying Theorem \ref{Thm1Intro} with $p=2$ and the characterization of the $K
$-functional for $X=L^{2}$ and $k=1$ given in \eqref{PD28.1} we obtain (to simplify the exposition we shall omit the underlying equivalence constants that depend on $f$ and $\bar{u}$)
\begin{align}
\Vert\bar{u}-\bar{u}_{n}\Vert_{L^{2}} &\lesssim K(n^{-1},f;L^{2},\dot{W}%
^{1,2})^{1/2} \min \{n^{-1/4}, K(n^{-1}, f; L^2, \dot{W}^{1, 2})^{1/2}\} \nonumber \\
&\approx \omega(f, n^{-1})_2^{1/2} \min \{n^{-1/4}, \omega(f, n^{-1})_2^{1/2}\}. 
\label{PF19.12}
\end{align}
Combining \eqref{PF19.12} with the elementary estimates 
\begin{equation*}
\Vert f\Vert_{\dot{B}_{2,\infty}^{\alpha}}=\sup_{h \neq0}\frac{\Vert\Delta
_{h}f\Vert_{L^{2}}}{|h|^{\alpha}}\geq\sup_{0<|h|<n^{-1}}\frac{\Vert\Delta_{h}f%
\Vert_{L^{2}}}{|h|^{\alpha}}\geq
n^{\alpha}\sup_{|h|<n^{-1}}\Vert\Delta_{h}f\Vert_{L^{2}} = n^\alpha \omega(f, n^{-1})_2
\end{equation*}
we arrive at the following estimate
$$
	\Vert\bar{u}-\bar{u}_{n}\Vert_{L^{2}}  \lesssim n^{-\alpha/2} \min \{n^{-1/4}, n^{-\alpha/2}\}.
$$

Next we focus on the sharpness assertion of \eqref{27.9.3}.   We will restrict our attention to the case $\alpha \neq 1/2$, since the optimality with $\alpha = 1/2$ is a well-know result (cf. \cite[Remark 6.6]{BNS} and \cite[Remark 4.7]{KSe}). Assume first that  $\alpha \in (1/2, 1)$ and consider the prototypical example of extremal local singularity in $B^\alpha_{2, \infty}$  given by (cf. \cite[Lemma 1, p. 44]{RS})
	$$
		\psi(x_1) =(\text{sgn } x_1) |x_1|^{\alpha-\frac{1}{2}}
	$$
	for $|x_1|$ sufficiently small (say $|x_1| \leq 1/8$). Let 
	$$
		g_1(x_1) = \begin{cases}
			\psi (\frac{1}{8}) &\text {if} \qquad x_1 \in [0, \frac{1}{4}], \\
			\varphi(x_1) & \text{if} \qquad x_1 \in [\frac{1}{4}, \frac{1}{2}], \\
			\psi(-\frac{1}{8}) & \text{if} \qquad x_1 \in [\frac{1}{2}, \frac{3}{4}], \\
			\psi(x_1-\frac{7}{8}) & \text{if} \qquad x_1 \in [\frac{3}{4}, 1],
		\end{cases}
	$$
			where $\varphi \in C^\infty$
			 is decreasing with  $\varphi(\frac{1}{4}) = \psi(\frac{1}{8}) > 0$ and $\varphi(\frac{1}{2}) = \psi(-\frac{1}{8}) < 0$. Let $g(x) = g_1(x_1)$ for $x = (x_1, x_2) \in \T^2$. Assume that $t \in (0, 1/8)$. By construction, the sign of $g_1'$ does not change  on any interval of length $t$ and then 
			 $$
			 	|g_1(x_1+t)-g_1(x_1)| = \bigg|\int_{x_1}^{x_1 + t} g_1'(r) \, dr \bigg| = \int_{x_1}^{x_1+t} |g_1'(r)| \, dr
			 $$
			 leading to 
			 $$
			 	\|\Delta_{t e_1} g\|_{L^1(\T^2)} = \|\Delta_t g_1\|_{L^1(\T)} =  \int_0^1  \int_{0}^{t} |g_1'(x_1 + r)| \, dr \, d x_1 = t \|g_1'\|_{L^1(\T)}.
			 $$
			 Here, we note that $\|g_1'\|_{L^1(\T)} < \infty$ since $\alpha > 1/2$. Moreover, it is clear that $\Delta_{t e_2} g(x) =0$. Hence
			 $$
			 	\text{TV}(g) = \lim_{t \to 0^{+}} t^{-1} \|\Delta_{t e_1} g\|_{L^1(\T^2)} =  \|g_1'\|_{L^1(\T)}. 
			 $$
			Let $z_1 \in C^\infty(\T)$ with $|z_1| \leq 1$ such that $z_1 \equiv -1$ on $[\frac{1}{4}, \frac{1}{2}]$ and $z_1 \equiv 1$ on $[\frac{3}{4}, 1]$. Then we define $z(x) = (z_1(x_1), 0)$ and $\xi(x) = - \text{div } z(x) = -z_1'(x_1)$. Then
			 $$
			 	\langle \xi, g \rangle = - \int_0^1 z_1'(x_1) g_1(x_1) \, dx_1 = \int_0^1 z_1(x_1) g_1'(x_1) \, dx_1 = \|g_1'\|_{L^1(\T)}  = \text{TV} (g). 
			 $$
			 According to the well-known dual characterization of minimizers (see e.g. \cite[(2.11)]{CCCNP}), we derive that  $g$ is the unique minimizer of \eqref{PF18.3} with data $f = g + \xi$. Furthermore, using that $\psi \in B^\alpha_{2, \infty}$, it is easy  to check that $g_1 \in B^\alpha_{2, \infty}(\T)$ and, in particular, $g \in B^\alpha_{2, \infty}(\T^2)$. Then, taking into account that $\xi \in C^\infty(\T^2)$, we also derive that  $f \in B^\alpha_{2, \infty}(\T^2)$. On the other hand, from the fact that $P_n$ is an orthogonal projection onto $X_n$, we infer that
			 \begin{equation}\label{27.9.1}
			 	\|g-\bar{u}_n\|_{L^2(\T^2)} \geq \inf_{u \in X_n} \|g-u\|_{L^2(\T^2)} = \|g-P_n g\|_{L^2(\T^2)}. 
			 \end{equation}
			 Suppose now that $n \geq 4$ and let $I^*_n$ be the unique interval  of the form  $I_{n, i} = [\frac{i-1}{n}, \frac{i}{n}]$ with $1 \leq i \leq n$ that contains $7/8$, that is, there exists $a \in [0, 1/n)$ such that $I^{*}_n = [\frac{7}{8}-a, \frac{7}{8}-a+\frac{1}{n})$. Then 
			 \begin{align}
			 \|g-P_n g\|_{L^2(\T^2)}^2 &\geq \sum_{i=1}^n  \int_{I^*_n \times I_{n, i}} |g(x)-P_n g(x)|^2 \, dx \nonumber \\
			 &  =  \int_{I^*_n } |g_1(x_1)-(g_1)_{I_n^*}|^2 \, dx_1  \nonumber \\
			 & =  \bigg[ \int_{I_n^*} g^2 - \frac{1}{|I^*_n|}\bigg( \int_{I_n^*} g  \bigg)^2 \bigg] \nonumber \\
			 &  = \frac{1}{n^{2 \alpha}} \bigg[ \int_{-n a}^{-n a +1} |x_1|^{2\alpha -1} \, dx_1 - \bigg( \int_{-n a}^{-n a+1} (\text{sgn } x_1)  |x_1|^{\alpha-\frac{1}{2}} \, dx_1  \bigg)^2 \bigg] \approx \frac{1}{n^{2 \alpha}}, \label{27.9.2}
			 \end{align}
			 where the last step follows from periodicity and the fact that $\alpha \neq 1/2$. As a combination of \eqref{27.9.1} and \eqref{27.9.2}, we get
			 $$
			 	\|g -\bar{u}_n\|_{L^2(\T^2)} \gtrsim n^{-\alpha}.
			 $$
			 This proves that \eqref{4.10.4} holds for some $f \in B^\alpha_{2, \infty}(\T^2)$ with $\alpha > 1/2$ and $\bar{u}= g$.

Next we turn our attention to \eqref{1.10.5}. 			Let $h = 1/n$ and $\iota = (\lfloor{n/2} \rfloor + 1/2) h$.  Define $\varepsilon = (2 j +1) \frac{h}{2}$, where $j$ is the largest integer such that $\varepsilon \leq h^{\alpha+\frac{1}{2}}$. The existence of such a value $j$ is guaranteed by the assumption $\alpha < 1/2$. Without loss of generality (taking $n \geq n_0$ for some $n_0$ sufficiently large if necessary), we may assume that   $\iota + \varepsilon < 5/6$. Let $\varphi: [0, 1] \longrightarrow \R$ be the piecewise linear function that interpolates the points 
			 $$
			 	(0, 2), \quad \Big(\iota-\frac{h}{2}, \varepsilon \Big), \quad (\iota, 0),  \quad \Big(\iota+\frac{h}{2}, \varepsilon - \frac{h}{2} \Big), \quad (\iota+\varepsilon, 0), \quad (1, 2)
			 $$
			 and $z_1 = 1 -\varphi$. Since $0 \leq \varphi \leq 2$,  we have $|z| \leq 1$ for $z(x) = (z_1(x_1), 0)$. Define
			 $$
			 	f_n(x) = (\mathbf{1}_{[\iota, 1)} + \varphi') (x_1) = (\mathbf{1}_{[\iota, 1)} - z') (x_1),
			 $$
			 or equivalently,  
			 $$
			 	f_n(x) = \sum_{i=1}^5 c_i \mathbf{1}_{J_i}(x_1),
			 $$
			 where
			 $$
			 c_1 = \frac{\varepsilon-2}{\iota-\frac{h}{2}}, \qquad c_2  = -\frac{2 \varepsilon}{h}, \qquad c_3= \frac{2 \varepsilon}{h}, \qquad c_4 = 0, \qquad c_5 = \frac{2}{1-\iota-\varepsilon} + 1, 
			 $$
			 and 
			 $$
			 	J_1 = \Big(0, \iota-\frac{h}{2} \Big), \quad J_2 = \Big(\iota-\frac{h}{2}, \iota \Big), \quad J_3 = \Big(\iota, \iota+ \frac{h}{2} \Big), 
			 $$  
			 $$
			 	J_4 = \Big( \iota+ \frac{h}{2}, \iota + \varepsilon \Big), \quad J_5 = (\iota+\varepsilon, 1). 
			 $$
			 We also let 
			 $$
			 	g_n(x) = \mathbf{1}_{[\iota, 1)}(x_1). 
			 $$
			 Clearly
			 \begin{equation}\label{1.10.2}
			   \|g_n\|_{L^\infty(\T^2)} = 1 \qquad \text{and} \qquad \text{TV}(g_n) = 2.
			   \end{equation}
			    Moreover, we have 
			 $$
			 	\langle -z_1', g_n  \rangle = z_1(\iota)-z_1(1) = \varphi(1)-\varphi(\iota) = 2 = \text{TV} (g_n), 
			 $$
			 which shows that $g_n$ is the unique minimizer of \eqref{PF18.3} with data $f_n$. 
			 
			 To estimate $\|f_n\|_{\dot{B}^\alpha_{2, \infty}}$, we apply that
			 $$
			 	\|\Delta_{\xi} f_n\|_{L^2(\T^2)} \leq \sum_{i=1}^5 |c_i| \|\Delta_{\xi_1} \mathbf{1}_{J_i}\|_{L^2(\T)} \lesssim \sum_{i=1}^5 |c_i| \min\{|\xi_1|, |J_i|\}^{1/2}
			 $$
			 and we next estimate each of the terms in the latter sum. If $i=1$, we use that $\iota-\frac{h}{2} \geq \frac{1}{6}$ (if $n_0$ is large enough)  to derive that $|c_1| \lesssim 1$ uniformly with respect to $n$. If $i=5$ then $|c_5| = \frac{2}{1-\iota-\varepsilon} + 1 \lesssim 1$ since $\iota+\varepsilon < \frac{5}{6}$. Hence, if $|\xi| \leq 1$ then 
			 $$
			 	\|\Delta_{\xi} f_n\|_{L^2(\T^2)}  \lesssim |\xi_1|^{1/2} + \frac{\varepsilon}{h} \min \Big\{|\xi_1|, \frac{h}{2} \Big\}^{1/2} \lesssim  |\xi_1|^{1/2} + h^{\alpha-1/2} \min \Big\{|\xi_1|, \frac{h}{2} \Big\}^{1/2} \lesssim |\xi|^\alpha. 
			 $$ 
			 This together with the fact that $\|f_n\|_{L^2(\T^2)} \lesssim 1$ uniformly with respect to $n$ yields that 
			 \begin{equation}\label{1.10.4}
			 \sup_{n \geq n_0} \|f_n\|_{\dot{B}^\alpha_{2, \infty}} < \infty. 
			 \end{equation}
			 It follows from \eqref{1.10.2} and \eqref{1.10.4} that $f_n \in \mathcal{K}$ (cf. \eqref{1.10.3}) for every $n \geq n_0$.

			 Note that, by construction, the points $\iota-\frac{h}{2}, \iota + \frac{h}{2}$ and $\iota + \varepsilon$ are  of the form $\frac{i}{n}$ for some $1 \leq i \leq n-1$ and then 
			 $$
			 	P_n f_n (x) = c_1 \mathbf{1}_{J_1}(x_1) + c_5 \mathbf{1}_{J_5}(x_1). 
			 $$
			 We are going to prove that $$\bar{u}_n(x) = \frac{\varepsilon}{\iota+\varepsilon} \,  \mathbf{1}_{[0, \iota + \varepsilon)}(x_1) + \mathbf{1}_{[\iota+\varepsilon, 1)} (x_1)$$
			 is the unique minimizer of \eqref{PF18.3} with data $P_n f_n$. Note that $\bar{u}_n \in X_n$. Let $\zeta_1: [0, 1] \longrightarrow \R$ be such that $\zeta_1(0)=-1$ and
			 \begin{align*}
			 \zeta_1' (x_1)&= \bar{u}_n(x_1, 0)-P_n f_n(x_1, 0) \\
			 & = \Big(\frac{\varepsilon}{\iota+\varepsilon}-c_1 \Big) \mathbf{1}_{J_1}(x_1) + \frac{\varepsilon}{\iota+\varepsilon} \,  \mathbf{1}_{J_2 \cup J_3 \cup J_4}(x_1) + (1-c_5) \mathbf{1}_{J_5}(x_1)
			 \end{align*}
			 or equivalently
			 $$
			 	\zeta_1(x_1) -\zeta_1(0) = \begin{cases}
				\big(\frac{\varepsilon}{\iota+\varepsilon}-c_1 \big)x_1 & \quad \text{if} \quad x_1 \in J_1, \\
					 \big(\frac{\varepsilon}{\iota+\varepsilon}-c_1 \big) \big(\iota-\frac{h}{2} \big) + \frac{\varepsilon}{\iota+\varepsilon} \big(x_1 -\iota+\frac{h}{2} \big) & \quad \text{if} \quad x_1 \in J_2 \cup J_3 \cup J_4, \\
				  \big(\frac{\varepsilon}{\iota+\varepsilon}-c_1 \big) \big(\iota-\frac{h}{2} \big) + \frac{\varepsilon}{\iota+\varepsilon} \big(\varepsilon +\frac{h}{2} \big)  + (1-c_5) (x_1-\iota-\varepsilon) & \quad \text{if} \quad x_1 \in J_5. 	
				\end{cases}
			 $$
			 Observe that $\zeta_1(1) = \zeta_1(0)=-1, \zeta_1(\iota+\varepsilon)=1$, $\zeta_1$ is increasing on $[0, \iota+\varepsilon]$ and decreasing on $[\iota+\varepsilon, 1]$, so $|\zeta_1(x_1)| \leq 1$.  Let $\zeta(x) = (\zeta_1(x_1), 0)$.  Then $|\zeta(x)| \leq 1$ and, in addition, 
			 $$
			 	\langle - \zeta'_1, \bar{u}_n \rangle = 2 \Big(1- \frac{ \varepsilon}{\iota+\varepsilon} \Big) = \text{TV}(\bar{u}_n),
			 $$
			 which yields the desired assertion that $\bar{u}_n$ is the minimizer of \eqref{PF18.3} associated to $P_n f_n$. 
			 
			 Finally we estimate the error between  the continuous solution $g_n$ and the approximated solution, namely, 
			 \begin{equation}\label{1.10.1}
			 	\|g_n - \bar{u}_n\|_{L^2(\T^2)}^2  = \bigg(\frac{\varepsilon}{\iota+\varepsilon} \bigg)^2 \iota + \bigg(1-\frac{\varepsilon}{\iota+\varepsilon} \bigg)^2 \varepsilon = \frac{\varepsilon \iota}{\iota+\varepsilon} \gtrsim \varepsilon.
			 \end{equation}
			 Since $n \geq n_0$ (recall also that $\alpha < 1/2$) and using the definition of $\varepsilon$, we arrive at the following estimates 
			 $$
			 	\frac{h^{\alpha + \frac{1}{2}}}{2} \leq h^{\alpha+\frac{1}{2}} - h \leq \varepsilon,
			 $$
			and then (by \eqref{1.10.1})
			$$
				\|g_n - \bar{u}_n\|_{L^2(\T^2)}  \gtrsim h^{\frac{\alpha}{2} + \frac{1}{4}} = n^{-\frac{\alpha}{2} - \frac{1}{4}}. 
			$$
			 This completes the proof of the desired assertion \eqref{1.10.5}. 
\end{proof}

The proof of Theorem \ref{Thm1Intro} can be modified to establish
extrapolation estimates for the rates of convergence of minimizers in the spirit of the BBM formula \eqref{14.9.4}. Specifically, we establish the following result.  

\begin{corollary}
\label{Thm191.} Let $s\in(0,1)$ and $p\in\lbrack1,2)$. Suppose that $f\in\dot{W}^{s,p}\cap L^{\infty},$ then 
\begin{equation}
\Vert\bar{u}-\bar{u}_{n}\Vert_{L^{2}}\lesssim n^{-\frac{sp}{2}}[(1-s)^{%
\frac {1}{p}}\Vert f\Vert_{\dot{W}^{s,p}}]^{\frac{p}{2}}\Vert
f\Vert_{L^{\infty}}^{1-\frac{p}{2}},   \label{P19.12}
\end{equation}
where the implicit constant is independent of $n$ and $s$. As a
consequence, taking limits as $s\rightarrow1^{-}$ and applying \eqref{14.9.4}: 
\begin{equation*}
\Vert\bar{u}-\bar{u}_{n}\Vert_{L^{2}}\lesssim n^{-\frac{p}{2}}\Vert f\Vert_{%
\dot{W}^{1,p}}^{\frac{p}{2}}\Vert f\Vert_{L^{\infty}}^{1-\frac{p}{2}}\qquad%
\text{if}\qquad p\in(1,2) 
\end{equation*}
and 
\begin{equation*}
\Vert\bar{u}-\bar{u}_{n}\Vert_{L^{2}}\lesssim n^{-\frac{1}{2}}\Vert f\Vert_{%
\dot{BV}}^{\frac{1}{2}}\Vert f\Vert_{L^{\infty}}^{\frac{1}{2}} \qquad \text{if} \qquad p=1. 
\end{equation*}
\end{corollary}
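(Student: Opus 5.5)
The plan is to rerun the proof of Theorem \ref{Thm1Intro} up to \eqref{24.9.10}, and then bound $\|f-P_nf\|_{L^2}$ directly by interpolating between $L^p$ and $L^\infty$. From \eqref{24.9.10} and $\bar E\le \bar E_n$ we have
\begin{equation*}
\|\bar u-\bar u_n\|_{L^2}\le 2\|f-P_nf\|_{L^2}.
\end{equation*}
So it suffices to show
\begin{equation*}
\|f-P_nf\|_{L^2}\lesssim n^{-\frac{sp}{2}}\big[(1-s)^{1/p}\|f\|_{\dot W^{s,p}}\big]^{p/2}\|f\|_{L^\infty}^{1-\frac p2}.
\end{equation*}
Since $P_n$ is an averaging operator, $|f-P_nf|\le 2\|f\|_{L^\infty}$ pointwise. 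Because $p<2$, this gives
\begin{equation*}
\|f-P_nf\|_{L^2}^2\le (2\|f\|_{L^\infty})^{2-p}\,\|f-P_nf\|_{L^p}^p.
\end{equation*}
The problem thus reduces to the $L^p$ estimate
\begin{equation*}
\|f-P_nf\|_{L^p}\lesssim n^{-s}(1-s)^{1/p}\|f\|_{\dot W^{s,p}},
\end{equation*}
with a constant uniform in $s$.

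For the $L^p$ estimate I would go through the $K$-functional. On each cell $S=S^h_{i,j}$, Jensen gives
\begin{equation*}
\int_S|f-f_S|^p\le h^{-2}\int_S\int_S|f(x)-f(y)|^p\,dy\,dx\le h^{-2}\int_{|\xi|\le\sqrt2 h}\|\Delta_\xi f\|_{L^p(S)}^p\,d\xi
\end{equation*}
(adapted periodically). Summing over cells yields
\begin{equation*}
\|f-P_nf\|_{L^p}^p\lesssim h^{-2}\int_{|\xi|\lesssim h}\|\Delta_\xi f\|_{L^p}^p\,d\xi,
\end{equation*}
which by \eqref{PD28.1} (with $k=1$, $q=p$, $X=L^p$) is comparable to $K(h,f;L^p,\dot W^{1,p})^p$. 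Equivalently, one can argue as in the proof of \eqref{19.5Nov}: $P_n$ is an $L^p$-contraction and the Poincaré inequality gives $\|g-P_ng\|_{L^p}\lesssim h\|\nabla g\|_{L^p}$, hence $\|f-P_nf\|_{L^p}\lesssim K(n^{-1},f;L^p,\dot W^{1,p})$. This route also covers $p=1$ via $\dot W^{1,1}$.

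Next, \eqref{sec2:1} gives
\begin{equation*}
K(t,f)\le t^{s}\|f\|_{(L^p,\dot W^{1,p})^{\blacktriangleleft}_{s,p}},
\end{equation*}
and \eqref{PD28.3} (with $k=1$, $X=L^p$, constants uniform in $s$) identifies the right-hand norm as
\begin{equation*}
\approx (s(1-s)p)^{1/p}\|f\|_{\dot W^{s,p}}.
\end{equation*}
For $s$ bounded away from $0$ this gives $K(n^{-1},f)\lesssim n^{-s}(1-s)^{1/p}\|f\|_{\dot W^{s,p}}$. For small $s$ the factor $s^{1/p}\le1$ only helps, since the target bound has no $s^{1/p}$. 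Combining with the previous paragraph gives \eqref{P19.12}. The limiting statements then follow by letting $s\to1^-$ in \eqref{P19.12} with $n$ fixed and using \eqref{14.9.4}, or its $\dot{BV}$ version from \cite{Dav} when $p=1$. This requires $f\in\dot W^{s,p}$ for $s$ near $1$, which holds since $f\in \dot W^{1,p}\cap L^\infty$ on the torus.

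The main obstacle is keeping every constant uniform in $s$. In particular, one needs \eqref{PD28.1} and \eqref{PD28.3} on the torus $\mathbb T^2$ rather than on $\mathbb R^N$, with the periodic modification of the differences; the paper asserts these hold "adapted to $Q$". If that equivalence is delicate, I would bypass the $K$-functional and derive the $L^p$ estimate directly from the cell-wise Jensen bound, using
\begin{equation*}
\int_{|\xi|\lesssim h}\|\Delta_\xi f\|_p^p\,d\xi\le h^{sp+2}\int\frac{\|\Delta_\xi f\|_p^p}{|\xi|^{sp+2}}\,d\xi.
\end{equation*}
This gives $n^{-s}\|f\|_{\dot W^{s,p}}$ but loses the factor $(1-s)^{1/p}$. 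Recovering that factor needs the averaged-$K$ characterization: the monotonicity of $K(t)/t$ gives $K(h)\lesssim h^{s}(1-s)^{1/p}\|f\|_{(\cdot)_{s,p}}$, which is exactly the normalization mechanism behind \eqref{sec2:1}.
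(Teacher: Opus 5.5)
Your proposal is correct, and its outer structure matches the paper's. Both reduce via \eqref{24.9.10} to $\|\bar u-\bar u_n\|_{L^2}\le 2\|f-P_nf\|_{L^2}$, and both pass from $L^2$ to $L^p$ using $\|f-P_nf\|_{L^\infty}\le 2\|f\|_{L^\infty}$, which is \eqref{PF19.3}. You differ in how you prove the $L^p$ estimate.

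The paper localizes to cells. On each cell it interpolates the operator $\mathcal{T}_{S^h_{i,j}}f=f-f_{S^h_{i,j}}$ between the trivial bound on $L^p(S^h_{i,j})$ and the Poincar\'e bound on $h\dot W^{1,p}(S^h_{i,j})$, using the truncated functional \eqref{5.10.2}. The factor $((1-s)p)^{1/p}$ comes from evaluating that functional on the degenerate pair $(L^p(S^h_{i,j}),L^p(S^h_{i,j}))$. The other side is rescaled to $h^s\|f\|_{\dot W^{s,p}(S^h_{i,j})}$, and the cells are then summed.

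You instead prove the global bound $\|f-P_nf\|_{L^p}\lesssim K(n^{-1},f;L^p,\dot W^{1,p})$ exactly as in \eqref{19.5Nov}. The factor $(1-s)^{1/p}$ then comes from the normalization $c_{s,p}$ in \eqref{sec2:1}, combined with \eqref{PD28.3} on $\T^2$.

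The underlying mechanism is the same, as you note at the end: monotonicity of $K(t)/t$ integrated against $t^{-sp}\,dt/t$ near $0$. What each route buys:
\begin{itemize}
\item Your route is shorter and reuses an argument already in the paper.
\item It also gives the endpoint bounds without BBM, since $K(n^{-1},f;L^p,\dot W^{1,p})\le n^{-1}\|\nabla f\|_{L^p}$, using the Poincar\'e inequality in $\dot{BV}$ when $p=1$.
\item The paper's localized route gives a slightly sharper right-hand side, $\big(\sum_{i,j}\|f\|^p_{\dot W^{s,p}(S^h_{i,j})}\big)^{1/p}$, which only sees oscillation within each cell.
\item It also only needs \eqref{PD28.1} at scales $t<h$ on small cubes, not a global identification on the torus.
\end{itemize}

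One minor caveat concerns the normalization of $\dot W^{s,p}(\T^2)$. If $h$ ranges over a fundamental domain rather than $\R^2$, then $\|f\|_{(L^p,\dot W^{1,p})_{s,p}}$ can exceed that seminorm by a factor of order $s^{-1/p}$ as $s\to0^+$. In that case the $s^{1/p}$ in $c_{s,p}$ is exactly what keeps your constant uniform, not something that merely helps. Running the monotonicity argument only on $(0,n^{-1})$, as in your final sentence, avoids the issue entirely.
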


\begin{proof}
We have the trivial estimates 
\begin{equation}
\Vert f-P_{n}f\Vert_{L^{2}}\leq\Vert
f-P_{n}f\Vert_{L^{p}}^{1-\theta} \|f-P_n f\|_{L^\infty}^\theta \leq 2^\theta  \Vert
f-P_{n}f\Vert_{L^{p}}^{1-\theta}  \Vert f\Vert_{L^{\infty}}^{\theta}, 
\label{PF19.3}
\end{equation}
where $P_{n}$ is the projection operator defined by \eqref{PF19.2} and $\theta = 1-\frac{p}{2}$. 
Moreover, by construction, 
\begin{equation}
\Vert f-P_{n}f\Vert_{L^{p}}^{p}=\sum_{0\leq i,j\leq n-1}\Vert
\mathcal{T}_{S_{i,j}^{h}}f\Vert_{L^{p}(S_{i,j}^{h})}^{p}   \label{PF19.4}
\end{equation}
where 
\begin{equation*}
\mathcal{T}_{S_{i,j}^{h}}f(x):=f(x)-f_{S^h_{i, j}}. 
\end{equation*}
On the other hand, the classical Poincar\'{e} inequality applied to the
square $S_{i,j}^{h}$ implies 
\begin{equation}
\Vert \mathcal{T}_{S_{i,j}^{h}}f\Vert_{L^{p}(S_{i,j}^{h})}\lesssim h\Vert f\Vert _{%
\dot{W}^{1,p}(S_{i,j}^{h})}.   \label{PF19.5}
\end{equation}
Interpolating \eqref{PF19.5} with the trivial estimate 
\begin{equation*}
\Vert \mathcal{T}_{S_{i,j}^{h}}f\Vert_{L^{p}(S_{i,j}^{h})}\lesssim\Vert f\Vert
_{L^{p}(S_{i,j}^{h})},   
\end{equation*}
we arrive at 
\begin{equation}
[
\mathcal{T}_{S_{i,j}^{h}}f]_{(L^{p}(S_{i,j}^{h}),L^{p}(S_{i,j}^{h}))_{s,p}}%
\lesssim[ f]_{(L^{p}(S_{i,j}^{h}), h\dot{W}%
^{1,p}(S_{i,j}^{h}))_{s,p}},   \label{PF19.7}
\end{equation}
where $[\cdot]_{(A_0, A_1)_{s, p}}$ refers to the modified interpolation\footnote{ We mention that  \eqref{5.10.2} plays an important role in extrapolation theory as can be seen in \cite{JM, KM, M-Lim}. In particular, working with ordered pairs $(A_0, A_1)$, one has that (cf. \cite[Lemma 3]{M-Lim})   $$[f]_{(A_0, A_1)_{s, p}} \leq \|f\|_{(A_0, A_1)_{s, p}} \leq (1+s^{-1/p} (1-s)^{1/p}) [f]_{(A_0, A_1)_{s, p}}.$$}   (semi-)norm (cf. \eqref{5.10.1}) given by 
\begin{equation}\label{5.10.2}
	 [f]_{(A_0, A_1)_{s,p}} := \bigg(\int_0^1 [t^{-s}K(t, f; A_0, A_1)]^p \, \frac{dt}{t}	 \bigg)^{1/p}
\end{equation}
 and the hidden equivalence constant in \eqref{PF19.7} is independent of $h$ and $S^h_{i, j}$ with $0\leq
i,j\leq n-1$. By virtue of \eqref{sec2:3}, we can compute the left-hand side
of \eqref{PF19.7} as 
\begin{equation}
[
\mathcal{T}_{S_{i,j}^{h}}f]_{(L^{p}(S_{i,j}^{h}),L^{p}(S_{i,j}^{h}))_{s,p}}= ((1-s)p)^{-1/p} \|
\mathcal{T}_{S_{i,j}^{h}}f\|_{L^{p}(S_{i,j}^{h})}.   \label{PF19.9}
\end{equation}
Concerning the right-hand side of \eqref{PF19.7}, we can make use of the
simple fact that\footnote{%
More generally, a simple change of variables gives  $[ f]
_{(A_{0},hA_{1})_{s,p}} =  h^{s}\big( \int_0^h [t^{-s}K(t, f; A_0, A_1)]^p \, \frac{dt}{t} \big)^{1/p}$ for every $%
h > 0$.} 
\begin{equation}
[ f]_{(L^{p}(S_{i,j}^{h}),h\dot{W}^{1,p}(S_{i,j}^{h}))_{s,p}} =  h^{s} \bigg( \int_0^h [t^{-s} K(t, f; L^p(S^h_{i, j}), \dot{W}^{1, p}(S_{i, j}^h))]^p \, \frac{dt}{t} \bigg)^{1/p}
\label{P19.10}
\end{equation}
and, in addition, the computations carried out in \eqref{PD28.3} (more
precisely, their counterparts adapted to cubes which hold \textit{mutatis
mutandis}, in particular, the formula  \eqref{PD28.1} remains valid if $t \in (0, h)$; we refer to \cite[p. 17]{DLTYY} for further details) show that 
\begin{equation*}
\bigg( \int_0^h [t^{-s} K(t, f; L^p(S^h_{i, j}), \dot{W}^{1, p}(S_{i, j}^h))]^p \, \frac{dt}{t} \bigg)^{1/p} \approx%
\Vert f\Vert_{\dot{W}^{s,p}(S_{i,j}^{h})}
\end{equation*}
with equivalence constants that are independent of $s \in (0, 1)$ and $S_{i,j}^{h}$.
Hence \eqref{P19.10} yields 
\begin{equation}
[ f]_{(L^{p}(S_{i,j}^{h}),h\dot{W}^{1,p}(S_{i,j}^{h}))_{s,p}}\approx
h^{s}\Vert f\Vert_{\dot{W}^{s,p}(S_{i,j}^{h})}.   \label{P19.11}
\end{equation}
Plugging \eqref{PF19.9} and \eqref{P19.11} into \eqref{PF19.7}, we obtain 
\begin{equation*}
\Vert \mathcal{T}_{S_{i,j}^{h}}f\Vert_{L^{p}(S_{i,j}^{h})}\lesssim ((1-s)p)^{1/p} h^{s}\Vert
f\Vert_{\dot{W}^{s,p}(S_{i,j}^{h})}
\end{equation*}
and then \eqref{PF19.4} gives 
\begin{equation*}
\Vert f-P_{n}f\Vert_{L^{p}}\lesssim (1-s)^{1/p}h^{s}\bigg(\sum_{0\leq i,j\leq
n-1}\Vert f\Vert_{\dot{W}^{s,p}(S_{i,j}^{h})}^{p}\bigg)^{1/p}\leq
(1-s)^{1/p}h^{s}\Vert f\Vert_{\dot{W}^{s,p}}. 
\end{equation*}
The last estimate combined with \eqref{PF19.3} yields 
\begin{equation*}
\Vert f-P_{n}f\Vert_{L^{2}} \lesssim[(1-s)^{1/p}h^{s}\Vert f\Vert_{\dot{W}%
^{s,p}}]^{1-\theta}\Vert f\Vert_{L^{\infty}}^{\theta}. 
\end{equation*}
This together with \eqref{24.9.10} leads to the desired estimate %
\eqref{P19.12}.
\end{proof}

\subsubsection{Error bounds via  Brezis--Van
Schaftingen--Yung spaces.}

 Let  $\alpha\in(0,1),p\in(1,\infty)$
and $\gamma\in\mathbb{R}\backslash\{0\}$. The Brezis--Van Schaftingen--Yung space is defined through the functional   
\begin{equation}
\Vert f\Vert_{\dot{B}_{p}^{\alpha}(\gamma,\infty)}^{p}:=\sup_{\lambda>0}%
\iint_{|\Delta_h f(x)|>\lambda|h|^{\alpha+\frac{\gamma}{p}}}\frac{\lambda^{p}}{%
|h|^{2-\gamma}}\,dx\,dh;  \label{BVSY}
\end{equation}
cf. \cite{BVY, BVY2}. 
Note the connection with weak-type conditions: 
\begin{equation*}
\Vert f\Vert_{\dot{B}_{p}^{\alpha}(\gamma,\infty)}=\bigg\|\frac{\Delta_h f(x)}{%
|h|^{\alpha+\frac{\gamma}{p}}}\bigg\|_{L^{p,\infty}(\nu_{\gamma})},
\end{equation*}
where $L^{p,\infty}(\nu_{\gamma})$ denotes the Marcinkiewicz space (also
known as weak-$L^{p}$ space) on the product space $\mathbb{T}^{2}\times 
[-\frac{1}{2}, \frac{1}{2})^2$ endowed with the measure 
\begin{equation*}
\nu_{\gamma}(E):=\iint_{E}\frac{1}{|h|^{2-\gamma}}\,dx\,dh. 
\end{equation*}

\begin{proof}[Proof of Theorem \ref{Thm2Intro}]
In light of Theorem \ref{Thm1Intro}, the desired estimate \eqref{25.3Nov} can
be seen  as a simple consequence of the following general question: Given $%
\alpha\in(0,1)$, characterize those $f\in L^{2}$ that satisfy 
\begin{equation}
K(t,f;L^{2},\dot{W}^{1,p})\lesssim t^{\alpha}.   \label{25.2Nov}
\end{equation}
In the language of interpolation theory \eqref{25.2Nov} is equivalent to the
statement 
\begin{equation}
f\in(L^{2},\dot{W}^{1,p})_{\alpha,\infty}.   \label{PF19.13}
\end{equation}
In this way the problem is reduced to characterizing the interpolation spaces
in \eqref{PF19.13}. The Besov spaces $\dot{B}_{2,\infty}^{\alpha}$ provide
the  well-known answer when $p=2$ (cf. Theorem \ref{Coro8.32}). However,
the situation is more delicate when $p\neq2$ where there is no suitable 
characterization of $K(t,f;L^{2},\dot{W}^{1,p}).$ Despite this obstruction,
the interpolation spaces arising in \eqref{PF19.13} with $p \neq 2$ have been recently
described in \cite{DSSVSY} in terms of the functionals \eqref{BVSY}%
. To be more precise, according to \cite[Corollary 1.15]{DSSVSY} (note that the arguments carried out in \cite{DSSVSY} refer to $\R^N$, but similar arguments also work in the periodic setting) the following interpolation formula holds 
\begin{equation}
(L^{2},\dot{W}^{1,p})_{\alpha,\infty}=\dot{B}_{q}^{\alpha}(\gamma ,\infty) 
\label{PF20.1}
\end{equation}
where $\frac{1}{q}=\frac{1-\alpha}{2}+\frac{\alpha}{p}$ and $\frac{1}{\gamma 
}=\frac{1}{2}-\frac{1}{p}.$ Then, by \eqref{enlaprimera}, 
\begin{equation}
\Vert\bar{u}-\bar{u}_{n}\Vert_{L^{2}}\lesssim K(n^{-2+\frac{2}{p}},f;L^{2},%
\dot{W}^{1,p})\lesssim n^{-(2-\frac{2}{p})\alpha} \|f\|_{\dot{B}_{q}^{\alpha}(\gamma,\infty)}   \label{PF20.2}
\end{equation}
provided that $\alpha \geq \frac{p}{4(p-1)}$. 
 Note that, in view
of \eqref{PF20.1} and the Sobolev embedding theorem, 
\begin{equation*}
\dot{B}_{q}^{\alpha}(\gamma,\infty)\subset L^{2}+\dot{W}^{1,p}\subset L^{2}. 
\end{equation*}
Then \eqref{25.3Nov} with $\alpha \geq 1/2$ follows after applying a simple change of variables ($\beta \leftrightarrow \alpha$) in %
\eqref{PF20.2}. The case $\alpha < 1/2$ in \eqref{25.3Nov} can be obtained similarly.  In addition, the validity of \eqref{25.3Nov} for $f\in\dot {W%
}^{\beta,q}$ is an immediate consequence of $\dot {W}%
^{\beta,q}\hookrightarrow\dot{B}_{q}^{\beta}(\gamma,\infty)$, which easily follows from the trivial embedding  $L^p(\nu_\gamma) \hookrightarrow L^{p, \infty}(\nu_\gamma)$.
\end{proof}

\end{document}